\theoremstyle{definition}
\newtheorem{dfn}{Definition}[section]
\theoremstyle{theorem}
\newtheorem{prp}[dfn]{Proposition}
\newtheorem{lmm}[dfn]{Lemma}
\newtheorem{ass}[dfn]{Assumption}
\newtheorem{thm}[dfn]{Theorem}
\newtheorem{crl}[dfn]{Corollary}
\newtheorem*{thm*}{Theorem}
\newcounter{mainthms}
\theoremstyle{theorem}
\newtheorem{mainthm}[mainthms]{Theorem}
\numberwithin{equation}{subsection}
\newcommand{\red}[1]{{\color{red} #1}}
\newcommand{\what}[1]{\widehat{#1}}
\newcommand{\conj}[1]{\overline{#1}}
\newcommand{\wilde}[1]{\widetilde{#1}}
\newcommand{\brak}[1]{\langle #1 \rangle}
\newcommand{\vnorm}[1]{\left\lvert #1 \right\rvert}
\newcommand{\xlra}[1]{\xlongrightarrow{#1}}
\newcommand{\vdotsb}{\rotatebox[origin=c]{-90}{$\dotsb$}}
\newcommand{\ddotsb}{\rotatebox[origin=c]{-45}{$\dotsb$}}
\newcommand{\vcong}{\rotatebox[origin=c]{-90}{$\cong$}}
\newcommand{\inv}{{-1}}
\newcommand{\vphi}{\varphi}
\newcommand{\veps}{\varepsilon}
\newcommand{\sub}{\subseteq}
\newcommand{\twoslash}{/\!\!/}
\newcommand{\mc}[1]{\mathcal{#1}}
\newcommand{\mf}[1]{\mathfrak{#1}}
\newcommand{\ms}[1]{\mathscr{#1}}
\newcommand{\mb}[1]{\mathbb{#1}}
\newcommand{\cat}[1]{\mathsf{#1}}
\newcommand{\sA}{\ms{A}}
\newcommand{\sB}{\ms{B}}
\newcommand{\sC}{\ms{C}}
\newcommand{\sD}{\ms{D}}
\newcommand{\sE}{\ms{E}}
\newcommand{\sQ}{\ms{Q}}
\newcommand{\Spaces}{\ms{S}}
\newcommand{\sX}{\ms{X}}
\newcommand{\sY}{\ms{Y}}
\newcommand{\sPP}{\ms{PP}}
\newcommand{\sSRW}{\ms{CRW}}
\newcommand{\cA}{\mc{A}}
\newcommand{\cM}{\mc{M}}
\newcommand{\cN}{\mc{N}}
\newcommand{\cO}{\mc{O}}
\newcommand{\cP}{\mc{P}}
\newcommand{\cQ}{\mc{Q}}
\newcommand{\bA}{\mathbf{A}}
\newcommand{\bC}{\mathbf{C}}
\newcommand{\bCat}{\mathbf{C}\mathrm{at}}
\newcommand{\bE}{\mathbf{E}}
\newcommand{\bh}{\mathbf{h}}
\newcommand{\bMF}{\mathbf{MF}}
\newcommand{\Path}{\mathbf{P}\mathrm{ath}}
\newcommand{\Set}{\mathbf{S}}
\newcommand{\bRW}{\mathbf{RW}}
\newcommand{\bT}{\mathbf{T}}
\newcommand{\bD}{\mathbf{\Delta}} 
\newcommand{\bS}{\mathbf{\Sigma}}
\newcommand{\bL}{\mathbf{\Lambda}}
\newcommand{\bG}{\mathbf{\Gamma}}
\newcommand{\bsd}{\mathbf{\Theta}}
\newcommand{\bvx}{\mathbf{\Xi}}
\newcommand{\CC}{\mb{C}}
\newcommand{\KK}{\mb{K}}
\newcommand{\LL}{\mb{L}}
\newcommand{\NN}{\mb{N}}
\newcommand{\TT}{\mb{T}}
\newcommand{\ZZ}{\mb{Z}}
\newcommand{\bd}{\mf{d}} 
\newcommand{\Hom}{\mathrm{Hom}}
\newcommand{\Fun}{\mathrm{Fun}}
\newcommand{\Map}{\mathrm{Map}}
\newcommand{\Nat}{\mathrm{Nat}}
\newcommand{\Cat}{\mathrm{Cat}}
\newcommand{\Seg}{\mathrm{Seg}}
\newcommand{\CAlg}{\mathrm{CAlg}}
\newcommand{\Mon}{\mathrm{Mon}}
\newcommand{\RKan}{\mathrm{RKan}}
\newcommand{\LKan}{\mathrm{LKan}}
\DeclareMathOperator{\Spec}{Spec}
\newcommand{\Tw}{\mathrm{Tw}}
\newcommand{\id}{\mathrm{id}}
\DeclareMathOperator{\dom}{dom}
\DeclareMathOperator{\im}{im}
\DeclareMathOperator*{\colim}{colim}
\newcommand{\rmcl}{\mathrm{cl}}
\newcommand{\rmcart}{\mathrm{cart}}
\newcommand{\rmcocart}{\mathrm{cocart}}
\newcommand{\rmint}{\mathrm{int}}
\newcommand{\rminj}{\mathrm{inj}}
\newcommand{\opp}{{\mathrm{op}}}
\newcommand{\crep}{\mathrm{cr}}
\newcommand{\rmc}{\mathrm{c}}
\newcommand{\Art}{\mathrm{Art}}
\newcommand{\sCat}{\ms{C}\mathrm{at}}
\newcommand{\GS}{\mathrm{GS}}
\newcommand{\PP}{\mathrm{PP}}
\newcommand{\SRW}{\ms{CRW}}
\newcommand{\Lag}{\mathrm{Lag}}
\newcommand{\dSt}{\mathrm{d}\ms{S}\mathrm{t}}
\newcommand{\dAff}{\mathrm{d}\ms{A}\mathrm{ff}}
\newcommand{\Mod}{\ms{M}\mathrm{od}}
\newcommand{\sCAlg}{\ms{CA}\mathrm{lg}}
\newcommand{\QCoh}{\ms{QC}\mathrm{oh}}
\newcommand{\PreSymp}{\ms{P}\mathrm{re}\ms{S}\mathrm{ymp}}
\title[Higher categories of push-pull spans, I]{Higher categories of push-pull spans, I: \\ Construction and applications}
\date{\today}
\author{Lorenzo Riva}
\address{Department of Mathematics \\ University of Notre Dame \\ South Bend, IN 46556, USA.}
\email{lriva@nd.edu}
\urladdr{https://sites.google.com/nd.edu/lorenzo-riva/home}
\subjclass[2000]{18N65}
\begin{document}

\maketitle

\begin{abstract}
	This is the first part of a project aimed at formalizing Rozansky-Witten models in the functorial field theory framework. Motivated by work of Calaque-Haugseng-Scheimbauer, we construct a family of symmetric monoidal $(\infty,3)$-categories $\sPP(\sC; Q^\otimes)$, parametrized by an $\infty$-category $\sC$ with finite limits and a representable functor $\sQ^\otimes = \sC(-, Q^\otimes) : \sC^\opp \to \CAlg(\sCat_1)$ with pushforwards, which contains correspondences in $\sC$ with local systems in $\sQ^\otimes$ that compose via a push-pull formula. We apply this general construction to provide an approximation $\sSRW$ to the $3$-category of Rozansky-Witten models whose existence was conjectured by Kapustin-Rozansky-Saulina; this approximation behaves like a ``commutative'' version of the conjectured $3$-category and is related to work of Stefanich on higher quasicoherent sheaves.
\end{abstract}

\tableofcontents

\section{Introduction} \label{sec:introduction}

\subsection{Motivation}

\subsubsection{Field theories and higher categories}

There is an interesting interplay between algebraic topology and modern mathematical physics whose formalization goes back to the work of Atiyah in \cite{Atiyah1988}. The objects in question are \emph{functorial topological field theories}: these are symmetric monoidal functors
\begin{equation*}
	F : (\ms{B}\mathrm{ord}_n, \sqcup) \to (\sC, \otimes)
\end{equation*}
from a certain $(\infty,n)$-category of bordisms, possibly with additional tangential or geometric structures, to a target symmetric monoidal $(\infty,n)$-category $\sC$, usually a higher version of the category of vector spaces or chain complexes. In recent years there has been an explosion of interest in field theories due to their connection with the higher algebra of $\infty$-categories via the cobordism hypothesis (see \cite{BD1995, Lurie2009, GP2022}). The construction of functorial field theories starts by choosing or producing an appropriate target $(\infty,n)$-category $\sC$; one can then attempt to directly define $F$, which is usually very complicated for $n \geq 2$, or study the fully dualizable objects of $\sC$ and then appeal to the cobordism hypothesis to guarantee the existence of $F$. Some examples of both techniques can be found in \cite{Abrams1996, Costello2007, SP2009, ST2011, Juhasz2018, DSSP2020, CHS2022, BEP2023} in a variety of low dimensional or non-extended settings.

Unfortunately, $(\infty, n)$-categories are notoriously hard to construct due to the difficulty in making the composition operations satisfy an infinite collection of homotopy coherence axioms. The step-by-step technique used for building weak $n$-categories, which consists in specifying the collection of $k$-morphisms and then verifying a small set of axioms, does not work anymore, and the situation is even more complicated when one wants to add extra structure such as a symmetric monoidal product. This is a major roadblock for the study of field theories: many higher categories of physical interest, which arise from the study of classical or quantum field theory, involve a complicated interplay of algebra, geometry, and topology; collecting together this mixture of different mathematical objects in one coherent structure is difficult given the current techniques for building higher categories. In this paper we will solve this problem in a specific case of interest.

\subsubsection{The Rozansky-Witten $3$-category}

In \cite{KRS2009} Kapustin, Rozansky, and Saulina studied a $3$-dimensional field theory arising from physics which they call the \emph{Rozansky-Witten model}. Its rich structure is connected to K\"ahler geometry, derived categories of coherent sheaves, matrix factorizations, and deformation quantization. Motivated by the results in \cite{KRS2009}, Kapustin and Rozansky set out to describe a symmetric monoidal $3$-category $\bRW$\footnote{In Kapustin and Rozansky's notation $\bRW$ is $\dddot{\cat{L}}$. Given how we usually denote categories in this paper we prefer the more evocative notation of \cite{BCR2023}.} that would classify Rozansky-Witten models in the functorial field theory framework. In \cite{KR2010} they provide an extensive sketch of the construction:
\begin{enumerate}[label=(\alph*)]
	\item the objects are holomorphic symplectic manifolds and the morphisms include (but are not restricted to) Lagrangian spans; there is a dualization operation 
	\begin{equation*}
		(X, \omega) \mapsto (X, \omega)^\vee := (X, -\omega)
	\end{equation*}
	and a product
	\begin{equation*}
		(X, \omega_X) \odot (Y, \omega_Y) := (X \times Y, \pi_X^\ast \omega_X + \pi_Y^\ast \omega_Y)
	\end{equation*}
	and we have
	\begin{equation*}
		\bRW(X, Y) \simeq \bRW(\ast, X^\vee \odot Y)
	\end{equation*}
	for any symplectic manifolds $X$ and $Y$, where $\ast$ is the point with the trivial symplectic form;
	\item when restricting to cotangent bundles $X = \bT^\ast U$ with their canonical symplectic structure, the category of $2$-morphisms $\Hom_{\bRW(\ast, X)}(L_1, L_2)$ between two Lagrangians $L_1, L_2$ in $X$ is a deformation of the $\ZZ_2$-graded derived category of the intersection of $L_1$ and $L_2$, and in fact it restricts to matrix factorizations in the special affine case of $X = \bT^\ast \CC^n$; one then uses the fact that holomorphic symplectic manifolds have Darboux charts isomorphic to cotangent bundles to glue together these mapping categories in the case of an arbitrary $X \approx \bigcup_{i \in I} \bT^\ast U_i$;
	\item compositions are given by pullbacks at the level of Lagrangian spans and push-pull formulas (see \Cref{sec:aside-push-pull}) at the level of sheaves.
\end{enumerate}

Some excellent progress in describing $\bRW$ already appears in the literature. Using the powerful framework of derived algebraic geometry, Calaque, Haugseng, and Scheimbauer provide a ``semi-classical'' approximation in \cite{CHS2022} in the form of an $(\infty,3)$-category of iterated Lagrangian spans of symplectic derived stacks; this generalizes classical Rozansky-Witten theory from \cite{RW1997} for algebraic targets but does not account for the derived categories in the $2$-morphisms. Brunner, Carqueville, and Roggenkamp obtain a bicategory of ``affine Rozansky-Witten models'' in \cite{BCR2023} by using a categorification of matrix factorizations, which have been well-studied since their introduction in \cite{Eisenbud1980} and have many applications all throughout homological algebra; their construction should embed in the homotopy $2$-category of $\bRW$. The aforementioned works, together with the sequel \cite{BCFR2023} to \cite{BCR2023}, also contain strong dualizability theorems, making the constructions excellent for field theoretic applications.

\subsection{Main results}

Our first result is a general construction of a symmetric monoidal $(\infty,3)$-category $\sPP(\sC;Q^\otimes)$, dependent on two parameters with minimal requirements. We then specialize this result to give another approximation to $\bRW$ and bridge the gap between \cite{CHS2022} and \cite{BCR2023}. 

Fix an $\infty$-category $\sC$ with finite limits and a ``representable'' functor $\sQ^\otimes : \sC^\opp \to \CAlg(\sCat_1)$ -- by this we mean a functor $\sQ^\otimes : \sC^\opp \to \Fun(\bG^\opp \times \bD^\opp, \Spaces)$ (see \Cref{sec:notation} for the notation or \Cref{sec:commutative-monoids} for our model of symmetric monoidal $\infty$-categories) such that there exists another functor $Q^\otimes : \bG^\opp \times \bD^\opp \to \sC$, the representing object, and an equivalence $\sQ^\otimes(-)_{\bullet, \bullet} \simeq \sC(-, Q^\otimes_{\bullet, \bullet})$. We require that for any morphism $f : c \to d$ in $\sC$ the induced symmetric monoidal functor $f^\ast : \sQ^\otimes(d) \to \sQ^\otimes(c)$ has a (lax monoidal) right adjoint.

\begin{mainthm} \label{thm:thmA}
	There exists a a symmetric monoidal $(\infty,3)$-category $\sPP(\sC;Q^\otimes)$ of ``push-pull spans in $\sC$ valued in $Q^\otimes$'' satisfying the following three analogues of properties (a), (b), and (c) above:
	\begin{enumerate}[label=({\alph*}')]
		\item the bottom categorical layer ($0$- and $1$-morphisms) is the $\infty$-category of spans in $\sC$, whose objects are those of $\sC$ and morphisms are given by spans; the symmetric monoidal structure is induced by the product in $\sC$ and by the tensor product of the $\infty$-categories $\sQ^\otimes(-)$;
		\item given two spans $X \leftarrow Z_i \to Y$ ($i = 1, 2$) the top categorical layer ($2$- and $3$-morphisms) is given by the $\infty$-category $\sQ(Z_1 \cap Z_2)$, where $Z_1 \cap Z_2 = Z_1 \times_{X \times Y} Z_2$ and $\sQ(-)$ denotes the underlying $\infty$-category of $\sQ^\otimes(-)$;
		\item the composition operation is given by pullbacks in the bottom layer and by a push-pull formula (see \Cref{sec:aside-push-pull} for more details) in the top layer.
	\end{enumerate}
\end{mainthm}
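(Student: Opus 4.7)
The plan is to model $(\infty,3)$-categories as $3$-fold complete Segal objects and to produce $\sPP(\sC;Q^\otimes)$ as a functor $\bG^\opp \times \bD^\opp \times \bD^\opp \times \bD^\opp \to \Spaces$ satisfying appropriate Segal and completeness conditions in each simplicial direction, where the $\bG^\opp$-variable encodes the symmetric monoidal structure in the usual Segal $\Gamma$-space way, built from the Cartesian product in $\sC$ together with the tensor product structure already present in $\sQ^\otimes$. The construction proceeds by building the three categorical layers successively.

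For the bottom two layers (properties (a$'$) and (b$'$)), I would adapt the Calaque--Haugseng--Scheimbauer construction of iterated spans: a $(k_1,k_2)$-simplex is a functor from a lattice-shaped indexing category (the \emph{CHS twisted-arrow shape}, or an equivalent presentation $\Sigma^{k_1} \times \Sigma^{k_2}$ with $\Sigma$ the appropriate cosimplicial object) into $\sC$ sending each elementary square to a pullback. This produces the $(\infty,2)$-category of iterated spans in $\sC$ whose objects and $1$-morphisms are as required by (a$'$). For the third layer, to each iterated span $F$ I would attach a $k_3$-simplex of $\sQ(W(F))$, where $W(F)$ is the total fiber product of the diagram $F$; specializing to $k_1=k_2=1$ gives $\sQ(Z_1 \times_{X\times Y} Z_2)$, which is property (b$'$).

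The composition in property (c$'$) is controlled by the Segal conditions at the third level. For a composable pair of $2$-morphisms $\alpha_i \in \sQ(Z_i \times_{X \times Y} Z_i')$, the relevant Segal map must exhibit the composite as a push-pull $p_\ast q^\ast(\alpha_1 \otimes \alpha_2)$ along the legs of the pullback square gluing the two spans. The data of $\sQ^\otimes$ provides only the pullback direction $f^\ast$ directly, so to implement the pushforwards I would package the top-layer assignment as a cocartesian fibration over the iterated-spans object whose cocartesian lifts along one family of legs implement $f^\ast$ and whose lifts along the other family implement $f_\ast$; the hypothesis that each $f^\ast$ admits a lax monoidal right adjoint is precisely what allows this $f_\ast$ direction to be constructed.

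The main obstacle will be making these two directions of functoriality interact coherently, which in concrete terms reduces to establishing a Beck--Chevalley base change $f_\ast g^\ast \simeq g'{}^\ast f'_\ast$ across every pullback square appearing in span composition, and doing so compatibly across all simplicial levels simultaneously. I would handle this by first extending $\sQ^\otimes$ to a functor out of the double $\infty$-category of spans in $\sC$, so that both functorialities and the relating isomorphisms arise from a single structure via adjointability, along the lines of Haugseng's treatment of iterated spans. Once these coherences are in place, the Segal and completeness conditions at all three levels follow by componentwise arguments as in \cite{CHS2022}, and the $\bG^\opp$-functoriality automatically upgrades the resulting object to a symmetric monoidal $(\infty,3)$-category realizing (a$'$), (b$'$), (c$'$).
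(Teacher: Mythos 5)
Your proposal diverges from the paper's construction at the crucial step, and the divergence introduces a genuine gap. You propose to \emph{define} the composition of $2$-morphisms operationally, by packaging the top layer as a fibration with two directions of functoriality ($f^\ast$ along one family of legs, $f_\ast$ along the other) and then extending $\sQ^\otimes$ to the double $\infty$-category of spans via adjointability. You correctly identify that this forces you to establish a Beck--Chevalley equivalence $f_\ast g^\ast \simeq g'^\ast f'_\ast$ for every pullback square arising in span composition -- but that is exactly where the plan fails as a proof of the theorem \emph{as stated}. The only hypothesis available is that each $f^\ast$ admits a right adjoint and that $f \mapsto f_\ast$ is functorial (\Cref{ass:assumption}); base change for arbitrary pullback squares in $\sC$ is strictly stronger and is not assumed. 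The universal property you invoke (functors out of the span category correspond to functors with adjointable squares) requires precisely that missing adjointability, so your extension to the double category of spans cannot be constructed from the given data. Relatedly, the claim that the Segal and completeness conditions then ``follow by componentwise arguments as in \cite{CHS2022}'' does not hold up: the Segal condition in the direction of vertical composition of $2$-morphisms is the hard point, and \cite{CHS2022} contains no local systems and no pushforwards, so it offers no template for it.

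The paper avoids Beck--Chevalley entirely by never defining the composite $2$-morphism as the output of an operation. Instead it introduces a new indexing shape $\bsd^l$ (barycentric subdivisions, on which the cartesian condition forces $F(S) \simeq \prod_{i\in S}F(i)$) and a modified local system $\cQ^\otimes$ built from the path $2$-category $\Path(l)$, whose value on the top cell records, for each partial composite, a candidate object $\conj{r}$ \emph{together with} a comparison map $\conj{\vphi} : \pi_{0,l}^\ast\conj{r} \to \bigotimes_j \pi_{j,j+1}^\ast \conj{p}_j$ living entirely in $\sQ$ of the total product. The push-pull condition demands that the adjoint $\conj{\vphi}^\dagger : \conj{r} \to (\pi_{0,l})_\ast(\cdots)$ be an equivalence, so a composite is a \emph{witness} rather than a value, and the Segal condition is verified by showing the space of witnesses over a fixed spine is contractible (\Cref{lmm:pp-segal-2}, \Cref{lmm:inductive-structure-dxl}, \Cref{prp:push-pull-spans-are-segal-2}). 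All coherences -- including the associativity you would otherwise extract from base change -- are carried by the higher cells of $\bd[l]$ and by these contractibility statements. If you want to rescue your route, you would need to add adjointability of all relevant squares as a hypothesis (true for $\QCoh$ on derived stacks, hence adequate for \Cref{thm:thmB}, but not for \Cref{thm:thmA} in its stated generality), or switch to the witness-based encoding.
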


This result can be generalized to obtain an $(\infty, n+1)$-category where the first $n$ levels are given by spans in $\sC$ and the $\infty$-categories of morphisms in the top levels are given by $\sQ^\otimes(-)$. The existence part of \Cref{thm:thmA} follows from \Cref{thm:push-pull-3-cat} and the properties follow directly from the construction. 

We can apply \Cref{thm:thmA} to obtain the promised approximation to $\bRW$. This is done in \Cref{sec:rw-3-cat} after some technical results at the end of \Cref{sec:construction}.

\begin{mainthm} \label{thm:thmB}
	Fix an algebraically closed field $\KK$ of characteristic $0$. Then there is a symmetric monoidal $(\infty, 3)$-category $\sSRW$ whose objects are symplectic derived stacks over $\KK$, $1$-morphisms are Lagrangian spans, and the $\infty$-category of $2$-morphisms between Lagrangian spans $X \leftarrow L_i \to Y$ ($i = 1,2$) is the $\infty$-category of $\ZZ_2$-graded quasicoherent sheaves on the derived intersection $L_1 \cap L_2$.
\end{mainthm}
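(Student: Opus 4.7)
The plan is to specialize \Cref{thm:thmA} to $\sC = \dSt_\KK$ with $\sQ^\otimes$ the functor of $\ZZ_2$-graded quasicoherent sheaves, and then cut the output down to a symplectic/Lagrangian locus using the construction of Calaque-Haugseng-Scheimbauer \cite{CHS2022}.

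First I would verify the hypotheses of \Cref{thm:thmA}. The $\infty$-category $\dSt_\KK$ of derived stacks over $\KK$ admits all finite limits. For $\sQ^\otimes$ I take the assignment $X \mapsto \QCoh_{\ZZ_2}(X)$ sending a derived stack to its symmetric monoidal $\infty$-category of $\ZZ_2$-graded quasicoherent sheaves under tensor product. Representability follows from the universal property of the classifying derived stack of $\ZZ_2$-graded perfect complexes: one extracts a diagram $Q^\otimes : \bG^\opp \times \bD^\opp \to \dSt_\KK$ whose $X$-points Segal-assemble into the symmetric monoidal $\infty$-category $\QCoh_{\ZZ_2}(X)$. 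Existence of right adjoints to pullback along arbitrary morphisms of derived stacks is standard for quasicoherent sheaves and passes to the $\ZZ_2$-graded version, while lax monoidality of the right adjoint is the projection formula.

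\Cref{thm:thmA} then produces a symmetric monoidal $(\infty,3)$-category $\sPP(\dSt_\KK; Q^\otimes)$ whose objects and $1$-morphisms form the $\infty$-category of spans in $\dSt_\KK$, and whose $\infty$-category of $2$-morphisms between spans $X \leftarrow Z_i \to Y$ is $\QCoh_{\ZZ_2}(Z_1 \times_{X \times Y} Z_2)$. Since by definition the derived intersection $Z_1 \cap Z_2$ equals $Z_1 \times_{X \times Y} Z_2$, the description of the $\infty$-category of $2$-morphisms already matches the statement verbatim. It remains to restrict the bottom layer to symplectic stacks and Lagrangian spans.

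The main obstacle is carrying out this restriction in a way that preserves the symmetric monoidal $(\infty,3)$-categorical structure. Let $\Lag(\dSt_\KK)$ denote the symmetric monoidal $(\infty,1)$-category of symplectic derived stacks and Lagrangian spans, extracted as the $1$-categorical truncation of the iterated construction in \cite{CHS2022}, equipped with its canonical forgetful functor to the $\infty$-category of spans $\mathrm{Span}(\dSt_\KK)$. I would define
\begin{equation*}
    \sSRW := \Lag(\dSt_\KK) \times_{\mathrm{Span}(\dSt_\KK)} \sPP(\dSt_\KK; Q^\otimes),
\end{equation*}
the pullback taken in symmetric monoidal $(\infty,3)$-categories, where the right-hand map is the projection from $\sPP$ onto its bottom layer of objects and $1$-morphisms. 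Because fiber products of $(\infty,3)$-categories are computed levelwise in their underlying $3$-fold Segal objects, this pullback inherits the required monoidal structure and leaves the $\infty$-categories of $2$-morphisms unchanged on the Lagrangian locus. The technical results at the end of \Cref{sec:construction} presumably establish the precise Segal functoriality of $\sPP$ and the existence of the projection onto its bottom layer needed to form this pullback, and verifying these compatibilities is the step I expect to require the most care.
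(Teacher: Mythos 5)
Your overall strategy is the paper's: specialize the general push-pull construction to derived stacks with $\ZZ_2$-graded quasicoherent sheaves, then cut the bottom layer down to the symplectic/Lagrangian locus of \cite{CHS2022}. The hypotheses check (finite limits in $\dSt$, representability of $\QCoh^{\ZZ_2}$ via descent and \Cref{lmm:dst-rep}, existence of pushforwards) matches \Cref{sec:background-dag}. But the key step you defer -- forming the restriction -- is where your formulation has a genuine gap. The expression $\Lag(\dSt_\KK) \times_{\mathrm{Span}(\dSt_\KK)} \sPP(\dSt_\KK;Q^\otimes)$ presupposes a ``projection from $\sPP$ onto its bottom layer'' as a map of symmetric monoidal $(\infty,3)$-categories, and no such functor exists: an $(\infty,3)$-category includes into, but does not project onto, its underlying $(\infty,1)$-category (the $2$- and $3$-morphisms have nowhere to go). What does exist is a family of level-wise maps on the underlying $\bG^\opp\times\bD^{3,\opp}$-indexed fibrations -- the paper's $\sigma$, $\lambda$, $\rho$ of \Cref{dfn:horizontal-Lagrangian}, which send a generalized span in degree $(t,k,n,l)$ to its tuple of objects, horizontal slices, and transversal slices. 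The paper therefore proceeds differently in detail: it first forms the \emph{relative} push-pull fibration $\what{\PP}(U;Q^{\ZZ_2,\otimes})$ by pulling back $\what{\PP}(\dSt;Q^{\ZZ_2,\otimes})$ along $U_\ast:\what{\GS}(\PreSymp)^{2,1,\rmcart}\to\what{\GS}(\dSt)^{2,1,\rmcart}$ (this is essential because a Lagrangian span is a span in $\dSt$ with extra \emph{data} -- forms and isotropy homotopies -- not a property, so it cannot be cut out inside $\mathrm{Span}(\dSt)$), then passes to the full subcategory of horizontally Lagrangian spans, and finally verifies that this subcategory is stable under the $\bG^\opp\times\bD^{3,\opp}$-action and that its Segal maps are pulled back from those of $\what{\PP}(U;Q^{\ZZ_2,\otimes})$. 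Your pullback, correctly interpreted level-wise against $(\what{\Lag}_k)^{\times t(n+1)(l+1)}$, is this construction; but as written it is not well-formed, and the stability and Segal checks are precisely the content you would need to supply.

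A second omission: the $(\infty,3)$-category produced by \Cref{thm:push-pull-3-cat} has $3$-morphisms between $P,Q\in\QCoh^{\ZZ_2}(L_1\cap L_2)$ given by pairs $(W,\alpha)$ with $W\to L_1\cap L_2$ and $\alpha: i^\ast P\to i^\ast Q$, so the hom-$(\infty,1)$-categories of $2$-morphisms are a span-enlarged version of $\QCoh^{\ZZ_2}(L_1\cap L_2)$, not that category on the nose. To match the statement of \Cref{thm:thmB} one must additionally impose the ``standard maps of sheaves'' condition of \Cref{dfn:horizontal-Lagrangian}(4), restricting to constant transversal spans; this restriction also has to be checked to be stable and Segal, which the paper does alongside the Lagrangian conditions.
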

Note that $\sSRW$ depends on the field $\KK$ chosen as the base for our derived stacks. We call $\sSRW$ the \emph{$(\infty,3)$-category of commutative Rozansky-Witten theories}. Before adding the symplectic structure, if we restrict to \emph{$1$-affine} stacks (see \cite{Stefanich2021}) then the resulting hom $(\infty,2)$-categories from $X$ to $Y$ resemble the $(\infty,2)$-categories $\mathrm{2}\mathcal{Q}\mathrm{Coh}(X \times Y)$ constructed by Stefanich in \cite{Stefanich2021}.

The reasoning behind the name comes from the discrepancy between predictions for the behavior of the hom categories in $\bRW$ and their actual behavior in $\sSRW$. Concretely, fix $X = \KK^n$ and a polynomial map $p : X \to \KK$. Then the derivative $\partial p$ induces a Lagrangian submanifold $L_{p} \sub \bT^\ast X$ of the cotangent bundle of $X$, the \emph{graph of $\partial p$}, and it is expected that the $2$-morphisms in $\bRW$ between $L_p$ and $L_0$, the graph of the zero polynomial, correspond to \emph{matrix factorizations of $p$}:
\begin{equation} \label{eqn:mf-prediction}
	\Hom_{\bRW(X)}(L_p, L_0) \simeq \mathrm{MF}(\KK[x_1, \dotsc, x_n]; p).
\end{equation}
Here $\bRW(X) := \Hom_{\bRW}(\ast, \bT^\ast X)$, where the cotangent bundle has its canonical symplectic form. From results of Dyckerhoff (see \cite{Dyckerhoff2011}) we know that in certain cases the right-hand side of \Cref{eqn:mf-prediction} is equivalent to a category of $\ZZ_2$-graded dg-modules over a \emph{non-commutative} dg-algebra. For example, in the very explicit case of $p(x) = x^{n+1}$ ($n \geq 2$) we have a dg-algebra $(A,d)$ defined by
\begin{equation*}
	A := \KK[x;\theta, \partial_\theta]/(\partial_\theta \theta = 1, \theta^2 = \partial_\theta^2 = 0), \quad dx = 0, d\theta = x, d\partial_\theta = (n+1)x^n
\end{equation*}
with $x$ in even degree and $\theta, \partial_\theta$ in odd degree; then
\begin{equation*}
	\mathrm{MF}\left( \KK[x]; \frac{x^{n+1}}{n+1} \right) \simeq \Mod^{\ZZ_2}_A.
\end{equation*}
On the other hand, the $\infty$-categories of $2$-morphisms in $\sSRW$ are modeled by $\ZZ_2$-graded dg-modules over \emph{commutative} dg-algebras: using the same example,
\begin{equation*}
	\Hom_{\sSRW(X)}(L_p, L_0) \simeq \QCoh^{\ZZ_2}(L_p \cap L_0)
\end{equation*}
where $L_p \cap L_0$ is the derived critical locus of $p$, which can be explicitly calculated to be the spectrum of the $\ZZ_2$-graded dg-algebra $(B, d)$ defined by
\begin{equation*}
	B = \KK[x;\veps], \quad dx = 0, d\veps = x^n
\end{equation*}
with $x$ in even degree and $\veps$ in odd degree; then
\begin{equation*}
	\QCoh^{\ZZ_2}(L_p \cap L_0) \simeq \Mod^{\ZZ_2}_{B}.
\end{equation*}
One discrepancy between $\bRW$ and $\sSRW$ thus arises at least at the level of the extra non-commutative variables introduced by the matrix factorizations. We note that there is a map $B \to A$ inducing a functor $\Mod^{\ZZ_2}_A \to \Mod^{\ZZ_2}_B$, which suggests that we can pass from the $2$-morphisms in the non-commutative picture to the $2$-morphisms in the commutative picture -- see \Cref{sec:connections-mf} for a generalization of this idea.

At the moment it is unclear how to overcome these differences, but we expect that it amounts to a careful manipulation of the geometric data in the local systems from \Cref{thm:thmA}. In particular, should there be a functorial way to encode all the desired geometric data that appears in the definitions of \cite[Sections 3, 4]{KR2010} then our framework would immediately (or maybe with some minor modifications) allow us to get a better approximation of $\bRW$.

\subsubsection{Aside: the push-pull formula and generalized spans} \label{sec:aside-push-pull}

Assuming their well-definedness, the composition operations in $\sPP(\sC;Q^\otimes)$ and in $\sSRW$ are clear for at least some of the morphisms. The composition of two $1$-morphisms represented by spans in $\sC$ is given by pulling back along the common vertex:
\begin{equation*}
	\left(
	\begin{tikzcd}[column sep = tiny, row sep = small]
		& L_2 \ar[dl] \ar[dr] & \\
		Y & & Z 
	\end{tikzcd}
	\right) \circ \left(
	\begin{tikzcd}[column sep = tiny, row sep = small]
		& L_1 \ar[dl] \ar[dr] & \\
		X & & Y 
	\end{tikzcd}
	\right)
	=
	\begin{tikzcd}[column sep = tiny, row sep = small]
		& & L_1 \times_{Y} L_2 \ar[dl] \ar[dr] & & \\
		& L_1 \ar[dl] \ar[dr] & & L_2 \ar[dl] \ar[dr] & \\
		X & & Y & & Z 
	\end{tikzcd}
\end{equation*}
The composition of two $3$-morphisms represented by arrows $f, g$ in some $\infty$-categories $\sQ^\otimes(W_1)$ and $\sQ^\otimes(W_2)$ is given by pulling back those morphisms along the canonical maps and then composing them as arrows within some $\sQ^\otimes(W)$.

The composition of $2$-morphisms is trickier. Let's say we have two objects $X, Y \in \sPP(\sC;Q^\otimes)$, three $1$-morphisms $Z_1, Z_2, Z_3 \in \Hom_{\sPP(\sC;Q^\otimes)}(X,Y)$ seen as spans from $X$ to $Y$, and two $2$-morphisms $\cM \in \sQ^\otimes(Z_1 \cap Z_2)$ and $\cN \in \sQ^\otimes(Z_2 \cap Z_3)$. This information can all be summarized in the diagram in \Cref{fig:gen-span-comp}, where $Z_{I} = \bigcap_{i \in I} Z_i$ is the intersection ($=$ appropriate pullback) of the $Z_i$s.
\begin{figure}[h!]
	\begin{tikzpicture}[scale = 1.5]
		\node (B0) at (-2,0) {$Z_1$};
		\node (B1) at (2,0) {$Z_2$};
		\node (B2) at (0,{2*sqrt(3)}) {$Z_3$};
		\node (B01) at (0,0) {$Z_{1,2}$};
		\node (B12) at (1,{sqrt(3)}) {$Z_{2,3}$};
		\node (B02) at (-1,{1*sqrt(3)}) {$Z_{1,3}$};
		\node (B012) at (0,{2*sqrt(3)/3}) {$Z_{1,2,3}$};
		\node at (0,-0.4) {$\cM$};
		\node at (1.4,{sqrt(3)+0.2}) {$\cN$};
		\node at (-1.4,{sqrt(3)+0.2}) {$?$};
		\draw[->] (B01) -- (B0);
		\draw[->] (B01) -- (B1);
		\draw[->] (B12) -- (B1);
		\draw[->] (B12) -- (B2);
		\draw[->] (B02) -- (B0);
		\draw[->] (B02) -- (B2);
		\draw[->] (B012) -- (B0);
		\draw[->] (B012) -- (B1);
		\draw[->] (B012) -- (B2);
		\draw[->] (B012) -- node[left] {$f$} (B01);
		\draw[->] (B012) -- node[above] {$g$} (B12);
		\draw[->] (B012) -- node[midway,above] {$h$} (B02);
	\end{tikzpicture}
	\caption{A ``generalized span'' encoding the push-pull composition of $2$-morphisms.}
	\label{fig:gen-span-comp}
\end{figure}

The diagram, which resembles the barycentric subdivision of a standard $2$-simplex, is what we call a ``generalized span''. The composition $\cN \circ \cM$ should occupy the spot taken by the question mark, and in fact we want it to be given by a \emph{push-pull formula}
\begin{equation} \label{eqn:desired-push-pull-formula}
	\cN \circ \cM = h_\ast (f^\ast \cM \otimes g^\ast \cN) \in \sQ^\otimes(Z_1 \cap Z_3).
\end{equation}
Obviously we need to assume that $\sQ^\otimes$ has pushforwards in order for $h_\ast$ to exist. It turns out that this assumption is enough if we also have the extra data contained in the generalized spans. While the formula seems easy at first glance, ensuring that it behaves coherently and that it presents the composition of $2$-morphisms in $\sPP(\sC;Q^\otimes)$ will take a non-trivial amount of combinatorics. 

\subsubsection{Connections to matrix factorizations} \label{sec:connections-mf}

Finally we note that, despite the discrepancy between the categories of $2$-morphisms described above in the discussion following \Cref{thm:thmB}, there is a strong connection between the $2$-category $\bMF$\footnote{Originally defined for matrix factorizations of polynomials over $\mathbb{C}$, this $2$-category and the results concerning it can be generalized to any algebraically closed field $\KK$ of characteristic $0$. We leave this implicit in the discussion.} of truncated affine Rozansky-Witten models studied by Brunner, Carqueville, Fragknos, and Roggenkamp in \cite{BCR2023,BCFR2023} into the homotopy $2$-category of $\sSRW$ which precisely mimics the algebraic approach described in \cite[Section 1.3]{KR2010}. $\bMF$ is expected to correctly model Rozansky-Witten field theories in the case of the affine cotangent bundles $\bT^\ast \mathbb{C}^n$, so the following result gives a positive indication that $\sSRW$ is not far off from being a good model either:
\begin{thm*}[Theorem A in \cite{Riva2024}]
	There is a symmetric monoidal functor $\mf{e} : \bMF \to \bh_2 \sSRW$ whose essential image sits in the subcategory of $\bh_2 \sSRW$ spanned by the symplectic derived stacks of the form $\bT^\ast \KK^n$, $n \geq 0$.
\end{thm*}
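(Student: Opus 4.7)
The plan is to construct $\mf{e}$ stage by stage, using the commutative/non-commutative dg-algebra comparison discussed after \Cref{thm:thmB} as the central technical input. On objects, set $\mf{e}(\KK^n, W) := \bT^\ast \KK^n$, discarding the potential. On $1$-morphisms, assign to a matrix factorization $M$ of $W(y) - V(x)$ over $\KK[x,y]$ a Lagrangian span from $\bT^\ast \KK^n$ to $\bT^\ast \KK^m$ whose underlying Lagrangian is (a derived refinement of) $L_{W-V}$, the graph of $d(W-V)$; in the homotopy $2$-category $\bh_2 \sSRW$ only the equivalence class of this span survives, and the finer information of $M$ is recovered at the $2$-morphism level. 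On $2$-morphisms, use the functor $\Mod^{\ZZ_2}_A \to \Mod^{\ZZ_2}_B$ induced by the dg-algebra map $B \to A$ to send a morphism of matrix factorizations to a morphism in $\QCoh^{\ZZ_2}(L_1 \cap L_2)$, the hom $\infty$-category guaranteed by \Cref{thm:thmB}.

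Next I would verify the functoriality axioms of a pseudofunctor of $2$-categories. Horizontal composition in $\bMF$ is the tensor product $M \otimes_{\KK[y]} N$ of matrix factorizations, producing a matrix factorization of the summed potential $U - V$; horizontal composition in $\bh_2 \sSRW$ is pullback of Lagrangian spans along their common vertex (\Cref{thm:thmA}). The derived fiber product $L_{W-V} \times^h_{\bT^\ast \KK^m} L_{U-W}$ is a derived Lagrangian naturally related to $L_{U-V}$ via the derived critical locus of $W$, and one must produce explicit comparison $2$-isomorphisms realizing this matching. Vertical composition is straightforward since the commutativization functor $\Mod^{\ZZ_2}_A \to \Mod^{\ZZ_2}_B$ is an ordinary functor of module categories. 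The symmetric monoidal structure follows from the naturality of $\sPP(-, Q^\otimes)$ in \Cref{thm:thmA}: tensor product of polynomial rings with sum of potentials on the $\bMF$ side matches the product $\odot$ on the $\sSRW$ side.

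The main obstacle will be coherence at the $2$-morphism level: the commutativization $\Mod^{\ZZ_2}_A \to \Mod^{\ZZ_2}_B$ must intertwine the tensor product of matrix factorizations (the horizontal composition in $\bMF$) with the push-pull formula $h_\ast(f^\ast \cM \otimes g^\ast \cN)$ of \Cref{sec:aside-push-pull}. This is a Koszul-duality-flavored compatibility between the commutative and non-commutative presentations of the derived critical locus, and verifying it across affine patches is the substantive content of the result. A secondary subtlety is extracting a well-defined pseudofunctor of $2$-categories from the essentially $(\infty,3)$-categorical output of \Cref{thm:thmB}; truncating to $\bh_2 \sSRW$ fortunately collapses most of the higher coherences, leaving only $1$- and $2$-morphism compositions to check explicitly.
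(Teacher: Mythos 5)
This paper only states the result and defers its proof entirely to the sequel \cite{Riva2024}, so there is no in-paper argument to measure your proposal against; judged on its own terms, though, it has a structural gap that cannot be patched without redesigning the functor. The problem is the categorical level at which the matrix factorization data enters. In $\bh_2 \sSRW$ a $2$-morphism between Lagrangian spans $L_1, L_2$ is an \emph{isomorphism class of objects} of $\QCoh^{\ZZ_2}(L_1 \cap L_2)$; the morphisms of that $\infty$-category are the $3$-morphisms of $\sSRW$ and are quotiented away in the homotopy $2$-category. Your assignment on $2$-morphisms --- sending a map of matrix factorizations to ``a morphism in $\QCoh^{\ZZ_2}(L_1 \cap L_2)$'' via $\Mod^{\ZZ_2}_A \to \Mod^{\ZZ_2}_B$ --- therefore lands one categorical level too low. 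Relatedly, if $\mf{e}(M)$ is the span $L_{W-V}$ for \emph{every} matrix factorization $M$ of $W-V$, then $\mf{e}$ collapses the entire hom-category $\mathrm{hMF}(W-V)$ onto the endomorphisms of a single $1$-morphism: $\mf{e}(\id_M)$ is forced to be the identity $2$-cell $\nabla_\ast \cO$ for every $M$, and each $\mf{e}(\alpha)$ is constrained by vertical functoriality in $\alpha$, so the ``finer information of $M$'' is not recovered at the $2$-morphism level --- it is destroyed. Note that the commutativization $\Mod^{\ZZ_2}_A \to \Mod^{\ZZ_2}_B$ from the discussion after \Cref{thm:thmB} naturally turns $M$ itself into an \emph{object} of $\QCoh^{\ZZ_2}$ of the derived critical locus of $W-V$, i.e.\ of the intersection of the correspondences determined by $\Gamma_{d(W-V)}$ and the zero section; in other words $M$ wants to live as a $2$-morphism of $\sSRW$ between spans depending only on $V$ and $W$, not as a $1$-morphism. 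Any construction of $\mf{e}$ must negotiate this degree shift explicitly, and yours does not.

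Second, the comparison constraints you defer are the substance of the theorem rather than bookkeeping. The composite of two graph correspondences over $\bT^\ast \KK^m$ has apex $\KK^{n+p} \times (\Gamma_{dW} \times_{\bT^\ast \KK^m} \Gamma_{dW})$, which differs from $\KK^{n+p}$ by the derived self-intersection of a Lagrangian, an exterior algebra on $m$ odd generators with zero differential. These are not equivalent spans, so the compositor must be an \emph{invertible kernel} in $\QCoh^{\ZZ_2}$ of their intersection with respect to the push-pull composition --- a Kn\"orrer-periodicity-type statement, not an ``explicit comparison.'' The same issue arises for units, which you do not address: the stabilized diagonal matrix factorization must be matched, up to an invertible kernel, with the diagonal span $\bT^\ast \KK^n \leftarrow \bT^\ast \KK^n \to \bT^\ast \KK^n$, whose apex is not of the form $\KK^N$ at all. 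Until the level mismatch is resolved and these invertible comparison kernels are actually produced and shown coherent, the proposal does not yield the functor $\mf{e}$.
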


The details appear in full in the sequel \cite{Riva2024} to this paper. It is important to note that the existence of $\mf{e}$, together with the results of \cite{BCR2023} about the dualizable objects in $\bMF$, implies that there are many non-trivial fully extended oriented $2$-dimensional field theories valued in $\bh_2 \sSRW$. In fact, one can find the computations of those field theories in \cite{Riva2024}.

\subsection{Summary of the paper}

The whole paper is written in the language of $\infty$-categories. \Cref{sec:appendix} contains the results we need in terms of higher categories, monoidal structures, miscellaneous techniques involving (co)ends and (co)cartesian fibrations, and other abstract machinery. We use \emph{complete $n$-fold Segal spaces} as our model for $(\infty, n)$-categories (see \Cref{sec:higher-cats} for a quick review or \cite{Barwick2005} for a more complete treatment), and in particular we use complete Segal spaces to model $\infty$-categories. It has been shown that all known models of $(\infty,n)$-categories -- including complete $n$-fold Segal spaces -- have the same homotopy theory; see \cite{RV2022} for $n = 1$ or \cite{BR2013, BR2020, BSP2021} for $n \geq 1$. In particular this implies that our main results do not depend on the choice of model we made.

The bulk of the construction is carried out in \Cref{sec:construction}. We start with an $\infty$-category $\sC$ with finite limits and a symmetric monoidal category object $Q^\otimes$ valued in $\sC$ that satisfies a condition on the existence of the pushforwards determined by it (see \Cref{ass:assumption}). We modify Haugseng's $(\infty,n)$-category of iterated spans in $\sC$ from \cite{Haugseng2018} to build a cocartesian fibration over $\bG^\opp \times \bD^{m+n, \opp}$ which describes \emph{generalized} iterated spans with local systems and a symmetric monoidal structure. The first $m$ categorical levels are spans with local systems exactly as in \cite{Haugseng2018}: $1$-morphisms are spans in $\sC$, $2$-morphisms are spans between spans in $\sC$, $3$-morphisms are spans between spans between spans in $\sC$, and so on. We take a different approach for the monoidal structure. The last $n$ categorical levels contain diagrams which are modeled after barycentric subdivisions of a standard simplex and serve to setup the framework for the push-pull formulas to hold. The local systems on each generalized span associate specific algebraic objects at each categorical level. Creating these special local systems and proving that they assemble to a higher category is the content of \Cref{sec:enforcing-push-pull}. In the end we'll have obtained a symmetric monoidal $(\infty,3)$-category $\sPP(\sC;Q^\otimes)$ of generalized spans with ``push-pull'' local systems. We then construct a relative version of $\sPP(\sC;Q^\otimes)$ dependent on a symmetric monoidal functor $U : \sD^\odot \to \sC^\times$ which allows us to talk about generalized $\odot$-cartesian spans in $\sD$ such that their image in $\sC$ under $U$ has a push-pull local system.

In \Cref{sec:applications} we provide some applications of the results of \Cref{sec:construction}. First we review enough derived algebraic geometry to talk about the $\infty$-categories $\dSt$ of derived stacks and $\PreSymp$ of presymplectic derived stacks together with the functor $\QCoh^{\ZZ_2} : \dSt^\opp \to \CAlg(\sCat_1)$ of $\ZZ_2$-graded quasicoherent sheaves. Then we turn the crank built in \Cref{sec:construction} to obtain $\sSRW$, our approximation to $\bRW$.

\subsection{Notation} \label{sec:notation}

For the reader's convenience we provide a list of objects that are commonly found in the literature or repeatedly used in this paper, together with some conventions that we will try to stick to.

Some notation concerning categories:
\begin{itemize}
	\item $\bD$ is a skeleton of the category of linearly ordered finite sets. Its objects are the sets $[n] = \{0 \leq 1 \leq \dotsb \leq n\}$, for $n$ a non-negative integer, and morphisms are order-preserving functions. A moprhism $\vphi$ in $\bD$ is \emph{inert} if $\vphi(i) = \vphi(0) + i$ for any $i$ in the domain of $\vphi$. The special inert morphisms
	\begin{equation*}
		\rho_i : [1] \to [n], \quad (0,1) \mapsto (i, i+1)
	\end{equation*}
	are called the \emph{Segal maps}. If $\vphi$ is a map in $\bD$ then we call the corresponding map in $\bD^\opp$ a \emph{face map} if $\vphi$ is injective and a \emph{degeneracy map} if $\vphi$ is surjective. Every map in $\bD^\opp$ can be factored uniquely as a face map followed by a degeneracy map. 
	\item $\bG^\opp$ is a skeleton of the category of pointed finite sets (i.e. $\bG$ is a skeleton of the \emph{opposite} of pointed finite sets). Its objects are the pointed sets $\brak{n} = \{\ast, 1, \dotsc, n\}$, for $n$ a non-negative integer, and a morphism $\brak{m} \to \brak{n}$ of pointed sets. This category has two symmetric monoidal structures, induced respectively by the wedge product $\vee$ and the smash product $\wedge$ of finite pointed sets. An \emph{inert} morphism $\vphi$ is one such that $\vnorm{\vphi^\inv(j)} = 1$ for any $j \neq \ast$ in the codomain of $\vphi$; that is, an inert morphism $\vphi$ is injective on $\dom(\vphi) \setminus \vphi^\inv(\ast)$. The special inert morphisms
	\begin{equation*}
		\tau_i : \brak{n} \to \brak{1}, \quad k \mapsto
		\begin{cases}
			1 & k = i, \\
			\ast & k \neq i
		\end{cases}
	\end{equation*}
	are called the \emph{Segal maps}.
	\item $\sCat_n$ is the $\infty$-category of $(\infty, n)$-categories modeled as complete $n$-fold Segal spaces -- see \Cref{dfn:complete-segal-space}.
	\begin{itemize}[label=$\circledast$]
		\item $\Spaces \simeq \sCat_0 \sub \sCat_1$ is the $\infty$-category spaces. The ordinary category $\Set$ of sets embeds into $\Spaces$ as the discrete spaces.
		\item $\bCat_1 \sub \sCat_1$ is the full subcategory containing the nerves of ordinary categories. When using an ordinary category we will implicitly pass to its nerve and consider it as an $\infty$-category. We stick to a general notational distinction: script calligraphic letters ($\ms{A}$, $\ms{B}$, $\ms{C}$, etcetera) will denote $\infty$-categories and bold letters ($\mathbf{A}$, $\mathbf{B}$, $\mathbf{C}$, etcetera) will denote ordinary categories.
		\item $\bCat_2 \sub \sCat_2$ is the category of categories enriched in small categories, embedded into $(\infty,2)$-categories via a binerve -- see \Cref{sec:path-category}.
	\end{itemize}
	\item If $x \in \sC$ is an object we will use $\sC_{/x}$ and $\sC_{x/}$ to denote the overcategory of maps into $x$ and the undercategory of maps from $x$, respectively.
	\item We will reserve $\Hom_\sC(-,-)$ for the space of morphisms in an $\infty$-category $\sC$; in particular, $\Map := \Hom_{\sCat_1}$. As usual, we denote by $\Fun(-,-)$ the $\infty$-category of functors between two $\infty$-categories and by $\Nat = \Hom_{\Fun(-,-)}$ the space of natural transformations between such functors. 
	\item $\LKan$ and $\RKan$ denote left and right Kan extensions -- see \cite[Section 7.3]{kerodon} or \Cref{sec:ends} for more details and formulas.
	\item If $T \in \Spaces$ and $c \in \sC$ then we denote by
	\begin{equation*}
		T \pitchfork c := \lim \left( T \xlra{\mathrm{const}_c} \sC \right), \quad T \otimes c := \colim \left( T \xlra{\mathrm{const}_c} \sC \right)
	\end{equation*}
	the cotensoring and tensoring of $c$ by $T$ whenever these (co)limits exist. They satisfy
	\begin{equation*}
		\sC(T \otimes c, d) \simeq \Spaces(T, \sC(c, d)) \simeq \sC(c, T \pitchfork d)
	\end{equation*}
	for $c, d \in \sC$ whenever these (co)limits exist.
	\item The end and coend of a functor $F : \sA^\opp \times \sA \to \sC$ (see \Cref{sec:ends} for a definition) are denoted by
	\begin{equation*}
		\int_\sA F \quad \text{and} \quad \int^\sA F
	\end{equation*}
	respectively.
\end{itemize}

Throughout the course of the paper we define or recall the definition of:
\begin{itemize}
	\item span-like posets:
	\begin{itemize}[label=$\circledast$]
		\item $\bS^k$, $\bL^k$ (\Cref{dfn:poset-cats}): the posets used to parametrize cartesian iterated spans;
		\item $\bsd^l$, $\bvx^l$ (\Cref{dfn:poset-cats}): the posets used to parametrize cartesian iterated barycentric subdivisions of standard simplices;
	\end{itemize}
	\item span fibrations and related categories:
	\begin{itemize}[label=$\circledast$]
		\item $\GS(\sC)^{m,n}$ (\Cref{dfn:gen-span-functor}): generalized spans in $\sC$;
		\item $\GS(\sC)^{m,n,\rmcart}$ (\Cref{dfn:gen-cartesian-span}): generalized cartesian spans in $\sC$;
		\item $\GS(\sC;\mc{P})^{m,n}$ (\Cref{dfn:fib-span-with-local-system}): generalized cartesian spans in $\sC$ with local systems induced by a  functor $\mc{P} : \bD^{m + n, \opp} \to \CAlg(\sC)$;
		\item $\PP(\sC;Q^\otimes)$ (\Cref{dfn:push-pull}), $\sPP(\sC;Q^\otimes)$ (\Cref{dfn:push-pull-3-cat}): push-pull spans in $\sC$ valued in a symmetric monoidal category object $Q^\otimes$;
		\item $\PP(U : \sD \to \sC; Q^\otimes)$ (\Cref{dfn:relative-push-pull-fibration}), $\sPP(U : \sD \to \sC; Q^\otimes)$ (\Cref{dfn:relative-push-pull-spans}): push-pull spans in $\sD$ valued in a symmetric monoidal category object $Q^\otimes$ in $\sC$;
		\item $\SRW(\PreSymp;Q^\otimes)$ (\Cref{dfn:horizontal-Lagrangian}), $\sSRW$ (\Cref{crl:srw}): algebraic Rozansky-Witten $(\infty,3)$-category;
	\end{itemize}
	\item path category for push-pull local systems:
	\begin{itemize}[label=$\circledast$]
		\item $\Path(l)$ (\Cref{dfn:path-cat}), $\bd[l]$ (\Cref{dfn:nerve-path-cat}): the path category of the poset $[l]$ and its bisimplicial nerve;
		\item $\bd X$ (\Cref{dfn:nerve-path-with-labels}): the path category labelled by a bisimplicial object $X$;
		\item $\cQ^\otimes$ (\Cref{eqn:formula-for-cQ}): a heavy modification of a symmetric monoidal category object $Q^\otimes$ in $\sC$;
	\end{itemize}
	\item basics of derived symplectic algebraic geometry (\Cref{sec:derived-stacks-def}, \Cref{sec:symplectic-dag}):
	\begin{itemize}[label=$\circledast$] 
		\item $\Mod_\KK$, $\sCAlg_\KK$: derived modules and derived commutative algebras;
		\item $\dAff$, $\dSt$: derived affine stacks and derived stacks;
		\item $\Mod_A$, $\Mod_A^{\ZZ_2}$: derived ($\ZZ$-graded) modules and $\ZZ_2$-graded modules over a derived algebra $A$;
		\item $\QCoh(X)$, $\QCoh^{\ZZ_2}(X)$, $\cO_X$: quasicoherent sheaves and $\ZZ_2$-graded sheaves on a derived stack $X$, and the structure sheaf of $X$;
		\item $\LL_X$, $\TT_X$: cotangent and tangent stack on $X$;
		\item $\cA^p(X)$, $\cA^{p, \rmcl}(X)$: $p$-forms and closed $p$-forms on a derived stack $X$;
		\item $\PreSymp$ : presymplectic derived stacks.
	\end{itemize}
\end{itemize}

And finally some useful conventions:
\begin{itemize}
	\item Given any object $A$, repeated superscripts $(\dotsb((A^{r_1})^{r_2})\dotsb)^{r_k}$ will be turned into a multi-superscript $A^{r_1, r_2, \dotsc, r_k}$ with entries separated by commas; similarly for subscripts. We will only use parentheses to distinguish between possibly ambiguous meanings of the repeated super/subscripts. We also abbreviate $(A^{\times n})^\opp$ as $A^{n, \opp}$.
	\item When dealing with commutative monoids $M^\otimes : \bG^\opp \to \sC$ in a cartesian monoidal $\infty$-category $\sC$ we reserve the superscript $\otimes$ for the functor out of $\bG^\opp$, which encodes the totality of the monoidal structure, and remove the superscript when we are interested in the underlying object of $M^\otimes$, namely $M = M^\otimes_{1}$.
	\item If $F : \sD \to \sCat_1$ is a functor, the domain $\infty$-category of a (cartesian or cocartesian) fibration classified by it will be denoted by $\what{F}$ -- see \Cref{sec:functors-vs-fibrations}.
	\item We use $\simeq$ for equivalence within an $\infty$-category and $\cong$ for isomorphism within an ordinary category, and we try to reserve the equality symbol $=$ for definitional equalities.
\end{itemize}

\subsection{Acknowledgements}

This work was greatly improved by many helpful conversations with friends and colleagues. Amongst them, we would like to acknowledge Chris Schommer-Pries and Stephan Stolz for being patient, insightful, and supportive mentors, and for suggesting \cite{CHS2022} as a starting point for this project; Nils Carqueville for providing early feedback on the strategy; Damien Calaque, Rune Haugseng, and Claudia Scheimbauer for writing \cite{CHS2022} and greatly influencing the writing style of this work; Adam Dauser for providing feedback, alternative constructions, and ``reality checks'' during the middle and last stages of writing the paper; Connor Malin and Mark Behrens for graciously offering their intuition on various homotopy-theoretic intricacies; Justin Hilburn for helpful comments regarding the connections with Rozansky-Witten theories; and all the speakers and participants of the conference ``Higher Invariants in Functorial Field Theory'' that took place in Regensburg in the summer of 2023, who provided a stimulating environment within which the crucial steps of \Cref{sec:enforcing-push-pull} were carried out.

\section{Construction of the $(\infty, 3)$-category of push pull spans} \label{sec:construction}

Throughout the whole section we assume that $\sC$ is an $\infty$-category with finite limits. We will construct $\sPP(\sC;Q^\otimes)$ in steps. First we define a higher category of generalized cartesian spans with local systems similar to the one in \cite{Haugseng2018} and then we craft the local system that we will use to describe push-pull spans. Finally we put everything together and force the push-pull formulas to hold, obtaining an $(\infty,3)$-category. All of this will be done coherently with respect to the natural cartesian monoidal structure $\sC^\times$ on $\sC$, which essentially goes along for the ride, so that in the end our $(\infty,3)$-category will have a symmetric monoidal structure as well. We conclude by describing relative push-pull spans associated to a symmetric monoidal functor $U : \sD^\odot \to \sC^\times$ for $\sD^\odot$ another symmetric monoidal $\infty$-category.

\subsection{The generalized span fibration} \label{sec:gen-spans}

In this section we will obtain a cocartesian fibration of ``generalized iterated spans'' in $\sC$.

\subsubsection{Indexing categories}

Recall that an \emph{inert morphism} in $\bD$ is the inclusion of a connected subinterval, i.e. a function $\vphi : [m] \to [n]$ such that $\vphi(i) = \vphi(0) + i$ for $0 \leq i \leq m$.

\begin{dfn} \label{dfn:poset-cats}
	Let $\bD_{\rmint}$ and $\bD_{\rminj}$ denote the subcategories of $\bD$ containing the inert and the injective maps, respectively. Then define
	\begin{equation*}
		\bS^n := (\bD_{\rmint, /[n]})^\opp, \quad \bsd^n := (\bD_{\rminj, /[n]})^\opp.
	\end{equation*}
	There are two special full subcategories
	\begin{equation*}
		\bL^n \sub \bS^n, \quad \bvx^n \sub \bsd^n
	\end{equation*}
	spanned by the inert maps $(\vphi : [i] \to [n]) \in \bS^n$ with source $[i] = [0], [1]$ and the injective maps $(\psi : [i] \to [n]) \in \bsd^n$ with source $[i] = [0]$, respectively.
\end{dfn}

\begin{prp}
	The associations $[n] \mapsto \bS^n$ and $[n] \mapsto \bsd^n$ are functorial in $\bD$.
\end{prp}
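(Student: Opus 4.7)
The plan is to construct $\bS^\bullet, \bsd^\bullet : \bD \to \bCat_1$ by directly specifying their action on morphisms and verifying the functorial identities, exploiting the fact that both $\bS^n$ and $\bsd^n$ are posets. The latter holds because inert and injective maps are monomorphisms in $\bD$, so the overcategories $\bD_{\rmint,/[n]}$ and $\bD_{\rminj,/[n]}$ have at most one arrow between any two objects. Concretely, an inert map $g : [k] \to [n]$ is determined by the pair $(a,b) = (g(0), g(k))$ of its endpoints, so objects of $\bS^n$ are identified with pairs $0 \leq a \leq b \leq n$ with $(a,b) \leq (a',b')$ in $\bS^n$ iff $a \leq a' \leq b' \leq b$. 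An injective map $[k] \to [n]$ is determined by its image, so objects of $\bsd^n$ are identified with nonempty subsets $S \sub [n]$, ordered by reverse inclusion.

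Next I would define $f_\ast$ for a morphism $f : [n] \to [m]$ in $\bD$. On $\bS^n$, send $(a,b)$ to $(f(a), f(b))$, which lies in $\bS^m$ because $f$ is monotone, and the same monotonicity shows that this assignment preserves the partial order on $\bS^\bullet$. On $\bsd^n$, send $S$ to the direct image $f(S)$, which is a nonempty subset of $[m]$, and this preserves reverse inclusion because set-images preserve inclusions. Since the target posets have at most one arrow between any two objects, these order-preserving assignments on objects canonically extend to functors.

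Functoriality in $f$ then reduces to the identities $(g \circ f)(a) = g(f(a))$ and $(g \circ f)(S) = g(f(S))$, together with the obvious fact that $\id_{[n],\ast}$ is the identity. I do not expect any essential obstacle: the proposition is routine combinatorial bookkeeping once the posets $\bS^n$ and $\bsd^n$ are concretely described. As a conceptual sanity check, one can reinterpret $f_\ast$ as post-composition with $f$ followed by extracting the inert (respectively injective) part of the $(\text{active}, \text{inert})$ (respectively $(\text{surjective}, \text{injective})$) factorization in $\bD$, but this machinery is not needed for the verification.
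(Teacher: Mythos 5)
Your proof is correct and follows essentially the same route as the paper: define the pushforward on objects via $(a,b)\mapsto(f(a),f(b))$ for inert maps and $S\mapsto f(S)$ for injective ones, then check order-preservation and functoriality. Your observation that the overcategories are posets (since inert and injective maps are monomorphisms) neatly automates the morphism-level verifications that the paper carries out by explicit formulas, but the underlying construction is identical.
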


\begin{proof}
	Let $\alpha : [m] \to [n]$ be a map in $\bD$. 
	
	If $\vphi : [i] \to [m]$ is inert then there is an inert map $\alpha_\ast(\vphi) : [\alpha(\vphi(i)) - \alpha(\vphi(0))] \to [n]$ defined by $\alpha_\ast(\vphi)(0) = \alpha(\vphi(0))$. Moreover, if $\vphi$ and $\psi$ are inert and fit into a commutative diagram
	\begin{equation*}
		\begin{tikzcd}
			{[i]} \ar[dr, "\vphi"'] \ar[rr, "\chi"] & & {[j]} \ar[dl, "\psi"] \\
			& {[m]} &
		\end{tikzcd}
	\end{equation*}
	in $\bD_{\rmint, /[m]}$ -- which, if there is one, is the \emph{unique} such commutative diagram -- then there is a inert map $\alpha_\ast(\chi)$ making the diagram
	\begin{equation*}
		\begin{tikzcd}
			{[\alpha(\vphi(i)) - \alpha(\vphi(0))]} \ar[dr, "\alpha_\ast(\vphi)"'] \ar[rr, "\alpha_\ast(\chi)"] & & {[\alpha(\psi(j)) - \alpha(\psi(0))]} \ar[dl, "\alpha_\ast(\psi)"] \\
			& {[n]} &
		\end{tikzcd}
	\end{equation*}
	commute, specified by 
	\begin{equation*}
		\alpha_\ast(\chi)(0) = \alpha(\psi(0) + \chi(0)) - \alpha(\psi(0)) = \alpha(\vphi(0)) - \alpha(\psi(0)).	
	\end{equation*}
	It is easy to see from the formula for $\alpha_\ast(\chi)$ that $\alpha_\ast$ commutes with composition of morphisms in $\bD_{\rmint, /[m]}$. Hence $\alpha_\ast : \bD_{\rmint, /[m]} \to \bD_{\rmint, /[n]}$ is a functor, and so after taking opposites we obtain a functor $\bS^\bullet : \bD \to \bCat_1$.
	
	If $\vphi : [i] \to [m]$ is injective then there is a unique $k_\vphi \in \NN$ and a unique isomorphism $[k_\vphi] \cong \im (\alpha \circ \vphi)$ of linearly ordered sets. Define $\alpha_\ast(\vphi)$ to be the injection $[k_\vphi] \cong \im(\alpha \circ \vphi) \hookrightarrow [n]$. If $\vphi$ and $\psi$ are injective and fit into a commutative diagram
	\begin{equation*}
		\begin{tikzcd}
			{[i]} \ar[dr, "\vphi"'] \ar[rr, "\chi"] & & {[j]} \ar[dl, "\psi"] \\
			& {[m]} &
		\end{tikzcd}
	\end{equation*}
	in $\bD_{\rminj, /[m]}$ then there is an injection $\alpha_\ast(\chi)$ making the diagram
	\begin{equation*}
		\begin{tikzcd}
			{[k_\vphi]} \ar[dr, "\alpha_\ast(\vphi)"'] \ar[rr, "\alpha_\ast(\chi)"] & & {[k_\psi]} \ar[dl, "\alpha_\ast(\psi)"] \\
			& {[n]} &
		\end{tikzcd}
	\end{equation*}
	commute, defined by
	\begin{equation*}
		\alpha_\ast(\chi) : [k_\vphi] \cong \im(\alpha \circ \vphi) = \im(\alpha \circ \psi \circ \chi) \hookrightarrow \im(\alpha \circ \psi) \cong [k_\psi].
	\end{equation*}
	If $\chi = \chi_2 \circ \chi_1$ then $\im(\alpha \circ \tau \circ \chi) \hookrightarrow \im(\alpha \circ \tau)$ factors as
	\begin{equation*}
		\begin{tikzcd}[row sep = small, column sep = small]
			\im(\alpha \circ \tau \circ \chi) \ar[r, hook] \ar[d, phantom, "\vcong"] & \im(\alpha \circ \tau \circ \chi_2) \ar[r, hook] \ar[d, phantom, "\vcong"] & \im(\alpha \circ \tau) \ar[d, phantom, "\vcong"] \\
			{[k_\vphi]} & {[k_\psi]} & {[k_\tau]}
		\end{tikzcd}
	\end{equation*}
	and so $\alpha_\ast$ commutes with composition. Hence it is a functor $\bD_{\rminj, /[m]} \to \bD_{\rminj, /[n]}$ and so, after taking opposites, we have a functor $\bsd^\bullet : \bD \to \bCat_1$.
\end{proof}

The category $\bS^n$ is an easily visualizable poset. For example, here is the case $n = 3$ (with $\bL^3$ spanned by the red asterisks):
\begin{figure}[h!]
	\begin{tikzpicture}
		\foreach \x in {0,...,3} {
		\draw[color = red] node (A\x) at (3*\x,0) {$\ast$};}
		\draw[color = red] node (A01) at (1.5,1.5) {$\ast$};
		\draw[color = red] node (A12) at (4.5,1.5) {$\ast$};
		\draw[color = red] node (A23) at (7.5,1.5) {$\ast$};
		\draw node (A012) at (3,3) {$\bullet$};
		\draw node (A123) at (6,3) {$\bullet$};
		\draw node (A0123) at (4.5,4.5) {$\bullet$};
		
		\draw[->, color = red] (A01) -- (A0);
		\draw[->, color = red] (A01) -- (A1);
		\draw[->, color = red] (A12) -- (A1);
		\draw[->, color = red] (A12) -- (A2);
		\draw[->, color = red] (A23) -- (A2);
		\draw[->, color = red] (A23) -- (A3);
		\draw[->] (A012) -- (A01);
		\draw[->] (A012) -- (A12);
		\draw[->] (A123) -- (A12);
		\draw[->] (A123) -- (A23);
		\draw[->] (A0123) -- (A012);
		\draw[->] (A0123) -- (A123);
		
		\draw node at (-0.5, 2.5) {$\bS^3:$};
	\end{tikzpicture}
	\label{fig:span}
\end{figure}

On a pyramid of height $n$ and base $n + 1$, an element $(\vphi : [i] \to [n]) \in \bS^n$ is represented by a dot at height $i$ and positioned at $\vphi(0)$ units from the left. Each dot at height $i$ has two arrows going down to height $i-1$ which correspond to the restriction of an inert map $\vphi : [i] \to [n]$ to its only two subintervals isomorphic to $[i-1]$, namely $\{0, \dotsc, i-1\}$ and $\{1, \dotsc, i-1\}$.

The category $\bsd^n$ has a nice geometric interpretation as well: its objects are the vertices of the barycentric subdivision of the standard $n$-simplex $\Delta^n$. Each object $[i] \hookrightarrow [n]$ corresponds to a distinct embedding of the $i$-simplex $\Delta^i$ into $\Delta^n$. The subcategory $\bvx^n$ consists of all vertices of $\Delta^n$, i.e. the embeddings $\Delta^0 \hookrightarrow \Delta^n$. For small values of $n$, $\bsd^n$ and $\bvx^n$ (spanned by the red asterisks) look like this:
\begin{figure}[h!]
	\begin{tikzpicture}
		\node (P) at (-5,3) {$\red{\ast}$};
		\node at (-6,3.08) {$\bsd^0:$};
		\node (A0) at (-6,1) {$\red{\ast}$};
		\node (A1) at (-4,1) {$\red{\ast}$};
		\node (A01) at (-5,1) {$\bullet$};
		\node at (-7,1.08) {$\bsd^1:$};
		\node (B0) at (-2,0) {$\red{\ast}$};
		\node (B1) at (2,0) {$\red{\ast}$};
		\node (B2) at (0,{2*sqrt(3)}) {$\red{\ast}$};
		\node (B01) at (0,0) {$\bullet$};
		\node (B12) at (1,{sqrt(3)}) {$\bullet$};
		\node (B02) at (-1,{1*sqrt(3)}) {$\bullet$};
		\node (B012) at (0,{2*sqrt(3)/3}) {$\bullet$};
		\node at (-2.5,2) {$\bsd^2:$};
		
		\draw[->] (A01) -- (A0);
		\draw[->] (A01) -- (A1);
		\draw[->] (B01) -- (B0);
		\draw[->] (B01) -- (B1);
		\draw[->] (B12) -- (B1);
		\draw[->] (B12) -- (B2);
		\draw[->] (B02) -- (B0);
		\draw[->] (B02) -- (B2);
		\draw[->] (B012) -- (B0);
		\draw[->] (B012) -- (B1);
		\draw[->] (B012) -- (B2);
		\draw[->] (B012) -- (B01);
		\draw[->] (B012) -- (B12);
		\draw[->] (B012) -- (B02);
	\end{tikzpicture}
\end{figure}

The arrows denote restrictions along smaller faces. We can represent the elements of $\bsd^n$ as tuples of integers: an injection $\vphi : [i] \to [n]$ is equivalently an $(i+1)$-tuple $(r_0, \dotsc, r_i)$ of strictly increasing positive integers from $[n]$. The $1$-tuples $(r_0)$ correspond to elements of $\bvx^n$, and when $\alpha : [m] \to [n]$ is a map in $\bD$ the corresponding pushforward $\alpha_\ast : \bsd^m \to \bsd^n$ sends a tuple $(r_0, \dotsc, r_i)$ to the tuple $(\alpha(r_{0}), \dotsc, \alpha(r_{i}))$ where we remove repeated elements - that is, we first pass to the \emph{set} $\{\alpha(r_{0}), \dotsc, \alpha(r_{i})\}$ and then turn it into a tuple of size possibly smaller than $i+1$.

Note that there is an embedding $\bS^n \hookrightarrow \bsd^n$ since every inert map is an injection. In view of this embedding we will think of $\bsd^n$ as a generalization of $\bS^n$.

\subsubsection{Generalized spans}

Here is our main object of study for this section:
\begin{dfn}
	Let $\sD$ be an $\infty$-category. A \emph{generalized span} valued in $\sD$ is a functor
	\begin{equation*}
		F : \bS^{k_1, \dotsc, k_m} \times \bsd^{l_1, \dotsc, l_n} = \bS^{k_1} \times \dotsb \times \bS^{k_m} \times \bsd^{l_1} \times \dotsb \times \bsd^{l_n} \to \sD.
	\end{equation*}
\end{dfn}
Using the terminology of Haugseng, (iterated) spans valued in $\sD$ are functors $\bS^{k_1, \dotsc, k_m}  \to \sD$. Since $\bsd^p$ is a generalization of $\bS^p$ we allow ourselves to think of functors
\begin{equation*}
	 \bS^{k_1, \dotsc, k_m} \times \bsd^{l_1, \dotsc, l_n} \to \sD
\end{equation*}
as (iterated) generalized spans. When no confusion can arise we will just refer to these objects as \emph{spans} or \emph{generalized spans}, and when the indices don't play much of a role we will abbreviate the tuples $(k_1, \dotsc, k_m)$ and $(l_1, \dotsc, l_n)$ as $\conj{k}$ and $\conj{l}$.

\begin{dfn} \label{dfn:gen-span-functor}
	Let $\sD^\odot$ be a symmetric monoidal $\infty$-category. The functor 
	\begin{equation*}
		\GS(\sD^\odot)^{m, n} : \bG^\opp \times \bD^{m + n, \opp} \to \sCat_1
	\end{equation*}
	defined by
	\begin{equation*}
		(\brak{t}, [k_1], \dotsc, [k_m], [l_1], \dotsc, [l_n]) \mapsto \Fun(\bS^{k_1, \dotsc, k_m} \times \bsd^{l_1, \dotsc, l_n}, \sD^\odot_t)
	\end{equation*}
	unstraightens to a cocartesian fibration $\what{\GS}(\sD^\odot)^{m,n} \to \bG^\opp \times \bD^{m + n, \opp}$ called the \emph{generalized span fibration} or just the \emph{span fibration} for short.
\end{dfn}

We will need this general construction for \Cref{dfn:relative-push-pull-fibration} but until then we will only concentrate only on the case where $\sD^\odot = \sC^\times$ is the cartesian monoidal category $\sC$ -- see \Cref{sec:commutative-monoids} -- which is the easiest setting where \Cref{dfn:gen-cartesian-span} makes sense. We will abbreviate $\GS(\sC^\times)$ as $\GS(\sC)$ from now on.

Recall that $\sC^\times_t \simeq \sC^{\times t}$ has finite limits by \Cref{prp:cart-mon-str-on-prod}, and recall from \Cref{sec:functors-vs-fibrations} that the elements of $\what{\GS}(\sC)^{m,n}$ are precisely the generalized spans $\bS^{\conj{k}} \times \bsd^{\conj{l}} \to \sC_t$.

\begin{dfn} \label{dfn:gen-cartesian-span}
	A span $F : \bS^{\conj{k}} \times \bsd^{\conj{l}} \to \sC^\times_{t}$ is \emph{cartesian} if $F$ is a right Kan extension of its restriction to the subcategory 
	\begin{equation*}
		\bL^{\conj{k}} \times \bvx^{\conj{l}} = \bL^{k_1, \dotsc, k_m} \times \bvx^{l_1, \dotsc, l_n} := \bL^{k_1} \times \dotsb \times \bL^{k_m} \times \bvx^{l_1} \times \dotsb \times \bvx^{l_n}.
	\end{equation*}
	Explicitly, using the formula for right Kan extensions (see for example \Cref{eqn:formula-right-Kan}), $F$ is cartesian if the canonical map
	\begin{equation*}
		F(\conj{x}, \conj{y}) \to \lim_{(\conj{u}, \conj{v}) \in (\bL^{\conj{k}} \times \bvx^{\conj{l}})_{/(\conj{x}, \conj{y})}} F(\conj{u}, \conj{v})
	\end{equation*}
	is an equivalence for every $(\conj{x}, \conj{y}) \in \bS^{\conj{k}} \times \bsd^{\conj{l}}$. Let $\what{\GS}(\sC)^{m,n, \rmcart} \sub \what{\GS}(\sC)^{m,n}$ denote the full subcategory of the span fibration spanned by the cartesian spans.
\end{dfn}

In the case $(m,n) = (1,0)$, a cartesian span valued in $\sD$ looks like a pyramid of spans in $\sD$ such that the top vertices of the spans in the $(i+1)$th row are obtained by taking pullbacks of the vertices in the $i$th row, for all $i \geq 1$. For example, when $k_1 = 3$ the model cartesian span looks like \Cref{fig:cart-span-1}. In the desired $(\infty, 1)$-category of spans this means that composition of morphisms (the spans in the second row from the bottom) is obtained by taking pullbacks.

\begin{figure}[h!]
	\begin{tikzpicture}
		\foreach \x in {0,...,3} {
		\draw node (A\x) at (3*\x,0) {$A_{\x}$};}
		\draw node (A01) at (1.5,1.5) {$A_{01}$};
		\draw node (A12) at (4.5,1.5) {$A_{12}$};
		\draw node (A23) at (7.5,1.5) {$A_{23}$};
		\draw node (A012) at (3,3) {$A_{01} \times_{A_1} A_{12}$};
		\draw node (A123) at (6,3) {$A_{12} \times_{A_2} A_{23}$};
		\draw node (A0123) at (4.5,4.5) {$A_{01} \times_{A_1} A_{12} \times_{A_2} A_{23}$};
		
		\draw[->] (A01) -- (A0);
		\draw[->] (A01) -- (A1);
		\draw[->] (A12) -- (A1);
		\draw[->] (A12) -- (A2);
		\draw[->] (A23) -- (A2);
		\draw[->] (A23) -- (A3);
		\draw[->] (A012) -- (A01);
		\draw[->] (A012) -- (A12);
		\draw[->] (A123) -- (A12);
		\draw[->] (A123) -- (A23);
		\draw[->] (A0123) -- (A012);
		\draw[->] (A0123) -- (A123);
	\end{tikzpicture}
	\caption{A cartesian span $\bS^3 \to \sD$. Each square is a pullback.}
	\label{fig:cart-span-1}
\end{figure}

When $m \geq 1$ we have such pyramids of spans in $m$ possible directions and each kind of composition is carried out by taking pullbacks.

In the case $(m,n) = (0,1)$, a cartesian span looks like an $l_1$-simplex each of whose $k$-dimensional faces is labelled by an object $B_{i_0} \times \dotsb \times B_{i_k} \in \sD$ corresponding to the product of the elements $B_i \in \sD$ associated to the vertices of the simplex. For example, when $l_1 = 2$ the model generalized cartesian span looks like the diagram in \Cref{fig:gen-cart-span-ex}.
\begin{figure}[h!]
	\begin{tikzpicture}[scale = 0.8]
		\node (B0) at (-4,0) {$B_0$};
		\node (B1) at (4,0) {$B_1$};
		\node (B2) at (0,{4*sqrt(3)}) {$B_2$};
		\node (B01) at (0,0) {$B_0 \times B_1$};
		\node (B12) at (2,{2*sqrt(3)}) {$B_1 \times B_2$};
		\node (B02) at (-2,{2*sqrt(3)}) {$B_0 \times B_2$};
		\node (B012) at (0,{4*sqrt(3)/3}) {$B_0 \times B_1 \times B_2$};
		
		\draw[->] (B01) -- (B0);
		\draw[->] (B01) -- (B1);
		\draw[->] (B12) -- (B1);
		\draw[->] (B12) -- (B2);
		\draw[->] (B02) -- (B0);
		\draw[->] (B02) -- (B2);
		\draw[->] (B012) -- (B0);
		\draw[->] (B012) -- (B1);
		\draw[->] (B012) -- (B2);
		\draw[->] (B012) -- (B01);
		\draw[->] (B012) -- (B12);
		\draw[->] (B012) -- (B02);
	\end{tikzpicture}
	\caption{A generalized cartsian span $\bsd^2 \to \sD$. Each $k$-dimensional face, for $0 \leq k \leq 3$, is a limit diagram.}
	\label{fig:gen-cart-span-ex}
\end{figure}

\subsubsection{Cartesian spans are functorial}

We ultimately want to prove that cartesian spans are assemble to a functor. More precisely, so as to invoke straightening-unsraightening we want exactly \Cref{crl:cart-spans-are-cocart-fibration}, which states that the projection $\what{\GS}(\sC)^{m,n, \rmcart} \to \bG^\opp \times \bD^{m+n, \opp}$ is a cocartesian fibration. The proof is split in three parts: for each morphism $\vphi$ in one of the three factors $\bG^\opp$, $\bD^{m,\opp}$, and $\bD^{n,\opp}$ we will show that $\vphi$ has a cocartesian lift in $\what{\GS}(\sC)^{m,n, \rmcart}$. The arguments used for each factor are different\footnote{The terminology is different as well and we hope the reader will not mind. For the first factor we talk about pushing forward by a map in $\bG^\opp$, while for the second and third factors we talk about pulling back by maps in $\bD$. This is because a span $F : \bS^{\conj{k}} \times \bsd^{\conj{l}} \to \sC^\times_t$ gets pulled back along maps to its domain and pushed forward along maps from its codomain.}  since they parametrize different kinds of spans. 

\emph{The first factor: $\bG^\opp$}. Since $\sC^\times_t \simeq \sC^{\times t}$ has a canonical cartesian monoidal structure where products are computed componentwise (\Cref{prp:cart-mon-str-on-prod}) we can measure the cartesian-ness of a span $F$ by passing to its components:

\begin{lmm} \label{lmm:cartesian-iff-components-are-cartesian}
	A generalized span $F$ valued in $\sC^\times_t$ is cartesian if and only if each component $F_i$, obtained by composing $F$ with the Segal map $(\tau_i)_\ast : \sC^\times_t \to \sC$, is cartesian.
\end{lmm}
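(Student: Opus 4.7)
The plan is to reduce the cartesian-ness condition for $F$ to a statement about limits in $\sC^\times_t$ and then exploit the fact that these limits decompose componentwise via the Segal maps. By \Cref{prp:cart-mon-str-on-prod}, the collection of Segal projections $(\tau_i)_\ast : \sC^\times_t \to \sC$ for $i = 1, \dotsc, t$ exhibits an equivalence $\sC^\times_t \simeq \sC^{\times t}$. Two consequences are immediate: each $(\tau_i)_\ast$ preserves limits (projection from a product), and the family $\{(\tau_i)_\ast\}_{i=1}^t$ is jointly conservative, so a morphism $\alpha$ in $\sC^\times_t$ is an equivalence if and only if each $(\tau_i)_\ast(\alpha)$ is an equivalence in $\sC$.

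First I would unwind \Cref{dfn:gen-cartesian-span}: the condition that $F$ be cartesian is pointwise, asking that for every $(\conj{x}, \conj{y}) \in \bS^{\conj{k}} \times \bsd^{\conj{l}}$ the canonical comparison map
\[
\eta_F(\conj{x}, \conj{y}) : F(\conj{x}, \conj{y}) \longrightarrow \lim_{(\conj{u}, \conj{v}) \in (\bL^{\conj{k}} \times \bvx^{\conj{l}})_{/(\conj{x}, \conj{y})}} F(\conj{u}, \conj{v})
\]
be an equivalence in $\sC^\times_t$. Next, since $(\tau_i)_\ast$ preserves limits, applying it to both source and target of $\eta_F(\conj{x}, \conj{y})$ gives precisely the analogous canonical map $\eta_{F_i}(\conj{x}, \conj{y})$ for the component $F_i = (\tau_i)_\ast \circ F$, viewed as a diagram with values in $\sC$. (This is the small bookkeeping step: one checks that the universal map out of $F(\conj{x}, \conj{y})$ is natural with respect to postcomposition with $(\tau_i)_\ast$, which is automatic since $(\tau_i)_\ast$ is a functor that preserves the limits being formed.)

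Combining these observations, joint conservativity of the $(\tau_i)_\ast$ shows that $\eta_F(\conj{x}, \conj{y})$ is an equivalence if and only if each $\eta_{F_i}(\conj{x}, \conj{y})$ is an equivalence. Running this over all $(\conj{x}, \conj{y})$ yields exactly the claimed biconditional: $F$ is cartesian iff every $F_i$ is cartesian.

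The main obstacle is purely conceptual rather than technical: one just needs to be comfortable passing from the "global" right-Kan-extension formulation in $\sC^\times_t$ to the componentwise one in $\sC$ via the product decomposition $\sC^\times_t \simeq \sC^{\times t}$. Once this decomposition is invoked, both preservation of limits and joint conservativity are free, and no further computation is required.
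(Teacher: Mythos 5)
Your proof is correct and follows essentially the same route as the paper: both arguments reduce the right-Kan-extension condition to its pointwise limit formulation and then use the fact that, under the Segal equivalence $\sC^\times_t \simeq \sC^{\times t}$, limits are computed componentwise (your explicit appeal to limit-preservation plus joint conservativity of the $(\tau_i)_\ast$ is just a slightly more detailed packaging of that same fact).
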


\begin{proof}
	Limits in $\sC^\times_t$ are computed componentwise, meaning that $x \in \sC^\times_t$ is a limit of a diagram $D : \sA \to \sC_t$ if and only if the image of $x$ under the Segal equivalence $\sC^\times_t \simeq \sC^{\times t}$ is a limit of the composite diagram $\sA \xlra{D} \sC^\times_t \to \sC^{\times t}$ -- see part of the proof of \Cref{prp:cart-mon-str-on-prod}. The claim then follows because right Kan extensions are limits.
\end{proof}

\begin{prp} \label{prp:push-of-cart-is-cart}
	Let $F : \bS^{\conj{k}} \times \bsd^{\conj{l}} \to \sC^\times_s$ be a cartesian span and let $\psi : \brak{s} \to \brak{t}$ be a morphism in $\bG^\opp$. Then $\psi_\ast F : \bS^{\conj{k}} \times \bsd^{\conj{l}} \to \sC^\times_t$ is a cartesian span and there is a cocartesian arrow $F \to \psi_\ast F$ in $\what{\GS}(\sC)^{m,n,\rmcart}$ sitting over $\psi$.
\end{prp}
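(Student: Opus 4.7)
The plan is to reduce the claim, via \Cref{lmm:cartesian-iff-components-are-cartesian}, to the statement that a (possibly empty) product of cartesian spans is cartesian, which follows from the commutation of products with limits. No step of the argument is genuinely hard; everything is a bookkeeping exercise combining unstraightening, the cartesian monoidal structure on $\sC^\times$, and the pointwise formula for right Kan extensions.

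First I would use that the unstraightening construction of \Cref{dfn:gen-span-functor} supplies, for free, a cocartesian lift of $\psi$ at $F$ in the unrestricted span fibration $\what{\GS}(\sC)^{m,n}$: its target is literally the composite $\psi_\ast \circ F : \bS^{\conj{k}} \times \bsd^{\conj{l}} \to \sC^\times_t$, where $\psi_\ast : \sC^\times_s \to \sC^\times_t$ is the functor encoded by the symmetric monoidal structure on $\sC^\times$. Since $\what{\GS}(\sC)^{m,n,\rmcart} \sub \what{\GS}(\sC)^{m,n}$ is a full subcategory, the very same arrow witnesses $\psi$ cocartesianly in the restricted fibration as soon as we know that $\psi_\ast F$ is cartesian.

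To verify the latter, I would invoke \Cref{lmm:cartesian-iff-components-are-cartesian}, which reduces the problem to showing that $(\tau_j)_\ast \psi_\ast F = (\tau_j \circ \psi)_\ast F : \bS^{\conj{k}} \times \bsd^{\conj{l}} \to \sC$ is cartesian for each Segal map $\tau_j : \brak{t} \to \brak{1}$, $j = 1, \dotsc, t$. Under the Segal equivalence $\sC^\times_s \simeq \sC^{\times s}$, the functor induced by $\tau_j \circ \psi : \brak{s} \to \brak{1}$ on the cartesian monoidal structure is the projection-and-product $(c_1, \dotsc, c_s) \mapsto \prod_{i \in \psi^\inv(j)} c_i$, so
\[
(\tau_j \circ \psi)_\ast F \simeq \prod_{i \in \psi^\inv(j)} F_i,
\]
where each $F_i$ is cartesian by hypothesis together with another application of \Cref{lmm:cartesian-iff-components-are-cartesian}.

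Finally, I would observe that a product of cartesian spans is cartesian: since right Kan extensions are pointwise limits and products commute with limits, for every $(\conj{x},\conj{y}) \in \bS^{\conj{k}} \times \bsd^{\conj{l}}$ we have
\[
\prod_{i} F_i(\conj{x},\conj{y}) \simeq \prod_i \lim_{(\conj{u},\conj{v}) \in (\bL^{\conj{k}} \times \bvx^{\conj{l}})_{/(\conj{x},\conj{y})}} F_i(\conj{u},\conj{v}) \simeq \lim_{(\conj{u},\conj{v})} \prod_i F_i(\conj{u},\conj{v}),
\]
so the product satisfies the right Kan extension formula on the nose. The degenerate case $\psi^\inv(j) = \emptyset$ yields the constant span at the terminal object of $\sC$, which is trivially a right Kan extension of any of its restrictions. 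The only point needing a little care is the identification of $(\tau_j \circ \psi)_\ast$ with the projection-and-product functor, which comes directly from the definition of the cartesian symmetric monoidal structure on $\sC^\times$ (\Cref{prp:cart-mon-str-on-prod}).
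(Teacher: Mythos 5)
Your proof is correct and follows essentially the same route as the paper's: produce the canonical cocartesian lift in the unrestricted span fibration, reduce via \Cref{lmm:cartesian-iff-components-are-cartesian} to the components, identify the $j$th component of $\psi_\ast F$ with $\prod_{i \in \psi^\inv(j)} F_i$, and conclude because right Kan extensions (being pointwise limits) commute with products. Your explicit treatment of the fullness of $\what{\GS}(\sC)^{m,n,\rmcart}$ and of the empty-fiber case are small details the paper leaves implicit, but the argument is the same.
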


\begin{proof}
	There is a canonical cartesian arrow $F \to \psi_\ast F$ in $\what{\GS}(\sC)^{m,n}$, so it remains to be proven that $\psi_\ast F$ is a cartesian span. We have the following commutative square:
	\begin{equation*}
		\begin{tikzcd}
			\sC^{\times}_s \ar[r, "\psi_\ast"] \ar[d, "\simeq"'] & \sC^\times_t \ar[d, "\simeq"] \\
			\sC^{\times s} \ar[r, "\pi_\psi"] & \sC^{\times t}
		\end{tikzcd}
	\end{equation*}
	where the $i$th component of $\pi_\psi$, for $1 \leq i \leq t$, is given by
	\begin{equation*}
		(c_1, \dotsc, c_s) \mapsto \prod_{k \in \psi^\inv(i)} c_k.
	\end{equation*}
	Therefore the $i$th component of $\psi_\ast F$ is
	\begin{equation*}
		(\tau_i)_\ast(\psi_\ast F) \simeq \prod_{k \in \psi^\inv(i)} (\tau_k)_\ast F \simeq \prod_{k \in \psi^\inv(i)} F_k.
	\end{equation*}
	Since right Kan extensions commute with products we see that $(\tau_i)_\ast(\psi_\ast F)$ is a cartesian span in $\sC$ for every $i$, and so $\psi_\ast F$ is a cartesian span in $\sC^\times_t$ by \Cref{lmm:cartesian-iff-components-are-cartesian}.
\end{proof}

\emph{The second factor: $\bD^{m, \opp}$.} We start with a useful lemma.

\begin{lmm}[Generalization of {\cite[Lemma 5.6]{Haugseng2018}}] \label{lmm:inductive-cart-span}
	Let $\sA_0 \sub \sA$ and $\sB_0 \sub \sB$ denote two inclusions of $\infty$-categories and let $F : \sA \times \sB \to \sD$ be a functor into another $\infty$-category with finite limits. Then the following conditions are equivalent:
	\begin{enumerate}
		\item $F$ is a right Kan extension of $F\vert_{\sA_0 \times \sB_0}$;
		\item $F$ is a right Kan extension of $F\vert_{\sA_0 \times \sB}$ and for every $a_0 \in \sA_0$ the restriction $F\vert_{\{a_0\} \times \sB} : \sB \to \sD$ is a right Kan extension of $F\vert_{\{a_0\} \times \sB_0}$;  
		\item the mate $F' : \sA \to \Fun(\sB, \sD)$ is a right Kan extension of $F' \vert_{\sA_0}$ and for every $a_0 \in \sA_0$ the functor $F'(a_0) : \sB \to \sD$ is a right Kan extension of $F'(a_0)\vert_{\sB_0}$.
	\end{enumerate}
\end{lmm}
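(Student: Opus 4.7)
The plan is to use the pointwise formula for right Kan extensions together with a Fubini-type argument that decomposes a limit over a product of overcategories into iterated limits, and to exploit the fact that limits in a functor $\infty$-category are computed pointwise.

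First I would show (1) $\Leftrightarrow$ (2). For a point $(a, b) \in \sA \times \sB$, the slice decomposes as $(\sA_0 \times \sB_0)_{/(a,b)} \simeq (\sA_0)_{/a} \times (\sB_0)_{/b}$, and similarly $(\sA_0 \times \sB)_{/(a,b)} \simeq (\sA_0)_{/a} \times \sB_{/b}$. Because $\sB_{/b}$ has a terminal object (namely $\id_b$), the pointwise formula for a right Kan extension from $\sA_0 \times \sB$ collapses to
\[
    F(a,b) \simeq \lim_{a_0 \in (\sA_0)_{/a}} F(a_0, b).
\]
Combining this with the pointwise formula $F(a_0, b) \simeq \lim_{b_0 \in (\sB_0)_{/b}} F(a_0, b_0)$ asserted by the second clause of (2), and using that limits commute with limits, yields
\[
    F(a,b) \simeq \lim_{(a_0, b_0) \in (\sA_0)_{/a} \times (\sB_0)_{/b}} F(a_0, b_0),
\]
which is precisely the pointwise formula for condition (1). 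Hence (2) implies (1). Conversely, (1) immediately gives the first clause of (2) by collapsing the $\sB_{/b}$ factor via its terminal object, and then the second clause follows by restricting to $a = a_0 \in \sA_0$ and noting that $((\sA_0)_{/a_0}$ has $\id_{a_0}$ as a terminal object, which collapses the $(\sA_0)_{/a_0}$ factor.

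Next I would address (2) $\Leftrightarrow$ (3). Limits in $\Fun(\sB, \sD)$ are computed pointwise, so $F' : \sA \to \Fun(\sB, \sD)$ is a right Kan extension of $F'|_{\sA_0}$ exactly when, for every $b \in \sB$, the functor $F(-, b) : \sA \to \sD$ is a right Kan extension of $F(-, b)|_{\sA_0}$. Using the same terminal-object collapse of $\sB_{/b}$ as above, this is equivalent to $F$ being a right Kan extension of $F|_{\sA_0 \times \sB}$. The remaining clauses in (2) and (3) are literally the same statement, since $F|_{\{a_0\} \times \sB}$ and $F'(a_0)$ are the same functor $\sB \to \sD$ under the mate correspondence.

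The main obstacle will be making the Fubini interchange of iterated limits fully rigorous in the $\infty$-categorical setting and pinning down the slice decomposition $(\sA_0 \times \sB_0)_{/(a,b)} \simeq (\sA_0)_{/a} \times (\sB_0)_{/b}$; both are standard and can be cited from the $\infty$-categorical literature. Compared to the base case in \cite[Lemma~5.6]{Haugseng2018} the present version is essentially the same argument applied symmetrically to both factors, so no new ingredients should be needed.
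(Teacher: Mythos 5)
Your argument is essentially the standard one; the paper itself gives no independent proof here, only the observation that Haugseng's proof of \cite[Lemma 5.6]{Haugseng2018} never uses the specific shape of the indexing posets and therefore applies verbatim. Haugseng's argument runs through the transitivity of right Kan extensions for $\sA_0\times\sB_0\sub\sA_0\times\sB\sub\sA\times\sB$ together with the same decomposition of comma categories you use, so your pointwise-formula-plus-Fubini version is a mild repackaging rather than a different route. Two caveats on the write-up. First, for \emph{right} Kan extensions the relevant comma categories are the undercategories $(\sA_0)_{a/}$, $(\sB)_{b/}$ (objects equipped with a map \emph{from} $a$, resp.\ $b$), and the collapse is along the \emph{initial} object $\id_b$; with the paper's stated slice conventions your $(\sB)_{/b}$ and ``terminal object'' are the left-Kan-extension versions. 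This is purely notational, but it should be fixed.

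Second, and more substantively, your direction $(1)\Rightarrow(2)$ is stated in the wrong logical order. The first clause of $(2)$ does \emph{not} follow from $(1)$ ``immediately by collapsing the $\sB$-factor'': the comma category appearing in the pointwise formula for $(1)$ is $(\sA_0)_{a/}\times(\sB_0)_{b/}$, and its second factor has no initial object when $b\notin\sB_0$, so there is nothing to collapse. The correct order is: first deduce the second clause of $(2)$ by restricting to $a=a_0\in\sA_0$ and collapsing the $(\sA_0)_{a_0/}$ factor (as you do), and only then obtain the first clause by running your Fubini computation in reverse, i.e.\ $\lim_{(a_0,b_0)}F(a_0,b_0)\simeq\lim_{a_0}\lim_{b_0}F(a_0,b_0)\simeq\lim_{a_0}F(a_0,b)$, the last step using the already-established second clause. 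All the ingredients for this are present in your proposal, so this is a reordering rather than a missing idea; the remaining point to be careful about, which you correctly flag, is that the composite of these identifications is the canonical comparison map, which is what the transitivity statement \cite[Proposition 4.3.2.8]{HTT2009} packages for you.
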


\begin{proof}
	The arguments of the proof in \cite{Haugseng2018} for functors $F : \bS^{n_1, \dotsc, n_k} \to \sD$ with $k = 2$, $\sA = \bS^{n_1}$, $\sB = \bS^{n_2}$, $\sA_0 = \bL^{n_1}$ and $\sB_0 = \bL^{n_2}$ do not depend on the values of $\sA, \sB, \sA_0, \sB_0$, and so the proof carries over identically for this more general case.
\end{proof}

\begin{prp} \label{prp:pull-of-cart-on-sigma-is-cart}
	Let $\sD$ be an $\infty$-category with finite limits. If $F : \bS^{\conj{k}} \times \bsd^{\conj{l}} \to \sD$ is a cartesian span and $\conj{\alpha} : [\conj{k'}] \to [\conj{k}]$ is a morphism in $\bD^{\times m}$, then
	\begin{equation*}
		(\conj{\alpha}, \id_{[\conj{l}]})^\ast F : \bS^{\conj{k'}} \times \bsd^{\conj{l}} \to \sD
	\end{equation*}
	is also a cartesian span and there is a cocartesian arrow $F \to (\conj{\alpha}, \id_{[\conj{l}]})^\ast F$ in $\what{\GS}(\sD)^{m,n,\rmcart}$ sitting over $(\conj{\alpha}, \id_{[\conj{l}]})$.
\end{prp}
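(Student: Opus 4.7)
The unstraightening $\what{\GS}(\sD)^{m,n} \to \bD^{m+n,\opp}$ is by construction a cocartesian fibration, so the morphism $(\conj\alpha, \id_{[\conj l]})$ has a canonical cocartesian lift with codomain
\begin{equation*}
G := F \circ (\alpha_\ast \times \id_{\bsd^{\conj l}}) = (\conj\alpha, \id_{[\conj l]})^\ast F.
\end{equation*}
Since $\what{\GS}(\sD)^{m,n,\rmcart}$ is a \emph{full} subcategory, this lift lies inside it as soon as $G$ is itself cartesian, and this cartesianness check is the whole content of the proposition.

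To verify that $G$ is cartesian I apply condition (3) of \Cref{lmm:inductive-cart-span} with the splitting $\sA = \bS^{\conj{k'}}$, $\sA_0 = \bL^{\conj{k'}}$, $\sB = \bsd^{\conj l}$, $\sB_0 = \bvx^{\conj l}$, working with the mate $G' = F' \circ \alpha_\ast : \bS^{\conj{k'}} \to \Fun(\bsd^{\conj l}, \sD)$ of $G$, where $F'$ is the mate of $F$. Cartesianness of $G$ then reduces to two conditions: (a) $G'$ is a right Kan extension of $G'|_{\bL^{\conj{k'}}}$; and (b) for every $\conj x_0 \in \bL^{\conj{k'}}$, the functor $G'(\conj x_0) = F'(\alpha_\ast(\conj x_0))$ is a right Kan extension of its restriction to $\bvx^{\conj l}$. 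The point $\alpha_\ast(\conj x_0)$ lies in $\bS^{\conj k}$ but need not lie in $\bL^{\conj k}$, so (b) is not immediate from cartesianness of $F$.

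For (b) I establish the stronger statement that $F'(\conj u)$ is right-Kan-extended from $\bvx^{\conj l}$ for \emph{every} $\conj u \in \bS^{\conj k}$. This follows by pointwise evaluation: writing $F'(\conj u)(\conj y) = \lim_{\conj v \in (\bL^{\conj k})_{/\conj u}} F'(\conj v)(\conj y)$ (valid since $F'$ is right-Kan-extended from $\bL^{\conj k}$, which is cartesianness of $F$ through condition (3)), replacing each inner value $F'(\conj v)(\conj y)$ for $\conj v \in \bL^{\conj k}$ by its right Kan expression over $(\bvx^{\conj l})_{/\conj y}$, and interchanging the two limits by Fubini. For (a) the $\bsd$-direction has been entirely absorbed into the target $\sE := \Fun(\bsd^{\conj l}, \sD)$, and the statement reduces to: for any functor $H : \bS^{\conj k} \to \sE$ right-Kan-extended from $H|_{\bL^{\conj k}}$, the pullback $H \circ \alpha_\ast : \bS^{\conj{k'}} \to \sE$ is right-Kan-extended from its restriction to $\bL^{\conj{k'}}$. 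For $m = 1$ this is exactly \cite[Lemma 5.6]{Haugseng2018}, and for general $m$ it follows by iterating \Cref{lmm:inductive-cart-span} once more to peel off one factor $\bS^{k_i}$ at a time. The main technical obstacle I anticipate is the $m = 1$ cofinality comparison between $(\bL^{k'})_{/\vphi}$ and $(\bL^k)_{/\alpha_\ast(\vphi)}$ through $\alpha_\ast$: objects of the larger slice not in the image of $\alpha_\ast$ must be expressed as limits of ones in the image using cartesianness of $H$. This is precisely Haugseng's combinatorial argument and transfers essentially unchanged to our setting.
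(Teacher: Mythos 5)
Your proof is correct and follows essentially the same route as the paper: adjoin over the $\bsd^{\conj{l}}$ factor via \Cref{lmm:inductive-cart-span}, pull back in the $\bS$-direction using Haugseng's result on cartesian iterated spans (the paper cites \cite[Proposition 5.9]{Haugseng2018}, which is the statement you actually want for the multi-index pullback, rather than Lemma 5.6), and then reassemble with the lemma again. You are in fact more careful than the paper on one point: the lemma only yields the pointwise condition at objects of $\bL^{\conj{k}}$, whereas what is needed is the condition at $\conj{\alpha}_\ast(\conj{x}_0) \in \bS^{\conj{k}}$, and your limit-interchange argument supplies exactly the extension of that condition to all of $\bS^{\conj{k}}$ that the paper uses implicitly.
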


\begin{proof}
	There is a cocartesian arrow $F \to (\conj{\alpha}, \id_{[\conj{l}]})^\ast F$ in $\what{\GS}(\sD)^{m,n}$, so we just have to prove that $(\conj{\alpha}, \id_{[\conj{l}]})^\ast F$ is a cartesian span. By applying \Cref{lmm:inductive-cart-span} to adjoin over the $\bsd^{\conj{l}}$ factor we get that (A) the mate
	\begin{equation*}
		F' : \bS^{\conj{k}} \to \Fun(\bsd^{\conj{l}}, \sD)
	\end{equation*}
	is cartesian and that (B) each image $F'(\conj{x}) : \bsd^{\conj{l}} \to \sD$ is cartesian, for $\conj{x} \in \bS^{\conj{k}}$. From (A), \cite[Proposition 5.9]{Haugseng2018} allows us to conclude that 
	\begin{equation*}
		\conj{\alpha}^\ast F' : \bS^{\conj{k'}} \to \Fun(\bsd^{\conj{l}}, \sD)
	\end{equation*} 
	is cartesian. For every $\conj{y} \in \bS^{\conj{k'}}$, if we set $\conj{x} = \conj{\alpha}_\ast \conj{y} \in \bS^{\conj{k}}$ we see that
	\begin{equation*}
		\conj{\alpha}^\ast F'(\conj{y}) \simeq F(\conj{x})
	\end{equation*}
	is cartesian by (B). \Cref{lmm:inductive-cart-span} then guarantees that the mate to $\conj{\alpha}^\ast F'$, which is precisely $(\conj{\alpha}, \id_{[\conj{l}]})^\ast F$, is cartesian.
\end{proof}

\emph{The third factor: $\bD^{n,\opp}$.} Pulling back the $\bsd^{\conj{l}}$ factors along maps in $\bD^{\times n}$ does \emph{not} preserve cartesian-ness, in general. In the simple case of $n = 1$, if $F : \bsd^{l} \to \sD$ is a cartesian span and $\vphi$ is an $i$-face of $\Delta^l$ then $F(\vphi)$ is a product of $i+1$ many terms which are all possibly distinct. When $\beta : [l'] \to [l]$ is not injective, say $\beta(a) = \beta(b)$, the pullback of $F$ along $\psi$ will mess up those faces of $\Delta^{l'}$ containing the vertices $a$ and $b$: explicitly, if $\vphi : [1] \to [l']$ is the embedding selecting $\{a, b\} \sub [l']$ and $\nu_a, \nu_b : [0] \to [l']$ select $\{a\}, \{b\} \sub [l']$ respectively, then
	\begin{align*}
		\beta^\ast F(\nu_x) & \simeq F(\nu_x) \text{ for $x = a, b$}, \\ 
		\beta^\ast F(\vphi) & \simeq F(\nu_a) \not \simeq F(\nu_a) \times F(\nu_b).
	\end{align*}

Fortunately, there is a fix.

\begin{dfn}
	Given an $\infty$-category $\sD$ with finite limits and a generalized span $F : \bS^{\conj{k}} \times \bsd^{\conj{l}} \to \sD$, the \emph{cartesian replacement} $\crep(F)$ is the span $\bS^{\conj{k}} \times \bsd^{\conj{l}} \to \sD$ defined by
	\begin{equation*}
		\crep(F) := \RKan_{I}(F \circ I),
	\end{equation*}
	where $I$ is the inclusion $\bL^{\conj{k}} \times \bvx^{\conj{l}} \sub \bS^{\conj{k}} \times \bsd^{\conj{l}}$.
\end{dfn}

Since $I$ is fully faithful we have $\crep(F) \circ I \simeq F \circ I$ (see the dual of \cite[Corollary 7.3.2.7]{kerodon}) and so we immediately see that $\crep(F)$ is a cartesian span, justifying the name:
\begin{equation*}
	\RKan_I (\crep(F) \circ I) \simeq \RKan_I (F \circ I) = \crep(F).
\end{equation*}
If $F$ is cartesian then obviously $\crep(F) \simeq F$. Moreover, the canonical natural transformation $F \to \crep(F)$ (obtained as the unit of $I^\ast \dashv \RKan_I$) determines an arrow $F \to \crep(F)$ in $\what{\GS}(\sD)^{m,n}$ sitting over the identity morphism of $([\conj{k}], [\conj{l}])$ in $\bD^{\times (m + n)}$ with the following property: if $F \to G$ is an arrow in $\what{\GS}(\sD)^{m,n}$ then there is an essentially unique arrow $\crep(F) \to \crep(G)$ in $\what{\GS}(\sD)^{m,n}$ making the square
\begin{equation*}
	\begin{tikzcd}
		F \ar[r] \ar[d] & G \ar[d] \\
		\crep(F) \ar[r] & \crep(G)
	\end{tikzcd}
\end{equation*}
commute. This follows immediately from the description of cartesian replacements as right Kan extensions.

\begin{prp} \label{prp:cart-extension}
	Let $\sD$ be an $\infty$-category with finite limits, let $F : \bS^{\conj{k}} \times \bsd^{\conj{l}} \to \sD$ be a cartesian generalized span, and let $\conj{\beta} : [\conj{l'}] \to [\conj{l}]$ be a morphism in $\bD^{\times n}$. Then the arrow $f : F \to \crep((\id_{[\conj{k}]},\conj{\beta})^\ast F)$ in $\what{\GS}(\sD)^{m,n, \rmcart}$ obtained by composing the two arrows
	\begin{equation*}
		F \to (\id_{[\conj{k}]},\conj{\beta})^\ast F, \quad (\id_{[\conj{k}]},\conj{\beta})^\ast F \to \crep((\id_{[\conj{k}]},\conj{\beta})^\ast F)
	\end{equation*}
	in $\what{\GS}(\sD)^{m,n}$ is a cocartesian arrow sitting over $(\id_{[\conj{k}]},\conj{\beta})$.
\end{prp}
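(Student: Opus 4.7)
The plan is to verify the universal property of cocartesian arrows directly, leveraging the cocartesian-ness of the first component $F \to (\id, \conj{\beta})^\ast F$ in the larger fibration $\what{\GS}(\sD)^{m,n}$ and then reducing the remaining comparison to a pointwise computation. Write $H := (\id_{[\conj{k}]}, \conj{\beta})^\ast F$ for brevity. By the unstraightening, the arrow $F \to H$ is cocartesian over $(\id, \conj{\beta})$ in $\what{\GS}(\sD)^{m,n}$, and the arrow $H \to \crep(H)$ is the unit of the fiberwise reflection $\crep \dashv I^\ast$ in the fiber over $([\conj{k}], [\conj{l'}])$, whose target is cartesian by construction. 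Since $\what{\GS}(\sD)^{m,n,\rmcart}$ is a full subcategory of $\what{\GS}(\sD)^{m,n}$, checking cocartesian-ness of $f$ in the restricted fibration amounts to showing, for any cartesian $G$ over $([\conj{k''}], [\conj{l''}])$ and any base morphism $w : ([\conj{k}], [\conj{l'}]) \to ([\conj{k''}], [\conj{l''}])$, that precomposition with $f$ induces an equivalence
\[
	\Hom_{\what{\GS}(\sD)^{m,n}}(\crep(H), G)_w \xlra{\sim} \Hom_{\what{\GS}(\sD)^{m,n}}(F, G)_{w \circ (\id, \conj{\beta})}.
\]

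Using the cocartesian property of $F \to H$, the right-hand side identifies with $\Nat(w^\ast H, G)$, while the left-hand side is $\Nat(w^\ast \crep(H), G)$. Thus the problem reduces to showing that the natural transformation $\eta_w : w^\ast H \to w^\ast \crep(H)$ obtained by applying $w^\ast$ to the unit induces an equivalence $\Nat(w^\ast \crep(H), G) \to \Nat(w^\ast H, G)$. Since $G$ is cartesian, the adjunction $I''^\ast \dashv \RKan_{I''}$ (with $I'' : \bL^{\conj{k''}} \times \bvx^{\conj{l''}} \hookrightarrow \bS^{\conj{k''}} \times \bsd^{\conj{l''}}$) implies that natural transformations into $G$ are determined by their restriction along $I''$, so it suffices to check that $\eta_w$ is a pointwise equivalence after restriction along $I''$. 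Writing $w = (\psi, \conj{\alpha}, \conj{\gamma})$, a point $(\conj{x''}, \conj{y''}) \in \bL^{\conj{k''}} \times \bvx^{\conj{l''}}$ is sent by $w$ to $(\conj{\alpha}_\ast \conj{x''}, \conj{\gamma}_\ast \conj{y''})$; because $\conj{\gamma}_\ast$ carries vertices to vertices, $\conj{\gamma}_\ast \conj{y''} \in \bvx^{\conj{l'}}$. It is therefore enough to prove that the unit $H(\conj{x}, \conj{y}) \to \crep(H)(\conj{x}, \conj{y})$ is an equivalence for every $(\conj{x}, \conj{y}) \in \bS^{\conj{k}} \times \bvx^{\conj{l'}}$.

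For such $(\conj{x}, \conj{y})$, the overcategory $\bvx^{\conj{l'}}_{/\conj{y}}$ is the terminal category $\{\conj{y}\}$, since $\bvx^{\conj{l'}}$ is essentially discrete (it has no non-identity morphisms). Fubini for limits then yields
\[
	\crep(H)(\conj{x}, \conj{y}) \simeq \lim_{\conj{u} \in \bL^{\conj{k}}_{/\conj{x}}} H(\conj{u}, \conj{y}) = \lim_{\conj{u} \in \bL^{\conj{k}}_{/\conj{x}}} F(\conj{u}, \conj{\beta}_\ast \conj{y}).
\]
On the other hand, $H(\conj{x}, \conj{y}) = F(\conj{x}, \conj{\beta}_\ast \conj{y})$, and applying \Cref{lmm:inductive-cart-span} to the cartesian span $F$ through the factorization $\bL^{\conj{k}} \times \bvx^{\conj{l}} \sub \bL^{\conj{k}} \times \bsd^{\conj{l}} \sub \bS^{\conj{k}} \times \bsd^{\conj{l}}$ shows that $F$ is right Kan extended from $\bL^{\conj{k}} \times \bsd^{\conj{l}}$, so $F(\conj{x}, \conj{y'}) \simeq \lim_{\conj{u} \in \bL^{\conj{k}}_{/\conj{x}}} F(\conj{u}, \conj{y'})$ for every $\conj{y'} \in \bsd^{\conj{l}}$; evaluating at $\conj{y'} = \conj{\beta}_\ast \conj{y}$ gives the desired equivalence.

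The main subtlety is identifying the correct reduction: although $\crep$ is only a fiberwise left adjoint and $w^\ast$ does not in general preserve cartesian-ness, the combination of $G$ being cartesian and $\conj{\gamma}_\ast$ preserving vertices confines the relevant evaluation points to $\bS^{\conj{k}} \times \bvx^{\conj{l'}}$, where $H$ already agrees with its cartesian replacement because the $\bvx$-factor of the defining limit collapses. Once this reduction is in place, the remaining argument is a routine Fubini computation combined with \Cref{lmm:inductive-cart-span}.
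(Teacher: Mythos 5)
Your proof is correct, but it follows a genuinely different route from the paper's. The paper verifies the ``essentially unique filler'' formulation of cocartesian-ness directly: given $g : F \to K$ with $K$ cartesian, sitting over a composite base morphism, it factors $g$ through the cocartesian lift in the ambient fibration $\what{\GS}(\sD)^{m,n}$, applies the functoriality of $\crep$ to the resulting horizontal arrows, and closes the diagram using the fact that $K \to \crep(K)$ is an equivalence because $K$ is cartesian; uniqueness of the filler is delegated to the universal property of the cartesian replacement. You instead verify the mapping-space formulation and isolate the concrete reason the replacement is harmless: mapping into a cartesian $G$ only detects restrictions along $\bL^{\conj{k''}} \times \bvx^{\conj{l''}}$ (via $I''^\ast \dashv \RKan_{I''}$), every base morphism carries this locus into $\bS^{\conj{k}} \times \bvx^{\conj{l'}}$ because the pushforwards on $\bsd^{\bullet}$ send vertices to vertices, and on $\bS^{\conj{k}} \times \bvx^{\conj{l'}}$ the unit $H \to \crep(H)$ is already a pointwise equivalence since the $\bvx$-factor of the defining limit collapses and $F$ is right Kan extended from $\bL^{\conj{k}} \times \bsd^{\conj{l}}$ by \Cref{lmm:inductive-cart-span}. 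Your argument is longer but makes explicit a fact the paper's proof never states (that $(\id,\conj{\beta})^\ast F$ and its cartesian replacement already agree on the generating locus), and it handles uniqueness automatically; the paper's is shorter and more formal. Two cosmetic points: the adjunction you write as ``$\crep \dashv I^\ast$'' should be phrased as the unit of the idempotent monad $\RKan_I \circ I^\ast$ (equivalently, $\crep$ is left adjoint to the inclusion of cartesian spans) --- as written it does not typecheck; and you silently suppress the $\bG^\opp$-component $\psi$ of $w$ in the pointwise check, which is fine since $\psi_\ast$ acts componentwise by finite products and so preserves equivalences, but deserves a sentence.
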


\begin{proof}
	Set $G := \crep((\id_{[\conj{k}]},\conj{\beta})^\ast F)$. Let $(\conj{\gamma},\conj{\delta}) : ([\conj{k}'],[\conj{l}'']) \to ([\conj{k}],[\conj{l}'])$ be another map in $\bD^{\times (m+n)}$ and let $g : F \to K$ be an arrow in $\what{\GS}(\sD)^{m,n,\rmcart}$ sitting over $(\id_{[\conj{k}]}, \conj{\beta}) \circ (\conj{\gamma},\conj{\delta}) = (\conj{\gamma}, \conj{\beta \circ \delta})$. To prove that $f$ is a cocartesian arrow we must show that there is an essentially unique arrow $h : G \to K$ such that $h \circ f \simeq g$. The construction of $h$ is outlined in this commutative diagram:
	\begin{equation*}
		\begin{tikzcd}
			F \ar[d] \ar[dd, "f"', bend right = 80] \ar[rrd, "g", bend left] \ar[dr] & & \\
			(\id_{[\conj{k}]}, \beta)^\ast F \ar[r] \ar[d] & (\conj{\gamma}, \conj{\beta \circ \delta})^\ast F \ar[r] \ar[d] & K \ar[d, "\simeq"] \\
			G \ar[r, "h_1"'] \ar[rr, "h", bend right = 40] & H \ar[r, "h_2"'] & \crep(K)
		\end{tikzcd}
	\end{equation*}
	The vertical arrows in the large rectangle are the cartesian replacements and $H = \crep((\conj{\gamma}, \conj{\beta \circ \delta})^\ast F)$. The arrows $h_1$ and $h_2$ are uniquely determined by the universal property of the cartesian replacement. The top right triangle commutes by a basic property of cocartesian fibrations, see \Cref{sec:functors-vs-fibrations}. Finally, the right vertical arrow is an equivalence because $K$ is a cartesian span.
\end{proof}

We can combine these three propositions to prove the following:
\begin{crl} \label{crl:cart-spans-are-cocart-fibration}
	The projection $\pi : \what{\GS}(\sC)^{m,n,\rmcart} \to \bG^\opp \times \bD^{m+n, \opp}$ is a cocartesian fibration.
\end{crl}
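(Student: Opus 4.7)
The plan is to combine the three preceding propositions with the standard fact that a composite of cocartesian arrows is again cocartesian. The ambient projection $\what{\GS}(\sC)^{m,n} \to \bG^\opp \times \bD^{m+n, \opp}$ is already a cocartesian fibration by construction, being the unstraightening of the functor $\GS(\sC)^{m,n}$ from \Cref{dfn:gen-span-functor}. Since $\what{\GS}(\sC)^{m,n,\rmcart}$ is a full subcategory of $\what{\GS}(\sC)^{m,n}$, to verify that the restriction $\pi$ is itself a cocartesian fibration it suffices to exhibit, for every morphism in the base and every cartesian span $F$ over its source, an arrow out of $F$ whose target lies in $\what{\GS}(\sC)^{m,n,\rmcart}$ and which is cocartesian in the ambient fibration.

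Any morphism in $\bG^\opp \times \bD^{m+n, \opp}$ decomposes as a composite of three morphisms of the shapes $(\psi, \id, \id)$, $(\id, \conj{\alpha}, \id)$, and $(\id, \id, \conj{\beta})$, each supported in exactly one of the three factors. Starting from a cartesian span $F$, I would apply \Cref{prp:push-of-cart-is-cart}, \Cref{prp:pull-of-cart-on-sigma-is-cart}, and \Cref{prp:cart-extension} in turn at each stage of the composition. Each proposition produces a cocartesian lift whose target is again a cartesian span --- automatic in the first two cases, and by design of the cartesian replacement in the third. Concatenating these three lifts produces an arrow in $\what{\GS}(\sC)^{m,n,\rmcart}$ lying over the given morphism of the base.

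The remaining point is to observe that this concatenation is itself cocartesian in $\what{\GS}(\sC)^{m,n}$. This is the standard fact that in any functor of $\infty$-categories the class of cocartesian arrows is closed under composition, a direct consequence of the universal lifting property. I do not anticipate any genuine obstacle here: all the substantive work has been absorbed into the three preceding propositions, and this corollary is essentially a bookkeeping argument packaging their lifts along the canonical one-factor-at-a-time factorization of morphisms in the product base.
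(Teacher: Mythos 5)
Your decomposition of a base morphism into three one-factor pieces and the application of \Cref{prp:push-of-cart-is-cart}, \Cref{prp:pull-of-cart-on-sigma-is-cart}, and \Cref{prp:cart-extension} followed by composing cocartesian lifts is exactly the paper's argument. However, one point in your framing is wrong and worth correcting: you claim it suffices to produce, over each factor, an arrow that is cocartesian \emph{in the ambient fibration} $\what{\GS}(\sC)^{m,n}$, and that the concatenation is cocartesian there. That is true for the $\bG^\opp$ and $\bD^{m,\opp}$ factors, but it fails for the $\bD^{n,\opp}$ factor: the ambient cocartesian lift of $(\id,\id,\conj{\beta})$ is $F'' \to (\id,\conj{\beta})^\ast F''$, whose target is generally \emph{not} cartesian, and the corrected arrow $F'' \to \crep((\id,\conj{\beta})^\ast F'')$ is \emph{not} cocartesian for the ambient projection --- it is only cocartesian for the restricted projection $\pi$, which is precisely what \Cref{prp:cart-extension} proves (its universal property is verified only against targets $K$ that are cartesian spans, and the proof uses that hypothesis when inverting $K \to \crep(K)$). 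This is the entire reason the cartesian replacement is needed. The argument is repaired with no extra work by reading all three propositions as producing arrows cocartesian for $\pi$ itself (for the first two this follows from fullness of the subcategory, since source and target are both cartesian) and then composing cocartesian arrows for $\pi$, which is the standard closure-under-composition fact you invoke.
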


\begin{proof}
	If we start with a cartesian span $F : \bS^{\conj{k}} \times \bsd^{\conj{l}} \to \sC^\times_s$ and morphisms 
	\begin{equation*}
		\psi : \brak{s} \to \brak{t}, \quad \conj{\alpha} : [\conj{k'}] \to [\conj{k}], \quad \conj{\beta} : [\conj{l'}] \to [\conj{l}]
	\end{equation*}
	in $\bG^\opp$, $\bD^{\times m}$, and $\bD^{\times n}$, respectively, then proving the claim amounts to showing that there is a cartesian span $G : \bS^{\conj{k}'} \times \bsd^{\conj{l}'} \to \sC^\times_t$ and a cocartesian morphism $F \to G$ in $\what{\GS}(\sC)^{m,n, \rmcart}$. We can find this chain of cocartesian morphisms in $\\what{GS}(\sC)^{m,n,\rmcart}$:
	\begin{enumerate}
		\item $F \to F' := \psi_\ast F$, where $F'$ is a cartesian span by \Cref{prp:push-of-cart-is-cart};
		\item $F' \to F'' := (\conj{\alpha}, \id_{[\conj{l}]})^\ast F'$, where $F''$ is a cartesian span by \Cref{prp:pull-of-cart-on-sigma-is-cart};
		\item $F'' \to G := \crep((\id_{[\conj{k'}]}, \conj{\beta})^\ast F'')$, where $G$ is a cartesian span by definition and the arrow is cocartesian by \Cref{prp:cart-extension}.
	\end{enumerate}
	The composition of the above morphisms gives a cocartesian morphism $F \to G$, as desired.
\end{proof}

\subsection{Generalized spans with local systems} \label{sec:gen-span-local-systems}

In this section we will extend the fibration $\what{\GS}(\sC)^{m,n}$ classifying generalized spans to a fibration $\what{\GS}(\sC)^{m,n}_{\twoslash \sigma}$ that classifies generalized spans with local systems valued in some section $\sigma$ of $\what{\GS}(\sC)^{m,n}$. The specific $\sigma$ we intend to use will be constructed later, but here is the general idea:

Say $P \in \sC$. A $P$-valued local system on an object $X \in \sC$ is simply a map $X \to P$. We think of $P$ as a classifying object of some sort, much like $BG$ classifies principal $G$-bundles in $\Spaces$ and $\sCat_1$ classifies cocartesian fibrations in a large $\infty$-category of $\infty$-categories. Now assume $P$ is a functor $\sD \to \sE$. A $P$-valued local system on a functor $F : \sD \to \sE$ is a natural transformation $F \to P$ of functors. In the setting of generalized spans,
\begin{itemize}
	\item $\sD$ is $\bS^{\conj{k}} \times \bsd^{\conj{l}}$,
	\item $\sE$ is $\sC_t^\times$,
	\item and the section $\sigma$ picks out a particular span $P_{t, \conj{k}, \conj{l}} := \sigma(\brak{t}, [\conj{k}], [\conj{l}]) : \bS^{\conj{k}} \times \bsd^{\conj{l}} \to \sC^\times_t$ for every element $(\brak{t}, [\conj{k}], [\conj{l}]) \in \bG^\opp \times \bD^{m + n}$
\end{itemize}
Then a vertex of $\what{\GS}(\sC)^{m,n}_{\twoslash \sigma}$ will be exactly a generalized span $F : \bS^{\conj{k}} \times \bsd^{\conj{l}} \to \sC_t^\times$ together with a natural transformation $F \to P_{t, \conj{k}, \conj{l}}$.

\subsubsection{From functors to sections} \label{sec:from-functors-to-sections}

We start with a useful proposition.

\begin{prp}[Generalization of {\cite[Proposition 7.3]{GHN2017}}] \label{prp:section-of-span-fib}
	Let $F : \sA \to \sCat_1$ and $G : \sB \to \sCat_1$ be functors which correspond to a cartesian fibration $\what{F} \to \sA^\opp$ and a cocartesian fibration $\what{G} \to \sB$. Define $S : \sA^\opp \times \sB \to \sCat_1$ by $S(a, b) = \Fun(F(a), G(b))$ and let $\pi : \what{S} \to \sA^\opp \times \sB$ be a corresponding cocartesian fibration. Then for any functor $H : \sD \to \sA^\opp \times \sB$ we have an equivalence between the space of functors $\sD \to \what{S}$ over $\sA^\opp \times \sB$ and the space of functors $\sD \times_{\sA^\opp} \what{F} \to \what{G}$ over $\sB$:
	\begin{equation*}
		\Map_{/\sA^\opp \times \sB}(\sD, \what{S}) \simeq \Map_{/\sB}(\sD \times_{\sA^\opp} \what{F}, \what{G}).
	\end{equation*}
	In particular, sections of $\what{S}$ are equivalently given by functors $\what{F} \to \Fun_{/\sB}(\sB, \what{G})$.
\end{prp}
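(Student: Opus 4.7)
The plan is to adapt the strategy of \cite[Proposition 7.3]{GHN2017} to the present setting, where the indexing base is the product $\sA^\opp \times \sB$. The key observation is that $\what{S}$ should coincide with a ``relative functor $\infty$-category'' between the families $\what{F}$ and $\what{G}$ over $\sA^\opp \times \sB$.

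First I would pull everything back to $\sD$ along $H$. Write $\what{F}_\sD := \sD \times_{\sA^\opp} \what{F}$ and $\what{G}_\sD := \sD \times_\sB \what{G}$, where the pullbacks are formed along the two components of $H$. Functors $\sD \to \what{S}$ over $\sA^\opp \times \sB$ are equivalent to sections of the pulled-back cocartesian fibration $H^\ast \what{S} \to \sD$, which is classified by $S \circ H$ and whose fiber over $d \in \sD$ with $H(d) = (a_d, b_d)$ is $\Fun(F(a_d), G(b_d))$. The central technical step is then to identify $H^\ast \what{S}$, as a cocartesian fibration over $\sD$, with the relative functor fibration $\Fun_{/\sD}(\what{F}_\sD, \what{G}_\sD) \to \sD$, whose fiber over $d$ is exactly $\Fun((\what{F}_\sD)_d, (\what{G}_\sD)_d) \simeq \Fun(F(a_d), G(b_d))$. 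Once this identification is in hand, sections of $H^\ast \what{S}$ correspond to sections of $\Fun_{/\sD}(\what{F}_\sD, \what{G}_\sD)$, which by the defining universal property of the relative functor $\infty$-category are precisely functors $\what{F}_\sD \to \what{G}_\sD$ over $\sD$. Finally, the universal properties of the pullbacks defining $\what{F}_\sD$ and $\what{G}_\sD$ identify such functors with $\Map_{/\sB}(\sD \times_{\sA^\opp} \what{F}, \what{G})$. The ``in particular'' clause will follow by taking $\sD = \sA^\opp \times \sB$ with $H = \id$ and using the standard adjunction $\Map_{/\sB}(\sB \times \what{F}, \what{G}) \simeq \Map(\what{F}, \Fun_{/\sB}(\sB, \what{G}))$.

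The hard part will be identifying $H^\ast \what{S}$ with $\Fun_{/\sD}(\what{F}_\sD, \what{G}_\sD)$ as cocartesian fibrations over $\sD$, rather than just on the level of fibers. My approach would be to construct a comparison map using the universal properties, verify that it preserves cocartesian edges, and check that it is a fiberwise equivalence; the standard criterion that a cocartesian-edge-preserving map of cocartesian fibrations which is a fiberwise equivalence is itself an equivalence then concludes the proof. This is essentially the content of the original GHN argument carried out in our generality, and the remaining manipulations with pullbacks and relative functor categories are formal.
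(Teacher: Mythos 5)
Your argument is correct in outline but takes a genuinely different route from the paper's. The paper never pulls back along $H$: it replaces $\sD \to \sA^\opp \times \sB$ by its free cocartesian fibration $\mf{F}(\sD)$, straightens both mapping spaces into spaces of natural transformations, and then runs a (co)end computation -- using the formula $\what{F} \simeq \int^{a \in \sA^\opp} F(a) \times (\sA^\opp)_{/a}$ -- to move the $\sA$-variable out of the target $S(-)$ and into the source, arriving at $\Nat((\sD \times_{\sA^\opp} \what{F})_{/(-)}, G(-))$ before unstraightening back. You instead reduce to a single base $\sD$ by base change and appeal to the relative exponential and its universal property. Both are legitimate: your route is more geometric and makes the ``in particular'' clause immediate, while the paper's trades the fibration-level comparison you need for a purely formal end/coend manipulation that never has to exhibit an explicit equivalence of fibrations.

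Two cautions. First, the object you denote $\Fun_{/\sD}(\what{F}_\sD, \what{G}_\sD)$ must be the relative exponential \emph{over} $\sD$ (the simplicial set whose $n$-simplices are maps $\Delta^n \times_\sD \what{F}_\sD \to \what{G}_\sD$ over $\sD$, as in \cite[3.2.2.12--3.2.2.13]{HTT2009}), not the $\infty$-category of functors over $\sD$; it is a cocartesian fibration with fibers $\Fun((\what{F}_\sD)_d, (\what{G}_\sD)_d)$ precisely because $\what{F}_\sD \to \sD$ is a \emph{cartesian} fibration and $\what{G}_\sD \to \sD$ is a \emph{cocartesian} one, so the mixed variance is essential and should be made explicit. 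Second, the step you defer -- identifying this relative exponential with the unstraightening of $d \mapsto \Fun(F(a_d), G(b_d))$ \emph{as cocartesian fibrations} -- is not a routine verification: it is essentially the entire content of the single-base case of the proposition (i.e.\ of the GHN statement being generalized here), since it amounts to the compatibility of unstraightening with internal homs. Your plan (build the comparison map from the universal property, equivalently from an evaluation map $H^\ast\what{S} \times_\sD \what{F}_\sD \to \what{G}_\sD$, then check fiberwise equivalence and preservation of cocartesian edges) is the right one, but that is where the real work of the proof lives, and it should not be dismissed as formal.
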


\begin{proof}
	Consider the free cocartesian fibration 
	\begin{equation*}
		\mf{F}(H) : \mf{F}(\sD) \to \sA^\opp \times \sB
	\end{equation*}
	induced by the functor $H$, where $\mf{F}(\sD)$ is defined by the pullback
	\begin{equation*}
		\begin{tikzcd}
			\mf{F}(\sD) \ar[r] \ar[d] & \Fun(\Delta^1, \sA^\opp \times \sB) \ar[d, "\mathrm{source}"] \\
			\sD \ar[r, "H"] & \sA^\opp \times \sB 
		\end{tikzcd}
	\end{equation*}
	and $\mf{F}(H)$ is the composition 
	\begin{equation*}
		\mf{F}(\sD) \to \Fun(\Delta^1, \sA^\opp \times \sB) \xlra{\mathrm{target}} \sA^\opp \times \sB.
	\end{equation*} 
	This was originally defined for \emph{cartesian} fibrations in \cite[Definition 4.1]{GHN2017}, and here we reversed the appropriate source and target maps to obtain a version for cocartesian fibrations. The cocartesian variant of \cite[Theorem 4.5]{GHN2017} states that
	\begin{equation*}
		\Map_{/\sA^\opp \times \sB}(\sD, \what{S}) \simeq \Map^{\rmcocart}_{/\sA^\opp \times \sB}(\mf{F}(\sD), \what{S}) 
	\end{equation*}
	where the space on the right is that of maps of cocartesian fibrations. Notice, moreover, that $\mf{F}(H)$ classifies the overcategory functor
	\begin{equation*}
		\sD_{/(-)} := \sD \downarrow (-) : \sA^\opp \times \sB \to \sCat_1
	\end{equation*}
	defined on objects $(a, b) \in \sA^\opp \times \sB$ by the pullback square
	\begin{equation*}
		\begin{tikzcd}
			\sD_{/(a,b)} \ar[r] \ar[d] & (\sA^\opp \times \sB)_{/(a,b)} \ar[d] \\
			\sD \ar[r, "H"] & \sA^\opp \times \sB
		\end{tikzcd}
	\end{equation*}
	This can be seen by examining a fiber of $\mf{F}(H)$: we have a commutative diagram
	\begin{equation*}
		\begin{tikzcd}
			\ms{P}_{(a,b)} \ar[r] \ar[d] & \mf{F}(\sD) \ar[d] \ar[r] & \sD \ar[d, "H"] \\
			(\sA^\opp \times \sB)_{/(a,b)} \ar[r] \ar[d] & \Fun(\Delta^1, \sA^\opp \times \sB) \ar[r, "\mathrm{source}"] \ar[d, "\mathrm{target}"] & \sA^\opp \times \sB \\
			\Delta^0 \ar[r, "{\{(a,b)\}}"] & \sA^\opp \times \sB &  
		\end{tikzcd}
	\end{equation*}
	in which every square is a pullback; this implies that the top horizontal rectangle is a pullback, so in particular $\ms{P}_{(a,b)}$ is both the fiber of $\mf{F}(H)$ over $(a,b)$ and the pullback $\sD_{/(a,b)}$ defined above. Given this information, we can manipulate various end formulas (see \Cref{sec:ends}) to get
	\begin{align*}
		\Nat(\sD_{/(-)}, S(-)) & \simeq \int_{(a,b) \in \sA^\opp \times \sB} \Map(\sD_{/(a,b)}, \Fun(F(a), G(b))) \\
		& \simeq \int_{b \in \sB} \int_{a \in \sA^\opp} \Map((\sD_{/(a,b)}) \times F(a), G(b)) \\
		& \simeq \int_{b \in \sB} \Map \left( \int^{a \in \sA^\opp} ((\sD \times_{\sA^\opp} (\sA^\opp)_{/a}) \times F(a))_{/b}, G(b) \right) \\ 
		& \simeq \int_{b \in \sB} \Map \left( \left((\sD \times_{\sA^\opp} \left( \int^{a \in \sA^\opp} F(a) \times (\sA^\opp)_{/a} \right) \right)_{/b}, G(b) \right) \\
		& \simeq \int_{b \in \sB} \Map ((\sD \times_{\sA^\opp} \what{F})_{/b}, G(b)) \\
		& \simeq \Nat((\sD \times_{\sA^\opp} \what{F})_{/(-)}, G(-))
	\end{align*}
	where the coend formula
	\begin{equation*}
		\what{F} \simeq \int^{a \in \sA^\opp} F(a) \times (\sA^\opp)_{/a} = \colim \left(\Tw(\sA)^\opp \to \sA \times \sA^\opp \xlra{F(-) \times (\sA^\opp)_{/(-)}} \sCat_1 \right)
	\end{equation*}
	appears as \cite[Corollary 7.6]{GHN2017} with slightly different notation\footnote{In \cite{GHN2017} the twisted arrow category $\Tw(\sA)$ is defined such that $\Tw(\sA) \to \sA \times \sA^\opp$ is a right fibration, while for us $\Tw(\sA) \to \sA^\opp \times \sA$ is a left fibration -- see \Cref{sec:ends}. After a careful rearrangement of (co)ends, opposites, and (co)limits we obtain the desired formula.}. Hence we have
	\begin{align*}
		\Map_{/\sA^\opp \times \sB}(\sD, \what{S}) & \simeq \Map^{\rmcocart}_{/\sA^\opp \times \sB}(\mf{F}(\sD), \what{S}) \\
		& \simeq \Nat(\sD_{/(-)}, S(-)) & \text{(straightening)} \\
		& \simeq \Nat((\sD \times_{\sA^\opp} \what{F})_{/(-)}, G(-)) \\
		& \simeq \Map^{\rmcocart}_{/\sB}(\mf{F}(\sD \times_{\sA^\opp} \what{F}), \what{G}) & \text{(unstraightening)} \\
		& \simeq \Map_{/\sB}(\sD \times_{\sA^\opp} \what{F}, \what{G}) & \text{(free $\dashv$ forgetful).}
	\end{align*}
	To prove the last assertion let $H = \id_{\sA^\opp \times \sB}$. Then $\Map_{/\sA^\opp \times \sB}(\sA^\opp \times \sB, \what{S})$ is the space of sections of $\what{S}$ and
	\begin{align*}
		\Map_{/\sA^\opp \times \sB}(\sA^\opp \times \sB, \what{S}) & \simeq \Map_{/\sB}((\sA^\opp \times \sB) \times_{\sA^\opp} \what{F}, \what{G}) \\
		& \simeq \Map_{/\sB}(\sB \times \what{F}, \what{G}) \\
		& \simeq \Map(\what{F}, \Fun_{/\sB}(\sB, \what{G})). \qedhere
	\end{align*}
\end{proof}

Set 
\begin{equation*}
	F = \bS^{\bullet, \dotsc, \bullet} \times \bsd^{\bullet, \dotsc, \bullet} : \bD^{m + n} \to \bCat_1 \sub \sCat_1, \quad G = \sC^\times : \bG^\opp \to \sCat_1
\end{equation*}
Then we can use \Cref{prp:section-of-span-fib} and \Cref{prp:calg-are-sections} to construct a section of $\what{S} = \what{\GS}(\sC)^{m,n}$ from a functor
\begin{equation*}
	\what{\bS^{\bullet, \dotsc, \bullet}} \times \what{\bsd^{\bullet, \dotsc, \bullet}} \to \CAlg(\sC) \to \Fun_{/\bG^\opp}(\bG^\opp, \what{\sC^\times})
\end{equation*}
with domain the category of elements of $\bS^{\bullet, \dotsc, \bullet} \times \bsd^{\bullet, \dotsc, \bullet}$. This is the product of the categories of elements of $\bS^\bullet$ and $\bsd^\bullet$:
\begin{itemize}
	\item the objects of $\what{\bS^\bullet}$ are pairs $([n], \vphi : [i] \to [n])$ where $\vphi$ is inert, and a morphism $([m], \vphi) \to ([n], \psi)$ is pair consisting of a morphism $\alpha : [n] \to [m]$ in $\bD$ and a morphism $\vphi \to \alpha_\ast \psi$ in $\bS^m$;
	\item the objects of $\what{\bsd^\bullet}$ are pairs $([n], \vphi : [i] \to [n])$ where $\vphi$ is injective, and a morphism $([m], \vphi) \to ([n], \psi)$ is pair consisting of a morphism $\beta : [n] \to [m]$ in $\bD$ and a morphism $\vphi \to \beta_\ast \psi$ in $\bsd^m$.
\end{itemize}

Note now that there are ``face'' functors
\begin{equation*}
	\begin{tikzcd}[row sep = 0.2em, column sep = small]
		\what{\bS^\bullet} \ar[r, "f_\bS"] & \bD^\opp & \phantom{aaa} & \what{\bsd^\bullet} \ar[r, "f_{\bsd}"] & \bD^\opp \\
		([m], \vphi : [i] \to [m]) \ar[r, mapsto] \ar[ddd, "{(\alpha, \mu)}"] & {[i]} & & ([m], \vphi : [i] \to [m]) \ar[r, mapsto] \ar[ddd, "{(\beta, \nu)}"] & {[i]} \\
		\phantom{a} & & & & \\
		\phantom{a} & & & & \\
		([n], \psi : [j] \to [n]) \ar[r, mapsto] & {[j]} \ar[uuu, "f_\bS(\alpha)"'] & & ([n], \psi : [j] \to [n]) \ar[r, mapsto] & {[j]} \ar[uuu, "{f_{\bsd}(\beta)}"']
	\end{tikzcd}
\end{equation*}
where
\begin{equation*}
	f_\bS(\alpha)(r) = \alpha(\psi(r)) - \vphi(0), \qquad f_\bsd(\beta)(r) = \max \{s \in [i] \mid \vphi(s) \leq \beta(\psi(r))\}.
\end{equation*}

It is worth pointing out that, since $\bS^n \sub \bsd^n$, the total space $\what{\bS^\bullet}$ embeds into $\what{\bsd^\bullet}$ and the face functor $f_\bS$ is exactly $f_{\bsd}$ restricted to $\what{\bS^\bullet}$, as can be observed by calculating $f_\bsd(\beta)$ when $\vphi$ and $\psi$ are inert morphisms. We decided to keep the constructions separate to highlight the difference that ordinary spans and generalized spans play in the paper.

The functor $\bS^{\bullet, \dotsc, \bullet} \times \bsd^{\bullet, \dotsc, \bullet} : \bD^{\times (m + n)} \to \bCat_1$ factors as
\begin{equation*}
	\bD^{\times (m + n)} \to \bCat_1^{\times (m + n)} \xlra{\times} \bCat_1
\end{equation*}
and so its Grothendieck construction is obtained by taking the product of the individual total spaces:
\begin{equation*}
	\what{\bS^{\bullet, \dotsc, \bullet}} \times \what{\bsd^{\bullet, \dotsc, \bullet}} \cong \what{\bS^\bullet}^{\times m} \times \what{\bsd^\bullet}^{\times n}
\end{equation*}
It follows that we have functors
\begin{equation*}
	f^{m,n} := f_\bS^{\times m} \times f_{\bsd}^{\times n} : \what{\bS^{\bullet, \dotsc, \bullet}} \times \what{\bsd^{\bullet, \dotsc, \bullet}} \to \bD^{m + n, \opp}.
\end{equation*}
Composing $f^{m,n}$ with a functor $\cP : \bD^{m + n, \opp} \to \CAlg(\sC) \to \Fun_{\bG^\opp}(\bG^\opp, \what{\sC^\times})$ will yield a section $\sigma_{\cP}$ of $\what{\GS}(\sC)^{m,n}$. This functor $\cP$ will be constructed later, so for now we will work in complete generality assuming that we have such a $\sigma_{\cP}$.

\begin{prp} \label{prp:GS-over-section}
	There is a cocartesian fibration $\what{\GS}(\sC)^{m,n}_{\twoslash \sigma_\cP} \to \bG^\opp \times \bD^{m+n, \opp}$ with the following properties:
	\begin{enumerate}
		\item a vertex $(F, \eta) \in \what{\GS}(\sC)^{m,n}_{\twoslash \sigma_\cP}$ in the fiber over $(\brak{t}, [\conj{k}], [\conj{l}])$ is a generalized span $F : \bS^{\conj{k}} \times \bsd^{\conj{l}} \to \sC^\times_t$ together with a natural transformation
		\begin{equation*}
			\eta : F \to \sigma_\cP(t,\conj{k}, \conj{l})
		\end{equation*}
		of functors $\bS^{\conj{k}} \times \bsd^{\conj{l}} \to \sC^\times_t$;
		\item $\what{\GS}(\sC)^{m,n}_{\twoslash \sigma_\cP}$ comes with a functor to $\what{\GS}(\sC)^{m,n}$ that remembers the underlying generalized span $F$.
	\end{enumerate}
\end{prp}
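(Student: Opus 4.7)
The plan is to construct $\what{\GS}(\sC)^{m,n}_{\twoslash \sigma_\cP}$ as a ``fiberwise overcategory'' of the cocartesian fibration $\pi : \what{\GS}(\sC)^{m,n} \to \bG^\opp \times \bD^{m+n, \opp}$ along the section $\sigma_\cP$. Let $\GS(\sC)^{m,n} : \bG^\opp \times \bD^{m+n, \opp} \to \sCat_1$ denote the functor classifying $\pi$ under straightening. I would introduce a new functor
\begin{equation*}
	G : \bG^\opp \times \bD^{m+n, \opp} \to \sCat_1
\end{equation*}
sending each object $b = (\brak{t}, [\conj{k}], [\conj{l}])$ to the overcategory
\begin{equation*}
	G(b) := \GS(\sC)^{m,n}(b)_{/\sigma_\cP(b)} = \Fun(\bS^{\conj{k}} \times \bsd^{\conj{l}}, \sC^\times_t)_{/\sigma_\cP(b)}
\end{equation*}
and each morphism $\phi : b \to b'$ to the functor $(F, \eta) \mapsto (\phi_! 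F, c_\phi \circ \phi_!(\eta))$, where $\phi_!$ is the pushforward classified by $\GS(\sC)^{m,n}$ and $c_\phi : \phi_! \sigma_\cP(b) \to \sigma_\cP(b')$ is the canonical arrow in the fiber over $b'$ through which the image $\sigma_\cP(\phi)$ of $\phi$ under the section factors (essentially uniquely), by the universal property of cocartesian lifts. Unstraightening $G$ yields the desired cocartesian fibration: the fiber over $b$ is by construction the overcategory of generalized spans $F$ equipped with a natural transformation $\eta : F \to \sigma_\cP(b)$, giving property (1), and the overcategory projections $G(b) \to \GS(\sC)^{m,n}(b)$ that forget $\eta$ are natural in $b$ and assemble into a natural transformation $G \Rightarrow \GS(\sC)^{m,n}$ whose unstraightening yields the forgetful functor of property (2).

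The main obstacle is making $G$ into a genuine functor of $\infty$-categories: this requires organizing the pushforwards and the comparison arrows $c_\phi$ into coherent simplicial data, which tends to be delicate whenever one manipulates straightenings explicitly. To sidestep this, I would instead give a model-independent construction via the pullback
\begin{equation*}
	\what{\GS}(\sC)^{m,n}_{\twoslash \sigma_\cP} := \Fun(\Delta^1, \what{\GS}(\sC)^{m,n}) \times_{\Fun(\Delta^1, \bG^\opp \times \bD^{m+n, \opp}) \times \what{\GS}(\sC)^{m,n}} (\bG^\opp \times \bD^{m+n, \opp}),
\end{equation*}
where the right-hand map sends $b$ to the pair $(\id_b, \sigma_\cP(b))$ and the left-hand map records the image of an arrow under $\pi$ together with its target. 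This pullback picks out exactly those arrows in $\what{\GS}(\sC)^{m,n}$ that lie in a single fiber and have target $\sigma_\cP(b)$ for some $b$, which is precisely the data of a pair $(F, \eta)$ as in (1). Standard results on slices of (co)cartesian fibrations then guarantee that the projection to $\bG^\opp \times \bD^{m+n, \opp}$ is cocartesian, with cocartesian lifts given by the recipe $(F, \eta) \mapsto (\phi_! F, c_\phi \circ \phi_!(\eta))$ described above; the ``source'' evaluation $\mathrm{ev}_0$ descends to the forgetful functor of property (2).
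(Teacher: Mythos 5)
Your construction is correct and is essentially the paper's approach: the paper simply invokes Haugseng's general lemma producing, for a cocartesian fibration $\sE \to \sB$ with a section $s$, a cocartesian fibration $\sE_{\twoslash s} \to \sB$ characterized by exactly the universal property your explicit pullback $\Fun(\Delta^1, \sE) \times_{\Fun(\Delta^1,\sB) \times \sE} \sB$ models, and then reads off properties (1) and (2) from that universal property just as you read them off from $\mathrm{ev}_0$ and the fiber description. The only difference is presentational -- you unpack the pullback and the cocartesian lifts $(F,\eta) \mapsto (\phi_! F, c_\phi \circ \phi_!(\eta))$ that the cited lemma supplies, which is a fine substitute for the citation.
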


\begin{proof}
	This follows from the much more general statement in \cite[Lemma 6.5]{Haugseng2018}. Explicitly, given a cocartesian fibration $\sE \to \sB$ and a section $s : \sB \to \sE$ there is a cocartesian fibration $\sE_{\twoslash s} \to \sB$ with the universal property that, for any functor $\sD \to \sB$, the square
	\begin{equation*}
		\begin{tikzcd}
			\Fun_{/\sB}(\sD, \sE_{\twoslash s}) \ar[r] \ar[d] & \Fun_{/\sB}(\sD \times [1] \sqcup_{\sD \times \{1\}} \sB, \sE) \ar[d] \\
			\{s\} \ar[r] & \Fun_{/\sB}(\sB, \sE)
		\end{tikzcd}
	\end{equation*}
	is a pullback. After setting $\sD \to \sB$ equal to the functor $\Delta^0 \to \sB$ picking out a point $b \in \sB$ the diagram tells us that the vertices of $\sE_{\twoslash s}$ over $b$ correspond to vertices $e \in \sE_b$ together with an arrow $e \to s(b)$ in $\sE_b$, proving (1). After setting $\sD = \sE_{\twoslash s}$ we can consider the identity functor $\sD \to \sE_{\twoslash s}$ on the top left corner of the diagram, which corresponds to a functor $(\sE_{\twoslash s} \times [1]) \sqcup_{\sE_{\twoslash s} \times \{1\}} \sB \to \sE$ on the top right corner; restricting the latter to $\sE_{\twoslash s} \times \{0\}$ yields the desired forgetful functor $\sE_{\twoslash s} \to \sE$, proving (2). The claim then follows by taking $\sE = \what{\GS}(\sC)^{m,n}$, $\sB = \bG^\opp \times \bD^{m+n, \opp}$, and $s = \sigma_\cP$.
\end{proof}

\subsubsection{Generalized spans with local systems and cartesian base}

Together with the results of the previous section we get a diagram
\begin{equation*}
	\begin{tikzcd}
		& \what{\GS}(\sC)^{m,n}_{\twoslash \sigma_\cP} \ar[d] \ar[ddr] & \\
		\what{\GS}(\sC)^{m,n, \rmcart} \ar[r, hook] \ar[rrd] & \what{\GS}(\sC)^{m,n} \ar[dr] \\
		& & \bG^\opp \times \bD^{m+n, \opp}
	\end{tikzcd}
\end{equation*}
where the diagonal arrows are cocartesian fibrations and the two orthogonal arrows preserve cocartesian morphisms.

\begin{dfn} \label{dfn:fib-span-with-local-system}
	Denote by $\what{\GS}(\sC; \cP)^{m,n} \to \bG^\opp \times \bD^{m+n, \opp}$ a cocartesian fibration obtained by taking the pullback of the above diagram:
	\begin{equation*}
		\begin{tikzcd}
			\what{\GS}(\sC; \cP)^{m,n} \ar[d] \ar[r] & \what{\GS}(\sC)^{m,n}_{\twoslash \sigma_\cP} \ar[d] \ar[ddr] & \\
			\what{\GS}(\sC)^{m,n, \rmcart} \ar[r, hook] \ar[rrd] & \what{\GS}(\sC)^{m,n} \ar[dr] \\
			& & \bG^\opp \times \bD^{m+n, \opp}
		\end{tikzcd}
	\end{equation*}
	This contains generalized spans in $\sC$ with local systems determined by $\sigma_\cP$ and with an underlying cartesian span.
\end{dfn}

One immediate consequence of this definition is that given $F \in \what{\GS}(\sC)^{m,n, \rmcart}_{t, \conj{k}, \conj{l}}$, the space of local systems that can be put on $F$ (i.e. the fiber of the left vertical map) is
\begin{equation*}
	\Nat(F, \sigma_\cP(t, \conj{k}, \conj{l})).
\end{equation*}
It follows that if $G_1$ and $G_2$ share the same underlying cartesian span $F$ then $G_1 \simeq G_2$ if and only if their local systems are equivalent.

In our applications the section $\sigma_\cP$ does not factor through cartesian spans so we can only assume that the base is cartesian -- compare this with \cite[Lemma 6.4]{Haugseng2018}. However, in the case $n = 0$ if we look at the fibers over $\brak{1} \in \bG^\opp$ we recover the constructions of \cite[Sections 5, 6]{Haugseng2018} (see there for the notation): 
\begin{equation*}
	\what{\GS}(\sC)^{m,0}_1 = \conj{\mathrm{SPAN}}_m^+(\sC), \quad \what{\GS}(\sC)^{m,0, \rmcart}_1 = \mathrm{SPAN}_m^+(\sC),
\end{equation*}
and 
\begin{equation*}
	(\what{\GS}(\sC)^{m,0}_{\twoslash  \sigma_{\cP}})_1 =
	\begin{cases}
		\mathrm{SPAN}_m^+(\sC;\cP) & \text{$\cP$ is an $m$-uple category object in $\sC$,} \\
		\conj{\mathrm{SPAN}}_m^+(\sC;\cP) & \text{else}.
	\end{cases}
\end{equation*}

When $n \neq 0$ is arbitrary we can recover \cite[Proposition 6.7]{Haugseng2018} when $\cP$ is only partially Segal.
\begin{prp} \label{prp:P-Segal-implies-GS-Segal}
	Assume $\cP$ satisfies the Segal condition in the first $m$ variables from $\bD^\opp$. Then so does $\what{\GS}(\sC)^{m,n}_{\twoslash  \sigma_{\cP}}$ -- more precisely, any functor classified by the cocartesian fibration $\what{\GS}(\sC)^{m,n}_{\twoslash  \sigma_{\cP}} \to \bG^\opp \times \bD^{m+n, \opp}$ satisfies the Segal condition in the first $m$ variables from $\bD^{\opp}$ and in the $\bG^\opp$ variable.
\end{prp}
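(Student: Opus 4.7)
The plan is to check the two Segal conditions on the functor classifying the cocartesian fibration. By straightening and the universal property in \Cref{prp:GS-over-section}, this classifying functor $\Xi : \bG^\opp \times \bD^{m+n, \opp} \to \sCat_1$ sends $(\brak{t}, [\conj k], [\conj l])$ to the slice
\begin{equation*}
    \Xi(\brak{t}, [\conj k], [\conj l]) \simeq \Fun(\bS^{\conj k} \times \bsd^{\conj l}, \sC^\times_t)_{/\sigma_\cP(t, \conj k, \conj l)},
\end{equation*}
with functoriality coming from restriction in the domain and from post-composition with $\cP$ in the slicing object. I would verify each Segal condition independently on this explicit model.

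For the Segal condition in the $\bG^\opp$ variable, the key is that $\sC^\times$ is cartesian monoidal (\Cref{prp:cart-mon-str-on-prod}), so the Segal maps induce $\sC^\times_t \simeq \sC^{\times t}$ and hence $\Fun(-,\sC^\times_t) \simeq \Fun(-,\sC)^{\times t}$. Since $\cP$ factors through $\CAlg(\sC)$, the section $\sigma_\cP(t,\conj k,\conj l)$ corresponds under this product decomposition to the $t$-tuple of its components $(\tau_i)_\ast \sigma_\cP(t,\conj k,\conj l)$. Slicing componentwise then gives
\begin{equation*}
    \Xi(\brak{t}, [\conj k], [\conj l]) \simeq \prod_{i=1}^t \Xi(\brak{1}, [\conj k], [\conj l]),
\end{equation*}
which is precisely the required Segal decomposition. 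This step is formally parallel to the argument in \Cref{prp:push-of-cart-is-cart}.

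For the Segal condition in the $i$-th $\bD^\opp$ variable with $1 \leq i \leq m$, I would adapt the approach of \cite[Proposition 6.7]{Haugseng2018}. Without loss of generality take $i = 1$ and write $k = k_1$; the Segal inclusions $\bS^{\rho_j} : \bS^1 \to \bS^k$ for $j = 1,\dotsc, k$ induce a restriction map $\Xi(\brak 1, [k], [\conj{k'}], [\conj l]) \to \Xi(\brak 1, [1], [\conj{k'}], [\conj l]) \times_{\Xi(\brak 1, [0], \dotsb)} \dotsb$ whose being an equivalence is exactly the Segal condition. I would produce an inverse by gluing: given a compatible collection of $(F_j, \eta_j)$ on the Segal pieces, the value of $\sigma_\cP$ at any non-Segal object of $\bS^k$ (such as $\mathrm{id}_{[k]}$) is $\cP([k])$, which by the Segal assumption on $\cP$ is the limit of the Segal diagram $\cP([1]) \times_{\cP([0])} \dotsb \times_{\cP([0])} \cP([1])$. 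This pinpoints the corresponding value of $\sigma_\cP$ as a limit, and the assembly of the $\eta_j$'s into a single local system is then essentially unique.

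The main obstacle is the fact that the underlying $\infty$-category $\Fun(\bS^k, \sC)$ does not satisfy the Segal condition in $k$: the poset $\bS^k$ has ``upper'' objects (like the initial object $\mathrm{id}_{[k]}$) not in the image of the Segal inclusions, and the values of $F$ there are a priori unconstrained. The content of the proposition is therefore that \emph{in the slice} over $\sigma_\cP$, the Segal structure of $\cP$ forces the relevant extension spaces to be contractible: since $\cP$ is Segal in the first variable, any local system on the Segal pieces extends essentially uniquely to a local system on all of $\bS^k$ compatible with $\sigma_\cP$. Making this contractibility precise, via an inductive application of \Cref{lmm:inductive-cart-span} to separate the $\bsd^{\conj l}$ factors (which play no role here) from the $\bS^{\conj{k'}}$ factors, and then carefully comparing slice categories over limits versus limits of slice categories, is the technical heart of the argument.
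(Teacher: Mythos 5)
Your treatment of the $\bG^\opp$ variable is correct and is essentially what the paper has in mind: the componentwise decomposition $\sC^\times_t \simeq \sC^{\times t}$ of \Cref{prp:cart-mon-str-on-prod}, together with the fact that $\cP$ lands in $\CAlg(\sC)$ so that $\sigma_\cP(t,\conj k,\conj l)$ decomposes as a $t$-tuple of copies of $\sigma_\cP(1,\conj k,\conj l)$, reduces the Segal map in $\brak t$ to a product of slice categories.

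The argument for the $\bD^\opp$ variables has a genuine gap, located exactly where you defer to the ``technical heart''. You claim that, because $\cP$ is Segal in the first $m$ variables, a compatible family $(F_j,\eta_j)$ on the Segal pieces extends essentially uniquely to an object of the slice over all of $\bS^{k}$. This is true for the local system $\eta$ \emph{once $F$ is fixed}: Segal-ness of $\cP$ means $\sigma_\cP$ is right Kan extended from $\bL^{\conj k}$ in the $\bS$-directions (this is \cite[Lemma 6.4]{Haugseng2018}), so $\Nat(F,\sigma_\cP)$ is computed by restriction to $\bL^{\conj k}$. But it is false for the underlying span $F$ itself: the value $F(\id_{[k]})$ at the apex of $\bS^k$, with its maps to the level-one vertices, is completely unconstrained by the slice over $\sigma_\cP$. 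Concretely, if $\cP$ is the terminal commutative monoid (which is Segal), the slice is all of $\Fun(\bS^{\conj k}\times\bsd^{\conj l},\sC^\times_t)$ and the Segal map there is visibly not an equivalence. What pins down $F$ at the non-Segal vertices is the cartesian condition of \Cref{dfn:gen-cartesian-span}, which appears nowhere in your argument; the proposition is really about (and is later applied to) the fibration $\what{\GS}(\sC;\cP)^{m,n}$ of \Cref{dfn:fib-span-with-local-system}, whose underlying spans are cartesian. Once that condition is imposed, the paper's proof is a reduction rather than an adaptation: fix $\brak t$ and $[\conj l]$, adjoin over the $\bsd^{\conj l}$ factors to identify the fiber with Haugseng's $\mathrm{SPAN}^+_m(\Fun(\bsd^{\conj l},\sC^\times_t);\cP(-,\conj l)_t)$, and quote \cite[Proposition 6.7]{Haugseng2018}, which packages both the cartesian gluing of $F$ and the contractibility of the space of extensions of $\eta$.
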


\begin{proof}
	To verify the first part of the claim it will be enough to check the Segal condition on the fibers after we fix the remaining $n$ variables $[\conj{l}] \in \bD^{n, \opp}$ and the variable $\brak{t} \in \bG^\opp$. But then we can adjoin over the factors $\bsd^{\conj{l}}$ to obtain
	\begin{equation} \label{eqn:eqn1}
		(\what{\GS}(\sC)^{m,n}_{\twoslash  \sigma_{\cP}})_{t, \conj{\bullet}, \conj{l}} \simeq {\mathrm{SPAN}}_m^+(\Fun(\bsd^{\conj{l}}, \sC^\times_t); \cP(-, \conj{l})_t)_{\conj{\bullet}}
	\end{equation}
	because $\cP(-, [\conj{l}])_t : \bD^{m, \opp} \to \sC^\times_t \sub \what{\sC^\times}$ is an $m$-uple category object by assumption and so the section $\sigma_{\cP(-, [\conj{l}])_t}$ factors through the cartesian span fibration by \cite[Lemma 6.4]{Haugseng2018}. The result now follows from \cite[Proposition 6.7]{Haugseng2018}.
	
	From the formula in \Cref{eqn:eqn1} and from the fact that $\cP$ lands in $\CAlg(\sC)$ it is also easy to see that the Segal condition holds for the variable $\brak{t}$ as well, proving the second part of the claim.
\end{proof}

\subsection{Modifying symmetric monoidal category objects} \label{sec:modifying-smco}

In this section we will concentrate on the case $m = 2, n = 1$. Our goal is to construct a functor $\cP = \cQ^\otimes : \bG^\opp \times \bD^{2 + 1, \opp} \to \sC$ (which will adjoin over to a functor $\cQ^\otimes : \bD^{2 + 1, \opp} \to \CAlg(\sC)$) from which to build $\sigma_\cP$ using the results of the previous section. This functor is a modification of a symmetric monoidal category object $Q^\otimes : \bG^\opp \times \bD^\opp \to \sC$ which is better suited to serve as a local system for generalized spans, in particular for those indexed by $\bsd^\bullet$. 

\subsubsection{The path category as a modifying diagram} \label{sec:path-category}

We begin by introducing the path category, which will serve as the ``shape'' that we want part of $\cQ^\otimes$ to have.

\begin{dfn} \label{dfn:path-cat}
	Let $\Path(l) \in \bCat_2$ denote the \emph{path $2$-category} of the poset $[0,l] = \{0 < 1 < \dotsb < l\} \in \bCat_1$. This has objects the positive integers $0, 1, \dotsc, l$ and morphism categories
	\begin{equation*}
		\Hom_{\Path(l)}(i,j) :=
		\begin{cases}
			\{S \sub [i,j] \mid i,j \in S\} & i \leq j, \\
			\emptyset & i > j,
		\end{cases}
	\end{equation*} 
	where $\{S \sub [i,j] \mid i,j \in S\}$ is a poset under inclusion of subsets, and hence an ordinary category
\end{dfn}
The usual definition of $\Path(l)$ presents it as a simplicially enriched category -- see \cite[Definition 1.2.56]{Land2021} for example -- because of its usefulness in defining the simplicial nerve. Just by examining the definition we see that $\Path(0)$, $\Path(1)$, and $\Path(2)$ look like 
\begin{equation} \label{eqn:path-l}
	\begin{tikzcd}
 		& & & & & \bullet_2 & & \\
 		\bullet_0 & & \bullet_0 \arrow[r, "01"] & \bullet_1 & & & & \bullet_1 \arrow[llu, "12"'] \\
 		& & & & & \bullet_0 \arrow[rru, "01"'] \arrow[uu, "012"', bend right=40, ""{name = R, inner sep = 2pt, above}] \arrow[uu, "02", bend left=40, ""{name = L, inner sep = 2pt, below}] & & 
 		\ar[Rightarrow, from = L, to = R]
	\end{tikzcd}
\end{equation}
where the (non-trivial) $2$-morphism in $\Path(2)$ corresponds to the (non-degenerate) inclusion $\{0, 2\} \sub \{0,1,2\}$. In general, the mapping categories only depend on the difference $d = j-i$:
\begin{equation*}
	\Hom_{\Path(l)}(i,j) \cong (\Delta^1)^{\times (d-1)} \cong \Hom_{\Path(d)}(0,d).
\end{equation*}
When passing from $\Path(l)$ to $\Path(l+1)$, one only has to add a new mapping category (up to isomorphism) between $0$ and $l+1$, and the other mapping categories will be determined by the old ones.

Consider now the bi-cosimplicial object $(-,-) : \bD^{2} \to \bCat_2$ where $([m], [n])$ is the $2$-category generated by
\begin{align*}
	\text{objects: } & x_0, \dotsc, x_m; \\
	\text{$1$-morphisms: } & f_{i,j} : x_i \to x_{i+1}, & 0 \leq i \leq m-1, 0 \leq j \leq n; \\
	\text{$2$-morphisms: } & \alpha_{i,j} : f_{i,j} \to f_{i,j+1}, & 0 \leq i \leq m-1, 0 \leq j \leq n-1.
\end{align*}
The graph underlying $([m], [n])$ looks like $n+1$ parallel horizontal chains of $1$-morphisms between $m+1$ objects together with $m$ parallel vertical chains of $2$-morphisms:
\begin{equation} \label{eqn:m-n-chains}
	\begin{tikzcd}
		\bullet \arrow[rr, bend right=80, ""{name = D2, inner sep = 1pt, above}] \arrow[rr, bend right=30, ""{name = U2, inner sep = 1pt, below}] \arrow[rr, bend left=25, ""{name = D1, inner sep = 1pt, above}] \arrow[rr, bend left=80, ""{name = U1, inner sep = 1pt, below}] & & \bullet \arrow[rr, bend right=80, ""{name = D4, inner sep = 1pt, above}] \arrow[rr, bend right=30, ""{name = U4, inner sep = 1pt, below}] \arrow[rr, bend left=25, ""{name = D3, inner sep = 1pt, above}] \arrow[rr, bend left=80, ""{name = U3, inner sep = 1pt, below}] & & \bullet \ar[rr, "\dotsb" description, bend left = 80] \ar[rr, "\dotsb" description, bend left = 25] \ar[rr, "\dotsb" description, bend right = 30] \ar[rr, "\dotsb" description, bend right = 80] & & \bullet \arrow[rr, bend right=80, ""{name = D8, inner sep = 1pt, above}] \arrow[rr, bend right=30, ""{name = U8, inner sep = 1pt, below}] \arrow[rr, bend left=25, ""{name = D7, inner sep = 1pt, above}] \arrow[rr, bend left=80, ""{name = U7, inner sep = 1pt, below}] & & \bullet
 		\arrow[Rightarrow, from=U1, to=D1] \arrow[Rightarrow, from=U2, to=D2] \arrow[phantom, from=D1, to=U2, "\vdotsb" description] \arrow[Rightarrow, from=U3, to=D3] \arrow[Rightarrow, from=U4, to=D4] \arrow[phantom, from=D3, to=U4, "\vdotsb" description] \arrow[Rightarrow, from=U7, to=D7] \arrow[Rightarrow, from=U8, to=D8] \arrow[phantom, from=D7, to=U8, "\vdotsb" description]
 	\end{tikzcd}
\end{equation}

Much like $[-] : \bD \to \bCat_1$ is the cosimplicial object representing the nerve functor into simplicial sets, $(-, -)$ is the bi-cosimplicial object representing a $2$-categorical nerve into bisimplicial sets:
\begin{equation*}
	N : \bCat_2 \to \Fun(\bD^{2, \opp}, \Set), \quad N(\bC)_{m,n} := \Hom_{\bCat_2}(([m], [n]), \bC).
\end{equation*}
An $(m,n)$-simplex of $N(\bC)$ is given by $n+1$ horizontal sequences of chains of $m$ composable $1$-morphisms bounding $m$ vertical sequences of chains of $n$ composable $2$-morphisms, all in $\bC$.

\begin{dfn} \label{dfn:nerve-path-cat}
	Let $\bd : \bD^\opp \to \Fun(\bD^{2, \opp}, \Set)$ denote the functor $[l] \mapsto N(\Path(l))$. Explicitly,
	\begin{align*}
		\bd[l]_{m,n} & = \{\text{$(m,n)$-simplices of $\bd[l]$}\} \\
		& = \{\text{diagrams of shape $([m], [n])$ in $\Path(l)$}\}.
	\end{align*}
\end{dfn}
We want to think of $\bd[l]$ as an ``indexing diagram'', so we introduce the following construction:
\begin{dfn} \label{dfn:nerve-path-with-labels}
	For any $X : \bD^{2, \opp} \to \sC$, define
	\begin{equation*}
		\bd X : \bD^\opp \to \sC, \quad \bd X_l := \int_{\bD^{2, \opp}} \bd[l]_{\bullet,\bullet} \pitchfork X_{\bullet, \bullet} \in \sC.
	\end{equation*}
\end{dfn}
See \Cref{sec:ends} for a discussion on ends in $\infty$-categories. In particular, note that for any $c \in \sC$ we have an equivalence
\begin{equation*}
	\sC(c, \bd X_l) \simeq \int_{\bD^{2,\opp}} \sC(c, \bd[l]_{\bullet,\bullet} \pitchfork X_{\bullet, \bullet}) \simeq \int_{\bD^{2,\opp}} \sC(\bd[l]_{\bullet,\bullet}, \Spaces(c, X_{\bullet, \bullet})) \simeq \Nat(\bd[l], \sC(c,X))
\end{equation*}
between the space of maps $c \to \bd X_l$ and the space of natural transformations $\bd[l] \to \sC(c,X)$. Since $\bd[l]$ is valued in sets (finite sets, in fact) the natural transformations correspond to parametrized tuples of maps $c \to X$, meaning that for each $([u],[v]) \in \bD^{2,\opp}$ we have a corresponding tuple of maps $c \to X_{u,v}$ whose components correspond to the $(u,v)$-simplices of $\bd[l]$. 

This observation can be made more precise. Let $\bE := \bD^{2,\opp}_{\bd[l]/}$ fit into the pullback square
\begin{equation*}
	\begin{tikzcd}
		\bE \ar[r] \ar[d] & \Fun(\bD^{2,\opp}, \sC)^\opp_{\bd[l]/} \ar[d] \\
		\bD^{2,\opp} \ar[r, "y"] & \Fun(\bD^{2,\opp}, \sC)^\opp
	\end{tikzcd}
\end{equation*}
of ordinary categories, where $y$ is the co-Yoneda embedding defined on objects by $y(a) := \Hom_{\bD^{2,\opp}}(a,-)$. The objects of $\bE$ are pairs $((u,v), \sigma)$ where $([u],[v]) \in \bD^{2,\opp}$ and $\sigma \in \bd[l]_{u,v}$, and there is a morphism $((u,v), \sigma) \to ((u',v'), \sigma')$ between pairs for each morphism $\vphi : ([u], [v]) \to ([u'],[v'])$ in $\bD^{2,\opp}$ such that $\vphi^\ast (\sigma) = \sigma'$. We see from \Cref{crl:formula-for-end-of-cotensor} that
\begin{equation*}
	\bd X_l \simeq \lim (\bE \to \bD^{2,\opp} \xlra{\bd[l]} \sC) \simeq \lim_{\substack{(u,v) \in \bD^{2,\opp} \\ \sigma \in \bd[l]_{u,v}}} X_{u,v}.
\end{equation*}

It's hard in general to calculate $\bd X_l$ from this limit formula but we can simplify things considerably by showing that the limit only depends on the non-degenerate simplices of $\bd[l]$ and the face maps. We start with a lemma which helps narrow down the range of simplices to consider:

\begin{lmm} \label{lmm:range-of-nondegenerate-simplices}
	The bisimplicial set $\bd[l]$ has non-degenerate $(u,v)$-simplices only in the range $u + v \leq l$. That is, if $u + v > l$ then any $(u,v)$-simplex is degenerate.
\end{lmm}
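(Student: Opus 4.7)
The plan is to unpack the combinatorial data of a $(u,v)$-simplex of $\bd[l]$ explicitly, characterize when such a simplex is non-degenerate in both the horizontal and vertical direction, and then run a double-counting argument bounding $u + v$ by $l$.

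First, I would describe $\bd[l]_{u,v} = \Hom_{\bCat_2}(([u],[v]), \Path(l))$ concretely. A $2$-functor $([u],[v]) \to \Path(l)$ amounts to: a choice of objects $x_0, x_1, \dotsc, x_u \in \{0, 1, \dotsc, l\}$ (one for each object of $([u],[v])$); for each $i \in [0, u-1]$ and $j \in [0, v]$ a subset $S_{i,j} \subseteq [x_i, x_{i+1}]$ containing both $x_i$ and $x_{i+1}$ (this encodes the $1$-morphism $f_{i,j}$); together with the requirement $S_{i,j} \subseteq S_{i,j+1}$ for each $j \in [0, v-1]$ (encoding the $2$-morphism $\alpha_{i,j}$). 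In particular the existence of a $1$-morphism forces $x_i \leq x_{i+1}$, and if $x_i = x_{i+1}$ then $S_{i,j} = \{x_i\}$ is the unique possibility.

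Second, I would characterize non-degenerate simplices. From the description of the mapping categories it is immediate that the simplex is in the image of the horizontal degeneracy $s_i^{\mathrm{h}} : \bd[l]_{u-1,v} \to \bd[l]_{u,v}$ if and only if $x_i = x_{i+1}$, and in the image of the vertical degeneracy $s_j^{\mathrm{v}} : \bd[l]_{u,v-1} \to \bd[l]_{u,v}$ if and only if $S_{i,j} = S_{i,j+1}$ for every $i$. Thus a non-degenerate $(u,v)$-simplex is one satisfying
\begin{enumerate}
\item[(a)] $x_0 < x_1 < \dotsb < x_u$; and
\item[(b)] for every $j \in [0, v-1]$ there exists some $i \in [0, u-1]$ with $S_{i,j} \subsetneq S_{i,j+1}$.
\end{enumerate}

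Third, I would run the counting estimate. Condition (a) gives $u \leq x_u - x_0 \leq l$. For condition (b), each $j$ contributes $\sum_{i}(|S_{i,j+1}| - |S_{i,j}|) \geq 1$, so telescoping over $j \in [0, v-1]$ yields
\begin{equation*}
\sum_{i=0}^{u-1} (|S_{i,v}| - |S_{i,0}|) \geq v.
\end{equation*}
On the other hand, since (a) gives $x_i < x_{i+1}$ we have $|S_{i,0}| \geq 2$ and $|S_{i,v}| \leq x_{i+1} - x_i + 1$, hence $|S_{i,v}| - |S_{i,0}| \leq x_{i+1} - x_i - 1$, and summing gives $\sum_{i}(|S_{i,v}| - |S_{i,0}|) \leq x_u - x_0 - u \leq l - u$. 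Combining the two inequalities yields $v \leq l - u$, i.e., $u + v \leq l$. Contrapositively, if $u + v > l$ no simplex can satisfy both (a) and (b), so every $(u,v)$-simplex is degenerate.

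The only delicate step is the characterization of non-degenerate simplices in the bisimplicial setting: one must check that a simplex fails to be in the image of \emph{any} horizontal or vertical degeneracy precisely under the conditions (a) and (b), rather than some stronger condition involving mixed degeneracies. This follows from the Eilenberg--Zilber lemma applied separately to the two directions, using that horizontal and vertical degeneracies commute; once this is in hand, the counting argument above is elementary.
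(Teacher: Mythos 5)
Your proof is correct and follows essentially the same route as the paper's: both arguments bound $v$ by the total number of proper inclusions available in the mapping posets $\Hom_{\Path(l)}(x_i,x_{i+1})$, which telescopes to $x_u - x_0 - u \leq l - u$. Your version is somewhat more careful — you make explicit the characterization of horizontal and vertical degeneracies and handle the case $x_i = x_{i+1}$ cleanly, whereas the paper leaves the strict increase of the $x_i$'s implicit.
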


\begin{proof}
	Recall that a $(u,v)$-simplex of $\bd[l]$ is a chain as in \Cref{eqn:m-n-chains} with $u+1$ many objects of $\Path(l)$, i.e. integers $i_0, \dotsc, i_u$ between $0$ and $l$, and a chain of $v$ many $2$-morphisms in the mapping set of any two consecutive objects, i.e. inclusions of subsets of $[0,l]$. If $i, j \in [0,l]$ then the longest chain of non-identity arrows in the category $\Hom_{\Path(l)}(i,j)$ (i.e. $2$-morphisms in $\Path(l)$) has length exactly $j-i-1$. The assumption that the simplex is non-degenerate implies, in particular, that there isn't a horizontal string of identity $2$-morphisms that can be collapsed. Hence $v$ is bounded above by the sum of the lengths of all the possible chains of $2$-morphisms:
	\begin{equation*}
		v \leq \sum_{k = 0}^{u-1} (i_{k+1} - i_k - 1) = i_u - i_0 - u.
	\end{equation*}
	But $i_u - i_0 \leq l$ and thus $u + v \leq u + l - u = l$.
\end{proof}

Now let $\bE^\star \sub \bE$ denote the full subcategory containing the pairs $((u,v), \sigma)$ where $\sigma$ is a non-degenerate simplex.

\begin{prp} \label{prp:non-deg-formula-dx}
	The inclusion $I : \bE^\star \to \bE$ is final, meaning that for any functor $p : \bE \to \sC$ we have an equivalence $\lim p \simeq \lim p \circ I$ induced by pre-composition with $I$. In particular
	\begin{equation*}
		\bd X_l \simeq \lim ( \bE^\star \to \bE \to \bD^{2,\opp} \xlra{\bd[l]} \sC) \simeq \lim_{\substack{u + v \leq l \\ \sigma \in \bd[l]^\star_{u,v}}} X_{u,v},
	\end{equation*}
	where the superscript $\star$ on $\bd[l]$ indicates that we are only considering the non-degenerate simplices.
\end{prp}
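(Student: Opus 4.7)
The plan is to invoke the standard $\infty$-categorical cofinality criterion: the inclusion $I$ is final in the stated sense if and only if the comma category $\bE^\star \times_\bE \bE_{/e}$ has weakly contractible nerve for every $e \in \bE$. Rather than checking contractibility directly, I will exhibit a terminal object in each such slice. Combined with the range restriction from \Cref{lmm:range-of-nondegenerate-simplices}, this will give the asserted limit formula.

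For $e = ((u,v), \sigma) \in \bE$, the Eilenberg-Zilber lemma for bisimplicial sets produces a unique decomposition $\sigma = \pi^\bullet \sigma^\dagger$, where $\pi : (u^\dagger, v^\dagger) \to (u,v)$ is a morphism in $\bD^{2,\opp}$ corresponding to a pair of surjections in $\bD^2$ and $\sigma^\dagger$ is non-degenerate. The pair $(e^\dagger := ((u^\dagger, v^\dagger), \sigma^\dagger),\, \pi : e^\dagger \to e)$ is then an object of $\bE^\star \times_\bE \bE_{/e}$, and I claim it is terminal. For arbitrary $(e'' = ((u'', v''), \sigma''),\, \vphi'' : e'' \to e)$ with $\sigma''$ non-degenerate, this reduces to producing a unique factorization $\vphi'' = \psi \circ \pi$ in $\bD^2$ (of the underlying map $(u,v) \to (u'',v'')$) through the surjection $\pi$, satisfying the additional compatibility $\psi^\bullet \sigma'' = \sigma^\dagger$.

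The key structural input is that $\Path(l)$ is \emph{gaunt}: the only identity $1$-morphism over $i$ is the singleton $\{i\}$, and each mapping category $\Hom_{\Path(l)}(i,j)$ for $i<j$ is a poset of subsets under inclusion. A non-degenerate simplex of $\bd[l]$ therefore has a strictly increasing vertex chain and strictly increasing $2$-chains in every hom-poset, and composites of non-identity $1$- or $2$-morphisms remain non-identity. From this I will match the fibers of $\pi$ and $\vphi''$: if $\pi$ identifies two adjacent indices then the resulting identity arrow in the face $\pi^\bullet \sigma^\dagger = (\vphi'')^\bullet \sigma''$ forces $\vphi''$ to identify those indices (since $\sigma''$ has distinct vertices); conversely, an identity arrow produced by $\vphi''$ identifying adjacent indices can only appear in $\pi^\bullet \sigma^\dagger$ by having $\pi$ identify them, since non-trivial composites along $\sigma^\dagger$ remain non-identity. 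The epi-mono factorization of $\vphi''$ in $\bD^2$ thus has surjective part exactly $\pi$, giving a unique injective $\psi$; the compatibility $\psi^\bullet \sigma'' = \sigma^\dagger$ then follows from $\pi^\bullet \psi^\bullet \sigma'' = \pi^\bullet \sigma^\dagger$ together with the injectivity of $\pi^\bullet$ on non-degenerate simplices (another direct consequence of Eilenberg-Zilber uniqueness) and the fact that $\psi^\bullet \sigma''$ remains non-degenerate along the injective $\psi$ in the gaunt setting.

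The main obstacle is largely bookkeeping: maintaining the direction conventions between $\bD^2$ and $\bD^{2,\opp}$ throughout, and running the gauntness-based fiber-matching cleanly for the $1$-morphism and $2$-morphism directions simultaneously. Once these are pinned down the argument reduces to careful applications of Eilenberg-Zilber uniqueness.
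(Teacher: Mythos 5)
Your proposal follows essentially the same route as the paper's proof: invoke the recognition criterion for final functors, exhibit a terminal object in each slice $\bE^\star \times_\bE \bE_{/e}$ via the Eilenberg--Zilber decomposition of $\sigma$, and use the fact that faces of non-degenerate simplices of $\bd[l]$ remain non-degenerate (which both arguments trace back to the poset/gaunt structure of $\Path(l)$) to see that the mono part of the epi--mono factorization of any competing map lands on $\sigma^\dagger$. The argument is correct and matches the paper's.
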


\begin{proof}
	The recognition theorem for final functors (dual to \cite[4.1.3.1]{HTT2009}) states that $I$ is final precisely if, for every $x \in \bE$, the $\infty$-category $\bE^\star \times_{\bE} \bE_{/x}$ is weakly constractible. It will be enough to prove that it has a terminal object. The objects of $\bE^\star \times_{\bE} \bE_{/x}$ are arrows $y \to x$ in $\bE$ with $y \in \bE^\star$ and the morphisms are arrows $y \to y'$ in $\bE$ over $x$. In our case $x$ corresponds to some specified simplex $\sigma \in \bd[l]_{u,v}$. Then there is a \emph{unique} non-degenerate simplex $\wilde{\sigma}$ and a map $\vphi$ in $\bD^{2,\opp}$ such that $\vphi$ is the identity or a degeneracy map (meaning that the two components of $\vphi$ in $\bD$ are surjective) and $\vphi^\ast (\wilde{\sigma}) = \sigma$. Let $y$ be the object of $\bE$ corresponding to the simplex $\wilde{\sigma}$. It is easy to see that $\vphi : y \to x$ is a terminal object in $\bE^\star \times_{\bE} \bE_{/x}$: if $\psi : y' \to x$ is another object of $\bE^\star \times_{\bE} \bE_{/x}$ corresponding to some non-degenerate simplex $\tau$ with $\psi^\ast(\tau) = \sigma$ then $\psi$ can be factored as a face map $\psi_1$ followed by a degeneracy map $\psi_2$; but faces of non-degenerate simplices are non-degenerate in $\bd[l]$ (as each morphism category in $\Path(l)$ is a poset) so we have that $\psi_1^\ast(\tau)$ is non-degenerate and thus must be $\wilde{\sigma}$ by the uniqueness property of $\wilde{\sigma}$, meaning that $\psi$ factors through $y$.
	
	The condition $u + v \leq l$ in the limit follows directly from \Cref{lmm:range-of-nondegenerate-simplices}.
\end{proof}

Note that the morphisms in $\bE^\star$ correspond to face maps so $\bE^\star$ does not contain any degeneracy maps. Using \Cref{prp:non-deg-formula-dx} we can explicitly calculate $\bd X_l$ for small values of $l$. It is helpful to refer to the drawings in \Cref{eqn:path-l} when following these computations.
\begin{itemize}
	\item $\bd[0]$ is a point and only has one $(0,0)$-simplex. Hence
	\begin{equation} \label{eqn:dx0}
		\bd X_0 \simeq X_{0,0}.
	\end{equation}
	\item $\bd[1]$ is a free edge and only has one $(1,0)$-simplex with two $(0,0)$-simplices as faces. Hence
	\begin{equation} \label{eqn:dx1}
		\bd X_1 \simeq \lim (X_{0,0} \leftarrow X_{1,0} \to X_{0,0}) \simeq X_{1,0}.
	\end{equation}
	\item $\bd[2]$ has three $(0,0)$-simplices connected by a commutative triangle of $(1,0)$-simplices (i.e. one $(2,0)$-simplex) with composite $f$, one additional $(1,0)$-simplex $g$, and one $(1,1)$-simplex from $g$ to $f$. Hence
	\begin{equation} \label{eqn:dx2}
		\bd X_2 \simeq \lim \left(
		\begin{tikzcd}
			X_{2,0} \ar[d, "\mathrm{legs}"'] \ar[r, "\mathrm{composite}"] & X_{1,0} & X_{1,1} \ar[l, "t"'] \ar[d, "s"] \\
			X_{1,0} \times_{X_{0,0}} X_{1,0} \ar[r, "{(s,t)}"] & X_{0,0}^{\times 2} & X_{1,0} \ar[l, "{(s,t)}"']
		\end{tikzcd}
		\right) \simeq X_{2,0} \times_{X_{1,0}} X_{1,1}
	\end{equation}
	where $s$ and $t$ are the appropriate source and target maps.
	\item For a general $l \geq 3$, $\bd[l]$ contains the $(l,0)$-simplex corresponding to the composable chain
	\begin{equation*}
		(\{0, 1\}, \{1, 2\}, \dotsc, \{l-1, l\})
	\end{equation*}
	of $1$-morphisms in $\Path(l)$ and the $(1,1)$-simplex corresponding to the inclusion
	\begin{equation*}
		\{0, l\} \sub [0,l]
	\end{equation*}
	representing a $2$-morphism in $\Hom_{\Path(l)}(0, l)$. Since the composition of the $(l,0)$-simplex and the target of the $(1,1)$-simplex coincide we obtain a map
	\begin{equation*}
		\bd X_l \to X_{l, 0} \times_{X_{1,0}} X_{1,1}.
	\end{equation*}
	In particular we have a projection
	\begin{equation} \label{eqn:xi}
		\xi : \bd X_l \to X_{1,1}
	\end{equation}
	onto the second factor.
\end{itemize}

We end this section with an inductive result describing the structure of $\bd X_l$, which is really a convoluted (but useful) reformulation of the statement that $\Path(l)$ is obtainable from copies of $\Path(l-1)$ together with a new mapping set isomorphic to $(\Delta^{1})^{\times (l-1)}$.
\begin{lmm} \label{lmm:inductive-structure-dxl}
	Assume $X : \bD^{2,\opp} \to \sC$ is a $2$-category object in $\sC$. Let $l \geq 3$. Denote by $\wp(l)$ the poset of non-empty subsets of $[0,l]$ and fix a set $\{c_0, \dotsc, c_l\}$ of $l+1$ many elements of $\sC$. There are two functors
	\begin{align*}
		A_l : \wp(l)^\opp & \to \sC, \quad S \mapsto \bd X_{\vnorm{S}-1}, \\
		B_l : \wp(l)^\opp & \to \sC, \quad S \mapsto \prod_{i \in S} c_i.
	\end{align*}
	Both restrict to functors $A_l'$ and $B_l'$ defined on $\wp(l)^\opp \setminus \{[0,l]\} \sub \wp(l)^\opp$. Then the canonical restriction map
	\begin{equation*}
		\Nat(B_l, A_l) \to \Nat(B_l', A_l')
	\end{equation*}
	has empty or contractible fibers.
\end{lmm}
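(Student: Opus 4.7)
The plan is to reduce the statement to a monomorphism property of a single map in $\sC$ via a right Kan extension adjunction, then to establish that property by combinatorial analysis using the Segal structure on $X$.

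Since $[0,l]$ is the initial object of $\wp(l)^\opp$ (as it is the terminal object of $\wp(l)$), the inclusion $j : \wp(l)^\opp \setminus \{[0,l]\} \hookrightarrow \wp(l)^\opp$ makes $j^\ast$ admit a right adjoint $\RKan_j$. The right Kan extension $\wilde{A}_l := \RKan_j A_l'$ agrees with $A_l$ away from $[0,l]$ and, by the pointwise formula, satisfies $\wilde{A}_l([0,l]) \simeq \lim_{S \subsetneq [0,l]} \bd X_{|S|-1}$; the unit $u : A_l \to \wilde{A}_l$ is pointwise the identity except at $[0,l]$, where it equals the canonical comparison map $\pi : \bd X_l \to \lim_{S \subsetneq [0,l]} \bd X_{|S|-1}$. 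Under the adjunction equivalence $\Nat(B_l', A_l') \simeq \Nat(B_l, \wilde{A}_l)$, the restriction map in the statement is identified with post-composition by $u$. Since natural transformation spaces are ends (\Cref{sec:ends}) and limits preserve pointwise monomorphisms, it suffices to show that $\pi$ is a monomorphism in $\sC$.

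To analyze $\pi$ we invoke \Cref{prp:non-deg-formula-dx}: the source $\bd X_l$ is the limit of $X$ over the category $\bE^\star$ of non-degenerate simplices of $\bd[l]$, while $\wilde{A}_l([0,l])$ is the limit of $X$ over the full subcategory of $\bE^\star$ spanned by the ``non-essential'' simplices --- those that factor through some $\bd[|S|-1] \to \bd[l]$ induced by a proper subset $S \subsetneq [0,l]$, equivalently those whose maximum subset of $[0,l]$ (the union of all subsets of $[0,l]$ appearing as $1$- or $2$-morphism data in $\sigma$) is strictly smaller than $[0,l]$. Showing that $\pi$ is a monomorphism then reduces to showing that any compatible family over the non-essential part of $\bE^\star$ extends to at most a contractible space of compatible families over all of $\bE^\star$.

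The main obstacle is this final combinatorial claim, which I would approach by filtering the inclusion of non-essentials into $\bE^\star$ one essential simplex at a time, in an order compatible with the face relations, so that at each step the Segal decomposition of the newly-added $X_{u,v}$ into an iterated fiber product of simpler $X_{u',v'}$'s lets us recognize the constraint imposed by the pre-existing face data as uniquely determining (or obstructing) the new component. This is most cleanly handled by induction on $l$: the base case $l = 3$ is verified by direct enumeration of the short list of essential simplices --- the $(3,0)$-chain $(\{0,1\},\{1,2\},\{2,3\})$, the $(1,v)$-simplices encoding the $2$-morphism $\{0,3\} \hookrightarrow [0,3]$ in $\Path(3)$, and a few higher-$(u,v)$ companions --- while the inductive step leverages the fact that $\Path(l)$ is obtained from $\Path(l-1)$ by adjoining the new vertex $l$ together with all morphisms into it, which localizes the new essential simplices to those touching $l$.
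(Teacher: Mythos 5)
Your first half is sound and is essentially the paper's reduction in different packaging: since $[0,l]$ is initial in $\wp(l)^\opp$, the restriction map is identified (via $j^\ast \dashv \RKan_j$, or equivalently via the pullback square of \Cref{crl:nat-trans-inductive}) with post-composition by a pointwise monomorphism, namely the unit of the Kan extension, which is an equivalence away from $[0,l]$ and is the comparison map $\pi : \bd X_l \to \lim_{S \subsetneq [0,l]} \bd X_{\vnorm{S}-1}$ at $[0,l]$; so it suffices that $\sC\bigl(\prod_i c_i, \pi\bigr)$ be $(-1)$-truncated. The gap is in the second half. The claim that a compatible family over the non-essential simplices extends to at most a contractible space of families over all of $\bE^\star$ \emph{is} the content of the lemma, and your proposed proof of it --- a cell-by-cell filtration organized as an induction on $l$ --- is not carried out and, as framed, does not get off the ground. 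The essential simplices of $\bd[l]$ are those whose support (the union of the subsets of $[0,l]$ occurring in them) is all of $[0,l]$; they are not ``those touching $l$,'' and none of them arises from $\Path(l-1)$, so the inductive hypothesis for $l-1$ gives no purchase on them. (The target of $\pi_l$ already contains the terms $\bd X_{l-1}$ for the $l$-element proper subsets, which is the only place $\pi_{l-1}$ could enter, and that is not where the difficulty lies.) The base case $l=3$ ``by enumeration'' plus the remark that the Segal decomposition ``uniquely determines the new component'' is precisely the step that needs proof.

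The mechanism you need, and which the paper supplies directly with no induction on $l$, is: (i) the Segal condition on $X$ reduces the limit over all non-degenerate simplices to the limit over those with $u, v \in \{0,1\}$; and (ii) every non-degenerate $(1,0)$-simplex --- a subset $S \sub [0,l]$ viewed as a $1$-morphism of $\Path(l)$ --- is canonically the horizontal composite of its consecutive two-element pieces $\{a_j, a_{j+1}\}$, while every non-degenerate $(1,1)$-simplex --- an inclusion $S \subset T$ --- factors vertically into single-element insertions, each pulled back from a three-element subset $\{a,b,a'\}$. Since $l \geq 3$, all of these supporting subsets are proper, so every component of a point of $\sC(c, \bd X_l)$ is determined by its restriction to the proper-subset data, which is exactly the monomorphism statement. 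If you retain your filtration framing, you must prove this decomposition for each essential simplex you adjoin, at which point the induction on $l$ is doing no work and should be dropped.
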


\begin{proof}
	Note that $\wp(l)^\opp$ is a cone on $\wp(l)^\opp \setminus \{[0,l]\}$, where $[0,l]$ is the initial object. From \Cref{crl:nat-trans-inductive} we have a pullback square
	\begin{equation*}
		\begin{tikzcd}
			\Nat(B_l,A_l) \ar[r] \ar[d] & \Nat(B_l', A_l') \ar[d] \\
			\displaystyle \sC \left( \prod_{i \in [0,l]} c_i, \bd X_l \right) \ar[r, "\zeta"] & \displaystyle \lim_{S \in \wp(l)^\opp \setminus \{[0,l]\}} \sC \left( \prod_{i \in [0,l]} c_i, \bd X_{\vnorm{S}-1} \right)
		\end{tikzcd}
	\end{equation*}
	It will be enough to prove that $\zeta$, the bottom map in the diagram, has empty or contractible fibers.
	
	Let $c = \prod_{i \in [0,l]} c_i$. From the limit formula in \Cref{prp:non-deg-formula-dx} we know that there are equivalences
	\begin{align*}
		\sC \left( c, \bd X_l \right) & \simeq \lim_{\substack{u + v \leq l \\ \sigma \in \bd[l]_{u,v}^\star}} \sC \left( c, X_{u,v} \right), \\
		\sC \left( c, \bd X_{\vnorm{S}-1} \right) & \simeq \lim_{\substack{u + v \leq \vnorm{S}-1 \\ \sigma \in \bd[\vnorm{S}-1]_{u,v}^\star}} \sC \left( c, X_{u,v} \right)
	\end{align*}
	and since $X$ satisfies the Segal condition we can simplify these limits as
	\begin{align*}
		\lim_{\substack{u + v \leq l \\ \sigma \in \bd[l]_{u,v}^\star}} \sC \left( c, X_{u,v} \right) & \simeq \lim_{\substack{u, v \in \{0,1\} \\ \sigma \in \bd[l]_{u,v}^\star}} \sC \left( c, X_{u,v} \right), \\
		 \lim_{\substack{u + v \leq \vnorm{S}-1 \\ \sigma \in \bd[\vnorm{S}-1]_{u,v}^\star}} \sC \left( c, X_{u,v} \right) & \simeq \lim_{\substack{u,v \in \{0,1\} \\ \sigma \in \bd[\vnorm{S}-1]_{u,v}^\star}} \sC \left( c, X_{u,v} \right).
	\end{align*}
	We thus have a commutative diagram
	\begin{equation*}
		\begin{tikzcd}
			\sC(c, \bd X_l) \ar[d, "\simeq"'] \ar[r, "\zeta"] & \displaystyle \lim_{S \in \wp(l)^\opp \setminus \{[0,l]\}} \sC \left( c, \bd X_{\vnorm{S}-1} \right) \ar[d, "\simeq"] \\
			\displaystyle \lim_{\substack{u, v \in \{0,1\} \\ \sigma \in \bd[l]_{u,v}^\star}} \sC \left( c, X_{u,v} \right) \ar[r, "\rho"] & \displaystyle \lim_{\substack{S \in \wp(l)^\opp \setminus \{[0,l]\} \\ u,v \in \{0,1\} \\ \sigma \in \bd[\vnorm{S}-1]_{u,v}^\star}} \sC \left( c, X_{u,v} \right)
		\end{tikzcd}
	\end{equation*}
	where $\rho$ is induced by the restrictions along inclusions $S \subset [0,l]$. It is easy to see that the fibers of $\rho$ are empty or contractible by examining the low dimensional (non-degenerate) simplices of $\bd[l]$:
	\begin{itemize}
		\item the $(0,0)$-simplices are the objects of $\Path(l)$, namely the integers $i = 0, 1, \dotsc, l$, and these are obtained from the proper inclusions $S = \{i\} \sub [0,l]$;
		\item the $(1,0)$-simplices are the $1$-morphisms in $\Path(l)$, corresponding to subsets $S \sub [0,l]$; each $S$ is canonically expressed as a pushout of sets with two elements using the ordering on $\{0, 1, \dotsc, l\}$, namely
		\begin{equation*}
			S = \{a_1, \dotsc, a_s\} = \{a_1, a_2\} \cup_{\{a_2\}} \{a_2, a_3\} \cup_{\{a_3\}} \dotsb \cup_{\{a_{s-1}\}} \{a_{s-1}, a_s\}
		\end{equation*}
		(this is the horizontal composition of $1$-morphisms in $\Path(l)$) and the latter are obtained from proper inclusions $S = \{a_{j}, a_{j+1}\} \sub [0,l]$;
		\item there are no (non-degenerate) $(0,1)$-simplices since $\Path(l)$ is the nerve of a $2$-category;
		\item the $(1,1)$-simplices are the $2$-morphisms in $\Path(l)$, corresponding to inclusions $S \sub T$ of subsets of $[0,l]$ with the same boundary; these inclusions can be canonically factored as successive single-element inclusions, namely
		\begin{equation*}
			S \subset S \cup \{b_1\} \subset S \cup \{b_1, b_2\} \subset \dotsb \subset S \cup \{b_1, \dotsc, b_r\} = T
		\end{equation*}
		(this is the vertical composition of $2$-morphisms in $\Path(l)$) and each such inclusion is pulled back from the prototypical single-element inclusion $\{0,2\} \subset \{0,1,2\}$ of subsets of $[0,l]$.
	\end{itemize}
	Therefore if $f : c \to \bd X_l$ is a morphism in $\sC$ then $\rho(f)$ completely determines $f$ via pullbacks along proper inclusions $S \subset [0,l]$ -- in fact, the argument shows that it is enough to consider those $S$ that have at most $3$ elements, justifying our assumption that $l \geq 3$. This shows that $\rho$ has empty or contractible fibers and thus so does $\zeta$, concluding the proof.
\end{proof}

\subsubsection{The construction of $\cQ^\otimes$} \label{sec:construction-of-Q}

We will now move on to the construction of the promised section $\sigma$ of $\what{\GS}(\sC)^{2,1}$. Assume that we have a symmetric monoidal category object $Q^\otimes : \bG^\opp \times \bD^\opp \to \sC$, i.e. $Q^\otimes$ satisfies the Segal condition in both variables. In particular there is a representable functor
\begin{equation*}
	\sQ^\otimes : \sC^\opp \to \CAlg(\Cat(\Spaces)), \quad \sQ^\otimes(c)_{t,k} := \sC(c, Q^\otimes_{t,k}) 
\end{equation*}
that lands in symmetric monoidal category objects in $\Spaces$.

First, by \Cref{prp:underlying-monoid} and \Cref{prp:alg-to-alg-alg} we can upgrade $Q^\otimes$ to a symmetric monoidal $2$-category object $P^\otimes = \underline{W(Q^\otimes)} : \bG^\opp \times \bD^{2, \opp} \to \sC$ where the first $\bD^\opp$ factor encodes the underlying monoid of $Q^\otimes$:
\begin{equation*}
	P^\otimes_{t,m,n} := Q^\otimes_{tm, n}.
\end{equation*}
We can then consider the category object $P^\otimes_{t, m, \bullet} : \bD^\opp \to \sC$ obtained by fixing the first two entries in $P^\otimes$ and, following \Cref{dfn:n-cubes}, we can produce a symmetric monoidal $3$-simplicial object $\square^2 P^\otimes : \bG^\opp \times \bD^{3, \opp} \to \sC$ by setting
\begin{equation*}
	(\square^2 P^\otimes)_{t,m,n,p} := (\square^2 (P^\otimes_{t, m, \bullet}))_{n,p} = (\square^2 (Q^\otimes_{tm, \bullet}))_{n,p}.
\end{equation*}
Note that $\square^2 P^\otimes$ satisfies the Segal condition in all four variables, i.e. it is a symmetric monoidal $3$-category object in $\sC$. Define $\bd \square^2 P^\otimes : \bG^\opp \times \bD^{2, \opp} \to \sC$ by
\begin{equation*}
	(\bd \square^2 P^\otimes)_{t,l,n} := \bd((\square^2 P^\otimes)_{t, \bullet, \bullet, n})_l = \int_{\bD^{2, \opp}} \bd[l]_{\bullet, \bullet} \pitchfork (\square^2 P^\otimes)_{t, \bullet, \bullet, n},
\end{equation*}
where the expression $(\square^2 P^\otimes)_{t, \bullet, \bullet, n}$ inside the end denotes the $2$-category object obtained from $\square^2 P^\otimes$ by fixing the first and last entries and letting the second and third entries vary. Now $\bd \square^2 P^\otimes$ only satisfies the Segal condition in the first and last variable -- the middle two variables were integrated out and replaced by an  extra variable for which the Segal condition does not hold. Finally, let $\cQ^\otimes : \bG^\opp \times \bD^{2 + 1, \opp} \to \sC$ be given by swapping the second and third factors of $\bD^\opp$ and upgrading $\bd \square^2 P^\otimes$ to a symmetric monoidal category object in $\Fun(\bD^{2, \opp}, \sC)$ using the constructions of \Cref{prp:underlying-monoid} and \Cref{prp:alg-to-alg-alg} again:
\begin{equation} \label{eqn:formula-for-cQ}
	\cQ^\otimes_{t, k, n, l} := (\bd \square^2 P^\otimes)_{tk,l,n}.
\end{equation}

\begin{prp} \label{prp:Q-otimes-Segal}
	The functor $\cQ^\otimes : \bG^\opp \times \bD^{2 + 1, \opp} \to \sC$ satisfies the Segal condition in the first three variables, namely the ones from the factors $\bG^\opp \times \bD^{2, \opp}$.
\end{prp}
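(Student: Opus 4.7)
The plan is to peel off the three Segal conditions one layer of the construction at a time, using that ends commute with limits and that the upgrade procedure $\underline{W(-)}$ from \Cref{prp:underlying-monoid} and \Cref{prp:alg-to-alg-alg} both preserves old Segal conditions and adds a new one.

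First I would record that the intermediate object $\square^2 P^\otimes : \bG^\opp \times \bD^{3,\opp} \to \sC$ satisfies the Segal condition in each of its four variables $(t, m, n, p)$. Indeed, $P^\otimes = \underline{W(Q^\otimes)}$ satisfies Segal in $\brak{t}$ (inherited from $Q^\otimes$), in $[m]$ (newly produced by \Cref{prp:underlying-monoid}--\Cref{prp:alg-to-alg-alg}), and in the outer $[n]$ (inherited from $Q^\otimes$). Since $\square^2$ is applied pointwise in $(t, m)$ to the category object $P^\otimes_{t, m, \bullet}$, this pointwise formation preserves the Segal conditions in $t$ and $m$, and it adds Segal conditions in the two new variables $(n, p)$ by \Cref{dfn:n-cubes}.

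Next I would show that $\bd \square^2 P^\otimes : \bG^\opp \times \bD^{2,\opp} \to \sC$ inherits the Segal condition in $\brak{t}$ and in its last $\bD^\opp$-variable (called $n$), though not in $l$. By definition
\[
(\bd \square^2 P^\otimes)_{t,l,n} \;\simeq\; \int_{([m],[p]) \in \bD^{2,\opp}} \bd[l]_{m,p} \pitchfork (\square^2 P^\otimes)_{t, m, p, n}.
\]
Both the end and the cotensor by a set are limits, so they commute with the further limits appearing on the right-hand sides of the Segal conditions in $\brak{t}$ and in $n$. The integrands satisfy those Segal conditions by the previous paragraph, so the integrals do as well. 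The variable $l$ is genuinely special: it was produced by the $\bd$ construction and is not expected to satisfy a Segal condition.

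Finally, $\cQ^\otimes$ is obtained from $\bd \square^2 P^\otimes$ by first permuting the factors $[l]$ and $[n]$ (an inconsequential relabelling) and then applying the upgrade $\underline{W(-)}$ of \Cref{prp:underlying-monoid}--\Cref{prp:alg-to-alg-alg} in the $\brak{t}$-slot; this splits $\brak{t}$ into the pair $(\brak{t}, [k])$. As in the first paragraph, this upgrade preserves the Segal condition in $\brak{t}$ already known from the second paragraph, introduces the new Segal condition in $[k]$, and is pointwise in the other variables so the Segal condition in $[n]$ is preserved. Together these give Segal in $\brak{t}$, $[k]$, and $[n]$, which is the claim.

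The main obstacle is not a single hard step but the need to track carefully, at each transition, which variables are being integrated and which are being left alone; once this bookkeeping is in place, the argument reduces to the fact that limits commute with limits together with the two cited propositions on commutative monoid objects.
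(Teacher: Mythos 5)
Your proposal is correct and follows essentially the same route as the paper: the paper's proof is a two-sentence appeal to the fact that the upgrade construction of \Cref{prp:underlying-monoid}--\Cref{prp:alg-to-alg-alg} and the commutative-square construction $\square^2$ preserve (and create) the Segal conditions, exactly the bookkeeping you carry out in detail. Your extra observation that the end and cotensor in the $\bd(-)$ step are limits and hence commute with the limits defining the Segal conditions in $\brak{t}$ and $[n]$ is the (implicit) content of the paper's claim that only the first three variables are governed by those two constructions.
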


\begin{proof}
	Two of the three constructions applied -- upgrading a commutative algebra object and passing to commutative squares -- preserve the Segal condition by the results in \Cref{sec:increase-cat-number}. The first three variables in $\cQ^\otimes$ are obtained from a symmetric monoidal category object $Q^\otimes$ using exactly those constructions, and so we are done.
\end{proof}

In particular we can think of $\cQ^\otimes$ as a functor $\bD^{2 + 1, \opp} \to \CAlg(\sC)$ and thus it defines a section $\sigma_{\cQ^\otimes}$ of $\what{\GS}(\sC)^{2, 1}$ by the discussion following \Cref{prp:section-of-span-fib}. We then obtain a cocartesian fibration $\what{\GS}(\sC; \cQ^\otimes)^{2, 1} \to \bG^\opp \times \bD^{2 + 1, \opp}$ as in \Cref{dfn:fib-span-with-local-system}. The vertices of this fibration are generalized cartesian spans $F : \bS^{a, c} \times \bsd^b \to \sC^\times_t$ together with a natural transformation $\eta : F \to \sigma_{\cQ^\otimes}(t, a, c, b)$. The choice of labels $k_1 = a, k_2 = c, l_1 = b$ is intentional since we want to think of the $\bsd$ factor as parametrizing the $2$-morphisms, not the $3$-morphisms, of our desired $(\infty,3)$-category. 

\subsubsection{A description of $\cQ^\otimes$}

To clarify all the steps in this construction we will compute $\cQ^\otimes_{t,k,n,l}$ (for small $t,k,n,l$) in terms of the underlying symmetric monoidal category object $Q := Q^\otimes_{1, \bullet}$. When we speak of an object, morphism, etcetera \emph{of} $Q$ we always mean \emph{classified by} $Q$ -- that is, the element, morphism, etcetera classified by a map $c \to Q_{\bullet, \bullet}$ belongs to the symmetric monoidal category object $\sQ^\otimes(c) \in \CAlg(\Cat(\Spaces))$.

First note that
\begin{align*}
	\cQ^\otimes_{t,k,n,0} & = \int_{\bD^{2, \opp}} \bd[0]_{\bullet,\bullet} \pitchfork (\square^2 P^\otimes)_{kt,\bullet, \bullet, n} \simeq (\square^2 P^\otimes)_{kt,0,0,n} \simeq Q^\otimes_{0, n} \simeq \ast, \\
	\cQ^\otimes_{t,k,n,1} & = \int_{\bD^{2, \opp}} \bd[1]_{\bullet,\bullet} \pitchfork (\square^2 P^\otimes)_{kt,\bullet, \bullet, n} \simeq (\square^2 P^\otimes)_{kt,1,0,n} \simeq Q^\otimes_{kt, n}, \\
	\cQ^\otimes_{t,k,n,2} & \simeq \int_{\bD^{2, \opp}} \bd[2]_{\bullet, \bullet} \pitchfork (\square^2 P^\otimes)_{tk, \bullet, \bullet, n} \\
	& \simeq (\square^2 P^\otimes)_{tk, 2, 0, n} \times_{(\square^2 P^\otimes)_{tk,1,0,n}} (\square^2 P^\otimes)_{tk, 1, 1, n} \\
	& \simeq Q^\otimes_{2tk, n} \times_{Q^\otimes_{tk, n}} (\square^2 Q^\otimes)_{tk, 1, n},
\end{align*}
for any $t, k, n$, by the calculations in \Cref{eqn:dx0,eqn:dx1,eqn:dx2}.

{\bfseries The monoidal structure} is controlled by the parameter $t$. Since $\cQ^\otimes_{t, \bullet, \bullet, \bullet}$ satisfies the Segal condition in the variable $t$, its values for $t \geq 1$ are determined by the value at $t = 1$ so we will fix $t = 1$ from now on and only focus on the underlying multi-simplicial object.

{\bfseries Objects} are classified by $\cQ^\otimes_{1, 0,0,0} \simeq \ast$, so there is a contractible choice of objects and their {\bfseries tensor product} is similarly trivial.

{\bfseries $1$-morphisms} are classified by $\cQ^\otimes_{1, 1, 0, 0} \simeq \ast$, so there is a contractible choice of $1$-morphisms and their {\bfseries horizontal composition} is similarly trivial.

{\bfseries $2$-morphisms} are classified by $\cQ^\otimes_{1,1,0,1} \simeq Q^\otimes_{1, 0} = Q_0$. Thus the $2$-morphisms are classified by the objects of $Q$. The {\bfseries horizontal composition of $2$-morphisms} is classified by $\cQ^\otimes_{1, 2, 0, 1} \simeq Q^\otimes_{2,0}$, meaning that it is just given by the tensor product of the objects of $Q$. The {\bfseries vertical composition of $2$-morphisms} is classified by
\begin{equation*}
	\cQ^\otimes_{1, 1, 0, 2} \simeq Q^\otimes_{2,0} \times_{Q^\otimes_{1,0}} Q^\otimes_{1,1},
\end{equation*}
where the pullback is taken along the tensor product map in the first factor and the target map in the second factor (see \Cref{eqn:dx2}). Thus the vertical composition of two $2$-morphisms, represented by two objects of $Q$, is classified by morphisms into their tensor product:
\begin{equation*}
	x, y \text{ in $Q$} \quad \leadsto \quad \text{morphisms } z \to x \otimes y \text{ in $Q$}.
\end{equation*}
Note that this is a possibly multivalued operation as there might be multiple morphisms into $x \otimes y$.

{\bfseries $3$-morphisms} are classified by $\cQ^\otimes_{1,1,1,1} \simeq Q^\otimes_{1,1} = Q_1$, so they are given by morphisms in $Q$. The {\bfseries horizontal composition of $3$-morphisms} is classified by $\cQ^\otimes_{1,2,1,1} \simeq Q^\otimes_{2,1}$, so it is given by the tensor product of morphisms in $Q$. The {\bfseries vertical composition of $3$-morphisms} is classified by
\begin{equation*}
	\cQ^\otimes_{1,1,1,2} \simeq Q^\otimes_{2,1} \times_{Q^\otimes_{1,1}} (\square^2 Q^\otimes_{1,\bullet})_{1,1},
\end{equation*}
i.e. by commutative squares of morphisms in $Q$ one side of which is the tensor product of the morphisms in $Q$ representing the chosen $3$-morphisms:
\begin{equation*}
	x \xlra{f} x', y \xlra{g} y' \text{ in $Q$} \quad \leadsto \quad \text{commutative squares } \begin{tikzcd} z \ar[r] \ar[d] & x \otimes y \ar[d, "f \otimes g"'] \\ z' \ar[r] & x' \otimes y' \end{tikzcd} \text{ in $Q$}.
\end{equation*}
This composition is easily seen to be compatible with the vertical composition of the $2$-morphisms bounding the chosen $3$-morphisms. The {\bfseries transversal composition of $3$-morphisms} is classified by $\cQ^\otimes_{1,1,2,1} \simeq Q^\otimes_{1,2} = Q_2$ and so it is given by the composition of arrows in $Q$, as expected.

The diagram in \Cref{fig:ex-span} is a visual representation of (most of) the data contained in a generalized span $F : \bS^{k,n} \times \bsd^{l} \to \sC_t$ with local systems valued in $\cQ^\otimes$, in the simple case $(t,k,n,l) = (1,1,0,2)$. We believe this is the most instructive case when $l = 2$, as can be deduced from the computations above, since varying $k$ and $n$ only serves to increase the number of such diagrams and varying $t$ allows us to pass to tuples of objects and morphisms without much of a change. The picture for $l \geq 3$ is more complicated and will be explained in detail shortly. For convenience we also note that the non-preferred kinds of $1$- and $2$-morphisms end up being trivial:
\begin{equation*}
	\cQ^\otimes_{1, 0, 1, 0}, \cQ^\otimes_{1, 0, 0, 1}, \cQ^\otimes_{1,1,1,0}, \cQ^\otimes_{1, 0, 1, 1} \simeq \ast.
\end{equation*}

\begin{figure}[t]
	\makebox[\textwidth][c]{
	\begin{tikzpicture}[scale = 1.8]
		\node (B0) at (-2,0) {$B_1$};
		\node (B1) at (2,0) {$B_2$};
		\node (B2) at (0,{2*sqrt(3)}) {$B_3$};
		\node (B01) at (0,0) {$B_{12}$};
		\node (B12) at (1,{sqrt(3)}) {$B_{23}$};
		\node (B02) at (-1,{1*sqrt(3)}) {$B_{13}$};
		\node (B012) at (0,{2*sqrt(3)/3}) {$B_{123}$};
		\draw[->] (B01) -- (B0);
		\draw[->] (B01) -- (B1);
		\draw[->] (B12) -- (B1);
		\draw[->] (B12) -- (B2);
		\draw[->] (B02) -- (B0);
		\draw[->] (B02) -- (B2);
		\draw[->] (B012) -- (B0);
		\draw[->] (B012) -- (B1);
		\draw[->] (B012) -- (B2);
		\draw[->] (B012) -- node[left] {$\pi_{12}$} (B01);
		\draw[->] (B012) -- node[below right] {$\pi_{23}$} (B12);
		\draw[->] (B012) -- node[below] {$\pi_{13}$} (B02);
		\draw[red, thick] (0,-1) node {$x$} circle (0.3);
		\draw[red, thick] (-2,{1.3*sqrt(3)}) node {$z$} circle (0.3);
		\draw[red, thick] (2,{1.3*sqrt(3)}) node {$y$} circle (0.3);
		\draw[red, thick] (4.7,{2*sqrt(3)/3}) circle (1.7);
		\draw[red,thick] (0,-0.6) -- (0,-0.2);
		\draw[red,thick] (-1.65,{1.18*sqrt(3)}) -- (-1.25,{1.05*sqrt(3)});
		\draw[red,thick] (1.65,{1.18*sqrt(3)}) -- (1.25,{1.05*sqrt(3)});
		\draw[red,thick] (0.3,{2*sqrt(3)/3}) edge[bend right = 20] (2.8,{2*sqrt(3)/3});
		\draw[->, red] (3.7, {2*sqrt(3)/3}) node[left] {$\pi_{13}^\ast z$} -- node[above] {$\alpha$} (4.1, {2*sqrt(3)/3}) node[right] {$\pi_{12}^\ast x \otimes \pi_{23}^\ast y$};
		\node[opacity=0.5, blue] (A2) at (3.9,{2*sqrt(3)/3+1}) {$\bullet$};
		\node[opacity=0.5, blue] (A0) at (3.9,{2*sqrt(3)/3-1}) {$\bullet$};
		\node[opacity=0.5, blue] (A1) at (6,{2*sqrt(3)/3}) {$\bullet$};
		\draw[->,blue,opacity=0.5] (A0) -- node[below, red] {$\pi^\ast_{12} x$} (A1);
		\draw[->,blue,opacity=0.5] (A1) -- node[above, red] {$\pi^\ast_{23} y$} (A2);
		\draw[->,blue,opacity=0.5] (A0) edge[bend right = 45] (A2);
		\draw[->,blue,opacity=0.5] (A0) edge[bend left = 45] (A2);
		\begin{scope}[transparency group, opacity=0.5]
			\draw[-{Implies}, double equal sign distance, blue] (3.6, {2*sqrt(3)/3-0.1}) to [out=-30, in=-150] (4.2, {2*sqrt(3)/3-0.1});
		\end{scope}
	\end{tikzpicture}}
	\caption{Example of a generalized span $F \in \what{\GS}(\sC;\cQ^\otimes)^{0,1}_{1,1,0,2}$. Here each $B_{I} := \bigcap_{i \in I} B_i$ is an object of $\sC$, $x,y,z$ are objects of $Q$, and $\alpha$ is a morphism of $Q$. The underlying cartesian span is in black and the local systems are in red, while the blue shadow is a reminder of the path category $\Path(2)$ on which the local system $\cQ^\otimes_{1,1,0,2}$ is based: namely, the local system on $B_{123}$ contains the data of the two objects $\pi_{12}^\ast x$ and $\pi_{23}^\ast y$ pulled back from $B_{12}$ and $B_{23}$, the object $\pi_{13}^\ast z$ pulled back from $B_{13}$, and the morphism $\alpha$.}
	\label{fig:ex-span}
\end{figure}

\subsection{Enforcing the push-pull condition} \label{sec:enforcing-push-pull}

In this section we will construct the symmetric monoidal $(\infty,3)$-category $\sPP(\sC; Q^\otimes)$ by first restricting ourselves to a special subcategory of $\what{\GS}(\sC; \cQ^\otimes)^{2, 1}$, proving that it induces a functor $\bG^\opp \times \bD^{2 + 1, \opp} \to \sCat_1$ which satisfies the Segal condition in all variables, and then passing to Segal spaces. 

From our previous calculations we can see that the problem in obtaining a $3$-category object lies in the vertical composition of $2$-morphisms (and hence also of $3$-morphisms), which is a priori multivalued. In order to fix this issue we will need to make an assumption about the symmetric monoidal category object $Q^\otimes$.
\begin{ass} \label{ass:assumption}
	Consider the representable functor
	\begin{equation*}
		\sQ : \sC^\opp \to \Seg^1(\Spaces) \to \sCat_1, \qquad \sQ(c)_n := \sC(c, Q^\otimes_{1,n}).
	\end{equation*}
	We require that for every $f : c \to d$ in $\sC$ there is an adjunction
	\begin{equation*}
		\begin{tikzcd}
			\sQ(d) \ar[r, bend left, "f^\ast"{name=U}] & \sQ(c). \ar[l, bend left, "f_\ast"{name=D}]
			\ar[from=U, to=D, "\rotatebox{-90}{$\dashv$}" description, phantom]
		\end{tikzcd}
	\end{equation*}
	The left adjoint $f^\ast$ will be called the \emph{pullback} by $f$ and the right adjoint $f_\ast$ will be called the \emph{pushforward} by $f$. Moreover, these adjunctions extend to ensure that
	\begin{equation*}
		\sQ^\dagger : \sC \to \Seg^1(\Spaces) \to \sCat_1, \quad \sQ^\dagger(f) = f_\ast
	\end{equation*}
	is again a functor of $\infty$-categories.
\end{ass}

We need this assumption to be able to talk about the push-pull condition, which we now turn to.

\subsubsection{Push-pull spans, abstractly}

Consider $(F, \eta) \in \what{\GS}(\sC;\cQ^\otimes)^{2,1}_{t,k,n,l}$ and assume $l \geq 2$. We treat $F$ as a functor
\begin{equation*}
	F : \bsd^l \to \Fun(\bS^{k,n}, \sC^\times_t)
\end{equation*}
and $\eta : F \to \sigma_{\cQ^\otimes}(t,k,n,l)$ as a morphism in $\Fun(\bsd^l, \Fun(\bS^{k,n}, \sC^\times_t))$. For any $S \in \bsd^l$ (i.e. $\emptyset \neq S \sub [0,l]$) let $u_S = F(S) \in \sC^\times_t$, and recall that
\begin{equation*}
	u_S \simeq \prod_{i \in S} u_i \in \sC^\times_t
\end{equation*}
since $F$ is cartesian. In particular we set $u = u_{[0,l]}$ and we denote by $\pi_S : u \to u_S$ the canonical projections.

By definition of $\cQ^\otimes$, the functor $\sigma_{\cQ^\otimes}(t,k,n,l) : \bsd^l \to \Fun(\bS^{k,n}, \sC^\times_t)$ takes the value
\begin{equation*}
	\cQ^\otimes_{t, \spadesuit, \clubsuit, l} := \bd((\square^2 P^\otimes)_{t \cdot \spadesuit, \bullet, \bullet, \clubsuit})_{l} : \bS^{k,n} \to \sC^\times_t
\end{equation*}
on the element $S = [0,l]$ of $\bsd^l$, where the variables denoted by $\bullet$ come from the bisimplicial object that the operator $\bd(-)$ is acting on (see \Cref{dfn:nerve-path-with-labels}) and the variables denoted by $\spadesuit$ and $\clubsuit$ are the ones coming from the elements $(x,y)$ of $\bS^{k,n} = \bS^k \times \bS^n$ via the ``face functors'' of \Cref{sec:from-functors-to-sections}. We call $\spadesuit$ and $\clubsuit$ the \emph{height} of $x$ and $y$, respectively, because informally they correspond to the vertical position of $x$ and $y$ in the pyramid-like posets $\bS^k$ and $\bS^n$. All together, we have a local system
\begin{equation*}
	\eta : u \to \cQ^\otimes_{t, \spadesuit, \clubsuit, l}
\end{equation*}
which can be thought of as specifying a map $u(x,y) \to \cQ^\otimes_{t, K, N, l}$ in $\sC^\times_t$ for every element $(x,y) \in \bS^{k,n} = \bS^k \times \bS^n$ where $x$ and $y$ have height $K$ and $N$, respectively.

By \Cref{eqn:xi} we have a projection
\begin{equation*}
	\xi \circ \eta : u \to \cQ^\otimes_{t, \spadesuit, \clubsuit, l} = \bd((\square^2 P^\otimes)_{t \cdot \spadesuit,\bullet, \bullet, \clubsuit})_l \to (\square^2 P^\otimes)_{t \cdot \spadesuit,1,1,\clubsuit} \simeq (\square^2 Q^\otimes_{t \cdot \spadesuit, \bullet})_{1,\clubsuit}.
\end{equation*}
The Segal condition guarantees an equivalence
\begin{equation*}
	(\square^2 Q^\otimes_{t \cdot \spadesuit, \bullet})_{1,\clubsuit} \simeq (\square^2 Q^\otimes_{1, \bullet})_{1,\clubsuit}^{\times t \cdot \spadesuit} = (\square^2 Q_\bullet)_{1,\clubsuit}^{\times t \cdot \spadesuit}
\end{equation*}
so the local system $\xi \circ \eta$ precisely classifies, for every $(x, y) \in \bS^{k,n}$, a tuple (of size enumerated by $t \cdot \spadesuit$) of vertical commutative blocks in $\sQ(u(x,y))$. The latter are towers of commutative squares
\begin{equation*}
	\begin{tikzcd}
		\conj{R}_0 \ar[r, "\conj{\vphi}_0"] \ar[d] & \conj{P}_0 \ar[d] \\
		\conj{R}_1 \ar[r, "\conj{\vphi}_1"] \ar[d] & \conj{P}_1 \ar[d] \\
		\vdots \ar[d] & \vdots \ar[d] \\
		\conj{R}_{\clubsuit-1} \ar[r, "\conj{\vphi}_{\clubsuit-1}"] \ar[d] & \conj{P}_{\clubsuit-1} \ar[d] \\
		\conj{R}_\clubsuit \ar[r, "\conj{\vphi}_\clubsuit"] & \conj{P}_\clubsuit
	\end{tikzcd}
\end{equation*} 
together with their higher composition and coherence data, where the length of the vertical sides is enumerated by $\clubsuit$. Since the components of the tuples are independent of each other we will instead think of $\xi \circ \eta$ as a classifier for one single commutative block valued in the product $\sQ(u)^{\times t \cdot \spadesuit}$.

Each $\conj{\vphi}_i$ comes from the basic projection $\xi$ described in \Cref{eqn:xi} so the sources $\conj{R}_i$ and targets $\conj{P}_i$ are constrained by some associated local systems. The sources $\conj{R}_i$ fit into a diagram\footnote{We are abusing notation a little by using the same symbol to refer to a classifying morphism (such as map $a : c \to Q_1$ in $\sC$) and that which it classifies (such as the morphism $a$ in $\sQ(c)$). We ask forgiveness from the reader for this necessary confusion introduced purely to streamline the notation and avoid getting lost in the indices.}
\begin{equation*}
	\begin{tikzcd}
		u \ar[r, "\conj{\vphi}_i"] \ar[dr, "\conj{R}_i" description] \ar[d, "\pi_{0,l}"'] & Q_1^{\times t \cdot \spadesuit} \ar[d, "\mathrm{source}"] \\
		u_{0} \times u_{l} \ar[r, "\conj{r}_i"'] & Q_0^{\times t \cdot \spadesuit}
	\end{tikzcd}
\end{equation*}
induced by the local system from the inclusion $\{0,l\} \sub [0,l]$ and thus
\begin{equation*}
	\conj{R}_i \simeq \pi_{0,l}^\ast (\conj{r}_i)
\end{equation*}
in $\sQ(u)^{\times t \cdot \spadesuit}$. 

The targets $\conj{P}_i$ fit into a diagram
\begin{equation*}
	\begin{tikzcd}
		u \ar[rrr, "\conj{\vphi}_i"] \ar[drrr, "\conj{P}_i" description] \ar[d, "\lim \pi_{j,j+1}"', "\simeq"] & & & Q_1^{\times t \cdot \spadesuit} \ar[d, "\mathrm{target}"] \\
		\displaystyle \lim_{0 \leq j \leq l-1} (u_{j} \times u_{{j+1}}) \ar[rr, "(\conj{p}_{i,j})_{0 \leq j \leq l-1}"'] & & (Q^\otimes_{l,0})^{\times t \cdot \spadesuit} \ar[r, "\otimes"'] & Q_0^{\times t \cdot \spadesuit}
	\end{tikzcd}
\end{equation*}
induced by the local system from the chain of inclusions $\{j,j+1\} \sub [0,l]$ and thus
\begin{equation*}
	\conj{P}_i \simeq \bigotimes_{j = 0}^{l-1} \pi_{j,{j+1}}^\ast (\conj{p}_{i,j})
\end{equation*}
in $\sQ(u)^{\times t \cdot \spadesuit}$; this equivalence is not necessary in our definition of the push-pull condition, but we will need it later. In conclusion we have tuples of morphisms
\begin{equation*}
	\conj{\vphi}_i : \pi_{0,l}^\ast (\conj{r}_i) \to \conj{P}_i
\end{equation*}
in $\sQ(u)^{\times t \cdot \spadesuit}$ which, thanks to the adjunction $\pi_{0, l}^\ast \dashv (\pi_{0, l})_\ast$, yield new tuples of morphisms
\begin{equation} \label{eqn:push-pull-map}
	\conj{\vphi}_i^\dagger : \conj{r}_i \to (\pi_{0, l})_\ast (\conj{P}_i)
\end{equation}
in $\sQ(u_{0} \times u_{l})^{\times t \cdot \spadesuit}$. We call $\conj{\vphi}_i^\dagger$ the \emph{push-pull (tuples of) maps determined by $(F, \eta)$}.

We can now state the condition precisely.

\begin{dfn} \label{dfn:push-pull}
	Let $Q^\otimes$ satisfy \Cref{ass:assumption}. We say that $(F,\eta) \in \what{\GS}(\sC; \cQ^\otimes)^{2,1}_{t, k, n, l}$ \emph{satisfies the push-pull condition} (or \emph{is push-pull}, for short) if 
	\begin{enumerate}
		\item $l = 0$ or $l = 1$, or
		\item $l \geq 2$ and for every map $\iota : [m] \to [l]$ the induced push-pull tuples of maps $\iota^\ast(\conj{\vphi}_i^\dagger)$ determined by $\iota^\ast (F,\eta)$ are equivalences in $\sQ(u_{\iota(0)} \times u_{\iota(m)})^{\times t \cdot \spadesuit}$.
	\end{enumerate}
	Denote by $\what{\PP}(\sC; Q^\otimes) \sub \what{\GS}(\sC; \cQ^\otimes)^{2, 1}$ the full subcategory spanned by the push-pull spans.
\end{dfn}

Less opaquely, this condition states that a diagram $F : \bS^{k,n} \times \bsd^l \to \sC_t$ with local systems $\eta$ has push-pull equivalences on every slice $F(x,y) : \bsd^l \to \sC_t$ corresponding to an object $(x,y) \in \bS^{k,n}$ and on every face or degeneracy of that slice. It turns out that we only need to consider the maps $\iota : [m] \to [l]$ which are injective, meaning that it is enough to look at the faces of $F(x,y)$. This is because if $\lambda$ is surjective then the added push-pull maps in $\lambda^\ast F(x,y)$ are equivalences already, so if we assume that the condition holds for the injections $\iota$ then it will also hold for all maps $\psi$ in $\bD$ since there is a unique factorization $\psi = \iota \circ \lambda$ of $\psi$ into a surjection followed by an injection.

\subsubsection{Push-pull spans, concretely}

Let us work out the details of the push-pull condition for a few small values of $l$. We will leave the parametrized framework for a moment and fix an object $(x, y) \in \bS^{k,n}$ such that the height of $x$ is $\spadesuit = k$ and the height of $y$ is $\clubsuit = n$.

The first interesting case is $l = 2$: the local system
\begin{equation*}
	u_0 \times u_1 \times u_2 \to \cQ^\otimes_{t,k,n,2} \simeq Q^\otimes_{2tk, n} \times_{Q^\otimes_{tk, n}} (\square^2 (Q^\otimes_{tk, \bullet}))_{1, n}
\end{equation*}
classifies a commutative block
\begin{equation*}
	\begin{tikzcd}
		\pi_{0,2}^\ast (\conj{r}_0) \ar[r, "\conj{\vphi}_0"] \ar[d] & \pi_{0,1}^\ast (\conj{p}_{0,0}) \otimes \pi_{1,2}^\ast (\conj{p}_{0,1}) \ar[d] \\
		\pi_{0,2}^\ast (\conj{r}_1) \ar[r, "\conj{\vphi}_1"] \ar[d] & \pi_{0,1}^\ast (\conj{p}_{1,0}) \otimes \pi_{1,2}^\ast (\conj{p}_{1,1}) \ar[d] \\
		\vdots \ar[d] & \vdots \ar[d] \\
		\pi_{0,2}^\ast (\conj{r}_{n-1}) \ar[r, "\conj{\vphi}_{n-1}"] \ar[d] & \pi_{0,1}^\ast (\conj{p}_{n-1,0}) \otimes \pi_{1,2}^\ast (\conj{p}_{n-1,1}) \ar[d] \\
		\pi_{0,2}^\ast (\conj{r}_n) \ar[r, "\conj{\vphi}_n"] & \pi_{0,1}^\ast (\conj{p}_{n,0}) \otimes \pi_{1,2}^\ast (\conj{p}_{n,1})
	\end{tikzcd}
\end{equation*}
in $\sQ(u_0 \times u_1 \times u_2)^{\times tk}$. If each push-pull map
\begin{equation*}
	\conj{\vphi}_i^\dagger : \conj{r}_i \to (\pi_{0,2})_\ast (\pi_{0,1}^\ast (\conj{p}_{i,0}) \otimes \pi_{1,2}^\ast (\conj{p}_{i,1}))
\end{equation*}
is an equivalence then $\conj{r}_i$ is a specified witness for the vertical composition of $\conj{p}_{i,0}$ and $\conj{p}_{i,1}$ when these are treated as $2$-morphisms in our desired $(\infty, 3)$-category. We encourage the reader to compare this description with \Cref{fig:ex-span}, where $tk = 1$ and $n = 0$.

In fact we have almost produced a proof of this statement:
\begin{lmm} \label{lmm:pp-segal-2}
	The Segal projection
	\begin{equation*}
		\rho : \what{\PP}(\sC;Q^\otimes)_{t,k,n,2} \to \what{\PP}(\sC;Q^\otimes)_{t,k,n,1} \times_{\what{\PP}(\sC;Q^\otimes)_{t,k,n,0}} \what{\PP}(\sC;Q^\otimes)_{t,k,n,1}
	\end{equation*}
	is an equivalence.
\end{lmm}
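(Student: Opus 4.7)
The plan is to unwind the data of push-pull local systems and exploit \Cref{ass:assumption} to see that the data above each ``missing'' face on the $\bsd^2$ side is uniquely determined by the data on the two $\bsd^1$ sides.

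First I reduce to the fiber over a fixed underlying cartesian span. Cartesian spans on $\bsd^l$ are right Kan extended from $\bvx^l$, and $\bvx^2$ is the Segal-style union of $\bvx^1 \sqcup_{\bvx^0} \bvx^1$, so the Segal map on underlying cartesian spans is already an equivalence. This reduces the problem to the following: given fixed vertices $u_0, u_1, u_2 \in \sC^\times_t$, the space of push-pull local systems over $\bsd^2$ with these vertices is equivalent to the space of pairs of (automatically push-pull) local systems over $\bsd^1$ that agree on the common middle vertex.

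Next I unpack the local system data by using \Cref{eqn:formula-for-cQ} and the explicit computations preceding \Cref{ass:assumption}. Fixing also a point $(x,y) \in \bS^{k,n}$ of heights $(\spadesuit, \clubsuit)$, a local system over $\bsd^2$ consists of a chain of $\clubsuit$ commutative blocks whose horizontal edges have the form
\[
\conj{\vphi}_i : \pi_{0,2}^\ast \conj{r}_i \to \pi_{0,1}^\ast \conj{p}_{i,0} \otimes \pi_{1,2}^\ast \conj{p}_{i,1}
\]
in $\sQ(u_0 \times u_1 \times u_2)^{\times t \cdot \spadesuit}$, together with the higher coherence data packaged by the bisimplicial nerve $\bd[2]$. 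The Segal projection $\rho$ retains only the edge data $(\conj{p}_{i,0})$ and $(\conj{p}_{i,1})$, and forgets $(\conj{r}_i)$ and $(\conj{\vphi}_i)$.

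The push-pull condition requires each adjunct
\[
\conj{\vphi}_i^\dagger : \conj{r}_i \xrightarrow{\simeq} (\pi_{0,2})_\ast\bigl(\pi_{0,1}^\ast \conj{p}_{i,0} \otimes \pi_{1,2}^\ast \conj{p}_{i,1}\bigr)
\]
to be an equivalence. Hence $\conj{r}_i$ is recovered uniquely (up to contractible choice) as the pushforward, and $\conj{\vphi}_i$ is recovered uniquely from $\conj{\vphi}_i^\dagger \simeq \id$ via the adjunction $\pi_{0,2}^\ast \dashv (\pi_{0,2})_\ast$. This constructs an inverse to $\rho$ on objects; the same argument applied to natural transformations between push-pull spans (which restrict levelwise to natural transformations between local systems) shows that $\rho$ is fully faithful as well.

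The main subtlety is coherence: ensuring that $(\conj{r}_i, \conj{\vphi}_i)$ assemble not just levelwise but also respect the full $\bd[2]$ structure---the chain maps between consecutive levels and the higher compositional data dictated by \Cref{lmm:inductive-structure-dxl}. This is exactly what the extended functoriality part of \Cref{ass:assumption}---which demands that pushforwards give a functor $\sQ^\dagger$ of $\infty$-categories, not merely pointwise right adjoints---is there to provide. With that in hand, the push-pull extension of $(\conj{p}_{i,0}, \conj{p}_{i,1})$ to a full local system over $\bsd^2$ is seen to be essentially unique, yielding the desired equivalence $\rho$.
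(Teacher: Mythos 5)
Your proposal is correct and follows essentially the same route as the paper: both arguments hinge on the observation that the push-pull condition pins down the data on the long face $\{0,2\}$ and on the unique $2$-cell — namely $\conj{r}_i$ together with $\conj{\vphi}_i$ — as $(\pi_{0,2})_\ast(\pi_{0,1}^\ast \conj{p}_{i,0} \otimes \pi_{1,2}^\ast \conj{p}_{i,1})$ with the counit of $\pi_{0,2}^\ast \dashv (\pi_{0,2})_\ast$, up to contractible choice, after first disposing of the underlying cartesian span via its right-Kan-extension property. The paper phrases this as showing any two points of a fiber of $\rho$ are connected by an essentially unique equivalence (via an explicit prism of commutative blocks), where you instead construct a section; for the chain coherence you only really need that $(\pi_{0,2})_\ast$ is a single functor of $\infty$-categories applied to the chain of targets, rather than the full functoriality clause of \Cref{ass:assumption}, but this is a presentational point rather than a gap.
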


\begin{proof}
	This proof will use the ``parametrized'' formalism developed above, in the sense that we will work with functors $\bsd^l \to \Fun(\bS^{k,n}, \sC^\times_t)$ and use $\spadesuit$ and $\clubsuit$ to denote variables coming from the elements of $\bS^{k,n}$. When we make a choice (for example when we choose an inverse of an equivalence), we are implicitly making one simultaneous choice for all elements $\bS^{k,n}$ -- this is consistent since there are only finitely many such elements.
	
	We will show that each fiber of $\rho$ is contractible by showing that any two vertices of it are connected by an essentially unique path. Let $(F,\eta^F), (G, \eta^G) \in \what{\PP}(\sC;Q^\otimes)_{t,k,n,2}$ live over the same fiber of $\rho$: then they share two $1$-dimensional faces $(E_0,\eta^{E_0}), (E_1,\eta^{E_1})$ in $\what{\PP}(\sC;Q^\otimes)_{t,k,n,1}$. It follows that
	\begin{enumerate}
		\item The underlying cartesian spans $F$ and $G$ are equivalent. The two faces $E_0, E_1$ determine three $0$-dimensional faces $D_0, D_1, D_2 \in \what{\PP}(\sC;Q^\otimes)_{t,k,n,0}$ -- just three cartesian spans without any local systems -- which agree for both $F$ and $G$. There is a commutative diagram
		\begin{equation*}
			\begin{tikzcd}
				\displaystyle \what{\PP}(\sC;Q^\otimes)_{t,k,n,2} \ar[d] \ar[ddr, bend left, "\text{underlying span}"] \ar[dd, bend right = 80, "\text{underlying $0$-faces}"'] & \\
				\displaystyle \prod_{j = 0}^2 \what{\PP}(\sC;Q^\otimes)_{t,k,n,0} \ar[d] & \\
				\displaystyle \prod_{j = 0}^2 \what{\GS}(\sC)^{2,1, \rmcart}_{t,k,n,0} & \what{\GS}(\sC)^{2,1, \rmcart}_{t,k,n,2} \ar[l, "\simeq"']
			\end{tikzcd}
		\end{equation*}
		where the bottom map is an equivalence by the cartesian condition and the vertical maps take both $(F,\eta^F)$ and $(G,\eta^G)$ to the triple $(D_0,D_1,D_2)$. The cartesian span in $\what{\GS}(\sC)^{2,1, \rmcart}_{t,k,n,2}$ induced by $(D_0,D_1,D_2)$ is the underlying span of both $F$ and $G$, and thus they must agree. 
		\item The local systems $\eta^F$ and $\eta^G$ have equivalent $0$-dimensional and $1$-dimensional faces. The first assertion is vacuous since $\cQ^\otimes_{t,k,n,0} \simeq \ast$. By assumption $\eta^{E_0}$ and $\eta^{E_1}$ are two out of the three $1$-dimensional faces of both $\eta^F$ and $\eta^G$, and thus must agree by assumption. In particular, the target
		\begin{equation*}
			\conj{P}_i \simeq \pi^\ast_{0,1}(\conj{p}_{i,0}) \otimes \pi^\ast_{1,2}(\conj{p}_{i,1}) \in \sQ(u_0 \times u_2 \times u_1)^{\times t \cdot \spadesuit}
		\end{equation*}
		of the push-pull maps is the same for both $\eta^F$ and $\eta^G$, since it is obtained from $\eta^{E_0}$ and $\eta^{E_1}$. The last $1$-dimensional faces are, say, $\theta^F$ and $\theta^G$, and as local systems these determine two chains of morphisms
		\begin{equation*}
			\begin{tikzcd}
				\conj{r^F}_0 \ar[d] & \conj{r^G}_0 \ar[d] \\
				\vdots \ar[d] & \vdots \ar[d] \\
				\conj{r^F}_n & \conj{r^G}_n 
			\end{tikzcd}
		\end{equation*}
		in $\sQ(u_0 \times u_2)^{\times t \cdot \spadesuit}$. But since both $\eta^F$ and $\eta^G$ are push-pull we have equivalences
		\begin{equation*}
			\conj{r^F}_i \xlongrightarrow[\simeq]{\conj{\vphi^F}_i^\dagger} (\pi_{0,2})_\ast (\conj{P}_i) \xlongleftarrow[\simeq]{\conj{\vphi^G}_{i}^\dagger} \conj{r^G}_i
		\end{equation*}
		in $\sQ(u_0 \times u_2)^{\times t \cdot \spadesuit}$ commuting with the vertical maps in the chains, meaning that we can choose essentially unique horizontal equivalences
		\begin{equation*}
			\begin{tikzcd}
				\conj{r^F}_0 \ar[d] \ar[r, "\psi_0", "\simeq"'] & \conj{r^G}_0 \ar[d] \\
				\conj{r^F}_1 \ar[d] \ar[r, "\psi_1", "\simeq"'] & \conj{r^G}_1 \ar[d] \\
				\vdots \ar[d] & \vdots \ar[d] \\
				\conj{r^F}_{\clubsuit-1} \ar[d] \ar[r, "\psi_{\clubsuit-1}", "\simeq"'] & \conj{r^G}_{\clubsuit-1} \ar[d] \\
				\conj{r^F}_\clubsuit \ar[r, "\psi_{\clubsuit}", "\simeq"'] & \conj{r^G}_\clubsuit 
			\end{tikzcd}
		\end{equation*}
		in $\sQ(u_0 \times u_2)^{\times t \cdot \spadesuit}$ commuting with the push-pull maps and making each square commute. Hence $\theta^F \simeq \theta^G$.
		\item The local systems $\eta^F$ and $\eta^G$ are equivalent on their only $2$-dimensional face. These local systems are determined by the three $1$-dimensional faces $\eta^{E_0}, \eta^{E_1}, \theta^F \simeq \theta^G$ shared by $\eta^F$ and $\eta^G$ and the two commutative blocks
		\begin{equation*}
			\begin{tikzcd}[row sep = small]
				\pi_{0,2}^\ast(\conj{r^F}_0) \ar[r, "\conj{\vphi^F}_0"] \ar[d] & \conj{P}_0 \ar[d] & \pi_{0,2}^\ast (\conj{r^G}_0) \ar[l, "\conj{\vphi^G}_{0}"'] \ar[d] \\
				\pi_{0,2}^\ast(\conj{r^F}_1) \ar[r, "\conj{\vphi^F}_1"] \ar[d] & \conj{P}_1 \ar[d] & \pi_{0,2}^\ast (\conj{r^G}_1) \ar[l, "\conj{\vphi^G}_{1}"'] \ar[d] \\
				\vdots \ar[d] & \vdots \ar[d] & \vdots \ar[d] \\
				\pi_{0,2}^\ast(\conj{r^F}_{\clubsuit-1}) \ar[r, "\conj{\vphi^F}_{\clubsuit-1}"] \ar[d] & \conj{P}_{\clubsuit-1} \ar[d] & \pi_{0,2}^\ast (\conj{r^G}_{\clubsuit-1}) \ar[l, "\conj{\vphi^G}_{\clubsuit-1}"'] \ar[d] \\
				\pi_{0,2}^\ast(\conj{r^F}_\clubsuit) \ar[r, "\conj{\vphi^F}_\clubsuit"] & \conj{P}_\clubsuit & \pi_{0,2}^\ast (\conj{r^G}_\clubsuit) \ar[l, "\conj{\vphi^G}_{\clubsuit}"']
			\end{tikzcd}
		\end{equation*}
		which they classify. But by the previous point this large commutative block can be extended to a commutative triangular prism whose front side consists entirely of equivalences:
		\begin{equation*}
			\begin{tikzcd}[column sep = large, row sep = small]
				\pi_{0,2}^\ast(\conj{r^F}_0) \ar[rr, "\pi_{0,2}^\ast(\psi_0)", "\simeq"', near end] \ar[rd, "\conj{\vphi^F}_0"'] \ar[dd] & & \pi_{0,2}^\ast (\conj{r^G}_0) \ar[ld, "\conj{\vphi^G}_{0}"] \ar[dd] \\
				& \conj{P}_0 \ar[dd] & \\
				\pi_{0,2}^\ast(\conj{r^F}_1) \ar[rr, "\pi_{0,2}^\ast(\psi_1)", "\simeq"', near end, crossing over] \ar[rd, "\conj{\vphi^F}_1"'] \ar[dd] & & \pi_{0,2}^\ast (\conj{r^G}_1) \ar[ld, "\conj{\vphi^G}_{1}"] \ar[dd] \\
				& \conj{P}_1 \ar[dd] & \\
				\vdots \ar[dd] & & \vdots \ar[dd] \\
				& \vdots \ar[dd] & \\
				\pi_{0,2}^\ast(\conj{r^F}_{\clubsuit-1}) \ar[rr, "\pi_{0,2}^\ast(\psi_{\clubsuit-1})", "\simeq"', near end, crossing over] \ar[rd, "\conj{\vphi^F}_{\clubsuit-1}"'] \ar[dd] & & \pi_{0,2}^\ast (\conj{r^G}_{\clubsuit-1}) \ar[ld, "\conj{\vphi^G}_{\clubsuit-1}"] \ar[dd] \\
				& \conj{P}_{\clubsuit-1} \ar[dd] & \\
				\pi_{0,2}^\ast(\conj{r^F}_\clubsuit) \ar[rr, "\pi_{0,2}^\ast(\psi_\clubsuit)", "\simeq"', near end, crossing over] \ar[rd, "\conj{\vphi^F}_\clubsuit"'] & & \pi_{0,2}^\ast (\conj{r^G}_\clubsuit) \ar[ld, "\conj{\vphi^G}_{\clubsuit}"] \\
				& \conj{P}_\clubsuit &
			\end{tikzcd}
		\end{equation*}
		Since the triangular slices and the square faces are all commutative, and since the horizontal maps are equivalences, the two back sides are equivalent commutative blocks. Hence $\eta^F \simeq \eta^G$, as needed.
	\end{enumerate}
	All of the choices made in establishing the equivalence between $\eta^F$ and $\eta^G$ are essentially unique, since the $\psi_i$'s were chosen from a contractible space of inverses all at once for every element of $\bS^{k,n}$, and compatible, since we made them all at once for all possible values of $\spadesuit$ and $\clubsuit$.
\end{proof}

Going back for one last example (where again we fix $(x,y) \in \bS^{k,n}$ with $\spadesuit = k$ and $\clubsuit = n$), when $l = 3$ we have six local systems
\begin{align*}
	& u_0 \times u_1 \xlra{\conj{p_{\bullet, 0}}} \cQ^\otimes_{t,k,n,1}, & & u_1 \times u_2 \xlra{\conj{p_{\bullet, 1}}} \cQ^\otimes_{t,k,n,1}, & & u_2 \times u_3 \xlra{\conj{p_{\bullet, 2}}} \cQ^\otimes_{t,k,n,1} \\
	& u_0 \times u_2 \xlra{\conj{q_\bullet}} \cQ^\otimes_{t,k,n,1}, & & u_1 \times u_3 \xlra{\conj{r_{\bullet}}} \cQ^\otimes_{t,k,n,1}, & & u_0 \times u_3 \xlra{\conj{s_{\bullet}}} \cQ^\otimes_{t,k,n,1}
\end{align*}
on the $1$-dimensional faces of $F$ which classify six chains
\begin{equation*}
	\begin{tikzcd}[row sep = small]
		\conj{p}_{0,0} \ar[d] & \conj{p}_{0,1} \ar[d] & \conj{p}_{0,2} \ar[d] & \conj{q}_{0} \ar[d] & \conj{r}_{0} \ar[d] & \conj{s}_{0} \ar[d] \\
		\conj{p}_{1,0} \ar[d] & \conj{p}_{1,1} \ar[d] & \conj{p}_{1,2} \ar[d] & \conj{q}_{1} \ar[d] & \conj{r}_{1} \ar[d] & \conj{s}_{1} \ar[d] \\
		\vdots \ar[d] & \vdots \ar[d] & \vdots \ar[d] & \vdots \ar[d] & \vdots \ar[d] & \vdots \ar[d] \\
		\conj{p}_{n-1,0} \ar[d] & \conj{p}_{n-1,1} \ar[d] & \conj{p}_{n-1,2} \ar[d] & \conj{q}_{n-1} \ar[d] & \conj{r}_{n-1} \ar[d] & \conj{s}_{n-1} \ar[d]\\
		\conj{p}_{n,0} & \conj{p}_{n,1} & \conj{p}_{n,2} & \conj{q}_n & \conj{r}_n & \conj{s}_n
	\end{tikzcd}
\end{equation*}
of $tk$-tuples of $1$-morphisms their respective $\infty$-categories $\sQ(-)$. The four local systems
\begin{align*}
	& u_0 \times u_1 \times u_2 \to \cQ^\otimes_{t,k,n,2}, & & u_1 \times u_2 \times u_3 \to \cQ^\otimes_{t,k,n,2} \\
	& u_0 \times u_1 \times u_3 \to \cQ^\otimes_{t,k,n,2}, & & u_0 \times u_2 \times u_3 \to \cQ^\otimes_{t,k,n,2}
\end{align*}
on the $2$-dimensional faces of $F$ determine equivalences
\begin{align*}
	& \conj{q}_i \simeq (\pi_{0,2})_\ast (\pi_{0,1}^\ast (\conj{p}_{i,0}) \otimes \pi_{1,2}^\ast (\conj{p}_{i,1})), & & \conj{r}_i \simeq (\pi_{1,3})_\ast (\pi_{1,2}^\ast (\conj{p}_{i,1}) \otimes \pi_{2,3}^\ast (\conj{p}_{i,2})), \\
	& \conj{s}_i \simeq (\pi_{0,3})_\ast (\pi_{0,1}^\ast (\conj{p}_{i,0}) \otimes \pi_{1,3}^\ast (\conj{r}_{i})), & & \conj{s}_i \simeq (\pi_{0,3})_\ast (\pi_{0,2}^\ast (\conj{q}_{i}) \otimes \pi_{2,3}^\ast (\conj{p}_{i,2}))
\end{align*}
via the push-pull maps. Finally, the local system
\begin{equation*}
	u_0 \times u_1 \times u_2 \times u_3 \to \cQ^\otimes_{t,k,n,3}
\end{equation*}
on the only $3$-dimensional face of $F$ ensures that these equivalences fit into the commutative diagrams
\begin{equation*}
	\begin{tikzcd}
		\conj{s}_i \ar[dr, "\simeq"] \ar[r, "\simeq"] \ar[d, "\simeq"'] & (\pi_{0,3})_\ast (\pi_{0,1}^\ast (\conj{p}_{i,0}) \otimes \pi_{1,3}^\ast (\conj{r}_{i})) \ar[d, "\simeq"] \\
		(\pi_{0,3})_\ast (\pi_{0,2}^\ast (\conj{q}_{i}) \otimes \pi_{2,3}^\ast (\conj{p}_{i,2})) \ar[r, "\simeq"'] & (\pi_{0,3})_\ast (\pi_{0,1}^\ast (\conj{p}_{i,0}) \otimes \pi_{1,2}^\ast(\conj{p}_{i,1}) \otimes \pi_{2,3}^\ast (\conj{p}_{i,2}))
	\end{tikzcd}
\end{equation*}
of $tk$-tuples in $\sQ(u_0 \times u_3)$. Thus $\conj{s}_i$ is a specified witness for the vertical composition of $\conj{p}_{i,0}$, $\conj{p}_{i,1}$, and $\conj{p}_{i,2}$ as $2$-morphisms, and also contains the data of all possible choices of bracketing.

\subsubsection{The $(\infty,3)$-category of push-pull spans}

Our next order of business is to show that $\what{\PP}(\sC; Q^\otimes)$ determines a symmetric monoidal $(\infty, 3)$-category. First we show that it determines a functor from $\bG^\opp \times \bD^{2 + 1, \opp}$.

\begin{prp}
	The projection $\what{\PP}(\sC; Q^\otimes) \sub \what{\GS}(\sC; \cQ^\otimes)^{2, 1} \to \bG^\opp \times \bD^{2 + 1, \opp}$ is a cocartesian fibration.
\end{prp}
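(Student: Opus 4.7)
The starting point is that $\what{\GS}(\sC;\cQ^\otimes)^{2,1} \to \bG^\opp \times \bD^{2+1,\opp}$ is already a cocartesian fibration and $\what{\PP}(\sC;Q^\otimes)$ is a full subcategory by construction. Hence the plan is to verify that the ambient cocartesian lifts preserve the push-pull property: whenever $(F, \eta)$ is push-pull and we lift a morphism $(\psi, \alpha_1, \alpha_2, \beta)$ out of it, the target vertex is again push-pull. I would decompose the morphism into its three components and treat each independently, following the three-step strategy used to show $\what{\GS}(\sC)^{2,1,\rmcart}$ is a cocartesian fibration.

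For $\psi: \brak{s} \to \brak{t}$ in $\bG^\opp$, the pushforward $\psi_\ast$ regroups factors componentwise in $\sC^\times$ and, by \Cref{ass:assumption} together with the symmetric monoidal structure on $\sQ^\otimes$, preserves the pullback-pushforward adjunctions underlying the push-pull maps. The push-pull tuples for $\psi_\ast(F,\eta)$ are thus rearrangements and tensor products of the original ones, so they remain equivalences. For $(\alpha_1, \alpha_2)$ in $\bD^{\times 2,\opp}$, pullback leaves the $\bsd^l$ direction untouched and only reindexes the underlying $\bS^{k',n'}$; since the push-pull condition is pointwise in $(x,y) \in \bS^{k,n}$, it is inherited immediately.

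The main content is the case $\beta: [l'] \to [l]$ in $\bD^\opp$. Factor $\beta = \iota \circ \lambda$ as an injection after a surjection. For $\iota$ alone, the pullback preserves cartesianness without replacement, and the push-pull equivalences for $\iota^\ast(F, \eta)$ along any $\kappa: [m] \to [l'']$ identify with those of $(F, \eta)$ along $\iota \circ \kappa$, hence are equivalences. For $\lambda$ surjective, the cocartesian lift uses the cartesian replacement $\crep(\lambda^\ast F)$ from \Cref{prp:cart-extension}; here I would invoke the observation immediately following \Cref{dfn:push-pull} that it suffices to test on injective $\kappa': [m] \to [l']$, and decompose $\lambda \circ \kappa'$ uniquely as a surjection followed by an injection. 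The surjective piece forces the relevant push-pull maps to become identity adjunction units after identifying the repeated coordinates, and the injective piece is covered by the previous case.

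The hard part will be the last paragraph: one must carefully identify the local system on $\crep(\lambda^\ast F)$ provided by the ambient cocartesian lift with the one reconstructed from its restriction to $\bvx^{l'}$, and then show that the pushforwards $(\pi_{0,l'})_\ast$ commute with the product identifications introduced by cartesian replacement. This is a bookkeeping computation that should follow from \Cref{ass:assumption} (ensuring pushforwards distribute over the symmetric monoidal structure of $\sQ^\otimes$), the explicit formulas for $\cQ^\otimes$ derived in \Cref{sec:construction-of-Q}, and the partial Segal condition of \Cref{prp:Q-otimes-Segal}, which guarantees that the local system on the higher-dimensional faces of $\crep(\lambda^\ast F)$ is determined by its vertex pieces.
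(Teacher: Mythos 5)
Your overall strategy is the same as the paper's: since the inclusion is full, it suffices to show that the targets of the ambient cocartesian lifts of morphisms $(\alpha,\beta,\gamma,\delta)$ stay inside the push-pull subcategory, and your handling of the $\bG^\opp$ factor and the first two $\bD^\opp$ factors (tuple components get multiplied, composed, or forgotten; the condition is checked pointwise over $\bS^{k,n}$) matches the paper's. Where you diverge is the last simplicial factor. The paper's argument there is a one-line formality: because \Cref{dfn:push-pull} quantifies over \emph{all} maps $\iota : [m] \to [l]$ (not just the identity, and not just injections), the condition for $\delta^\ast(F,\eta)$ along any further $\veps$ is literally the condition for $(F,\eta)$ along $\delta \circ \veps$, via $\veps^\ast\delta^\ast \simeq (\delta\circ\veps)^\ast$. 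You instead factor through a surjection--injection decomposition and then flag a ``hard part'' about matching the local system on $\crep(\lambda^\ast F)$ against pushforwards and product identifications. That computation is not needed: $\delta^\ast(F,\eta)$ is \emph{defined} as the target of the ambient cocartesian lift, whatever its internal description, and the push-pull property of that target is tested only by pulling back further and invoking the hypothesis on $(F,\eta)$. Your factorization argument essentially re-proves the remark following \Cref{dfn:push-pull} (that injections suffice because surjections contribute automatic equivalences), which is true but redundant here; the cost of your route is that the unresolved bookkeeping you describe would have to be carried out, whereas the paper's route closes the argument without it.
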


\begin{proof}
	Since the inclusion $\what{\PP}(\sC; Q^\otimes) \sub \what{\GS}(\sC; \cQ^\otimes)^{2, 1}$ is full it will be enough to prove that the action of $(\alpha, \beta, \gamma, \delta)$ in $\bG^\opp \times \bD^{2 + 1}$ on $\what{\PP}(\sC; Q^\otimes)$ is closed, i.e. that for any $(F,\eta) \in \what{\PP}(\sC; Q^\otimes)_{t,k,n,l}$ the new span $(\alpha_\ast, \beta^\ast, \gamma^\ast, \delta^\ast) (F,\eta)$ is in $\what{\PP}(\sC; Q^\otimes)$. Acting by $\alpha$ and $\beta$ only changes the number of components in each push-pull tuple by either adding identity morphisms (which are trivially push-pull), composing some morphisms (which preserves equivalences), or forgetting some morphisms; these operations evidently preserve the push-pull condition. Acting by $\gamma$ changes the height of the commutative blocks by possibly adding identities, composing, or forgetting morphisms, but this does not affect the push-pull maps themselves. Acting by $\delta$ preserves the push-pull condition by induction. Suppose $\delta$ goes from $[m]$ to $[l]$, assume $(F,\eta)$ is push-pull, and let $\veps : [h] \to [m]$ be another map in $\bD$. Then the push-pull maps in $(\delta \circ \veps)^\ast (F,\eta)$ are equivalences by \Cref{dfn:push-pull}, and since
	\begin{equation*}
		\veps^\ast (\delta^\ast (F,\eta)) \simeq (\delta \circ \veps)^\ast (F,\eta)
	\end{equation*}
	we see that the push-pull maps in $\veps^\ast (\delta^\ast (F,\eta))$ are also equivalences. But $\veps$ was arbitrary, so it follows $\delta^\ast (F,\eta)$ is push-pull, as required.
\end{proof}

\begin{dfn} \label{dfn:PP}
	Denote by $\PP(\sC; Q^\otimes) : \bG^\opp \times \bD^{2 + 1, \opp} \to \sCat_1$ a functor obtained by straightening the cocartesian fibration $\what{\PP}(\sC; Q^\otimes)$
\end{dfn}

\begin{prp} \label{prp:push-pull-spans-are-segal-1}
	The functor $\PP(\sC; Q^\otimes)$ satisfies the Segal condition in the first three variables.
\end{prp}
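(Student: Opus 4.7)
The plan is to combine the Segal condition for the ambient generalized span fibration with the fullness of the push-pull inclusion. I would first apply \Cref{prp:Q-otimes-Segal} to note that $\cQ^\otimes$ satisfies the Segal condition in its first three variables, which is exactly the hypothesis needed to invoke \Cref{prp:P-Segal-implies-GS-Segal} with $(m, n) = (2, 1)$. This gives the Segal condition for $\what{\GS}(\sC; \cQ^\otimes)^{2,1}$ in the first three variables of $\bG^\opp \times \bD^{2+1, \opp}$. Since $\what{\PP}(\sC; Q^\otimes) \hookrightarrow \what{\GS}(\sC; \cQ^\otimes)^{2,1}$ is a full inclusion closed under cocartesian pushforwards (as verified in the preceding proposition), the induced Segal map for $\PP(\sC; Q^\otimes)$ in the first three variables factors through the Segal equivalence for $\GS$ and is therefore automatically fully faithful; what remains is essential surjectivity.

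Essential surjectivity amounts to the following claim: if $(F, \eta) \in \what{\GS}(\sC; \cQ^\otimes)^{2,1}_{t, k, n, l}$ has all its Segal components in the variables $t, k, n$ lying in $\what{\PP}(\sC; Q^\otimes)$, then $(F, \eta)$ itself is push-pull. The push-pull condition of \Cref{dfn:push-pull} is pointwise on vertices $(x, y) \in \bS^{k, n}$ and on pullbacks along maps $\iota : [m] \to [l]$, and the latter operation is orthogonal to Segal decomposition in the first three variables, so I fix such an $\iota$ and a vertex $(x, y)$ of heights $(\spadesuit, \clubsuit)$. At this vertex, the push-pull maps $\conj{\vphi}_i^\dagger$ live in $\sQ(u_{\iota(0)} \times u_{\iota(m)})^{\times t \cdot \spadesuit}$ for $i = 0, \ldots, \clubsuit$, and I want to show they are equivalences whenever the analogous maps for each Segal component are.

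The decisive technical input is that the Segal condition on $\cQ^\otimes$ in the first three variables (combined with $\cQ^\otimes_{t, 0, n, l} \simeq \ast$ for the $k$-decomposition) forces the codomain $\cQ^\otimes_{t, \spadesuit, \clubsuit, l}$ of the local system at $(x, y)$ to decompose as an iterated fiber product indexed by the atomic pieces in $t, k, n$. Consequently, both the push-pull tuples $\conj{\vphi}_i^\dagger$ and the ambient $\sQ(u)^{\times t \cdot \spadesuit}$ split compatibly along this decomposition, with each atomic factor classified by a height-$1$ vertex of a Segal sub-poset $\bS^{[i, i+1]} \hookrightarrow \bS^k$ (and analogously for the $n$-splitting). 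Since equivalences in a product are detected componentwise, push-pull for $(F, \eta)$ at $(x, y)$ follows from push-pull at the corresponding height-$1$ vertices of each Segal component. The main subtlety to navigate is that a vertex $x \in \bS^k$ of height $\spadesuit \geq 2$ does not itself lie in any Segal sub-poset of $\bS^k$; nonetheless the Segal structure of $\cQ^\otimes$ (rather than of $\bS^k$) governs the atomic splitting of the tuples at such higher vertices, so the push-pull condition there is still correctly detected by the Segal components. Tracking this combinatorics through the three Segal conditions for $\brak{t}, [k], [n]$ independently yields the claim.
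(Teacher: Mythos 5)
Your proposal is correct and follows essentially the same route as the paper: both arguments feed \Cref{prp:Q-otimes-Segal} into \Cref{prp:P-Segal-implies-GS-Segal} to get the Segal condition for $\what{\GS}(\sC;\cQ^\otimes)^{2,1}$ in the first three variables, and then reduce the claim to the fact that the push-pull condition is detected on Segal components in those variables (the paper packages this as the statement that the comparison squares between the Segal maps for $\what{\PP}$ and for $\what{\GS}$ are pullbacks, which for full subcategory inclusions is exactly your fully-faithful-plus-essentially-surjective formulation). The only difference is one of emphasis: the paper simply asserts the pullback property, whereas you spend most of your effort justifying it via the componentwise splitting of the push-pull tuples coming from the Segal decomposition of $\cQ^\otimes$ -- which is the right combinatorial content, including the caveat about vertices of height $\geq 2$.
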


\begin{proof}
	We simplify the notation by letting $\sX = \what{\PP}(\sC; Q^\otimes)$ and $\sY = \what{\GS}(\sC; \cQ^\otimes)^{2, 1}$. The inclusion $X \sub Y$ is compatible with the Segal maps: for every quadruple of indices $(t, k, n, l)$ we have pullback diagrams
	\begin{equation*}
		\begin{tikzcd}
			\sX_{t,k,n,l} \ar[r] \ar[d] & \sY_{t,k,n,l} \ar[d] \\
			\sX_{b} \times_{\sX_{c}} \dotsb \times_{\sX_{c}} \sX_{b} \ar[r] & \sY_{b} \times_{\sY_{c}} \dotsb \times_{\sY_{c}} \sY_{b}
		\end{tikzcd}
	\end{equation*}
	one for each row in the matrix with columns $(b, c)$ given by
	\begin{equation*}
		b = 
		\begin{pmatrix}
			(1, k, n, l) \\
			(t, 1, n, l) \\
			(t, k, 1, l) \\
			(t, k, n, 1)
		\end{pmatrix}, \quad
		c = 
		\begin{pmatrix}
			(0, k, n, l) \\
			(t, 0, n, l) \\
			(t, k, 0, l) \\
			(t, k, n, 0)
		\end{pmatrix}.
	\end{equation*}
	By \Cref{prp:Q-otimes-Segal} the functor $\cQ^\otimes : \bG^\opp \times \bD^{2 + 1, \opp} \to \sC$ satisfies the Segal condition in the first three variables $(\brak{t}, [k], [n]) \in \bG^\opp \times \bD^{2, \opp}$ and thus by \Cref{prp:P-Segal-implies-GS-Segal} (applied to the induced functor $\bD^{2 + 1, \opp} \to \CAlg(\sC)$) the right vertical maps in the first three diagrams are equivalences. Since the diagrams are pullbacks this implies that the left vertical maps are equivalences and so $\PP(\sC, Q^\otimes)$ satisfies the Segal condition in the first three variables.
\end{proof}

\begin{prp} \label{prp:push-pull-spans-are-segal-2}
	The functor $\PP(\sC; Q^\otimes)$ satisfies the Segal condition in the last variable.
\end{prp}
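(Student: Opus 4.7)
The plan is to induct on $l$. The cases $l=0,1$ are trivial and $l=2$ is \Cref{lmm:pp-segal-2}; for $l \geq 3$ I will reduce to the ``last-edge'' Segal projection
\[
\rho_l : \what{\PP}(\sC;Q^\otimes)_{t,k,n,l} \to \what{\PP}(\sC;Q^\otimes)_{t,k,n,l-1} \times_{\what{\PP}(\sC;Q^\otimes)_{t,k,n,0}} \what{\PP}(\sC;Q^\otimes)_{t,k,n,1}
\]
and show its fibers are contractible. Iterating this splitting (using the inductive hypothesis on the $(l-1)$-factor) yields the full spine decomposition up to equivalence.

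Fix two push-pull spans $(F,\eta^F), (G,\eta^G)$ living in a common fiber of $\rho_l$. Just as in the proof of \Cref{lmm:pp-segal-2}, the underlying cartesian spans $F$ and $G$ agree automatically: by \Cref{dfn:gen-cartesian-span} a cartesian span on $\bsd^l$ is right-Kan-extended from its values on the vertex subcategory $\bvx^l \subseteq \bsd^l$, and each vertex of $\bsd^l$ already appears as a $0$-simplex either in the $(l-1)$-face $[0,l-1] \hookrightarrow [l]$ or the $1$-face $[l-1,l] \hookrightarrow [l]$. For the local systems, the inductive hypothesis applied to every proper subset $S \subsetneq [0,l]$, viewed as an injection $[|S|-1] \hookrightarrow [l]$, implies that $\eta^F|_S$ and $\eta^G|_S$ are determined up to essentially unique equivalence by the spine data and hence agree there.

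What remains is to match $\eta^F$ with $\eta^G$ on the top-dimensional piece $S=[0,l]$. Here I will invoke \Cref{lmm:inductive-structure-dxl}, applied to the bisimplicial object $X = (\square^2 P^\otimes)_{t\cdot\spadesuit,\bullet,\bullet,\clubsuit}$ whose $\bd X$ recovers $\cQ^\otimes_{t,\spadesuit,\clubsuit,\bullet}$. That lemma says precisely that once a compatible collection of morphisms $\prod_{i\in S} u_i \to \bd X_{|S|-1}$ has been fixed on all proper $S \subsetneq [0,l]$, the space of extensions to $S=[0,l]$ is either empty or contractible. Since the top face is the only piece not already pinned down by the inductive step, it suffices to construct a single equivalence between $\eta^F|_{[0,l]}$ and $\eta^G|_{[0,l]}$. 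But by the explicit description of $\cQ^\otimes$ worked out in \Cref{sec:construction-of-Q}, the push-pull target on $[0,l]$ is
\[
\conj{P}_i \simeq \bigotimes_{j=0}^{l-1} \pi_{j,j+1}^\ast(\conj{p}_{i,j}),
\]
which depends only on spine data and hence is identical for $F$ and $G$; then the push-pull condition forces both $\conj{r^F}_i$ and $\conj{r^G}_i$ to map by equivalences to $(\pi_{0,l})_\ast \conj{P}_i$. Choosing inverses yields essentially unique equivalences $\psi_i : \conj{r^F}_i \xrightarrow{\simeq} \conj{r^G}_i$, and the triangular-prism argument of \Cref{lmm:pp-segal-2} upgrades these to the required equivalence of commutative blocks over $[0,l]$.

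The main obstacle is ensuring that the top-face equivalence just constructed is coherent with the face-level equivalences already produced by the inductive hypothesis on proper subsets of $[0,l]$. This coherence is exactly the content of \Cref{lmm:inductive-structure-dxl}: its ``empty or contractible fibers'' conclusion guarantees that there is no obstruction to extending the lower-dimensional compatibility to the top face, and that the extension is itself essentially unique. Together with the parametric nature of the choices (made uniformly over $(x,y)\in\bS^{k,n}$, as in the proof of \Cref{lmm:pp-segal-2}), this establishes contractibility of the fibers of $\rho_l$, completing the induction.
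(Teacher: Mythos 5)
Your proof is correct and follows essentially the same route as the paper: induction on $l$ with \Cref{lmm:pp-segal-2} as the base case, agreement of the underlying cartesian spans and of the local systems on all proper faces via the inductive hypothesis, and \Cref{lmm:inductive-structure-dxl} to pin down the top-dimensional piece essentially uniquely. The only (harmless) deviations are that you phrase the reduction via the last-edge projection rather than the full spine map, and your explicit construction of the top-face equivalence from the push-pull maps is redundant once the lemma's ``empty or contractible fibers'' conclusion is applied to the fiber containing both $\eta^F$ and $\eta^G$.
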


\begin{proof}
	We again let $\sX = \what{\PP}(\sC; Q^\otimes)$ and we fix a tuple $d = (t,k,n)$ of indices. We have Segal face maps
	\begin{equation*}
		\rho_i : \sX_{d,l} \to \sX_{d,1},
	\end{equation*}
	one for each inert inclusion $\{i, i+1\} \sub [0,l]$, which assemble to
	\begin{equation*}
		\rho : \sX_{d,l} \to \sX_{d,1} \times_{\sX_{d,0}} \dotsb \times_{\sX_{d,0}} \sX_{d,1}
	\end{equation*}
	and we want to show that $\rho$ is an equivalence. We will prove the following, which readily implies that $\rho$ is an equivalence since each of its fibers is non-empty: if $(F,\eta^F)$ and $(G,\eta^G)$ are in the same fiber of $\rho$ then there is an essentially unique equivalence $(F,\eta^F) \simeq (G,\eta^G)$. The cases $l = 0$ and $l = 1$ are trivial and the case $l = 2$ is proved in \Cref{lmm:pp-segal-2}, so by induction we can assume that it holds up to $l-1$. By assumption $F$ and $G$ agree on their spine, namely the collection of $1$-dimensional faces indexed by the inert inclusions $\{i, i+1\} \sub [0,l]$, so by the inductive hypothesis they must agree on all of their $m$-dimensional faces for $m \leq l-1$. This means that $F \simeq G$ and thus $(F,\eta^F)$ and $(G,\eta^G)$ have a common underlying cartesian span with (parametrized) vertices $u_0, \dotsc, u_l$ and common equivalent local systems
	\begin{equation*}
		\eta_S^F \simeq \eta^G_S : u_S \simeq \prod_{i \in S} u_i \to \cQ^\otimes_{t,\spadesuit,\clubsuit,\vnorm{S}-1}
	\end{equation*} 
	for all proper subsets $S \subset [0,l]$. Since $l \geq 3$ we can now apply the setup of \Cref{lmm:inductive-structure-dxl}: we have two natural transformation $\eta^F$ and $\eta^G$ of functors $A_l, B_l : \wp(l)^\opp \to \Fun(\bS^{k,n}, \sC_t^\times)$ which are equivalent upon restricting to $\wp(l)^\opp \setminus \{[0,l]\}$. The lemma implies that the common restriction has an essentially unique extension to $\wp(l)^\opp$, which readily implies $\eta^F \simeq \eta^G$ via an essentially unique equivalence. This concludes the proof.  
\end{proof}

\begin{thm} \label{thm:push-pull-3-cat}
	Let $\sC$ be an $\infty$-category with finite limits and let $Q : \bG^\opp \times \bD^\opp \to \sC$ be a symmetric monoidal category object in $\sC$ satisfying \Cref{ass:assumption}. The functor $\PP(\sC;Q^\otimes) : \bG^\opp \times \bD^{3, \opp} \to \sCat_1$ determines a symmetric monoidal $(\infty,3)$-category.
\end{thm}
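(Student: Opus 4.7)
Propositions~\ref{prp:push-pull-spans-are-segal-1} and~\ref{prp:push-pull-spans-are-segal-2} together establish the Segal condition in each of the four variables of $\PP(\sC;Q^\otimes)$. The plan is to leverage these conditions, together with three additional properties---groupoidality of the values, globularity, and completeness---to identify $\PP(\sC;Q^\otimes)$ with a commutative monoid object in $\sCat_3$, equivalently a symmetric monoidal $(\infty,3)$-category.

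First, I would verify that each value $\PP(\sC;Q^\otimes)_{t,k,n,l}$ is an $\infty$-groupoid, so that the functor factors through $\Spaces \subseteq \sCat_1$ and can be packaged as a functor $\bG^\opp \times \bD^{3,\opp} \to \Spaces$. A morphism in $\PP(\sC;Q^\otimes)_{t,k,n,l}$ is a morphism of generalized cartesian spans together with a compatible morphism of local systems. Because cartesian spans are right Kan extensions from $\bL^{k,n} \times \bvx^l$ by \Cref{dfn:gen-cartesian-span}, any such morphism is determined by its restriction to this skeleton, and any equivalence there extends essentially uniquely to a global equivalence; the local-system component is similarly rigidified by the push-pull condition together with the adjunctions of \Cref{ass:assumption}, following the essentially-unique-extension template used in the proofs of \Cref{lmm:pp-segal-2} and \Cref{prp:push-pull-spans-are-segal-2}.

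Next, I would verify globularity: for a $3$-fold Segal space we need $\PP_{t,0,n,l}$ to be independent of $n$ and $l$, and $\PP_{t,k,0,l}$ to be independent of $l$. This should follow from the contractibility of $\cQ^\otimes$ in the relevant corner cases (as computed explicitly at the end of \Cref{sec:construction-of-Q}), together with the collapse of the indexing posets $\bS^0$ and $\bsd^0$ to a point, which forces the underlying cartesian span to be constant in the vanishing direction.

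The step I expect to be the main obstacle is completeness in each of the three $\bD^\opp$-directions, which amounts to showing that invertible $k$-morphisms in $\sPP(\sC;Q^\otimes)$ coincide with the degenerate ones for $k = 1, 2, 3$. The natural route is to reduce this to completeness of $\sC$ at the level of spans and of $\sQ^\otimes$ at the level of local systems, then to propagate these through the end and Kan extension formulas of \Cref{sec:gen-spans}, \Cref{sec:gen-span-local-systems} and \Cref{sec:modifying-smco} that define $\PP(\sC;Q^\otimes)$. Once these four pieces---groupoidality, Segal, globularity, and completeness---are in hand, the $\bG^\opp$-direction supplies the symmetric monoidal structure and the three $\bD^\opp$-directions the underlying $(\infty,3)$-category structure, completing the verification.
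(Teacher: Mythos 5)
You correctly identify the two Segal propositions as the substantive input, but the route you propose for the remaining steps contains a false claim and two unnecessary (and, as stated, unachievable) verifications. First, the values $\PP(\sC;Q^\otimes)_{t,k,n,l}$ are \emph{not} $\infty$-groupoids: they are full subcategories of fibers of the cocartesian fibration $\what{\GS}(\sC;\cQ^\otimes)^{2,1}$, whose morphisms over a fixed base are arbitrary natural transformations of functors $\bS^{\conj{k}}\times\bsd^{\conj{l}}\to\sC^\times_t$ together with compatible maps of local systems, and these need not be invertible. Your justification conflates ``a morphism is determined by its restriction to $\bL^{\conj{k}}\times\bvx^{\conj{l}}$'' (true, by the Kan extension property) with ``every morphism is an equivalence'' (false). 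The paper does not claim groupoidality; it passes to $\CAlg(\Cat^3(\Spaces))$ by applying the maximal-subspace functor $\sCat_1\to\Spaces$ to the values, which discards the non-invertible morphisms rather than proving there are none.

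Second, neither globularity nor completeness is verified in the paper, and completeness in particular cannot be verified from the stated hypotheses. The proof instead applies the functor $\mc{U}^3 = L^3\circ R^3$ of \Cref{prp:from-cat-to-seg}: $R^3$ (right adjoint to $\Seg^3(\Spaces)\sub\Cat^3(\Spaces)$) extracts the underlying $3$-fold Segal space from the $3$-uple Segal space, enforcing globularity universally, and $L^3$ freely completes; both preserve products, so the commutative monoid structure coming from the $\bG^\opp$-variable survives and one lands in $\CAlg(\sCat_3)$. The remark following \Cref{dfn:push-pull-3-cat} makes explicit that $R^3(\PP(\sC;Q^\otimes))$ is only known to be already complete under extra hypotheses ($\sC$ an $\infty$-topos and $Q^\otimes$ a complete Segal object), neither of which appears in the theorem, so your plan to ``reduce completeness to completeness of $\sC$ and $\sQ^\otimes$'' would add assumptions the theorem does not make. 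Once the Segal conditions are in hand, the correct conclusion is purely formal; the hard analysis you anticipate for completeness is deliberately avoided.
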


\begin{proof}
	By \Cref{prp:push-pull-spans-are-segal-1} and \Cref{prp:push-pull-spans-are-segal-2} $\PP(\sC;Q^\otimes)$ satisfies the Segal condition, so it defines an object of $\Cat^3(\CAlg(\sCat_1)) \simeq \CAlg(\Cat^3(\sCat_1))$. Passing first to $\CAlg(\Cat^3(\Spaces))$ via the functor $\sCat_1 \to \Spaces$ which takes the maximal subspace of an $\infty$-category (i.e. keeping all and only the invertible morphisms) and then passing to complete Segal spaces via \Cref{prp:from-cat-to-seg} gives an object of $\CAlg(\sCat_3)$.
\end{proof}

\begin{dfn} \label{dfn:push-pull-3-cat}
	We denote by $\sPP(\sC;Q^\otimes) \in \CAlg(\sCat_3)$ a symmetric monoidal $(\infty,3)$-category obtained in \Cref{thm:push-pull-3-cat} and we call it the \emph{$3$-category of push-pull spans in $\sC$ valued in $Q^\otimes$}.
\end{dfn}

By construction, the $(\infty, 3)$-category $\sPP(\sC;Q^\otimes)$ has the property that the $3$-morphisms between two $2$-morphisms $P, Q \in \sQ(Z_1 \cap Z_2)$ are given by a pair $(W, \alpha)$, where $W \in \sC_{/Z_1 \cap Z_2}$ comes with a map $i : W \to Z_1 \cap Z_2$ and $\alpha : i^\ast P \to i^\ast Q$ is a map in $\sQ(W)$. It is straightforward to pass to the subcategory where all such $W$ are equivalent to $Z_1 \cap Z_2$ (i.e. where $i$ is an equivalence), thus recovering exactly the desired property (c'). We do this explicitly in \Cref{sec:rw-3-cat}, but doing so now recovers exactly \Cref{thm:thmA}.

One note must be made about completeness here. Recall the functors $R^3$ and $L^3$ from \Cref{prp:from-cat-to-seg}. If $\sC$ is an $\infty$-topos and $Q^\otimes : \bG^\opp \to \Fun(\bD^{\opp}, \sC)$ is a symmetric monoidal \emph{complete} Segal object in $\sC$ (see \cite[Definition 7.4]{Haugseng2018}) then using similar arguments to those in \cite[Section 9]{Haugseng2018} one can prove that $R^3 (\PP(\sC;Q^\otimes)) \in \CAlg(\Seg^3(\Spaces))$ is already complete, in which case applying $L^3$ does noting. In any case, though, completing an $n$-fold Segal space does not affect the dualizability of its $k$-morphisms, so for the sake of computations it is good enough to work with the $3$-fold Segal space underlying $\PP(\sC;Q^\otimes)$. This is (implicitly) what we will do in \Cref{sec:rw-3-cat}.

Since the data of $Q^\otimes$ is equivalent to that of the functor $\sQ^\otimes : \sC^\opp \to \CAlg(\Cat(\Spaces))$ which it represents, we may sometimes say that the local systems are valued in or determined by $\sQ^\otimes(-)$. This terminology is more convenient for the section to come.

\subsubsection{Relative push-pull spans}

The construction of $\sPP(\sC;Q^\otimes)$ requires that $Q^\otimes$ be a (parametrized) object of $\sC$ satisfying some nice properties associated to the functor which it represents. Say $\sD^\odot$ is a symmetric monoidal $\infty$-category (not necessarily cartesian monoidal) that admits a symmetric monoidal functor $U : \sD^\odot \to \sC^\times$ to $\sC$ with its cartesian monoidal structure.
\begin{dfn}
	We let
	\begin{equation*}
		\what{\GS}(\sD^\odot)^{m,n,\rmcart} \sub \what{\GS}(\sD^\odot)^{m,n}
	\end{equation*}
	denote the full subcategory spanned by the \emph{generalized $\odot$-cartesian spans in $\sD$}, namely those spans 
	\begin{equation*}
		F : \bS^{\conj{k}} \times \bsd^{\conj{l}} \to \sD^\odot_t
	\end{equation*}
	such that the composite 
	\begin{equation*}
		U_t \circ F : \bS^{\conj{k}} \times \bsd^{\conj{l}} \to \sC^\times_t
	\end{equation*}
	is a cartesian span in $\sC$. That is, $\what{\GS}(\sD^\odot)^{m,n,\rmcart}$ fits into the pullback diagram
	\begin{equation*}
		\begin{tikzcd}
			\what{\GS}(\sD^\odot)^{m,n,\rmcart} \ar[r] \ar[d, "U_\ast"'] & \what{\GS}(\sD^\odot)^{m,n} \ar[d, "U_\ast"] \ar[d] \\
			\what{\GS}(\sC)^{m,n,\rmcart} \ar[r, hook] & \what{\GS}(\sC)^{m,n}
		\end{tikzcd}
	\end{equation*}
	of cocartesian fibrations over $\bG^\opp \times \bD^{m + n, \opp}$.
\end{dfn}

It is possible that $Q^\otimes$ does not lift to $\sD$ and so, strictly speaking, we cannot talk about $Q^\otimes$-valued local systems for spans in $\sD$. However, we can do the next best thing: first pass to $\sC$ via $U_\ast$ and then ask for local systems on the image, so as to get local systems on spans in $\sD$ determined by $\sQ^\otimes \circ U^\opp$. More precisely, consider the following diagram of cocartesian fibrations, where the vertical map on the right picks out the underlying generalized cartesian span of a push-pull span in $\sC$:
\begin{equation} \label{eqn:relative-push-pull}
	\begin{tikzcd}
		& \what{\PP}(\sC;Q^\otimes) \ar[d] \ar[ddr] & \\
		\what{\GS}(\sD^\odot)^{2,1, \rmcart} \ar[r, "U_\ast"] \ar[drr] & \what{\GS}(\sC)^{2,1, \rmcart} \ar[dr] & \\
		& & \bG^\opp \times \bD^{2+1, \opp}
	\end{tikzcd}
\end{equation}

\begin{dfn} \label{dfn:relative-push-pull-fibration}
	Let $\sD^\odot$ be a symmetric monoidal $\infty$-category and let $U : \sD^\odot \to \sC^\times$ be a symmetric monoidal functor. Denote by $\what{\PP}(U; Q^\otimes) \to \bG^\opp \times \bD^{2+1, \opp}$ a cocartesian fibration obtained by taking a pullback of the diagram in \Cref{eqn:relative-push-pull} and let $\PP(U; Q^\otimes) : \bG^\opp \times \bD^{2 + 1, \opp} \to \sCat_1$ be a straightening of it.
\end{dfn}
A vertex of $\what{\PP}(U; Q^\otimes)$ is a generalized $\odot$-cartesian span $F$ in $\sD$ together with a push-pull local system on $U_\ast(F)$ valued in $Q^\otimes$ -- equivalently, a push-pull local system on $F$ valued in the $\infty$-categories determined by $\sQ^\otimes \circ U^\opp(-)$.

\begin{prp}
	The functor $\PP(U; Q^\otimes)$ satisfies the Segal condition in all variables.
\end{prp}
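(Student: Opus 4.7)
The plan is to deduce the Segal condition from the observation that $\PP(U;Q^\otimes)$ is a pullback of three functors each of which already satisfies the Segal condition in all variables. Applying straightening to the pullback diagram in \Cref{dfn:relative-push-pull-fibration} -- which, being a right adjoint, preserves limits -- yields the equivalence
\begin{equation*}
    \PP(U;Q^\otimes) \simeq \PP(\sC;Q^\otimes) \times_{\GS(\sC)^{2,1,\rmcart}} \GS(\sD^\odot)^{2,1,\rmcart}
\end{equation*}
in $\Fun(\bG^\opp \times \bD^{2+1, \opp}, \sCat_1)$. The Segal condition is itself a limit condition, asking that canonical comparison maps into iterated fiber products be equivalences, and limits commute with limits; so it is preserved under pullbacks of functors that all satisfy it. Consequently it suffices to verify the Segal condition on each of the three factors above.

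For $\PP(\sC;Q^\otimes)$ this is exactly \Cref{prp:push-pull-spans-are-segal-1,prp:push-pull-spans-are-segal-2}. For the two generalized cartesian span fibrations the argument is elementary and requires no local system data. The Segal condition in the $\bG^\opp$ variable is a consequence of the equivalences $\sC^\times_t \simeq \sC^{\times t}$ and $\sD^\odot_t \simeq \sD^{\times t}$ (with $\sD = \sD^\odot_1$) coming from the symmetric monoidal structures, combined with the componentwise nature of the cartesian span condition recorded in \Cref{lmm:cartesian-iff-components-are-cartesian}. The Segal condition in the two $\bS$-type variables is the direct analogue for generalized spans of the Segal condition for iterated cartesian spans in \cite[Section 5]{Haugseng2018}, once one uses the cartesian property to identify the fiber with a functor category out of $\bL^{k,n}$. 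Finally, the Segal condition in the $\bsd$ variable follows immediately from the cartesian condition: by the right Kan extension formula along $\bvx^l \sub \bsd^l$, one identifies $\GS(\sC)^{2,1,\rmcart}_{t,k,n,l}$ with the $(l+1)$-fold power $\Fun(\bL^{k,n}, \sC^\times_t)^{l+1}$, under which the Segal map reduces to the tautological equivalence of iterated fiber products along the one-point functor category; the analogous identification holds for $\sD^\odot$.

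Combining these three facts with the pullback formula gives the Segal condition for $\PP(U;Q^\otimes)$ in all four variables. The only mild bookkeeping issue to watch is that the functoriality on the $\bsd$ variable along non-injective morphisms in $\bD$ involves the cartesian replacement of \Cref{prp:cart-extension}; however, since cartesian replacement is a right adjoint construction it commutes with the vertex-restriction identification above, so the Segal-map equivalence persists throughout and no new work is required beyond the pullback argument.
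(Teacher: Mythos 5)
Your reduction to the three factors of the pullback in \Cref{dfn:relative-push-pull-fibration} is legitimate in itself: straightening preserves the pullback, the Segal comparison map of a pointwise pullback of functors is the pullback of the comparison maps of the factors, and your treatment of $\PP(\sC;Q^\otimes)$ (via \Cref{prp:push-pull-spans-are-segal-1,prp:push-pull-spans-are-segal-2}) and of $\GS(\sC)^{2,1,\rmcart}$ is fine. The gap is in the third factor. A generalized $\odot$-cartesian span in $\sD$ is \emph{not} a right Kan extension computed in $\sD$: the definition only asks that $U_t\circ F$ be cartesian in $\sC$, and $\sD$ is not assumed to have finite limits, nor $U$ to preserve or detect them. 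Hence the claimed identification of $\what{\GS}(\sD^\odot)^{2,1,\rmcart}_{t,k,n,l}$ with $\Fun(\bL^{k,n},\sD^\odot_t)^{\times(l+1)}$ fails: at every object of $\bS^{k,n}\times\bsd^l$ not lying in $\bL^{k,n}\times\bvx^l$ the span records a chosen object of $\sD^\odot_t$ lifting the corresponding limit in $\sC^\times_t$, together with its structure maps in $\sD^\odot_t$, and none of this is seen by the restriction to $\bL^{k,n}\times\bvx^l$. (In the motivating example $U:\PreSymp^\odot\to\dSt^\times$, this extra data is a presymplectic structure on each derived intersection compatible with all the projections; its space of choices is not obviously contractible.) So the Segal condition for $\what{\GS}(\sD^\odot)^{2,1,\rmcart}$, in the two $\bS$-variables for $k,n\geq 2$ and in the $\bsd$-variable for $l\geq 2$, is exactly the assertion that these lifts over the spine exist and are essentially unique; your argument assumes this rather than proving it, and it is the entire content of the proposition.

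For comparison, the paper attacks the statement from the other side: it asserts that each Segal square for $\what{\PP}(U;Q^\otimes)$ is pulled back along $U_\ast$ from the corresponding square for $\what{\PP}(\sC;Q^\otimes)$ and then uses that the right-hand vertical maps are equivalences. Unwinding that pullback claim using the Segal property of $\GS(\sC)^{2,1,\rmcart}$ shows it to be equivalent to the Segal property of $\what{\GS}(\sD^\odot)^{2,1,\rmcart}$, i.e.\ to the same lifting statement you elide. So your decomposition is a reasonable reorganization of the paper's argument, but the one step you dismiss as ``analogous'' is precisely the step that requires an actual argument (or an additional hypothesis on $U$), and as written your justification for it is incorrect. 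The closing remark about cartesian replacement is beside the point here, since the Segal maps are induced by inert, hence injective, morphisms and never invoke \Cref{prp:cart-extension}.
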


\begin{proof}
	Let $\sX = \what{\PP}(U; Q^\otimes)$ and $\sY = \what{\PP}(\sC; Q^\otimes)$. Then the Segal maps for $\sX$ are obtained from those for $\sY$ via $U_\ast$: for every quadruple of indices $(t,k,n,l)$ we have pullback diagrams
	\begin{equation*}
		\begin{tikzcd}
			\sX_{t,k,n,l} \ar[r, "U_\ast"] \ar[d] & \sY_{t,k,n,l} \ar[d] \\
			\sX_{b} \times_{\sX_{c}} \dotsb \times_{\sX_{c}} \sX_{b} \ar[r] & \sY_{b} \times_{\sY_{c}} \dotsb \times_{\sY_{c}} \sY_{b}
		\end{tikzcd}
	\end{equation*}
	one for each row in the matrix with columns $(b, c)$ given by
	\begin{equation*}
		b = 
		\begin{pmatrix}
			(1, k, n, l) \\
			(t, 1, n, l) \\
			(t, k, 1, l) \\
			(t, k, n, 1)
		\end{pmatrix}, \quad
		c = 
		\begin{pmatrix}
			(0, k, n, l) \\
			(t, 0, n, l) \\
			(t, k, 0, l) \\
			(t, k, n, 0)
		\end{pmatrix}.
	\end{equation*}
	That these squares are pullback squares follows immediately from the fact that $U_\ast$ preserves cartesian spans and from the definition of $\sX$. By \Cref{prp:push-pull-spans-are-segal-1} and \Cref{prp:push-pull-spans-are-segal-2} the right vertical map in all four cases is an equivalence, so the left vertical map is an equivalence as well and we are done.
\end{proof}

\begin{dfn} \label{dfn:relative-push-pull-spans}
	The symmetric monoidal $(\infty,3)$-category induced by $\PP(U; Q^\otimes)$ is denoted $\sPP(U; Q^\otimes)$ and contains the \emph{$U$-relative push-pull spans in $\sD$ valued in $Q^\otimes$}, or just \emph{relative push-pull spans} for short.
\end{dfn}

\section{Applications of push-pull spans} \label{sec:applications}

We just showed that given a symmetric monoidal functor $U : \sD^\odot \to \sC^\times$, where $\sC$ is cartesian monoidal, and any symmetric monoidal category object $Q : \bG^\opp \times \bD^\opp \to \sC$ satisfying a certain assumption there is a symmetric monoidal $(\infty, 3)$-category $\sPP(U : \sD \to \sC;Q^\otimes)$ of spans in $\sD$ with a push-pull system in $\sC$ valued in $Q^\otimes$. Following in the footsteps of \cite{CHS2022} we will use derived algebraic geometry to define suitable objects that we can plug into our general construction. We will then restrict it a little further to obtain the $(\infty,3)$-category $\sSRW$ and compare it to Kapustin and Rozansky's $\bRW$ from \cite{KR2010}.

It's worth noting that derived algebraic geometry is not the only setting suitable for Rozansky-Witten field theories. A different and maybe more natural choice would be derived differential geometry, which has derived smooth manifolds (see \cite{Spivak2010,Steffens2023}) as its main objects of study. We feel that derived algebraic geometry works better for this paper in particular for several key reasons:
\begin{enumerate}
	\item it allows us to compare our results to those of \cite{CHS2022} (because of the explicit use of derived algebro-geometric methods) and \cite{BCR2023,BCFR2023} (because matrix factorizations are inherently algebraic, and thus it's easier to compute field theories with algebraic rather than smooth stacks);
	\item the universal properties of derived stacks are easier to apply to the construction of \Cref{sec:construction}, in particular the passage from a limit-preserving functor into symmetric monoidal $\infty$-categories to its representing $\bG^\opp \times \bD^\opp$-object which is the key to the arguments in \Cref{sec:modifying-smco,sec:enforcing-push-pull};
	\item we\footnote{The author.} are overall more comfortable with algebraic stacks than with smooth stacks (for educational reasons more than anything else).
\end{enumerate}
However, we want to stress that \Cref{thm:push-pull-3-cat} is general enough that it can be applied to derived smooth stacks, given a well-behaved choice of $Q^\otimes$ -- the representing object determining the $2$- and $3$-morphisms. We leave the task of producing such a $Q^\otimes$ to future work.

\subsection{Background on derived algebraic geometry} \label{sec:background-dag}

Here we recall a number of definitions and results from derived algebraic geometry, just enough for our purposes. We refer the reader to \cite{TV2008,PTVV2013} for a detailed exposition of this theory and the symplectic structures definable in it. We will forgo citing the primary sources directly and instead use the convenient Appendix B of \cite{CHS2022} since the great majority of the material we will need is already covered in that paper.

\subsubsection{Derived stacks} \label{sec:derived-stacks-def}

Fix a base field $\KK$ of characteristic zero. Then there is an $\infty$-category $\Mod^\rmc_{\KK}$ of connective $\KK$-modules (= cochain complexes of vector spaces over $\KK$ localized at the quasi-isomorphisms, with vanishing positive cohomology) which is symmetric monoidal with respect to the tensor product of modules (= derived tensor product of cochain complexes). Let $\sCAlg_\KK^\rmc := \CAlg(\Mod^\rmc_{\KK})$ denote the $\infty$-category of connective $\KK$-algebras (= commutative algebras within connective $\KK$-modules) -- see \cite[Definition B.1.1]{CHS2022}. Define $\dAff := (\sCAlg_\KK^\rmc)^\opp$. For any very large presentable $\infty$-category $\sD$ there is a full subcategory ${\dSt}(\sD) \sub \Fun(\dAff^\opp, \sD)$ of presheaves on $\dAff$ with the following properties:
\begin{enumerate}[label=(\alph*)]
	\item \cite[first part of Lemma B.3.7]{CHS2022} $\dSt := \dSt(\conj{\Spaces})$ is an $\infty$-topos (see \cite{HTT2009}), where $\conj{\Spaces}$ is a very large $\infty$-category of large spaces;
	\item \cite[Corollary B.3.5]{CHS2022} the Yoneda embedding 
	\begin{equation*}
		\Spec : \dAff \to \Fun(\dAff^\opp, \Spaces), \quad A \mapsto \dAff(-, A) \simeq \sCAlg_{\KK}^\rmc(A,-)
	\end{equation*}
	factors through $\dSt$, so we can view it as a fully faithful functor $\Spec : \dAff \to \dSt$;
	\item \cite[second part of Lemma B.3.7]{CHS2022} $\Spec^\opp$ induces a restriction functor $\Fun(\dSt^\opp, \sD) \to \Fun(\dAff^\opp, \sD)$ which specializes to an equivalence
	\begin{equation*}
		\Fun^{\mathrm{R}}(\dSt^\opp, \sD) \xlra{\simeq} \dSt(\sD)
	\end{equation*}
	where the "$\mathrm{R}$" superscript denotes the full subcategory spanned by those functors which preserve (large) limits; the inverse of this equivalence is given by right Kan extension along $\Spec^\opp$.
\end{enumerate}
The $\infty$-category $\dSt$ is that of \emph{derived stacks}, a far reaching generalization of schemes and stacks in algebraic geometry, and its full subcategory $\dAff$ is that of \emph{derived affine stacks} (\cite[Definition B.3.6]{CHS2022}). We will denote an element of $\dAff$ by $A$ if we are thinking of it as an algebra (in its opposite category) and by $\Spec A$ if we are thinking of it as a functor $\dAff^\opp \to \conj{\Spaces}$. By (a) $\dSt$ is complete, cocomplete, and cartesian closed, and admits an embedding
\begin{equation*}
	(-)_\mc{B} : \Spaces \hookrightarrow \dSt, \quad X_\mc{B} := \colim_{X} (\Spec \KK)
\end{equation*}
called the \emph{Betti stack} (\cite[Definition B.3.9]{CHS2022}). By (c) there is an equivalence
\begin{equation*}
	\begin{tikzcd}
		\Fun^{\mathrm{R}}(\dSt^\opp, \conj{\Spaces}) \ar[rr, "(\Spec^\opp)^\ast"] \ar[dr, "\simeq"'] & & \Fun(\dAff^\opp, \conj{\Spaces}) \\
		& \dSt \ar[ur, "\sub"'] &
	\end{tikzcd}
\end{equation*}
whose inverse is given by right Kan extending along $\Spec^\opp$. 
\begin{lmm} \label{lmm:dst-rep}
	Let $\sD$ be an $\infty$-categories enriched over large spaces, let $\sE \sub \Fun(\sD^\opp, \conj{\Spaces})$ be a full subcategory, and let $f : \sD \hookrightarrow \sE$ be the inclusion of a dense full subcategory. Assume further that the restriction $f^{\opp,\ast} : \Fun(\sE^\opp, \conj{\Spaces}) \to \Fun(\sD^\opp, \conj{\Spaces})$ gives an equivalence
		\begin{equation*}
			\Fun^\mathrm{R}(\sE^\opp, \conj{\Spaces}) \simeq \sE,
		\end{equation*}
		where $\Fun^\mathrm{R}$ denotes the limit-preserving functors, whose inverse is right Kan extension along $f^\opp : \sD^\opp \to \sE^\opp$. Then, for any limit-preserving $F : \sE^\opp \to \conj{\Spaces}$, the element $f^{\opp,\ast}(F) \in \sE$ is a representing object for $F$:
	\begin{equation*}
		F(X) \simeq \sE(X, f^{\opp,\ast}(F))
	\end{equation*}
	naturally in $X \in \sE^\opp$.
\end{lmm}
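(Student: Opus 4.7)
The plan is to combine three ingredients: density of $f : \sD \hookrightarrow \sE$, which lets us resolve any $X \in \sE$ as a colimit of objects from $\sD$; the hypothesis that $\sE$ sits inside $\Fun(\sD^\opp, \conj{\Spaces})$ via the restricted Yoneda embedding (this is precisely what density provides); and the limit-preservation hypothesis on $F$. I will construct the natural equivalence $F(X) \simeq \sE(X, f^{\opp,\ast}(F))$ by computing both sides as the same limit indexed by the slice category $\sD_{/X}$.

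First, I would unpack $Y := f^{\opp,\ast}(F)$ as an object of $\sE$. Under the identification $\sE \sub \Fun(\sD^\opp, \conj{\Spaces})$ given by $X \mapsto \sE(f(-), X)$, the object $Y$ corresponds to the presheaf $d \mapsto F(f(d))$, and density yields the natural equivalence $\sE(f(d), Y) \simeq Y(d) \simeq F(f(d))$ for every $d \in \sD$. Second, density of $f$ gives, for each $X \in \sE$, a canonical equivalence
\[
	\colim_{(d \to X) \in \sD_{/X}} f(d) \xlra{\simeq} X
\]
in $\sE$. Applying $\sE(-, Y)$ converts this colimit into a limit:
\[
	\sE(X, Y) \simeq \lim_{\sD_{/X}} \sE(f(d), Y) \simeq \lim_{\sD_{/X}} F(f(d)).
\]
On the other hand, this same colimit presentation becomes a limit diagram in $\sE^\opp$, and since $F : \sE^\opp \to \conj{\Spaces}$ preserves limits we obtain $F(X) \simeq \lim_{\sD_{/X}} F(f(d))$. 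Comparing the two formulas yields the desired equivalence.

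For naturality in $X$, I would organize the slice construction $X \mapsto \sD_{/X}$ into an appropriate (co)cartesian fibration over $\sE$ and observe that both $X \mapsto \sE(X, Y)$ and $X \mapsto F(X)$ are then expressed as the same parametrized limit of $d \mapsto F(f(d))$ over that fibration. The main obstacle, such as it is, lies not in the computation itself but in making sure that the hypothesized equivalence $\Fun^{\mathrm R}(\sE^\opp, \conj{\Spaces}) \simeq \sE$ (with inverse given by right Kan extension along $f^\opp$) is used correctly to identify $f^{\opp,\ast}(F)$ with the presheaf $d \mapsto F(f(d))$ inside $\sE$, and that the density equivalence is natural enough to pass to the parametrized setting. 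Both points follow from standard $\infty$-categorical density arguments, e.g.\ the $\infty$-categorical analogue of the fact that a dense functor exhibits every object as a canonical colimit of the slice diagram.
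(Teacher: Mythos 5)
Your proof is correct and follows essentially the same route as the paper's: both resolve $X$ as the canonical colimit $\colim_{Z \in \sD_{/X}} f(Z)$ supplied by density, identify $\sE(f(d), f^{\opp,\ast}(F)) \simeq F(f(d))$ via the Yoneda lemma, and conclude by applying limit-preservation of $F$ to the same $\sD_{/X}$-indexed diagram. The only cosmetic difference is that you read off $f^{\opp,\ast}(F)$ directly as the presheaf $d \mapsto F(f(d))$, whereas the paper routes this identification through the adjunction with $\RKan_{f^\opp}$.
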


\begin{proof}
	Denseness of $f$ implies that the Yoneda embedding $y : \sD \to \Fun(\sD^\opp, \conj{\Spaces})$ factors through $\sE$. First we will show that for any $X \in \sE$ we have a natural equivalence
	\begin{equation*}
		X \simeq \colim_{Z \in \sD_{/X}} y(Z).
	\end{equation*}
	Indeed, the limit-preserving functor $\sE(-,Y) : \sE^\opp \to \conj{\Spaces}$ is right Kan extended from $\Fun(\sD^\opp, \conj{\Spaces})$ so we have natural equivalences
	\begin{equation*}
		\sE(X,Y) \simeq \lim_{Z \in \sD^\opp_{X/}} \sE(f^\opp(Z), Y) \simeq \sE \left( \colim_{Z \in \sD_{/X}} f(Z), Y \right)
	\end{equation*}
	for any $X \in \sE$, and so the Yoneda lemma yields
	\begin{equation*}
		X \simeq \colim_{Z \in \sD_{/X}} f(Z) \simeq \colim_{Z \in \sD_{/X}} y(Z)
	\end{equation*}
	since $f$ and $y$ agree on objects. Remember that if $X$ is considered as an object of $\sE^\opp$ then
	\begin{equation*}
		X \simeq \lim_{Z \in \sD^\opp_{X/}} f^\opp(Z)
	\end{equation*}
	and so, after a few careful applications of the Yoneda lemma,
	\begin{align*}
		\sE(X, f^{\opp,\ast}(F)) & \simeq \Hom_{\Fun(\sE^\opp, \conj{\Spaces})}(\RKan_{f^\opp}(X), F) \\
		& \simeq \Hom_{\Fun(\sD^\opp, \conj{\Spaces})} \left(\RKan_{f^\opp} \left(\lim_{Z \in \sD^\opp_{X/}} f^\opp(Z) \right), F \right) \\
		& \simeq \lim_{Z \in \sD^\opp_{X/}} \Hom_{\Fun(\sD^\opp, \conj{\Spaces})} (\RKan_{f^\opp}(f^\opp(Z)), F) \\
		& \simeq \lim_{Z \in \sD^\opp_{X/}} \Hom_{\Fun(\sD^\opp, \conj{\Spaces})} (\RKan_{f^\opp}(\sD(-,Z)), F) \\
		& \simeq \lim_{Z \in \sD^\opp_{X/}} \Hom_{\Fun(\sD^\opp, \conj{\Spaces})} (\sE(-, f^\opp(Z)), F) \\
		& \simeq \lim_{Z \in \sD^\opp_{X/}} F(f^\opp(Z)) \\
		& \simeq F \left( \lim_{Z \in \sD^\opp_{X/}} f^\opp(Z) \right) \\
		& \simeq F(X). \qedhere
	\end{align*}
\end{proof}
Thanks to the lemma we can see that every derived stack is canonically the colimit of the derived affine stacks that map into it:
\begin{equation} \label{eqn:dst-is-colimit-of-daff}
	X \simeq \colim_{A \in \dAff_{/X}} \Spec(A).
\end{equation}

We are more interested in these abstract properties than in the actual implementation of derived stacks, but the interested reader should know that we can take $\dSt$ to be the full subcategory of $\Fun(\sCAlg_{\KK}^\rmc, \conj{\Spaces})$ spanned by the \emph{\'etale sheaves}, those product-preserving functors $F$ such that, for any \'etale map $f : A \to B$ in $\sCAlg_{\KK}^\rmc$, $F(A)$ is realized as a limit of the cosimplicial diagram
\begin{equation*}
	\begin{tikzcd}[column sep = small]
		F(B) \ar[r, shift left] \ar[r, shift right] & F(B \otimes_A B) \ar[r, shift left = 2] \ar[r, shift right = 2] \ar[r] & F(B \otimes_A B \otimes_A B) \ar[r, shift left = 1] \ar[r, shift right = 1] \ar[r, shift left = 3] \ar[r, shift right = 3] & \dotsb
	\end{tikzcd}
\end{equation*}
induced by $f$ (see \cite[Definition B.3.1]{CHS2022}). Thus we can think of a derived stack as a presheaf on $\dAff$ which preserves products and satisfies an appropriate notion of descent, which in virtue of the consequences of (c) is given as a canonical colimit of affine presheaves. Given an $X \in \dSt$ we can relativize $\dSt(\sD)$ to $\dSt_X(\sD) \sub \Fun(\dAff^\opp_{/X}, \sD)$ and obtain an equivalence
\begin{equation} \label{eqn:etale-sheaf-gives-functor}
	\Fun^\mathrm{R}(\dSt^\opp_{/X}, \sD) \simeq \dSt_X(\sD)
\end{equation}
(see \cite[Lemma B.3.7]{CHS2022}).

\subsubsection{Quasicoherent sheaves}

Now let $\Mod : \sCAlg_{\KK}^{\rmc} \to \CAlg(\sCat_1)$ denote the functor sending an algebra $A \in \sCAlg_{\KK}^{\rmc}$ to the symmetric monoidal $\infty$-category $\Mod_A$ of $A$-modules in $\Mod_{\KK}$. This satisfies a very strong form of descent that characterizes it as an element of $\dSt(\CAlg(\sCat_1))$, and so by condition (c) (or \cite[second part of Lemma B.3.7]{CHS2022}) we obtain a limit-preserving functor $\QCoh^\otimes : \dSt^\opp \to \CAlg(\sCat_1)$ (see \cite[Definition B.5.2]{CHS2022})\footnote{The target of $\Mod$ should actually be $\mathrm{Pr}^\mathrm{L}$, the very large $\infty$-category of \emph{presentably symmetric monoidal $\infty$-categories}. Since this is itself presentable we can apply (c) with $\sD = \mathrm{Pr}^\mathrm{L}$ and the argument will follow through. Note, however, that by \cite[Definition B.5.12]{CHS2022} the passage to $\CAlg(\sCat_1)$ is justified and does not significantly affect the functor $\QCoh^\otimes$ that we obtain in the end.}. From \Cref{eqn:dst-is-colimit-of-daff} we see that
\begin{equation*}
	\QCoh^\otimes(X) \simeq \lim_{A \in \dAff_{/X}} \QCoh^\otimes(\Spec(A)) \simeq \lim_{\Spec A \in \dAff_{/X}} \Mod_{A}.
\end{equation*}
Define $\cO_X \in \QCoh^\otimes(X)$ to be the element of the limit obtained by taking the algebra $A \in \Mod_A$ for every $\Spec A \to X$. Then from \cite[Proposition B.5.3]{CHS2022} we have that $\QCoh^\otimes(X)$ corresponds to the symmetric monoidal $\infty$-category of \emph{quasicoherent sheaves of $\cO_X$-modules on $X$}. Locally over an affine stack $\Spec A$ this is the $\infty$-category of $A$-modules, and in particular the sheaf $\cO_X$ serves as the unit for the tensor product. For each morphism $f : X \to Y$ of derived stacks the corresponding functor $f^\ast : \QCoh^\otimes(Y) \to \QCoh^\otimes(X)$ is the \emph{pullback} of sheaves; if $\QCoh(X)$ denotes the underlying $\infty$-category of $\QCoh^\otimes(X)$ then $f^\ast$ has a right adjoint $f_\ast : \QCoh(X) \to \QCoh(Y)$, the \emph{pushforward} of sheaves. After composing with the inclusion $\CAlg(\sCat_1) \sub \Fun(\bG^\opp \times \bD^\opp, \Spaces)$ and adjoining the two factors we obtain a functor
\begin{equation*}
	Q^\otimes : \bG^\opp \times \bD^\opp \to \Fun^\mathrm{R}(\dSt^\opp, \Spaces) \simeq \dSt
\end{equation*}
which satisfies the Segal condition in both variables and such that
\begin{equation*}
	\sQ^\otimes_{t,n} := \QCoh^\otimes(X)_{t,n} \simeq \dSt(X, Q^\otimes_{t,n})
\end{equation*}
by \Cref{lmm:dst-rep}; i.e., $Q^\otimes$ parametrizes a representable object for the symmetric monoidal $\infty$-category of quasicoherent sheaves over a derived stack. In particular, since pushforwards of quasicoherent sheaves by maps of derived stacks always exist, the functor
\begin{equation*}
	\sQ \simeq \dSt(-, Q^\otimes_{1, \bullet}) : \dSt^\opp \to \sCat_1
\end{equation*}
satisfies \Cref{ass:assumption}. 

\subsubsection{Aside: $\ZZ_2$-graded quasicoherent sheaves}

Fix $A \in \sCAlg^\rmc_\KK$ and consider the $\infty$-category $\Mod_A^{\ZZ_2}$ of $\ZZ_2$-graded dg-modules over $A$ -- this can be identified with the dg-category of $\ZZ_2$-graded $A$-modules, with $A$ having the standard even-odd $\ZZ_2$-grading\footnote{The underlying module is obtained by taking the direct sum of $A_j$ for even $j$, in degree $0$, and for odd $j$, in degree $1$.}, localized at the quasi-isomorphisms. $\Mod_A^{\ZZ_2}$ is clearly $\KK$-linear and stable (the suspension functor swaps the even and odd degrees), and it is also presentably symmetric monoidal under the $\ZZ_2$-graded tensor product. We can then consider the functor
\begin{equation*}
	F : \sCAlg^\rmc_{\KK} \to \CAlg(\sCat_1), \quad A \mapsto \Mod_A^{\ZZ_2}.
\end{equation*}
By \cite[Theorem D.3.5.2, Remark D.3.5.3]{Lurie2018} the functor $F$ satisfies \'etale descent. Since $F$ also preserves finite products ($\Mod^{\ZZ_2}_{A \oplus B} \simeq \Mod^{\ZZ_2}_{A} \times \Mod^{\ZZ_2}_{B}$) we have from \cite[Definition B.3.1]{CHS2022} that $F$ is an \'etale sheaf and hence $F$ determines a limit preserving functor
\begin{equation*}
	\QCoh^{\ZZ_2} : \dSt^\opp \to \CAlg(\sCat_1).
\end{equation*}
Similarly as for $\QCoh$ we obtain a representing object
\begin{equation*}
	Q^{\ZZ_2, \otimes} : \bG^\opp \times \bD^\opp \to \dSt
\end{equation*}
for $\QCoh^{\ZZ_2}$.

\subsubsection{(Pre)Symplectic derived stacks} \label{sec:symplectic-dag}

Quite a lot of calculus can be developed over special kinds of derived stacks. When we restrict ourselves to the nice subcategory $\dSt^\Art$ of \emph{Artin stacks} (see \cite[Definition B.7.4]{CHS2022}) then each $X \in \dSt^\Art$ has a unique \emph{cotangent complex} $\LL_X \in \QCoh(X)$ (see \cite[Definition B.10.2, Theorem B.10.8]{CHS2022}), a dualizable quasicoherent sheaf on $X$ which behaves similarly to the cotangent bundle of smooth manifolds; we denote its dual by
\begin{equation*}
	\TT_X := \LL_X^\vee
\end{equation*}
and call it the \emph{tangent complex}. When $X = \Spec A$ is affine, then $\LL_X =: \LL_A \in \Mod_A$ is the corepresenting object for the functor $\mathrm{Der}_\KK(A, -)$ sending a $\KK$-module $M$ to the space of derivations $A \to M$ (see \cite[Section B.2]{CHS2022}). The tangent and cotangent complexes are functorial in $X$: each map $f : Y \to X$ induces a map $f^\ast \LL_X \to \LL_Y$ and a dual map $\TT_Y \to f^\ast \TT_X$. Moreover, one can define the \emph{$p$th exterior power} of the cotangent complex, $\bigwedge^p \LL_X \in \QCoh(X)$, and the associated \emph{space of $p$-forms on $X$}, $\cA^p(X) \in \Spaces$, which satisfy the equivalence
\begin{equation*}
	\cA^p(X) \simeq \Hom_{\QCoh(X)}(\cO_X, \textstyle \bigwedge^p \LL_X) \simeq \Hom_{\QCoh(X)}(\textstyle \bigwedge^p \TT_X, \cO_X)
\end{equation*}
(see \cite[Proposition B.12.2]{CHS2022}). A $p$-form $\omega$ together with the data of a closed null-homotopy $d\omega \simeq 0$ is a \emph{closed $p$-form}, and these also form a space $\cA^{p, \rmcl}(X)$ (see \cite[Definition B.12.1]{CHS2022}). The differential $d$ is the external de Rham differential (see \cite[Section 1.2]{PTVV2013}, appearing as $\epsilon$), not the internal differential on the complex of forms. Both $\cA^p(X)$ and $\cA^{p, \rmcl}(X)$ are functorial in $X$ and preserve limits so they are represented by some derived stacks $A^p$ and $A^{p, \rmcl}$, respectively (see \cite[Proposition 12.3]{CHS2022}).

Recall that a symplectic manifold is a smooth manifold $M$ together with a closed non-degenerate $2$-form $\omega \in \Omega^2(X)$. A \emph{presymplectic derived stack} is a derived Artin stack $X$ together with a closed $2$-form $\omega \in \cA^{2,\rmcl}(X)$ on it (see \cite[Definition 2.1.3]{CHS2022}). Equivalently, it is a morphism $X \to A^{2, \rmcl}$ whose source is a derived Artin stack, so we can define the $\infty$-category of presymplectic stacks as
\begin{equation*}
	\PreSymp := \dSt^{\Art} \times_{\dSt} \dSt_{/A^{2, \rmcl}}
\end{equation*}
(see \cite[Definition 2.3.2]{CHS2022}). This category has a symmetric monoidal structure determined by
\begin{equation} \label{eqn:presymp-product}
	(X, \omega_X) \odot (Y, \omega_Y) \simeq (X \times Y, \pi_X^\ast \omega_X + \pi_Y^\ast \omega_Y)
\end{equation}
where $\pi_X$ and $\pi_Y$ are the projections of $X \times Y$ to $X$ and $Y$, respectively. The point $\ast = \Spec \KK \in \dSt$ has a trivial presymplectic form $\omega_\ast = 0$ (\cite[Example 2.1.4]{CHS2022}) and acts as the unit for $\odot$. The forgetful functor $U : \PreSymp^{\odot} \to \dSt^\times$ is symmetric monoidal, as can be easily seen from \Cref{eqn:presymp-product}.

Note that
\begin{equation*}
	\cA^2(X) \simeq \Hom_{\QCoh(X)}(\cO_X, \textstyle \bigwedge^2 \LL_X) \simeq \Hom_{\QCoh(X)}(\TT_X, \LL_X)
\end{equation*}
by the dualizability of $\LL_X$. If $\omega \in \cA^2(X)$ is a closed $2$-form such that the corresponding map $\TT_X \to \LL_X$ is an equivalence then the presymplectic stack $(X, \omega)$ is called \emph{symplectic} (see \cite[Definition 2.1.2]{CHS2022}); this mirrors the non-degeneracy condition for symplectic forms on smooth manifolds.

\subsubsection{Lagrangian correspondences}

A span 
\begin{equation*}
	\begin{tikzcd}
		& (L, \omega_L) \ar[dl, "f"'] \ar[dr, "g"] & \\
		(X, \omega_X) & & (Y, \omega_Y)
	\end{tikzcd}
\end{equation*}
in $\PreSymp$ is called an \emph{isotropic correspondence} (see \cite[Definition 2.4.3]{CHS2022}). This is really the data of a commutative diagram
\begin{equation*}
	\begin{tikzcd}
		& L \ar[dl, "f"'] \ar[dr, "g"] & \\
		X \ar[dr, "\omega_X"'] & & Y \ar[dl, "\omega_Y"] \\
		& A^{2, \rmcl} &
	\end{tikzcd}
\end{equation*}
in $\dSt$, with $X, Y, L \in \dSt^\Art$, which includes the data of an equivalence $f^\ast \omega_X - g^\ast \omega_Y \simeq 0$ in $\cA^{2,\rmcl}(L)$. If $X \simeq \ast$ is the point then $\omega_X \simeq 0$ and so $0 \simeq g^\ast \omega_Y$, in which case we call $g$ an \emph{isotropic morphism}. 

Note that an isotropic correspondence as above gives rise to a commutative hexagon
\begin{equation*}
	\begin{tikzcd}
		& \TT_L \ar[dr] \ar[dl] & \\
		f^\ast \TT_X \ar[d] & & g^\ast \TT_Y \ar[d] \\
		f^\ast \LL_X \ar[dr] & & g^\ast \LL_Y \ar[dl] \\
		& \LL_L &
	\end{tikzcd}
\end{equation*}
in $\QCoh(L)$. We say that the isotropic correspondence is \emph{Lagrangian} if this is a limit diagram (see \cite[Definition 2.4.7]{CHS2022}). The definition of a Lagrangian span extends to that of a \emph{Lagrangian $k$-fold span} (see \cite[Definition 2.6.11, Definition 2.9.3]{CHS2022}): informally, these are functors $\bS^k \to \PreSymp$ such that each generating span is a Lagrangian correspondence and each composite, obtained by taking pullbacks, is Lagrangian as well. It was shown that the collection of such spans forms a subcategory $\ms{L}\mathrm{ag}$ of the $\infty$-category of spans in $\PreSymp$ (this is $\Lag^{\ast,0}_1$ in \cite[Section 2.9]{CHS2022}). Thus there is a full subcategory $\what{\Lag} \sub \what{\GS}(\PreSymp)^{1,0, \rmcart}$ which straightens to the $\infty$-category $\ms{L}\mathrm{ag}$.

\subsection{Commutative Rozansky-Witten theories} \label{sec:rw-3-cat}

\subsubsection{Push-pull spans in presymplectic stacks}

Set $\sC = \dSt$ and $\sD = \PreSymp$ be the two $\infty$-categories defined above with their symmetric monoidal forgetful functor $U : \sD^\odot \to \sC^\times$, and let $Q^{\ZZ_2,\otimes}$ be the symmetric monoidal category object of $\sC$ classifying the symmetric monoidal $\infty$-category of $\ZZ_2$-graded quasicoherent sheaves. From \Cref{dfn:relative-push-pull-spans} we have a symmetric monoidal $(\infty,3)$-category $\sPP(U;Q^{\ZZ_2,\otimes})$ of relative push-pull spans with the following informal description:

\textbf{Objects:} The objects of $\sPP(U;Q^{\ZZ_2,\otimes})$ are presymplectic derived stacks $(X, \omega)$. The monoidal product $(X, \omega_X) \odot (Y, \omega_Y)$ is defined in \Cref{eqn:presymp-product}.

\textbf{$1$-morphisms:} The $1$-morphisms are isotropic correspondences $(X, \omega_X) \leftarrow L \rightarrow (Y, \omega_Y)$, i.e. spans of presymplectic derived stacks. If $(-)^\diamondsuit : \PreSymp \to \PreSymp$ denotes the dualization functor taking $(X, \omega)$ to $(X, -\omega)$ then there is an equivalence
\begin{equation} \label{eqn:morph-dual-in-pp}
	\sPP(X, Y) \simeq \sPP(\ast, X^\diamondsuit \odot Y)
\end{equation}
for any $X, Y \in \PreSymp$. This follows from the equivalence of Lagrangian structures explained in \cite[Section 2.4]{CHS2022}. The composition of two correspondences is given by taking pullbacks, and the identity $1$-morphism $X \to X$ is the diagonal correspondence $X \leftarrow X \to X$. 

\textbf{$2$-morphisms:} The $2$-morphisms between correspondences $(X, \omega_X) \leftarrow L \rightarrow (Y, \omega_Y)$ and $(X, \omega_X) \leftarrow M \rightarrow (Y, \omega_Y)$ are given by $\ZZ_2$-graded quasicoherent sheaves over the derived intersection $L \cap M := L \times_{X \times Y} M$ of $L$ and $M$. The horizontal composition is simply given by pulling back and tensoring: given composable spans
	\begin{equation*}
		\begin{tikzcd}
			& L \ar[dl] \ar[dr] & & L' \ar[dl] \ar[dr] & \\
			X & & Y & & Z \\
			& M \ar[ul] \ar[ur] & & M' \ar[ul] \ar[ur] &
		\end{tikzcd}
	\end{equation*}
	and sheaves $\cM \in \QCoh^{\ZZ_2}(L \cap M)$ and $\cM' \in \QCoh^{\ZZ_2}(L' \cap M')$, their composite is
	\begin{equation*}
		\pi^\ast \cM \otimes (\pi')^\ast \cM' \in \QCoh^{\ZZ_2}((L \times_Y L') \cap (M \times_Y M'))
	\end{equation*} 
	where $\pi$ and $\pi'$ are the canonical projections as in the diagram
	\begin{equation*}
		\begin{tikzcd}
			& (L \times_Y L') \cap (M \times_Y M') \ar[d, "\simeq"] \ar[ddl, "\pi"', bend right] \ar[ddr, "\pi'", bend left] & \\
			& (L \cap M) \times_{Y} (L' \cap M') \ar[dl] \ar[dr] & \\
			L \cap M & & L' \cap M'
		\end{tikzcd} 
	\end{equation*}
	The identity for this composition is the structure sheaf $\cO_{X \times X} \in \QCoh^{\ZZ_2}(X \cap X) = \QCoh^{\ZZ_2}(X \times X)$: in fact, given a sheaf $\cM \in \QCoh^{\ZZ_2}(L \cap M)$ where $L$ and $M$ are Lagrangian correspondences from $X$ to $Y$ we have that the projections $\pi : (L \times_Y Y) \times (M \times_Y Y) \to L \cap M$ and $\pi' : (L \times_Y Y) \times (M \times_Y Y) \to Y \cap Y = Y \times Y$ are none other than the identity map in the first case and the natural projection in the second case, so
	\begin{equation*}
		\pi^\ast \cM \otimes (\pi')^\ast \cO_{Y \times Y} \simeq \cM \otimes \cO_{L \cap M} \simeq \cM.
	\end{equation*}
	The vertical composition is given by the push-pull formula: given three spans
	\begin{equation*}
		\begin{tikzcd}
			& L \ar[dl] \ar[dr] & \\
			X & M \ar[l] \ar[r] & Y \\
			& N \ar[ul] \ar[ur] &
		\end{tikzcd}
	\end{equation*}
	and sheaves $\cM \in \QCoh^{\ZZ_2}(L \cap M)$ and $\cN \in \QCoh^{\ZZ_2}(M \cap N)$, their composite is
	\begin{equation*}
		i_{M,\ast} (i_N^\ast \cM \otimes i_L^\ast \cN) \in \QCoh^{\ZZ_2}(L \cap N)
	\end{equation*}
	where
	\begin{equation*}
		 \begin{tikzcd}
		 	& L \cap M \cap N \ar[dl, "i_L"'] \ar[d, "i_M"] \ar[dr, "i_N"] & \\
		 	M \cap N & L \cap N & L \cap M
		 \end{tikzcd} 
	\end{equation*}
	are the three canonical inclusions of derived intersections. The identity for this composition is given by $\nabla_\ast \cO_{L}$, where $\nabla : L \to L \cap L = L \times_{X \times Y} L$ is the diagonal map. This follows from a quick computation: since the diagram
	\begin{equation*}
		\begin{tikzcd}
			L \cap M \ar[r, "\nabla"] \ar[d, "j"'] & L \cap M \cap M \ar[d, "i_L"] \\
			M \ar[r, "\nabla"'] & M \cap M
		\end{tikzcd}
	\end{equation*}
	commutes and $\nabla$ satisfies the base change and the projection formula (see \cite[Definitions B.8.1, B.8.2, and Theorem B.8.12]{CHS2022}, if $\cM \in \QCoh^{\ZZ_2}(L \cap M)$ is another sheaf then
	\begin{align*}
		\nabla_\ast \cO_M \circ \cM & :=  i_{M, \ast}(i_M^\ast \cM \otimes i_L^\ast \nabla_\ast \cO_{M}) \\
		& \simeq i_{M, \ast} (i_M^\ast \cM \otimes \nabla_\ast j^\ast \cO_{M}) & \text{(base change)} \\
		& \simeq i_{M, \ast} \nabla_\ast (\nabla^\ast i_M^\ast \cM \otimes j^\ast \cO_{M}) & \text{(projection)} \\
		& \simeq (i_M \circ \nabla)_\ast ((i_M \circ \nabla)^\ast \cM \otimes \cO_{L \cap M}) \\
		& \simeq \cM & (i_M \circ \nabla \simeq \id_{L \cap M})
	\end{align*}
	and similarly for the other identity.

\textbf{$3$-morphisms:} The $3$-morphisms are given by maps of sheaves over a span: a $3$-morphism between two sheaves $\cM$ and $\cN$ over $L \cap M$ is given by a span $L \cap M \xlongleftarrow{f} N \xlra{g} L \cap M$ and a morphism $\alpha : f^\ast \cM \to g^\ast \cN$ in $\QCoh^{\ZZ_2}(N)$. If we reduce to the spans of the form
\begin{equation*}
	L \cap M  \xlongleftarrow{\id} L \cap M \xlra{\id} L \cap M
\end{equation*}
then we obtain the space of maps between $\cM$ and $\cN$ in $\QCoh^{\ZZ_2}(L \cap M)$. The horizontal composition is given by pulling back and tensoring in the only way which is compatible with the composition of $1$-morphisms; the vertical composition is given by the push-pull formula for the vertical composition of $2$-morphisms, but applied to the morphisms; the transversal composition is simply the composition of maps of sheaves after suitable pullbacks. The identities for these compositions are the obvious ones induced by the formulas above.

\subsubsection{Further restrictions}

With a little bit of care we can restrict to a smaller $(\infty, 3)$-category which better approximates the idea in \cite{KR2010}. Consider the projections
	\begin{align*}
		\sigma : \what{\PP}(U ;Q^{\ZZ_2,\otimes})_{t,k,n,l} & \to \PreSymp^{\times tk(n+1)(l+1)} \\
		\lambda : \what{\PP}(U;Q^{\ZZ_2,\otimes})_{t,k,n,l} & \to (\what{\GS}(\PreSymp)^{1}_{k})^{\times t(n+1)(l+1)} \\
		\rho : \what{\PP}(U;Q^{\ZZ_2,\otimes})_{t,k,n,l} & \to (\what{\GS}(\PreSymp)^{1}_{n})^{\times t(k+1)(l+1)}
	\end{align*}
	obtained by composing various Segal maps: $\sigma$ sends a generalized span to the collection of its generating objects in $\PreSymp$, $\lambda$ sends it to the collection of its horizontal slices (spans in $\PreSymp$ underlying the composition of $1$-morphisms), and $\rho$ send it to the collection of its transversal slices (spans in $\PreSymp$ underlying the composition of $3$-morphisms). Since $\what{\Lag}_k$ is contained in $\what{\GS}(\PreSymp)^1_k$ we can make the following definitions:
\begin{dfn} \label{dfn:horizontal-Lagrangian}
	A generalized span $(F,\eta) \in \what{\PP}(U;Q^{\ZZ_2,\otimes})_{t,k,n,l}$
	\begin{enumerate}
		\item \emph{has symplectic objects} if $\sigma(F)$ is a tuple of symplectic derived stacks,
		\item \emph{has Lagrangian $1$-morphisms} if $\lambda(F)$ is in $(\what{\Lag}_k)^{\times t(n+1)(l+1)}$, 
		\item \emph{is horizontally Lagrangian} if it has both symplectic objects and Lagrangian $1$-morphisms, and
		\item \emph{has standard maps of sheaves} if every component of $\rho(F)$ is a constant iterated span, i.e. all the projection morphisms are equivalences.
	\end{enumerate}
	Denote by $\what{\SRW} \sub \what{\PP}(U;Q^{\ZZ_2,\otimes})$ the full subcategory spanned by the generalized spans which are horizontally Lagrangian and have standard maps of sheaves.
\end{dfn}

Being horizontally Lagrangian implies that, in our desired $(\infty,3)$-category, the objects are all symplectic stacks and the $1$-morphisms are all Lagrangian correspondences. Having standard maps of sheaves implies that if $L$ and $M$ are (now Lagrangian) spans between (now symplectic) stacks $X$ and $Y$, then the $\infty$-category of $2$-morphisms between $L$ and $M$ is precisely $\QCoh^{\ZZ_2}(L \cap M)$. We will conclude this section by proving that enforcing these properties still returns an $(\infty,3)$-category.

\begin{prp}
	All four properties in \Cref{dfn:horizontal-Lagrangian} are stable under pulling back by maps in $\bG^\opp \times \bD^{3, \opp}$.
\end{prp}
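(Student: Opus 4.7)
The plan is to reduce to checking stability separately for each of the four factors of $\bG^\opp \times \bD^{3, \opp}$, and to observe that since property (3) is the conjunction of (1) and (2), it suffices to treat (1), (2), and (4). In each case, the defining property is formulated as a condition on the image under one of the three projections $\sigma, \lambda, \rho$, which are themselves obtained by composing Segal maps in $\what{\PP}(U;Q^{\ZZ_2,\otimes})$ and are therefore strictly compatible with the cocartesian pushforward. Hence it is enough to verify that the image subcategory (symplectic objects, Lagrangian spans, constant spans) is closed under the corresponding operations: products induced by $\bG^\opp$-maps, and the reindexings (face/degeneracy/inertion, eventually pullback composition, plus cartesian replacement in the $l$-variable) induced by $\bD^\opp$-maps.

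For (1), the monoidal $\bG^\opp$-action produces $\odot$-products of tuples of objects, and $(X,\omega_X) \odot (Y, \omega_Y) = (X \times Y, \pi_X^\ast \omega_X + \pi_Y^\ast \omega_Y)$ is symplectic whenever both factors are (the sum of the induced isomorphisms $\TT_X \to \LL_X$ and $\TT_Y \to \LL_Y$ pulls back to an isomorphism on $X \times Y$, or invoke \cite[Proposition 2.1.5]{CHS2022}). The $\bD^\opp$-actions in the three variables act on the underlying generalized span by $\bS^k/\bS^n/\bsd^l$ functoriality, whose effect on the collection of vertices is to duplicate, permute, or delete entries; the symplectic property of each entry is preserved.

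For (2), we use that $\what{\Lag} \hookrightarrow \what{\GS}(\PreSymp)^{1}$ is a full subcategory stable under the structural cocartesian operations, which is essentially the content of \cite[Section 2.9]{CHS2022} (composability by pullback plus the Lagrangian extension of products). The $\bG^\opp$-factor acts by $\odot$-products, and the $\odot$-product of Lagrangian $k$-fold spans is again Lagrangian (\cite[Proposition 2.4.8]{CHS2022} and its iterated analogue). The $\bD^\opp$-action in the $k$ variable acts precisely as the functoriality of $\what{\Lag}_\bullet$ in $\bD^\opp$, so closure here is built into the construction of $\what{\Lag}$. The $\bD^\opp$-actions in the $n$ and $l$ variables merely reindex copies of $\lambda(F)$ (after passing through Segal decomposition, possibly via cartesian replacement in $l$), so no new horizontal slice is produced, only duplicates, forgotten copies, or pullback composites—all of which remain Lagrangian.

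For (4), the condition that every component of $\rho(F)$ is a constant iterated span is the statement that a prescribed family of structure maps in $\PreSymp$ are equivalences. Products of equivalences under $\odot$, composites of equivalences (which arise from the Segal composition in the $n$-variable), pullbacks of equivalences along the other face functors, and cartesian replacement—all send equivalences to equivalences. Therefore (4) is preserved by each factor. The mildest technical point—and the only place where some care is required—is that for a map $\delta$ in the $l$-variable one must invoke cartesian replacement to land back in $\what{\GS}(\sC)^{m,n,\rmcart}$, but since cartesian replacement is computed by right Kan extension from the vertex data $\bL^{\conj k} \times \bvx^{\conj l}$ and equivalences are stable under limits, the constancy of the transversal components survives. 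Assembling these four checks yields the proposition.
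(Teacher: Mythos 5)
Your proposal is correct and follows essentially the same route as the paper: factor the claim over the four indexing variables, handle the $\bG^\opp$-action via closure of symplectic/Lagrangian structures under $\odot$-products, the first $\bD^\opp$ factor via the fact that $\Lag$ is closed under its own structure maps, the remaining $\bD^\opp$ factors as mere reindexing, and property (4) via preservation of equivalences. Your extra attention to the cartesian replacement in the $l$-variable is a slightly more careful elaboration of a point the paper leaves implicit, but it does not change the argument.
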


\begin{proof}
	It's easy to see that the fourth property is stable since equivalences are preserved by pullbacks. Pulling back by maps in the second and third $\bD^\opp$ factors simply changes the number of objects of $k$-folds spans under consideration, and this does not affect the properties. Pulling back by maps in $\bG^\opp$ amounts to taking products of spans or inserting trivial spans; taking products preserves both the symplectic property and the Lagrangian property (see \cite[Section 2.10]{CHS2022}) and so we're done. Finally, the maps in the first $\bD^\opp$ factor induce the structure maps of $\Lag$; the latter is obviously closed under these structure maps since it forms an $\infty$-category.
\end{proof}

\begin{crl} \label{crl:rw-is-category}
	The projection $\what{\SRW} \sub \what{\PP}(U;Q^{\ZZ_2,\otimes}) \to \bG^\opp \times \bD^{3, \opp}$ is a cocartesian fibration.
\end{crl}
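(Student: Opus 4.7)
The plan is to combine the preceding stability proposition with the fact that $\what{\PP}(U;Q^{\ZZ_2,\otimes}) \to \bG^\opp \times \bD^{3, \opp}$ is already known to be a cocartesian fibration (this is built into \Cref{dfn:relative-push-pull-fibration}, obtained as a pullback of cocartesian fibrations). Since $\what{\SRW}$ is a \emph{full} subcategory of $\what{\PP}(U;Q^{\ZZ_2,\otimes})$, it suffices to check that cocartesian lifts in the ambient fibration remain inside $\what{\SRW}$ whenever their source does. This is a standard criterion for when a full subcategory inherits the structure of a cocartesian fibration from the total space.

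More concretely, I would proceed as follows. Fix $(F,\eta) \in \what{\SRW}$ in the fiber over $(\brak{t}, [k], [n], [l])$ and an arbitrary morphism $(\psi, \alpha, \gamma, \beta) : (\brak{t}, [k], [n], [l]) \to (\brak{t'}, [k'], [n'], [l'])$ in $\bG^\opp \times \bD^{3,\opp}$. Let $(F,\eta) \to (F', \eta')$ be a cocartesian lift in $\what{\PP}(U;Q^{\ZZ_2,\otimes})$, which exists by hypothesis. By the previous proposition, all four properties cutting out $\what{\SRW}$ -- symplecticity of objects, Lagrangianness of $1$-morphisms, and the ``standard maps of sheaves'' condition -- are preserved under the action by morphisms in $\bG^\opp \times \bD^{3,\opp}$, so $(F', \eta') \in \what{\SRW}$. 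Hence the restriction of this arrow to $\what{\SRW}$ is still cocartesian in $\what{\SRW}$: the universal property reduces to the universal property in the ambient fibration because the inclusion $\what{\SRW} \hookrightarrow \what{\PP}(U;Q^{\ZZ_2,\otimes})$ is fully faithful, so mapping spaces out of $(F', \eta')$ agree in the two categories.

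There is no real obstacle here -- the content of the statement was entirely absorbed into the preceding proposition, and what remains is a formal observation about cocartesian fibrations and full subcategories. If one wanted to be very explicit, one could invoke the standard lemma that a full subcategory $\sE' \subseteq \sE$ of a cocartesian fibration $p : \sE \to \sB$ whose objects are closed under the cocartesian pushforward operation inherits the structure of a cocartesian fibration with the same cocartesian morphisms; this is immediate from unpacking the definition of a cocartesian arrow via mapping spaces.
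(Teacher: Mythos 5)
Your proof is correct and follows exactly the paper's argument: the paper also deduces the corollary immediately from the preceding stability proposition together with fullness of the inclusion $\what{\SRW} \sub \what{\PP}(U;Q^{\ZZ_2,\otimes})$, citing the same standard fact that a full subcategory closed under cocartesian pushforward inherits the cocartesian fibration structure. You have simply spelled out the details that the paper leaves implicit.
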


\begin{proof}
	Follows immediately from the previous proposition since the inclusion of $\what{\SRW}$ into $\what{\PP}(U;Q^{\ZZ_2,\otimes})$ is full -- compare, for example, with the proof of \Cref{crl:cart-spans-are-cocart-fibration}.
\end{proof}

\begin{dfn}
	Let $\SRW : \bG^\opp \times \bD^{3, \opp} \to \sCat_1$ denote a straightening of the projection $\what{\SRW} \to \bG^\opp \times \bD^{3, \opp}$, which exists by \Cref{crl:rw-is-category}.
\end{dfn}

\begin{prp} \label{prp:rw-is-segal}
	The functor $\SRW$ satisfies the Segal condition in all four variables.
\end{prp}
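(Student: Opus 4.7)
The strategy is to reduce the Segal condition for $\SRW$ to the one already established for $\PP(U; Q^{\ZZ_2,\otimes})$ in \Cref{prp:push-pull-spans-are-segal-1} and \Cref{prp:push-pull-spans-are-segal-2}. For each of the four Segal directions and each quadruple $(t,k,n,l)$, there is a commutative square
\begin{equation*}
\begin{tikzcd}
\what{\SRW}_{t,k,n,l} \ar[r, hook] \ar[d] & \what{\PP}(U; Q^{\ZZ_2,\otimes})_{t,k,n,l} \ar[d, "\simeq"] \\
\what{\SRW}_b \times_{\what{\SRW}_c} \dotsb \times_{\what{\SRW}_c} \what{\SRW}_b \ar[r, hook] & \what{\PP}(U;Q^{\ZZ_2,\otimes})_b \times_{\what{\PP}(U;Q^{\ZZ_2,\otimes})_c} \dotsb \times_{\what{\PP}(U;Q^{\ZZ_2,\otimes})_c} \what{\PP}(U;Q^{\ZZ_2,\otimes})_b
\end{tikzcd}
\end{equation*}
with $(b,c)$ as in the proofs just cited, both horizontal maps fully faithful (the bottom one because limits of fully faithful functors are fully faithful), and the right vertical an equivalence. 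The left vertical is therefore automatically fully faithful, and I only need to verify essential surjectivity.

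An object of the bottom-left corner is a compatible tuple $(F_1, \dotsc, F_r)$ of horizontally Lagrangian generalized spans with standard maps of sheaves, matching along the chosen Segal boundaries. By the right-vertical equivalence the tuple assembles uniquely to some $\wilde F \in \what{\PP}(U; Q^{\ZZ_2,\otimes})_{t,k,n,l}$, and I must show $\wilde F \in \what{\SRW}_{t,k,n,l}$. By \Cref{dfn:horizontal-Lagrangian} this reduces to verifying that each of the three defining properties descends from the $F_i$ to $\wilde F$ along the Segal assembly in each of the four variables.

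For the symplectic-objects condition, $\sigma(\wilde F)$ consists of presymplectic stacks each of which already appears as a component of some $\sigma(F_i)$, hence is symplectic. For the Lagrangian-$1$-morphism condition, $\lambda(\wilde F)$ either reproduces the $\lambda(F_i)$ (when the Segal decomposition is along the $n$- or $l$-variable, which leave the $k$-fold span slices untouched), gets composed as iterated spans in the $k$-direction (for the first $\bD^\opp$ factor), or gets multiplied via the symmetric monoidal product of $\PreSymp$ (for the $\bG^\opp$ factor). Closure under Segal composition in the $k$-direction is exactly the statement that $\what{\Lag} \sub \what{\GS}(\PreSymp)^{1, \rmcart}$ is a subcategory, the main content of \cite[Section 2.9]{CHS2022}; closure under the symmetric monoidal product was noted in the preceding proposition. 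Finally, for the standard-maps-of-sheaves condition, $\rho(\wilde F)$ is built from the $\rho(F_i)$, whose components are constant iterated spans by hypothesis; constancy (i.e.\ the condition that every projection morphism is an equivalence) is stable under Segal composition and products, so $\rho(\wilde F)$ inherits it.

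The principal obstacle is the closure of Lagrangian iterated spans under Segal composition, since the symplectic and standard-sheaf conditions descend essentially formally from how the projections $\sigma$ and $\rho$ interact with the Segal maps. Fortunately this closure is exactly what Calaque-Haugseng-Scheimbauer establish, so no new work is needed. Combining the three closure statements gives essential surjectivity in each of the four Segal directions, completing the proof.
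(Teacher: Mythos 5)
Your proof is correct and follows essentially the same route as the paper: reduce to the Segal equivalences for $\what{\PP}(U;Q^{\ZZ_2,\otimes})$ established in \Cref{sec:enforcing-push-pull} and check that the defining conditions of the full subcategory $\what{\SRW}$ are detected on Segal components, with the closure of $\what{\Lag}$ under composition (from \cite{CHS2022}) carrying the only real content. Your write-up is in fact more explicit than the paper's, which compresses the essential-surjectivity check into the remark that $\sigma,\lambda,\rho$ factor through the Segal maps.
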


\begin{proof}
	It is easy to see that the Segal maps for $\what{\SRW}$ are pulled back from the Segal maps for $\what{\PP}(U;Q^{\ZZ_2,\otimes})$ along the inclusion $\what{\SRW} \sub \what{\PP}(U;Q^{\ZZ_2,\otimes})$: in fact, the projections $\sigma,\lambda,\rho$ in \Cref{dfn:horizontal-Lagrangian} are the composition of the Segal maps for the first, third, and fourth variable variables followed by further projections, which are themselves stable under pullbacks. Since the Segal maps for $\what{\PP}(U;Q^{\ZZ_2,\otimes})$ are equivalences by the results of \Cref{sec:enforcing-push-pull} we obtain that the Segal maps for $\what{\SRW}$ are also equivalences, proving the claim.
\end{proof}

\begin{crl} \label{crl:srw}
	The functor $\SRW : \bG^\opp \times \bD^{3, \opp} \to \sCat_1$ determines a symmetric monoidal $(\infty, 3)$-category. This is denoted $\sSRW$, and we call it the \emph{$(\infty,3)$-category of commutative Rozansky-Witten theories}.
\end{crl}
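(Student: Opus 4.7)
The plan is to mirror exactly the proof of Theorem \ref{thm:push-pull-3-cat}, using Proposition \ref{prp:rw-is-segal} as the key input. That proposition already tells us that $\SRW$ satisfies the Segal condition in all four variables, which is the heart of what is needed; the remaining work is to repackage this information into the language of symmetric monoidal $(\infty,3)$-categories.

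First, I would observe that a functor $\bG^\opp \times \bD^{3,\opp} \to \sCat_1$ which satisfies the Segal condition in the $\bG^\opp$ variable gives rise to an object of $\CAlg(\Fun(\bD^{3,\opp}, \sCat_1))$. Combined with the Segal condition in each of the three $\bD^\opp$ variables and the equivalence $\Cat^3(\CAlg(\sCat_1)) \simeq \CAlg(\Cat^3(\sCat_1))$, this upgrades $\SRW$ to an object of $\CAlg(\Cat^3(\sCat_1))$. Next I would apply the maximal-subspace functor $\sCat_1 \to \Spaces$ componentwise, which is symmetric monoidal and preserves the Segal condition, to produce an object of $\CAlg(\Cat^3(\Spaces))$. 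Finally, invoking Proposition \ref{prp:from-cat-to-seg} through the completion functor $L^3$, I obtain a symmetric monoidal complete $3$-fold Segal space, i.e. an element of $\CAlg(\sCat_3)$. This is by definition a symmetric monoidal $(\infty,3)$-category, and we call the result $\sSRW$.

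There is essentially no main obstacle in this argument: the three steps above are formal consequences of the fact that $\SRW$ is a symmetric monoidal $3$-fold Segal object in $\sCat_1$, and each of them is already used in the proof of Theorem \ref{thm:push-pull-3-cat}. The genuine content has been absorbed into the earlier results, namely Corollary \ref{crl:rw-is-category} (to know that $\SRW$ is well-defined as a functor via straightening) and Proposition \ref{prp:rw-is-segal} (which in turn relies on the fact that the restrictions encoded by $\sigma$, $\lambda$, $\rho$ and by the $\bG^\opp$-direction are all pulled back from Segal maps in $\what{\PP}(U; Q^{\ZZ_2,\otimes})$ that are already known to be equivalences). As in the discussion following Definition \ref{dfn:push-pull-3-cat}, I would also remark that completeness in the last $\bD^\opp$ direction holds automatically once one restricts the $3$-morphisms to honest maps of sheaves via the ``standard maps of sheaves'' condition in Definition \ref{dfn:horizontal-Lagrangian}, so that the completion $L^3$ does not alter the dualizability-relevant data of $\sSRW$.
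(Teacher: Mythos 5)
Your proposal is correct and follows essentially the same route as the paper: the paper's proof of this corollary is simply to combine Proposition \ref{prp:rw-is-segal} with the passage from a symmetric monoidal $3$-uple category object to a complete $3$-fold Segal space, exactly as in the proof of Theorem \ref{thm:push-pull-3-cat}, which is what you spell out. The only minor imprecision is that the relevant functor from Proposition \ref{prp:from-cat-to-seg} is $\mc{U}^3 = L^3 \circ R^3$ (one first takes the underlying $3$-fold Segal space before completing), not $L^3$ alone, but since you invoke that proposition this does not affect the argument.
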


\begin{proof}
	Combine \Cref{prp:rw-is-segal} with the construction of a complete $3$-fold Segal space from a $3$-uple Segal space.
\end{proof}

\appendix

\section{Higher categorical background} \label{sec:appendix}

Here we collect a list of definitions and results about higher categries and higher algebra that we need for \Cref{sec:construction,sec:applications}. Some results are well known, but we wrote down a proof of those for which we could not track down a precise reference.

\subsection{Functors versus fibrations} \label{sec:functors-vs-fibrations}

Throughout this work we make extensive use of Lurie's \emph{straightening/unstraightening equivalence}, which asserts that for any $\infty$-category $\sD$ there is an adjoint equivalence
\begin{equation} \label{eqn:str-unstr}
	\begin{tikzcd}
		{\sCat_1}_{/\sD}^{\rmcocart} \ar[rr, bend left, "\mathrm{St}"{name=U}] & & \Fun(\sD, \sCat_1) \ar[ll, bend left, "\mathrm{Un}"{name=D}]
		\ar[from=U, to=D, "\rotatebox{-90}{$\dashv$}" description, phantom]
	\end{tikzcd}
\end{equation}
between the category of functors $\sD \to \sCat_1$ and the category of cocartesian fibrations with target $\sD$ and maps preserving cocartesian morphisms. See \cite[Section 2]{HTT2009} for information about cocartesian fibrations and \cite[Section 3.2]{HTT2009} or \cite{HHR2021} for a detailed proof of straightening/unstraightening. 

For our purposes it will be sufficient to have just a few properties of this equivalence. Recall:
\begin{dfn}
	Let $P : \sE \to \sD$ be a functor and $f : e_1 \to e_2$ a morphism in $\sE$. Then $f$ is \emph{$P$-cocartesian} (or just \emph{cocartesian} if $P$ can be inferred from the context) if for every $e_3 \in \sE$ the diagram
	\begin{equation*}
		\begin{tikzcd}
			\sE(e_2, e_3) \ar[r, "f^\ast"] \ar[d] & \sE(e_1, e_3) \ar[d] \\
			\sD(P(e_2), P(e_3)) \ar[r, "P(f)^\ast"] & \sD(P(e_1), P(e_3))
		\end{tikzcd}
	\end{equation*}
	is a pullback square, or equivalently the diagram of undercategories
	\begin{equation*}
		\begin{tikzcd}
			\sE_{e_2/} \ar[r, "f^\ast"] \ar[d] & \sE_{e_1/} \ar[d] \\
			\sD_{P(e_2)/} \ar[r, "F(f)^\ast"] & \sD_{P(e_1)/}
		\end{tikzcd}
	\end{equation*}
	is a pullback square. We also say that $P$ is a \emph{cocartesian fibration} if for every $g : d_1 \to d_2$ in $\sD$ and every $e_1 \in \sE$ with $P(e_1) \simeq d_1$ there is a cocartesian morphism $f$ in $\sE$ such that $P(f) \simeq g$. If $P^\opp$ is a cocartesian fibration then $P$ is a \emph{cartesian fibration}.
\end{dfn}
If $g : d_1 \to d_2$ is a morphism in $\sD$, $e_1$ is a lift of $d_1$, and $f$ is a cocartesian lift of $g$ starting at $e_1$, we sometimes denote the target of $f$ by $g_!(e_1)$. This is meant to signify that if the cocartesian fibration $P$ comes from a functor $F : \sD \to \sCat_1$ via \Cref{eqn:str-unstr} then $g_!(e_1) \simeq F(g)(e_1)$ in the $\infty$-category $F(d_2)$. In particular, precomposing by $f$ gives an equivalence
\begin{equation*}
	\sE_{/\id_{d_2}}(g_!(e_1), e') \simeq \sE_{/g}(e_1, e')  
\end{equation*}
for any lift $e'$ of $d_2$ and this can be used to show that any two cocartesian lifts of $g$ have equivalent targets (i.e. $g_!(e_1)$ is essentially unique).

The equivalence in \Cref{eqn:str-unstr} is functorial in $\sD$: if $G : \sC \to \sD$ and $F : \sD \to \sCat_1$ are functors then
\begin{equation*}
	\mathrm{Un}(F \circ G) \simeq \sC \times_{\sD} \mathrm{Un}(F)
\end{equation*}
as cocartesian fibrations over $\sC$. In particular when $G = \{d\} : \Delta^0 \to \sD$ picks out an object $d \in \sD$ then we have an equivalence of $\infty$-categories between the fiber of $\mathrm{Un}(F)$ at $d$ and the value $F(d)$:
\begin{equation*}
	F(d) \simeq \mathrm{Un}(F \circ \{d\}) \simeq \Delta^0 \times_{\sD} \mathrm{Un}(F) \simeq \mathrm{Un}(F)_d.
\end{equation*}

Passing to $\sD^\opp$ gives an an equivalence for cartesian fibrations instead, and replacing $\sCat_1$ with $\Spaces$ gives another version for left and right fibrations. This equivalence happens inside an $\infty$-category of large $\infty$-categories within which $\sD$, ${\sCat_1}_{/\sD}^{\rmcocart}$, $\sCat_1$, and $\Fun(\sD, \sCat_1)$ live. We will not dwell too much on size issues, but the interested reader can assume that we're fixing a hierarchy of Grothendieck universes associated to cardinals $\kappa_1 < \kappa_2 < \dotsb$ and using ``small'' to refer to $\kappa_1$-small $\infty$-categories, ``large'' to refer to $\kappa_2$-small $\infty$-categories, and ``very large'' for the subsequent ones.

Given a functor $F : \sD \to \sCat_1$ we will denote by $\what{F} \to \sD$ or $\what{F} \to \sD^\opp$ the associated cocartesian or cartesian fibration; the variance is usually implicit from the context, but when we need to disambiguate between the two we will write $\mathrm{Un}(F) \to \sD$ for the cocartesian unstraightening and $\mathrm{Un}^\opp(F) \to \sD^\opp$ for the cartesian unstraightening. By \cite[Theorem 7.4, Theorem 7.6]{GHN2017} we have equivalences
\begin{equation*}
	\mathrm{Un}(F) \simeq \int^{\sC^\opp} F(-) \times \sC_{(-)/}, \quad \mathrm{Un}^\opp(F) \simeq \int^{\sC^\opp} F(-) \times (\sC^\opp)_{/(-)}
\end{equation*}
(see \Cref{sec:ends} for the notation). At the level of ordinary categories (i.e. when $\sD = \mathbf{D}$ is ordinary and $F$ lands in $\bCat_1$) the above formulas turn into explicit constructions for $\what{F}$, the \emph{Grothendieck (op)fibration associated to $F$} or \emph{category of elements of $F$}, in terms of objects of $\mathbf{D}$ and elements of the categories $F(d)$ for $d \in \mathbf{D}$.

If $\sE \to \sD$ is a cocartesian fibration and $f : x \to y$ is a morphism in $\sD$ then we denote the induced functor on fibers by $f_\ast : \sE_x \to \sE_y$. If $\sD = \sC^\opp$ and $f : x \to y$ is a morphism in $\sC$ then we denote the induced functor by $f^\ast : \sE_y \to \sE_x$ instead.

\subsection{A model for higher categories} \label{sec:higher-cats}

Here we briefly discuss the model for $(\infty, n)$-categories that will be used throughout the paper. The following material is presented in great detail (including the relevant sources) in Sections 3, 4, and 7 of \cite{Haugseng2018}; we direct the interested reader to it for a more complete exposition. For the remainder of this section, $\sD$ is an $\infty$-category with finite limits.

\begin{dfn} \label{dfn:category-object}
	A simplicial object $X : \bD^\opp \to \sD$ is called a \emph{category object} in $\sD$ if, for any $n \geq 0$, the maps
	\begin{equation*}
		\begin{tikzcd}[row sep = 0.05em]
			{[0]} \ar[r, "\sigma_i"] & {[n]}, & & {[1]} \ar[r, "\rho_j"] & {[n]} \\
			0 \ar[r, mapsto] & i & & (0, 1) \ar[r, mapsto] & (j, j+1)
		\end{tikzcd}
	\end{equation*}
	in $\bD$ induce an equivalence
	\begin{equation*}
		(\rho_0 \times_{\sigma_1} \dotsb \times_{\sigma_{n-1}} \rho_{n-1})_\ast : X_n \xlra{\simeq} X_1 \times_{X_0} \dotsb \times_{X_0} X_1
	\end{equation*}
	in $\sD$, where the right-hand side contains $n$ copies of $X_1$. The above equation is called a \emph{Segal condition}. The full subcategory of $\Fun(\bD^\opp, \sD)$ spanned by the category objects is denoted by $\Cat(\sD)$. A \emph{Segal space} is a category object in $\Spaces$.
\end{dfn}

The data of a Segal space $X$ includes a space $X_0$ of ``objects'' $x, y, z, \dotsc$ and a space $X_1$ of ``morphisms'' $f, g, h, \dotsc$, each of which has a source $s : X_1 \to X_0$ and target $t : X_1 \to X_0$ determined by the two maps $\sigma_0, \sigma_1 : [0] \to [1]$ in $\bD$. The inclusion $[1] \to [2]$ defined by $c(0) = 0$ and $c(1) = 2$ produces a map $c : X_2 \to X_1$ and thus the Segal condition guarantees that there is a ``composition'' operation $X_1 \times_{X_0} X_1 \xleftarrow{\simeq} X_2 \xlra{c} X_1$, defined up to equivalence, sending a pair $(f, g)$ of composable edges to a candidate for the composite $g \circ f \in X_1$. The space of these candidates is contractible and so, up to equivalence, $g \circ f$ is unique. The assumption that the Segal condition holds for all other values of $n$ guarantees that this composition is unital and associative up to equivalence.

Given a category object $X \in \Cat(\sD)$ and an object $d \in \sD$ we obtain a Segal space $X(d) \in \Cat(\Spaces)$ by taking the composite
\begin{equation*}
	\bD^\opp \xlra{X} \sD \xlra{\Hom_\sD(d,-)} \Spaces.
\end{equation*}
It will be helpful to think of an arbitrary category object in this way, as a representing object for Segal spaces parametrized (contravariantly) by $\sD$.

Note that, for any $\infty$-category $\sA$, limits in $\Fun(\sA, \sD)$ are computed componentwise in $\sD$ so $\Cat(\sD)$ is again an $\infty$-category with finite limits.

\begin{dfn}
	The $\infty$-category $\Cat^n(\sD)$ of \emph{$n$-uple category objects} in $\sD$ is defined inductively as follows:
	\begin{itemize}
		\item $\Cat^0(\sD) := \sD$;
		\item $\Cat^n(\sD) := \Cat(\Cat^{n-1}(\sD))$ for $n \geq 1$.
	\end{itemize}
	An \emph{$n$-uple Segal space} is an $n$-uple category object in $\Spaces$. If $X$ is an $n$-uple Segal space and $a$ is an $n$-tuple of $0$s and $1$s containing exactly $k$ many $1$s, then a vertex of the space $X_a$ is called a \emph{$k$-morphism} of $X$; a $0$-morphism is also called an \emph{object} of $X$.
\end{dfn}

By unwinding the definition we see that an $n$-uple category object in $\sD$ is equivalently a functor $X : \bD^{n, \opp} \to \sD$ which satisfies the Segal condition in each variable independently. The data of $X$ can be thought of as a collection of $n$-dimensional cubes: the legs of each cube are finite sequences of composable $1$-morphisms between objects, and the higher dimensional faces are represented by the $k$-morphisms glued along their common $(k-1)$-morphisms. Here is a picture of a $2$-cube in $X_{m,n}$:
\begin{equation*}
	\begin{tikzcd}
		x_{0, 0} \ar[r, "f_{1,0}"] \ar[d, "f_{0,1}"] & x_{1, 0} \ar[r, "f_{2,0}"] \ar[d, "f_{1,1}"] & x_{2, 0} \ar[r, "f_{3,0}"] \ar[d, "f_{2,1}"] & \dotsb \ar[r, "f_{m,0}"] \ar[d] & x_{m,0} \ar[d, "f_{m,1}"] \\
		x_{0, 1} \ar[r, "f_{1,1}"] \ar[d, "f_{0,2}"] & x_{1,1} \ar[r, "f_{2,1}"] \ar[d, "f_{1,2}"] & x_{2, 1} \ar[r, "f_{3,1}"] \ar[d, "f_{2,2}"] & \dotsb \ar[r, "f_{m,1}"] & x_{m,1} \ar[d, "f_{m,2}"] \\
		\vdotsb \ar[d, "f_{0,n}"] & \vdotsb \ar[d, "f_{1,n}"] & \vdotsb \ar[d, "f_{2,n}"] & \ddotsb & \vdotsb \ar[d, "f_{m,n}"] \\
		x_{0,n} \ar[r, "f_{1, n}"] & x_{1,n} \ar[r, "f_{2, n}"] & x_{2,n} \ar[r, "f_{3, n}"] & \dotsb \ar[r, "f_{m, n}"] & x_{m,n}
	\end{tikzcd}
\end{equation*}
The commutative squares here contain the data represented by $X_{1,1}$.

Note that an $n$-uple Segal space has exactly $\binom{n}{k}$ many possibly different kinds of $k$-morphisms. To pass from the ``cubical'' data of an $n$-uple category to the ``globular'' data of an $n$-category we must enforce the condition that there be only one kind of $k$-morphism for every $k$, so that we can talk about $(k+1)$-morphisms as going from one $k$-morphism to another. This degeneracy condition on the $k$-morphisms is encoded by letting certain representing objects be trivial or constant. In general we say that an $n$-uple category object $X$ is \emph{essentially constant} if it the canonical natural transformation $c_{X_{0,\dotsc,0}} \to X$, where $c_{X_{0,\dotsc,0}} : \bD^{n,\opp} \to \sD$ is the constant functor with value $X_{0, \dotsc, 0}$, is an equivalence.

\begin{dfn}
	The $\infty$-category $\Seg^n(\sD)$ of \emph{$n$-fold Segal objects} in $\sD$ is defined inductively as follows:
	\begin{itemize}
		\item $\Seg^0(\sD) := \sD$;
		\item $\Seg^n(\sD) \sub \Cat(\Seg^{n-1}(\sD))$ is the full subcategory spanned by the functors $X : \bD^\opp \to \Seg^{n-1}(\sD)$ such that $X_0$ is essentially constant as an $(n-1)$-uple category object in $\sD$.
	\end{itemize}
	An \emph{$n$-fold Segal space} is an $n$-fold Segal object in $\Spaces$.
\end{dfn}

Up to equivalence, the data of an $n$-fold Segal object reduces to that of the objects $X_{1, \dotsc, 1, 0, \dotsc, 0}$ ($k$ many $1$s) representing the $k$-morphisms and the induced source, target, composition, identity, and higher associativity maps.

We might be tempted at this point to use $n$-fold Segal spaces as our models of $(\infty, n)$-categories. However, as it turns out, the homotopy theory of $\Seg^n(\Spaces)$ does not reflect the desired homotopy theory of $(\infty, n)$-categories in that fully faithful and essentially surjective maps of $n$-fold Segal spaces, adapted to that setting, are not necessarily equivalences. Localizing with respect to those maps gives the correct model of $(\infty, n)$-categories, which also happens to be equivalent to the subcategory of $\Seg^n(\Spaces)$ spanned by those $n$-fold Segal spaces $X$ which are \emph{complete}, meaning that all the equivalences internal to the $n$-fold category $X$ arise through degeneracies from the space of objects $X_{0, \dotsc, 0}$; the case $n = 1$ was proved by Joyal and Tierney in \cite{JT2007}, and Barwick generalized the result to $n > 1$ in his thesis.

\begin{dfn} \label{dfn:complete-segal-space}
	An \emph{$(\infty, n)$-category} is a complete $n$-fold Segal space. We denote by $\sCat_n \sub \Seg^n(\Spaces)$ the full subcategory spanned by the $(\infty, n)$-categories.
\end{dfn}

The following proposition allows us to bypass checking for completeness when building $(\infty, n)$-categories:

\begin{prp} \label{prp:from-cat-to-seg}
	The inclusion $\Seg^n(\Spaces) \sub \Cat^n(\Spaces)$ admits a right adjoint $R^n : \Cat^n(\Spaces) \to \Seg^n(\Spaces)$ -- the \emph{underlying $n$-fold Segal space} -- and the inclusion $\sCat_n \sub \Seg^n(\Spaces)$ admits a left adjoint $L^n : \Seg^n(\Spaces) \to \sCat_n$ -- the \emph{free complete $n$-fold Segal space} -- which preserves products. Therefore we obtain a product-preserving functor
	\begin{equation*}
		\mc{U}^n = L^n \circ R^n : \Cat^n(\Spaces) \to \sCat_n.
	\end{equation*}
\end{prp}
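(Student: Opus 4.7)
My plan is to establish the existence of $R^n$ by induction on $n$, then cite or sketch the existence of $L^n$ as a Bousfield localization, and finally combine these to get product-preservation of $\mc{U}^n$.

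\textbf{Existence of $R^n$ by induction.} The base cases $n=0$ and $n=1$ are immediate: $\Seg^0(\Spaces) = \Spaces = \Cat^0(\Spaces)$, and for $n=1$ the essential constancy of $X_0$ as an object of $\Spaces$ is a vacuous condition, so $\Seg^1(\Spaces) = \Cat(\Spaces) = \Cat^1(\Spaces)$; take $R^0 = R^1 = \id$. For $n \geq 2$, factor the inclusion as
\begin{equation*}
    \Seg^n(\Spaces) \hookrightarrow \Cat(\Seg^{n-1}(\Spaces)) \xlongrightarrow{i_\ast} \Cat(\Cat^{n-1}(\Spaces)) = \Cat^n(\Spaces),
\end{equation*}
where $i_\ast$ is postcomposition with $i : \Seg^{n-1}(\Spaces) \hookrightarrow \Cat^{n-1}(\Spaces)$. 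By induction $i$ has a right adjoint $R^{n-1}$; since a right adjoint preserves all limits and the Segal condition is a limit condition, postcomposition with $R^{n-1}$ lands in $\Cat(\Seg^{n-1}(\Spaces))$ and provides a right adjoint to $i_\ast$. For the first inclusion, essential constancy of $X_0$ is equivalent to asking that $X_0$ invert all morphisms of $\bD^{n-1, \opp}$; this is a closure condition under limits and all small colimits (because checked on morphisms being equivalences), so $\Seg^n(\Spaces) \subseteq \Cat(\Seg^{n-1}(\Spaces))$ is a colocalization of presentable $\infty$-categories, and the adjoint functor theorem gives the desired right adjoint. Composing the two right adjoints yields $R^n$.

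\textbf{Existence of $L^n$.} This is the completion functor of Barwick (generalizing Rezk's theorem for $n=1$): the inclusion $\sCat_n \hookrightarrow \Seg^n(\Spaces)$ is the inclusion of local objects for a left Bousfield localization at a set $S_n$ of morphisms which, on each Segal object, freely adjoin inverses to the ``internal equivalences.'' This exhibits $\sCat_n$ as a reflective subcategory and produces the left adjoint $L^n$.

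\textbf{Product-preservation of $L^n$, and hence of $\mc{U}^n$.} The content here is that the localization is \emph{monoidal} with respect to the cartesian product: the generating class $S_n$ is closed under taking cartesian product with an arbitrary $n$-fold Segal space (equivalently, inverting an internal equivalence in one factor still inverts the product with anything). This ensures that if $X, Y \in \Seg^n(\Spaces)$ then the canonical map $L^n(X \times Y) \to L^n(X) \times L^n(Y)$ is an equivalence, because both sides exhibit the $S_n$-localization of $X \times Y$. Finally, $R^n$ is a right adjoint and thus preserves all limits, in particular finite products, so $\mc{U}^n = L^n \circ R^n$ preserves products as claimed.

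\textbf{Main obstacle.} The delicate step is verifying that the essential-constancy subcategory is stable under all small colimits, which is what delivers the right adjoint via the adjoint functor theorem; one must check this inductively using the characterization of colimits in $\Cat(\sD)$ and the fact that $R^{n-1}$ (already produced by induction) preserves the relevant structure. The monoidality of the completion localization is standard but also requires care, amounting to the observation that the set $S_n$ of ``equivalence-adjoining'' morphisms is closed under cartesian product with identities — this is where one uses that completeness in the Rezk/Barwick sense is a cartesian-friendly condition.
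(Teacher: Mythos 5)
Your overall architecture is sound, and your final deduction is word-for-word the paper's: $R^n$ preserves products because it is a right adjoint, $L^n$ preserves products, hence so does $\mc{U}^n = L^n \circ R^n$. The real difference is that the paper does not prove the three existence/preservation statements at all --- it cites them as Proposition 4.12, Definition 7.4, and Proposition 7.10 of Haugseng's \emph{Iterated spans} paper --- whereas you attempt to reconstruct them. Your description of $L^n$ as a left Bousfield localization whose class of generating maps is stable under cartesian product with arbitrary objects (so that $X \times Y \to L^nX \times L^nY$ is a local equivalence with local target) is exactly the content of the cited Proposition 7.10, and your inductive factorization of $\Seg^n(\Spaces) \sub \Cat^n(\Spaces)$ through $\Cat(\Seg^{n-1}(\Spaces))$, with the right adjoint to $i_\ast$ given by postcomposition with the limit-preserving functor $R^{n-1}$, is the standard route to Proposition 4.12.

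The one step I would not accept as written is the colimit-closure argument for the coreflection $\Seg^n(\Spaces) \sub \Cat(\Seg^{n-1}(\Spaces))$. Your justification --- essential constancy of $X_0$ means $X_0$ inverts every morphism of $\bD^{n-1,\opp}$, and ``inverting all morphisms'' is preserved under colimits because colimits of equivalences are equivalences --- is a statement about \emph{pointwise} colimits in $\Fun(\bD^\opp, \Seg^{n-1}(\Spaces))$. But $\Cat(\Seg^{n-1}(\Spaces))$ is an accessible localization of that functor category at the Segal maps, so its colimits are computed by forming the pointwise colimit and then reflecting back into the local objects; the reflection can in principle alter the degree-zero part, so closure under colimits of the \emph{ambient} category does not follow from the pointwise observation. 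This is precisely the point you flag as the ``main obstacle,'' and it is what one is really invoking Haugseng's Proposition 4.12 for: you should either supply that argument (for instance by producing $R^n$ via an explicit limit formula forcing constancy in degree zero, or by showing that the Segal localization does not change evaluation at $[0]$ on the relevant diagrams) or simply cite the result, as the paper does.
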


\begin{proof}
	The first three assertions are from Proposition 4.12, Definition 7.4, and Proposition 7.10 in \cite{Haugseng2018}. Since $R^n$ preserves all limits and $L^n$ preserves products we see that $\mc{U}^n$ preserves products.
\end{proof}

\subsection{Ends in $\infty$-categories} \label{sec:ends}

In ordinary category theory, the \emph{end} of a bifunctor $F : \bA^\opp \times \bA \to \bC$ is the equalizer
\begin{equation*}
	\int_{\bA} F = \int_{a \in \bA} F(a,a) := \mathrm{eq} \left( \prod_{a \in \bA} F(a,a) \rightrightarrows \prod_{a \to a'} F(a, a') \right),
\end{equation*}
which is equivalent to a limit over the twisted arrow category of $\bA$ -- see \cite[Section 1]{Loregian2021}. The latter definition readily translates to the $\infty$-categorical context:
\begin{dfn}
	Given a bifunctor $F : \sA^\opp \times \sA \to \sC$ of $\infty$-categories we define its \emph{end} to be the limit
	\begin{equation*}
		\int_\sA F = \int_{a \in \sA} F(a,a) := \lim (\Tw(\sA) \to \sA^\opp \times \sA \xlra{F} \sC).
	\end{equation*}
	Here $\Tw(\sA) \to \sA^\opp \times \sA$ is a left fibration\footnote{Warning: our $\Tw(\sA)$ is defined as $\Tw(\sA)^\opp$ in \cite{GHN2017}. Since we are borrowing results from that paper, we encourage the readers who are comparing the two works to pay attention to the change in notation.} corresponding to the bifunctor $\sA(-, -) : \sA^\opp \times \sA \to \Spaces$.
\end{dfn}

The dual notion is that of a \emph{coend}:
\begin{equation*}
	\int^\sA F := \colim (\Tw(\sA^\opp)^\opp \to \sA^\opp \times \sA \xlra{F} \sC).
\end{equation*}

\subsubsection{Useful results about ends}

Mirroring the standard result for ordinary categories, Proposition 5.1 in \cite{GHN2017} shows that if $F, G : \sA \to \sC$ are two functors of $\infty$-categories then
\begin{equation} \label{eqn:nat-formula}
	\Nat(F, G) \simeq \int_\sA \sC(F, G) = \int_{a \in \sA} \sC(F(a), G(a)).
\end{equation}
This can be used to prove an $\infty$-categorical analogue of the classical formula for Kan extensions using ends (see \cite{Loregian2021}).
\begin{prp} \label{prp:end-formula}
	Let $F : \sA \to \sB$ and $G : \sA \to \sC$ be functors of $\infty$-categories. If a right Kan extension of $G$ along $F$ at $b \in B$ exists, then
	\begin{equation*}
		(\RKan_F G)(b) \simeq \int_{a \in \sA} \sB(b, F(a)) \pitchfork G(a).
	\end{equation*}
\end{prp}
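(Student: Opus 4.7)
The plan is to verify the formula by testing against arbitrary mapping spaces and invoking Yoneda. Fix $c \in \sC$. Since $\sC(c,-)$ preserves limits (in particular ends), the cotensor adjunction $\sC(c, T \pitchfork d) \simeq \Spaces(T, \sC(c,d))$, and the end formula for natural transformations \Cref{eqn:nat-formula}, I obtain
\begin{align*}
    \sC\!\left(c, \int_{a \in \sA} \sB(b, F(a)) \pitchfork G(a)\right)
    &\simeq \int_{a \in \sA} \sC(c, \sB(b, F(a)) \pitchfork G(a)) \\
    &\simeq \int_{a \in \sA} \Spaces(\sB(b, F(a)), \sC(c, G(a))) \\
    &\simeq \Nat(\sB(b, F(-)), \sC(c, G(-))).
\end{align*}

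For the left-hand side, I would use the pointwise formula for the right Kan extension, valid under the standing hypothesis that the extension exists at $b$: namely $(\RKan_F G)(b) \simeq \lim (\sA \times_\sB \sB_{b/} \xlra{\pi} \sA \xlra{G} \sC)$, where $\pi$ denotes the projection. Applying $\sC(c,-)$ yields
\begin{equation*}
    \sC(c, (\RKan_F G)(b)) \simeq \lim_{\sA \times_\sB \sB_{b/}} \sC(c, G(\pi(-))).
\end{equation*}

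The key step is identifying this limit with the same natural-transformation space. The projection $\pi : \sA \times_\sB \sB_{b/} \to \sA$ is the left fibration classifying the functor $\sB(b, F(-)) : \sA \to \Spaces$, and a Fubini-style identity for limits over left fibrations asserts that, for any $H : \sA \to \sC$,
\begin{equation*}
    \lim_{\sA \times_\sB \sB_{b/}} H \circ \pi \simeq \int_{a \in \sA} \sB(b, F(a)) \pitchfork H(a).
\end{equation*}
Applied to $H = \sC(c, G(-)) : \sA \to \Spaces$ and combined again with \Cref{eqn:nat-formula}, this produces $\Nat(\sB(b, F(-)), \sC(c, G(-)))$. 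Comparing with the right-hand computation and applying Yoneda gives the desired equivalence.

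The main obstacle --- or rather the step requiring care --- is justifying the Fubini-style identity for limits over a left fibration, which is not stated verbatim earlier in the excerpt. This can be established either by translating through straightening-unstraightening (treating $\pi$ as encoding $\sB(b, F(-))$ and rewriting the indexing $\infty$-category as the total space of the corresponding left fibration), or by dualizing the coend formula $\what{W} \simeq \int^{a \in \sA} W(a) \times (\sA)_{/a}$ of \cite[Corollary 7.6]{GHN2017} --- replacing colimits with limits, tensors with cotensors, and twisted arrow categories with their opposites. Everything else in the argument is formal manipulation of ends, adjunctions, and the universal property of Kan extensions.
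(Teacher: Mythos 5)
Your proposal is correct and follows essentially the same route as the paper: test both sides against $\sC(c,-)$, use the pointwise limit formula for the right Kan extension over $\sB_{b/}\times_\sB\sA$, identify that limit with $\Nat(\sB(b,F(-)),\sC(c,G(-)))$ via the left-fibration interpretation, and conclude by Yoneda. The one step you flag as needing care --- the ``Fubini-style'' identity for limits over the left fibration --- is handled in the paper exactly by the first route you suggest: since $H=\sC(c,G(-))$ is $\Spaces$-valued, one only needs that the limit of the functor classified by a left fibration is its space of sections (\cite[Corollary 3.3.3.4]{HTT2009}), so your argument closes without needing the general $\sC$-valued weighted-limit statement.
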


\begin{proof}
	We first recall (see for example \cite[Section 7.3]{kerodon}) that right Kan extensions are given by the formula
	\begin{equation} \label{eqn:formula-right-Kan}
		(\RKan_F G)(b) \simeq \lim_{\substack{a \in \sA \\ (b \to F(a)) \in \sB}} G(a),
	\end{equation}
	where the limit is taken over the $\infty$-category
	\begin{equation*}
		\sB_{b/} \times_{\sB} \sA.
	\end{equation*}
	Let $c \in \sC$. Then
	\begin{align*}
		\sC(c, (\RKan_F G)(b)) & \simeq \lim_{\substack{a \in \sA \\ (b \to F(a)) \in \sB}} \sC(c, G(a)) \\
		& = \lim \left( \sB_{b/} \times_{\sB} \sA \xlra{\pi} \sA \xlra{\sC(c, G(-))} \Spaces \right). \tag{a}
	\end{align*}
	We will show that the right-hand side is equivalent to
	\begin{equation*}
		\Nat(\sB(b, F(-)), \sC(c, G(-))),
	\end{equation*}
	the space of natural transformations of functors $\sA \to \Spaces$ between $\sB(b, F(-))$ and $\sC(c, G(-))$. Notice first that $\pi$ is a left fibration classifying $\sB(b, F(-))$. If $\omega : \sE \to \sA$ is a left fibration that classifies $\sC(c, G(-))$, then 
	\begin{equation*}
		\pi \times_\sA \omega : (\sB_{b/} \times_{\sB} \sA) \times_{\sA} \sE \to \sA
	\end{equation*}
	is a left fibration that classifies the composite $\sC(c, G(\pi(-))) : \sB_{b/} \times_{\sB} \sA \to \Spaces$ since the unstraightening functor takes pre-composition to pullbacks. Hence
	\begin{align*}
		\Nat(\sB(b, F(-)), \sC(c, G(-))) & \simeq \Map^{\rmcocart}_{/\sA}(\sB_{b/} \times_{\sB} \sA, \sE) \\
		& \simeq \Map^{\rmcocart}_{/(\sB_{b/} \times_{\sB} \sA)}(\sB_{b/} \times_{\sB} \sA, (\sB_{b/} \times_{\sB} \sA) \times_{\sA} \sE) \\
		& \simeq \lim \left( \sB_{b/} \times_{\sB} \sA \xlra{\sC(c, G(\pi(-)))} \Spaces \right),
	\end{align*}
	where $\Map^{\rmcocart}_{/\sD}$ denotes the space of maps in ${\sCat_1}^\rmcocart_{/\sD}$ and the last equivalence follows from \cite[Corollary 3.3.3.4]{HTT2009}, which states that the space of sections of a left fibration is equivalent to the limit of the functor it classifies.
	
	Now we can apply the end formula \Cref{eqn:nat-formula} for $\Nat(-,-)$:
	\begin{align*}
		\lim_{\substack{a \in \sA \\ (b \to F(a)) \in \sB}} \sC(c, G(a)) & \simeq \Nat(\sB(b, F(-)), \sC(c, G(-))) \\
		& \simeq \int_{a \in \sA} \Hom_{\Spaces}(\sB(b, F(a)), \sC(c,G(a))) \\
		& \simeq \int_{a \in \sA} \sC (c, \sB(b, F(a)) \pitchfork G(a)) \\
		& \simeq \sC \left( c, \int_{a \in \sA} \sB(b, F(a)) \pitchfork G(a) \right). \tag{b}
	\end{align*}
	The claim follows if we combine the two equivalences (a) and (b) and then apply the Yoneda lemma.
\end{proof}

\begin{crl} \label{crl:formula-for-end-of-cotensor}
	Assume $F : \sA \to \Spaces$ and $G : \sA \to \sC$ are functors. Then
	\begin{equation*}
		\int_\sA F \pitchfork G \simeq (\RKan_{y} G)(F) \simeq \lim_{\substack{ a \in \sA \\ x \in F(a)}} G(a) \quad \left( \simeq \lim (\sA_{F/} \to \sA \xlra{G} \sC) \right)
	\end{equation*}
	where $y : \sA \to \Fun(\sA, \Spaces)^\opp$ is the co-Yoneda embedding given on objects by $a \mapsto \sA(a, -)$, and $\sA_{F/}$ falls into the pullback square
	\begin{equation*}
		\begin{tikzcd}
			\sA_{F/} \ar[r] \ar[d] & \Fun(\sA, \Spaces)^\opp_{F/} \ar[d] \\
			\sA \ar[r, "y"] & \Fun(\sA, \Spaces)^\opp
		\end{tikzcd}
	\end{equation*}
	of $\infty$-categories.
\end{crl}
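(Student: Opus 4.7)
The plan is to derive both equivalences as immediate consequences of \Cref{prp:end-formula} together with the Yoneda lemma.

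First, I would apply \Cref{prp:end-formula} to the co-Yoneda embedding itself. Set the ``$F$'' in that proposition to be $y : \sA \to \Fun(\sA, \Spaces)^\opp$, with target $\sB = \Fun(\sA, \Spaces)^\opp$, and with the original $G : \sA \to \sC$ unchanged. The proposition then yields
\begin{equation*}
	(\RKan_y G)(F) \simeq \int_{a \in \sA} \Fun(\sA, \Spaces)^\opp(F, y(a)) \pitchfork G(a).
\end{equation*}
The mapping space can be rewritten: $\Fun(\sA, \Spaces)^\opp(F, y(a)) \simeq \Fun(\sA, \Spaces)(y(a), F) = \Nat(\sA(a,-), F)$, which by the Yoneda lemma is naturally equivalent to $F(a)$. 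Substituting gives the first desired equivalence
\begin{equation*}
	(\RKan_y G)(F) \simeq \int_{a \in \sA} F(a) \pitchfork G(a) = \int_\sA F \pitchfork G.
\end{equation*}

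For the second equivalence, I would use the general limit formula \Cref{eqn:formula-right-Kan} for right Kan extensions, which expresses $(\RKan_y G)(F)$ as the limit of $G$ composed with the projection from the comma $\infty$-category $\Fun(\sA, \Spaces)^\opp_{F/} \times_{\Fun(\sA, \Spaces)^\opp} \sA$. By definition this is precisely $\sA_{F/}$, giving
\begin{equation*}
	(\RKan_y G)(F) \simeq \lim(\sA_{F/} \to \sA \xlra{G} \sC).
\end{equation*}
To identify this with the indexed limit $\lim_{a \in \sA,\, x \in F(a)} G(a)$, I would unravel the objects of $\sA_{F/}$: they are pairs $(a, \alpha)$ where $\alpha$ is a morphism $F \to y(a)$ in $\Fun(\sA, \Spaces)^\opp$, equivalently a natural transformation $y(a) = \sA(a,-) \to F$, which by the Yoneda lemma corresponds to a point $x \in F(a)$. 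This identification is a standard consequence of the Yoneda lemma applied to the slice.

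The argument is almost entirely formal given \Cref{prp:end-formula}, so there is no serious obstacle; the only subtlety worth double-checking is the handling of the opposite category when applying Yoneda to $\Fun(\sA, \Spaces)^\opp(F, y(a))$, which I have been careful to track above. All the remaining steps are direct substitutions and reindexing.
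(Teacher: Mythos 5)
Your argument is correct and is essentially the paper's own proof run in the opposite direction: the paper starts from $\int_\sA F \pitchfork G$ and rewrites $F(a) \simeq \Fun(\sA,\Spaces)^\opp(F, y(a))$ to recognize the end formula of \Cref{prp:end-formula}, whereas you start from $(\RKan_y G)(F)$ and apply \Cref{prp:end-formula} directly, then invoke Yoneda; the identification of the limit over $\sA_{F/}$ via \Cref{eqn:formula-right-Kan} and the Yoneda description of its objects is the same in both. No gaps.
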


\begin{proof}
	Let $\sB = \Fun(\sA, \sC)^\opp$. Then
	\begin{equation*}
		\int_{\sA} F \pitchfork G = \int_{a \in \sA} F(a) \pitchfork G(a) \simeq \int_{a \in \sA} \sB(F, y(a)) \pitchfork G(a) \simeq (\RKan_{y} G)(F)
	\end{equation*}
	where the second equivalence comes from the co-Yoneda lemma: $F(a) \simeq \sB(F, y(a))$ naturally in $a \in \sA$. The limit formula for right Kan extensions (\cite[Definition 7.3.1.2]{kerodon}) provides the last equivalence in the claim, and the shorthand for the limit comes from the Yoneda lemma since morphisms $x : F \to y(a)$ in $\Fun(\sA, \Spaces)^\opp$ exactly correspond to vertices of $x \in F(a)$.
\end{proof}

For ends of this form, given any $a \in \sA$ and $c \in \sC$ we have a component map
\begin{multline} \label{eqn:component-map}
	\sC\left( c, \int_{\sA} F \pitchfork G \right) \simeq \int_{\sA} \Spaces(F, \sC(c, G)) \simeq \Nat(F, \sC(c, G)) \\
	\to \Spaces(F(a), \sC(c, G(a))) \to \sC(c, F(a) \pitchfork G(a))
\end{multline}
which allows us to pick specific morphisms in $\sC$ represented by the end. 

We conclude with a simplification for certain kinds of ends.

\begin{prp}
	Let $\sD = \sD_0^\lhd$ be the cone on an $\infty$-category $\sD_0$ and denote the initial object of $\sD$ by $i$. Moreover, let $F : \sD^\opp \times \sD \to \sC$ be a functor and let $F_0$ denote the restriction of $F$ to $\sD_0^\opp \times \sD_0$. Then we have a pullback square
	\begin{equation*}
		\begin{tikzcd}
			\displaystyle \int_{\sD} F \ar[r] \ar[d] & \displaystyle \int_{\sD_0} F_0 \ar[d] \\
			F(i,i) \ar[r] & \displaystyle \lim_{\sD_0} F(i,-)
		\end{tikzcd}
	\end{equation*}
	where the top map is the canonical restriction map induced by the inclusion $\sD_0 \sub \sD$, the vertical maps are two component maps coming from the limit, and the bottom map is induced by post-composition in $\sD$.
\end{prp}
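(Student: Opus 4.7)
The plan is to rewrite $\int_\sD F \simeq \lim_{\Tw(\sD)} F$ (suppressing the projection $\Tw(\sD) \to \sD^\opp \times \sD$) and to exhibit $\Tw(\sD_0^\lhd)$ as the total space of a cocartesian fibration over $\Delta^1$. Because $i$ is initial in $\sD = \sD_0^\lhd$, the objects of $\Tw(\sD)$ split into three classes: (i) $\id_i$, (ii) the essentially unique arrows $\alpha_d : i \to d$ for $d \in \sD_0$, and (iii) arrows internal to $\sD_0$. Since $\Hom_\sD(d,i)$ is empty for every $d \in \sD_0$, there are no morphisms in $\Tw(\sD)$ from class (ii) back to class (iii), while morphisms from (iii) to (ii) are exactly pairs $(i \to a, b \to d)$, i.e.\ morphisms $b \to d$ in $\sD_0$. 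Sending class (iii) to $0$ and classes (i)--(ii) to $1$ defines a functor $\pi : \Tw(\sD) \to \Delta^1$ with fibers $\sE_0 := \pi^\inv(0) \simeq \Tw(\sD_0)$ and $\sE_1 := \pi^\inv(1) \simeq \sD_0^\lhd$, and I would check directly that $\pi$ is cocartesian, classified by the composite $T' : \Tw(\sD_0) \xlra{T} \sD_0 \hookrightarrow \sD$ (with $T$ the target projection), the cocartesian lift at $(f : a \to b)$ being the arrow $f \to \alpha_b$ built from the unique $i \to a$ and $\id_b$.

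Next I would invoke the standard presentation of such a total space as a pushout of $\sE_1 \xlla{T'} \sE_0 \times \{1\} \hookrightarrow \sE_0 \times \Delta^1$, which dualizes to the pullback formula
\begin{equation*}
    \lim_{\Tw(\sD)} F \simeq \lim_{\sE_0} F|_{\sE_0} \times_{\lim_{\sE_0} (T')^\ast F|_{\sE_1}} \lim_{\sE_1} F|_{\sE_1}
\end{equation*}
after using that $\Delta^1$ has initial object $0$ (so $\lim_{\Delta^1}$ evaluates there). The left term evaluates to $\int_{\sD_0} F_0$, the right term to $\lim_{\sD_0^\lhd} F(i,-) \simeq F(i,i)$ (collapsing because $i$ is initial in $\sD_0^\lhd$), and the middle term to $\lim_{\Tw(\sD_0)} F(i, T(-))$. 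To identify this last term with $\lim_{\sD_0} F(i,-)$, the key input is that $T : \Tw(\sA) \to \sA$ is an initial functor for any $\sA$ --- equivalently, taking the end of a bifunctor depending only on the second variable recovers the limit over $\sA$. This is immediate from the equalizer description of ends at the $1$-categorical level and extends to $\infty$-categories via the weak contractibility of the comma slices $\Tw(\sA) \times_\sA \sA_{/d}$.

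The three canonical maps of the resulting pullback square are then verified to agree with those in the statement by inspection: the top map is restriction along $\Tw(\sD_0) \hookrightarrow \Tw(\sD)$, the left map is the component map at $\id_i \in \Tw(\sD)$ appearing in \Cref{eqn:component-map}, and the right map is induced pointwise by the natural $F(a,b) \to F(i,b)$ coming from the unique $i \to a$ in $\sD$. The main obstacle is the cocartesian fibration check for $\pi$ in the $\infty$-categorical sense, since one has to verify the universal property of the candidate lifts rather than read it off of a combinatorial picture; once this and the initiality of $T$ are established, the remainder is essentially formal.
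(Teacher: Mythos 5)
Your argument is correct, but it follows a genuinely different route from the paper's. The paper presents $\sD = \sD_0^\lhd$ as the pushout $\Delta^0 \sqcup_{\sD_0} (\sD_0 \times \Delta^1)$, applies the fact that $\Tw$ preserves colimits to get a pushout decomposition of $\Tw(\sD)$, and then computes the resulting pullback of limits using Fubini for ends and the explicit cospan shape of $\Tw(\Delta^1)$. You instead analyze $\Tw(\sD_0^\lhd)$ directly: the absence of maps into the initial object forces the three classes of twisted arrows to assemble into a cocartesian fibration over $\Delta^1$ with fibers $\Tw(\sD_0)$ and $\sD_{i/} \simeq \sD$, classified by the target projection, and you then use the mapping-cylinder (lax colimit) presentation of a cocartesian fibration over $\Delta^1$ to produce the same pullback of limits. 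What your approach buys is a concrete picture of $\Tw(\sD_0^\lhd)$ and the isolation of the single gluing functor $T'$; what it costs is the by-hand verification that the candidate lifts are cocartesian and the appeal to the collage presentation, where the paper only needs the (admittedly heavier, but citable) colimit-preservation of $\Tw$. Both arguments ultimately rest on the same two shared inputs -- that limits over a pushout of index categories form a pullback, and that the end of a bifunctor depending only on its covariant variable is the limit over that variable (equivalently, the initiality of the target projection $\Tw(\sA) \to \sA$, which the paper also invokes without proof when identifying its corners) -- and both arrive at the identical identification of the three structure maps, so the two proofs are interchangeable in the body of the paper.
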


\begin{proof}
	Consider the pushout square
	\begin{equation*}
		\begin{tikzcd}
			\sD_0 \ar[rr, "\id_{\sD_0} \times \{0\}"] \ar[d] & & \sD_0 \times \Delta^1 \ar[d, "g"] \\
			\Delta^0 \ar[rr, "\{i\}"] & & \sD 
		\end{tikzcd}
	\end{equation*}
	which identifies $\sD$ with a cone on $\sD_0$. Since $\Tw : \sCat_1 \to \sCat_1$ preserves colimits (see \cite[Remark 8.1.1.4]{kerodon}) we have an induced pushout square
	\begin{equation*}
		\begin{tikzcd}
			\Tw(\sD_0) \ar[r, "\alpha"] \ar[d, "\beta"'] & \Tw(\sD_0 \times \Delta^1) \ar[d, "\gamma"] \\
			\Tw(\Delta^0) \ar[r, "\delta"] & \Tw(\sD)
		\end{tikzcd}
	\end{equation*}
	If we set $\conj{F} := \pi \circ F : \Tw(\sD) \to \sD^\opp \times \sD \to \sC$ then \cite[Proposition 7.6.3.26]{kerodon} gives a \emph{pullback} square
	\begin{equation} \label{eqn:pullback-square-for-end}
		\begin{tikzcd}
			\lim \conj{F} \ar[r] \ar[d] & \lim \conj{F}\vert_{\Tw(\sD_0 \times \Delta^1)} \ar[d] \\
			\lim \conj{F}\vert_{\Tw(\Delta^0)} \ar[r] & \lim \conj{F}\vert_{\Tw(\sD_0)}
		\end{tikzcd}
	\end{equation}
	of limits of $\conj{F}$ restricted to the various $\infty$-categories in the diagram. We will now identify the four corners of this pullback square and show that it reduces to the diagram in the claim.
	\begin{itemize}
		\item By definition we have $\lim \conj{F} = \int_\sD F$.
		\item Since $\Tw(\Delta^0) \simeq \Delta^0$ and the map $\delta : \Tw(\Delta^0) \to \Tw(\sD)$ is the inclusion of the object $\id_i$, the identity map of the initial object $i$, the limit $\lim \conj{F}\vert_{\Tw(\Delta^0)}$ is equivalent to $\conj{F}(\id_i) = F(i,i)$.
		\item The map $\delta \circ \beta : \Tw(\sD_0) \to \Tw(\sD)$ (and hence also $\gamma \circ \alpha$, since the two are equivalent) factors through $\beta$ and thus $\conj{F}\vert_{\Tw(\sD_0)}$ is the constant functor at $F(i,i)$. Alternatively we can write it as the composite functor
		\begin{equation*}
			\Tw(\sD_0) \to \sD_0^\opp \times \sD_0 \to \sD_0 \to \sC
		\end{equation*}
		where the last map $\sD_0 \to \sC$ is the constant functor $c_{F(i,i)}$ and so
		\begin{equation*}
			\lim \conj{F}\vert_{\Tw(\sD_0)} \simeq \int_{\sD_0} c_{F(i,i)}.
		\end{equation*}
		The end of a bifunctor $H : \sD_0^\opp \times \sD_0 \to \sC$ which factors through the projection to $\sD_0$ is simply the limit of the functor out of $\sD_0$, so
		\begin{equation*}
			\lim \conj{F}\vert_{\Tw(\sD_0)} \simeq \int_{\sD_0} c_{F(i,i)} = \lim_{\sD_0} c_{F(i,i)}.
		\end{equation*}
		\item The map $g : \sD_0 \times \Delta^1 \to \sD$ is given on objects by
		\begin{equation*}
			g(x, n) = 
			\begin{cases}
				i & n = 0, \\
				x & n = 1,
			\end{cases}
		\end{equation*}
		where $x \in \sD_0$ and $n \in \{0,1\}$ stands for one of the two objects of $\Delta^1$. This means that
		\begin{equation*}
			\lim \conj{F}\vert_{\Tw(\sD_0 \times \Delta^1)} \simeq \int_{\sD_0 \times \Delta^1} F(g(-), g(-)) \simeq \int_{\Delta^1} \int_{\sD_0} F(g(-),g(-))
		\end{equation*}
		where the last equivalence follows from the Fubini theorem for ends (see, for example, \cite[Proposition 2.21]{Haugseng2018}). It can be easily calculated that $\Tw(\Delta^1)$ is isomorphic to the ordinary cospan
		\begin{equation*}
			\bullet \to \bullet \leftarrow \bullet
		\end{equation*}
		where, in terms of morphisms in $\Delta^1$, the left and right points correspond to $\id_0$ and $\id_1$ and the middle point corresponds to the unique nontrivial morphism $e : 0 \to 1$. This readily implies that we have a pullback square
		\begin{equation*}
			\begin{tikzcd}
				\displaystyle \int_{\Delta^1} \int_{\sD_0} F(g(-),g(-)) \ar[r] \ar[d] & \displaystyle \int_{\sD_0} F(-,-) \ar[d] \\
				\displaystyle \int_{\sD_0} F(i,i) \ar[r] & \displaystyle \int_{\sD_0} F(i,-)
			\end{tikzcd}
		\end{equation*}
		which simplifies to
		\begin{equation*}
			\begin{tikzcd}
				\displaystyle \int_{\Delta^1} \int_{\sD_0} F(g(-),g(-)) \ar[r] \ar[d] & \displaystyle \int_{\sD_0} F_0 \ar[d] \\
				\displaystyle \lim_{\sD_0} c_{F(i,i)} \ar[r] & \displaystyle \lim_{\sD_0} F(i,-)
			\end{tikzcd}
		\end{equation*}
	\end{itemize}
	After substituting each limit and simplifying, the pullback square in \Cref{eqn:pullback-square-for-end} turns into the desired square
	\begin{equation*}
		\begin{tikzcd}
			\displaystyle \int_{\sD} F \ar[r] \ar[d] & \displaystyle \int_{\sD_0} F_0 \ar[d] \\
			F(i,i) \ar[r] & \displaystyle \lim_{\sD_0} F(i,-)
		\end{tikzcd}
	\end{equation*}
	concluding the proof.
\end{proof}

\begin{crl} \label{crl:nat-trans-inductive}
	Let $\sD = \sD_0^\lhd$ be the cone on an $\infty$-category $\sD_0$ and denote the initial object of $\sD$ by $i$. If $F, G : \sD \to \sC$ are two functors of $\infty$-categories and $F_0, G_0$ are their restrictions to $\sD_0 \sub \sD$ then there is a pullback square
	\begin{equation*}
		\begin{tikzcd}
			\Nat(F,G) \ar[r] \ar[d] & \Nat(F_0, G_0) \ar[d] \\
			\sC(F(i), G(i)) \ar[r] & \displaystyle \lim_{\sD_0} \sC(F(i), G(-))
		\end{tikzcd}
	\end{equation*}
\end{crl}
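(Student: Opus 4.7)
The strategy is to apply the preceding proposition to a cleverly chosen bifunctor. Define
\begin{equation*}
    H : \sD^\opp \times \sD \to \Spaces, \quad H(a,b) := \sC(F(a), G(b)),
\end{equation*}
which is the composite of $F^\opp \times G$ with the mapping-space bifunctor of $\sC$. Let $H_0$ be its restriction to $\sD_0^\opp \times \sD_0$.

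The plan is to invoke the end formula for natural transformations (\Cref{eqn:nat-formula}), which identifies
\begin{equation*}
    \Nat(F,G) \simeq \int_\sD H \quad \text{and} \quad \Nat(F_0, G_0) \simeq \int_{\sD_0} H_0,
\end{equation*}
and then read off the other two corners: by construction $H(i,i) = \sC(F(i), G(i))$ and $\lim_{\sD_0} H(i,-) = \lim_{\sD_0} \sC(F(i), G(-))$. Now apply the preceding proposition to the bifunctor $H$ to obtain a pullback square whose four corners match those of the claim. One then verifies that the maps in this square coincide (up to canonical equivalence) with those in the statement: the top map is induced by restriction along $\sD_0 \sub \sD$, the right vertical arrow is a component map for the end, the left vertical arrow is evaluation at $\id_i$, and the bottom arrow is induced by post-composition with arrows $i \to x$ in $\sD$ for $x \in \sD_0$. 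All these identifications are immediate from the construction of the end and the description of $\Tw(\sD)$ used in the previous proof.

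There is essentially no obstacle here — this is a direct specialization of the previous proposition. The only thing requiring a small sanity check is that the maps produced by the proposition, when rewritten via the end-to-$\Nat$ identification, really agree with the restriction and post-composition maps appearing in the corollary's diagram; this follows because the component map of \Cref{eqn:component-map} at $a = i$ picks out evaluation of a natural transformation at the initial object, and the bottom map comes from naturality with respect to the unique morphisms $i \to x$. No further work is needed.
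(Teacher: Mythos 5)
Your proposal is correct and matches the paper's proof exactly: the paper also deduces the corollary by applying the preceding pullback-square proposition to the bifunctor $\sC(F(-),G(-))$ and invoking the end formula $\Nat(F,G) \simeq \int_{\sD} \sC(F,G)$ to identify the corners. The extra verification you sketch of the four maps is a reasonable elaboration of what the paper leaves implicit.
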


\begin{proof}
	The statement follows immediately by the previous proposition and the end formula for natural transformations.
\end{proof}

Informally, the data of a natural transformation $\eta : F \to G$ is equivalent to that of its restriction $\eta_0 : F_0 \to G_0$ together with an extra component map $F(i) \to G(i)$ that is compatible with the existing components $F(x) \to G(x)$, for $x \in \sD_0$. It would be interesting to have a similar formula in the more general case where $\sD_0$ is obtained from $\sD$ by removing an arbitrary object $d$ (in our case we removed the very special initial object $i$).

\subsection{Commutative monoids} \label{sec:commutative-monoids}

\begin{dfn}
	A \emph{symmetric monoidal $\infty$-category} is a functor $\sC^\otimes : \bG^\opp \to \sCat_1$ such that the Segal maps
	\begin{equation*}
		\begin{tikzcd}[row sep = 0.05em]
			\brak{n} \ar[r, "\tau_j"] & \brak{1}, & \\
			j \ar[r, mapsto] & 1 & \\
			k \ar[r, mapsto] & \ast & \text{if $k \neq j$}
		\end{tikzcd}
	\end{equation*}
	in $\bG^\opp$ induce an equivalence of $\infty$-categories
	\begin{equation*}
		\sC^\otimes_n \xlra{\simeq} (\sC^\otimes_1)^{\times n}.
	\end{equation*}
	This is also called a \emph{Segal condition}. The \emph{underlying category} of $\sC^\otimes$ is $\sC := \sC^\otimes_1$. We say that an $\infty$-category $\sD$ \emph{admits a symmetric monoidal structure} if there exists a symmetric monoidal $\infty$-category $\sD^\otimes$ with underlying category $\sD^\otimes_1 \simeq \sD$.
\end{dfn}

If $\sC$ is an $\infty$-category with finite limits then it admits a symmetric monoidal structure, called the \emph{cartesian monoidal structure}, and we denote the associated functor by $\sC^\times : \bG^\opp \to \sCat_1$. The construction of this functor can be found in \cite[Section 2.4.1]{HA2017}.

\begin{dfn}
	A \emph{commutative monoid} or \emph{commutative algebra} in a cartesian monoidal category $\sC$ is a functor $M^\otimes : \bG^\opp \to \sC$ satisfying the Segal conditions. We denote by $\CAlg(\sC)$ the full subcategory of $\Fun(\bG^\opp, \sC)$ spanned by the commutative monoids in $\sC$.
\end{dfn}

We have an auxiliary characterization of commutative monoids that will turn out to be useful later. We recall that an \emph{inert morphism} in $\bG^\opp$ is a map $f : \brak{m} \to \brak{n}$ such that $f^\inv(k)$ is a singleton for each $k \neq \ast$.

\begin{prp} \label{prp:calg-are-sections}
	Commutative monoids $M^\otimes \in \CAlg(\sC)$ are equivalent to sections $\wilde{M} : \bG^\opp \to \what{\sC^\times}$ of a cocartesian fibration $\what{\sC^\times} \to \bG^\opp$ which straightens to $\sC^\times$ such that $\wilde{M}$ sends inert morphisms to cocartesian morphisms.
\end{prp}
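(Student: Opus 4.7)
The plan is to construct mutually inverse functors between $\CAlg(\sC)$ and the $\infty$-category of sections $\bG^\opp \to \what{\sC^\times}$ sending inert morphisms to cocartesian ones. The construction hinges on a concrete description of the fibers of $\what{\sC^\times} \to \bG^\opp$: the Segal condition for $\sC^\times$ gives $\sC^\times_n \simeq \sC^{\times n}$ (by \Cref{prp:cart-mon-str-on-prod}), and the cocartesian pushforward along a morphism $f : \brak{m} \to \brak{n}$ is the functor $\sC^{\times m} \to \sC^{\times n}$ sending $(c_i)_i$ to the tuple with $j$-th entry $\prod_{i \in f^\inv(j)} c_i$. In particular, along an inert Segal map $\tau_i : \brak{n} \to \brak{1}$, the pushforward is the $i$-th projection $\sC^{\times n} \to \sC$.

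First I would construct $\Phi : \CAlg(\sC) \to \mathrm{Sect}^{\mathrm{inert}}(\what{\sC^\times})$. Given $M^\otimes \in \CAlg(\sC)$, its Segal condition produces equivalences $M^\otimes(\brak{n}) \simeq M^{\times n}$ with $M = M^\otimes(\brak{1})$, and I would use these to define $\Phi(M^\otimes)(\brak{n})$ as the tuple $(M, \dotsc, M) \in \sC^{\times n} \simeq \sC^\times_n$. Functoriality along $f : \brak{m} \to \brak{n}$ is induced from $M^\otimes(f) : M^m \to M^n$, which under the Segal identifications encodes exactly a morphism in $\what{\sC^\times}$ sitting above $f$. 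The inert-to-cocartesian property holds because, under the Segal identifications, $M^\otimes$ applied to an inert map becomes a projection onto components, matching the cocartesian pushforward in the fibers.

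In the other direction, I would construct $\Psi : \mathrm{Sect}^{\mathrm{inert}}(\what{\sC^\times}) \to \CAlg(\sC)$ as follows. Given a section $\wilde{M}$ satisfying the inert-to-cocartesian condition, applying the condition to the Segal maps $\tau_i : \brak{n} \to \brak{1}$ shows that each component of $\wilde{M}(\brak{n}) \in \sC^{\times n}$ is equivalent to the single object $M := \wilde{M}(\brak{1}) \in \sC$. I would then define $\Psi(\wilde{M})(\brak{n}) := M^{\times n}$, with the structure maps along $f : \brak{m} \to \brak{n}$ assembled from the active components (maps $M^{\vnorm{f^\inv(j)}} \to M$) of the morphism $\wilde{M}(f)$. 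The Segal condition for $\Psi(\wilde{M})$ is then built into the construction.

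The main obstacle is not the informal data-matching above but the functorial coherence of both constructions and the verification that $\Phi$ and $\Psi$ are mutually inverse as functors of $\infty$-categories; this requires carefully tracking the natural equivalences $\sC^\times_n \simeq \sC^{\times n}$ and their interaction with morphisms in $\bG^\opp$ and in $\what{\sC^\times}$. To handle this rigorously I would invoke the universal property of the cocartesian unstraightening, which identifies sections with compatibly chosen families in the fibers together with specified lifts, and then argue that the inert-to-cocartesian condition cuts this data down to precisely the Segal-compatible family of an object of $\CAlg(\sC)$. The equivalence on mapping spaces should then follow from a second application of the same analysis.
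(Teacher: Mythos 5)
Your fiberwise description of the correspondence is accurate: under the Segal equivalences $\sC^\times_n \simeq \sC^{\times n}$ a section sending inerts to cocartesians must have $\wilde{M}(\brak{n}) \simeq (M,\dotsc,M)$, and the active components of $\wilde{M}(f)$ recover the structure maps of a Segal functor $\bG^\opp \to \sC$. But this is only the object- and morphism-level data of the equivalence, and the step you defer --- ``functorial coherence of both constructions'' --- is where essentially all of the mathematical content lives. Defining $\Phi$ and $\Psi$ by prescribing their values on objects and on single morphisms does not determine functors of $\infty$-categories, and the device you propose for repairing this is not available as stated: the universal property of unstraightening identifies functors $\bG^\opp \to \sCat_1$ with cocartesian fibrations over $\bG^\opp$; it does not present \emph{sections} of such a fibration as ``compatibly chosen families in the fibers together with specified lifts.'' Sections form the lax limit of the classifying functor (this is the content of results like \cite[Proposition 7.1]{GHN2017}), and extracting from that lax limit the full subcategory cut out by the inert-to-cocartesian condition, then comparing it with the full subcategory of $\Fun(\bG^\opp,\sC)$ cut out by the Segal condition --- including the comparison of mapping spaces, which you again defer to ``a second application of the same analysis'' --- is a genuine theorem, not a formal consequence.

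That theorem is precisely \cite[Proposition 2.4.2.5]{HA2017}, applied to the commutative operad $\mathrm{Comm} \simeq \bG^\opp$, and the paper's proof consists of citing it (together with the observation that the inert condition is built into the definition of a map of $\infty$-operads, \cite[Definition 2.1.1.10]{HA2017}). Lurie's proof works with an explicit model of $\sC^\times$ designed to make the comparison tractable, and it is the honest version of the coherence argument you are gesturing at. So your proposal is best read as a correct heuristic for \emph{why} the statement is true, plus a pointer to where the proof must go; to turn it into a proof you should either carry out the lax-limit analysis in detail or, as the paper does, invoke the result from Higher Algebra directly.
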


\begin{proof}
	A more general statement, Proposition 2.4.2.5 in \cite{HA2017}, is true for operads and this is the case $\ms{O} = \mathrm{Comm} \simeq \bG^\opp$. The condition on inert morphisms is embedded in the definition of maps of $\infty$-operads: a section of $\what{\sC^\times} \to \bG^\opp$ is a map of operads $\bG^\opp \to \what{\sC^\times}$ over $\bG^\opp$, and such maps must send inert morphisms to cocartesian morphisms by \cite[Definition 2.1.1.10]{HA2017}.
\end{proof}
(To see why the condition on inert morphisms is needed we note that if $\wilde{M}$ sent \emph{all} morphisms to cocartesian morphisms in $\what{\sC^\times}$ then the corresponding functor $M : \bG^\opp \to \sC$ would characterize an idempotent element $M_1 \in \sC$, i.e. $M_1 \times M_1 \simeq M_1$.) 

There is a commutative diagram
\begin{equation*}
	\begin{tikzcd}
		\what{\sC^\times} \ar[rr, "\times"] & & \sC \\
		& \bG^\opp \ar[ul, "\wilde{M}"] \ar[ur, "M^\otimes"'] &
	\end{tikzcd}
\end{equation*}
where the top map is given on fibers by the product maps $\sC^\times_t \simeq \sC^{\times t} \to \sC$, and the proposition informally states that
\begin{equation*}
	\sC^\times_t \ni \wilde{M}(\brak{t}) \simeq (M^\otimes_1, M^\otimes_1, \dotsc, M^\otimes_1) \in \sC^{\times t}
\end{equation*} 

\begin{prp} \label{prp:alg-to-alg-alg}
	Precomposition with the smash product of finite sets $\wedge : \bG^\opp \times \bG^\opp \to \bG^\opp$ promotes a commutative monoid $M^\otimes \in \CAlg(\sC)$ in $\sC$ to another commutative monoid $W(M^\otimes) \in \CAlg(\CAlg(\sC))$ in $\CAlg(\sC)$ whose carrier is equivalent to $M^\otimes$.
\end{prp}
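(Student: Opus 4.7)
The plan is to verify the claim by unpacking the definition of $W(M^\otimes)$ and checking two Segal conditions in turn, then identifying the carrier.

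First I would define $W(M^\otimes) : \bG^\opp \to \Fun(\bG^\opp, \sC)$ as the mate of the composite $M^\otimes \circ \wedge : \bG^\opp \times \bG^\opp \to \sC$, so that on objects $W(M^\otimes)(\brak{n}) = M^\otimes(\brak{n} \wedge -)$. The key combinatorial observation, which I would record as a lemma, is that $\wedge$ preserves inert maps in each variable separately: if $f : \brak{m} \to \brak{m'}$ is inert and $\brak{n}$ is any pointed set, then $f \wedge \id_{\brak{n}} : \brak{mn} \to \brak{m'n}$ has fibers $(f^\inv(j), k)$ over $(j, k) \neq \ast$, each of cardinality $|f^\inv(j)| = 1$. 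Moreover, iterating Segal maps on both sides of $\wedge$ shows that for the inert projection $\tau_j : \brak{nm} \to \brak{1}$ one has a unique factorization as $\tau_{j'} \circ (\tau_i \wedge \id_{\brak{m}})$ followed by an appropriate inner Segal map, and similarly with the roles reversed.

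Next I would verify that $W(M^\otimes)$ actually lands in $\CAlg(\sC)$. Fixing $\brak{n}$, the Segal map for $W(M^\otimes)(\brak{n})$ at $\brak{m}$ is the map
\begin{equation*}
	M^\otimes(\brak{n} \wedge \brak{m}) \longrightarrow \prod_{j=1}^m M^\otimes(\brak{n} \wedge \brak{1}) = \prod_{j=1}^m M^\otimes(\brak{n})
\end{equation*}
induced by $\id_{\brak{n}} \wedge \tau_j$ for $1 \leq j \leq m$. By the lemma these are inert maps in $\bG^\opp$, and composing them with the inner Segal maps of $\brak{n}$ recovers all the Segal maps of $\brak{nm}$. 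The Segal condition for $M^\otimes$ then identifies both sides with $M^{\times nm}$ (where $M = M^\otimes_1$) and the induced map with an equivalence. The same argument with $\tau_i \wedge \id_{\brak{m}}$ shows that $W(M^\otimes) : \bG^\opp \to \CAlg(\sC)$ itself satisfies the outer Segal condition --- which can be checked pointwise in $\Fun(\bG^\opp, \sC)$, so it reduces to the same computation --- thereby giving $W(M^\otimes) \in \CAlg(\CAlg(\sC))$.

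Finally, I would compute the carrier: $W(M^\otimes)(\brak{1}) = M^\otimes(\brak{1} \wedge -)$. Since $\brak{1}$ is the unit for the smash product, the isomorphism $\brak{1} \wedge \brak{n} \cong \brak{n}$ in $\bG^\opp$ is natural, and post-composing $M^\otimes$ with this natural isomorphism gives $W(M^\otimes)(\brak{1}) \simeq M^\otimes$ as desired.

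The main obstacle is the bookkeeping in the two Segal checks: one must keep track of how the Segal projections of $\brak{nm}$ factor through the smash product in both orders so that the equivalences $M^\otimes(\brak{nm}) \simeq M^{\times nm}$ obtained from the inner and outer Segal conditions agree. Once the lemma about inertness of $f \wedge \id$ is isolated, though, the verifications are routine --- and indeed this whole construction is the concrete shadow of the Dunn--Lurie additivity equivalence $\CAlg(\CAlg(\sC)) \simeq \CAlg(\sC)$, so no genuinely new input beyond the Segal condition on $M^\otimes$ is required.
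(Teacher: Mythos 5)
Your proof is correct and follows essentially the same route as the paper: the paper's own proof only spells out the carrier identification $W(M^\otimes)(\brak{1}) = M^\otimes(\brak{1}\wedge -) = M^\otimes$, leaving the two Segal conditions implicit (the inner one is in effect the computation already done in \Cref{prp:comm-mon-on-prod} with the maps $\tau_i \wedge \id_{\brak{k}}$, or more slickly via distributivity of $\wedge$ over $\vee$). Your explicit verification that $f \wedge \id$ is inert and that the Segal projections of $\brak{nm}$ factor through those of each smash factor is exactly the bookkeeping the paper delegates, so no further comment is needed.
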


\begin{proof}
	Note that $\wedge$ sends $(\brak{m}, \brak{n})$ to $\brak{mn}$. If we view $W(M^\otimes) = M^\otimes \circ \wedge$ as a functor $\bG^\opp \to \CAlg(\sC^\times)$ then the carrier is obtained by evaluating it at $\brak{1}$:
	\begin{equation*}
		W(M^\otimes) : \brak{1} \mapsto (\brak{n} \mapsto M^\otimes_{1 \cdot n} = M^\otimes_n) = M^\otimes
	\end{equation*}
	as desired.
\end{proof}

\subsubsection{Monoidal structures on products}

\begin{prp} \label{prp:comm-mon-on-prod}
	If $M \in \sC$ admits a commutative monoidal structure (i.e. it extends to some $M^\otimes \in \CAlg(\sC)$ satisfying $M^\otimes_1 \simeq M$) then so does $M^{\times k}$ for any $k \geq 0$.
\end{prp}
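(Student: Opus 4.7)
The plan is to take $(M^\otimes)^{\times k}$, the $k$-fold product computed pointwise in $\Fun(\bG^\opp, \sC)$, as the desired commutative monoid structure on $M^{\times k}$. Concretely, I would define $(M^{\times k})^\otimes : \bG^\opp \to \sC$ by $(M^{\times k})^\otimes_n := (M^\otimes_n)^{\times k}$, with structure maps induced componentwise from those of $M^\otimes$.

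The only thing to check is the Segal condition. Using that $M^\otimes$ satisfies it, together with the fact that iterated finite products commute up to canonical equivalence, I have
\begin{equation*}
	(M^{\times k})^\otimes_n \simeq (M^\otimes_n)^{\times k} \simeq \bigl((M^\otimes_1)^{\times n}\bigr)^{\times k} \simeq \bigl((M^\otimes_1)^{\times k}\bigr)^{\times n} \simeq \bigl((M^{\times k})^\otimes_1\bigr)^{\times n},
\end{equation*}
and under these equivalences the Segal maps of $(M^{\times k})^\otimes$ correspond to the products of the Segal maps of $M^\otimes$. Evaluating at $\brak{1}$ yields $(M^{\times k})^\otimes_1 \simeq M^{\times k}$ as required, so $(M^{\times k})^\otimes$ is the desired commutative monoid structure.

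Phrased more abstractly, the argument amounts to noting that $\CAlg(\sC) \sub \Fun(\bG^\opp, \sC)$ is closed under finite products (the Segal condition is a limit condition, and limits in $\Fun(\bG^\opp, \sC)$ are computed pointwise) and that the evaluation-at-$\brak{1}$ functor $\CAlg(\sC) \to \sC$ preserves products. Since the entire content is formal manipulation of the Segal condition under finite products, there is no genuine obstacle; the only real decision is whether to write the argument pointwise or to invoke the abstract closure property of $\CAlg(\sC)$.
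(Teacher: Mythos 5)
Your proof is correct, but it takes a genuinely different route from the paper. You observe that $\CAlg(\sC) \sub \Fun(\bG^\opp, \sC)$ is closed under finite products (limits in the functor category are computed pointwise, the Segal condition is a limit condition preserved by products, and evaluation at $\brak{1}$ preserves products), so the $k$-fold pointwise product of $M^\otimes$ does the job; this is a clean, standard argument and it fully proves the existence statement. The paper instead constructs the monoid structure by precomposing $M^\otimes$ with the smash functor $- \wedge \brak{k} : \bG^\opp \to \bG^\opp$, setting $N^\otimes_n := M^\otimes_{nk}$, and identifies the induced Segal maps explicitly as the maps $\kappa_i : \brak{nk} \to \brak{k}$. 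The two constructions produce equivalent commutative monoids (via the Segal equivalences $M^\otimes_{nk} \simeq (M^\otimes_n)^{\times k}$), but the paper's specific model is not incidental: the proof of \Cref{prp:cart-mon-str-on-prod} later uses the identity $\mu_\ast = \sC^\times \circ (\mu \wedge \brak{n})$, which is only transparent for the smash-product presentation. So your argument buys brevity and generality (it is just ``products of commutative monoids are commutative monoids''), while the paper's buys an explicit description of the structure maps that is exploited downstream; with your construction one would additionally have to identify the two structures before citing this proposition in \Cref{prp:cart-mon-str-on-prod}.
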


\begin{proof}
	The smash-by-$\brak{k}$ functor
	\begin{equation*}
		- \wedge \brak{k} : \bG^\opp \to \bG^\opp
	\end{equation*}
	sends $\brak{n}$ to $\brak{nk}$. In particular, it sends the Segal maps $\tau_i : \brak{n} \to \brak{1}$ to the maps $\kappa_i : \brak{nk} \to \brak{k}$ defined by 
	\begin{equation*}
		\kappa_i (m) =
		\begin{cases}
			l-1 & \text{if } m = i + lk, \\
			\ast & \text{if } m \not\equiv i \mod k.
		\end{cases}
	\end{equation*}
	Precomposing $M^\otimes$ by $- \wedge \brak{k}$ yields a functor $N^\otimes : \bG^\opp \to \sC$ such that $N^\otimes_1 \simeq M^{\times k}$ and whose images fit into the commutative diagram
	\begin{equation*}
		\begin{tikzcd}
			N^\otimes_n \ar[r] \ar[d, "\simeq"'] & (N^\otimes_1)^{\times n} \ar[d, "\simeq"] & \\
			M^\otimes_{nk} \ar[d, "\simeq"] \ar[r, "\prod_i \kappa_i"] & (M^\otimes_k)^{\times n} \ar[d, "\simeq"] \\
			M^{\times nk} \ar[r, "\simeq"] & (M^{\times k})^{\times n} 
		\end{tikzcd}
	\end{equation*}
	Therefore the top map, the product of the induced Segal maps, is an equivalence, which tells us that $N^\otimes \in \CAlg(\sC)$ is a commutative monoid extending $N^\otimes_1 \simeq M_k \simeq M^{\times k}$.
\end{proof}

Interestingly enough, we note that the wedge sum of pointed finite sets (which is the coproduct in that category) is precisely the symmetric monoidal structure on $\bG^\opp$ such that symmetric monoidal functors $\bG^\opp \to \sC$ are exactly commutative monoids in $\sC$ for any cartesian monoidal $\sC$:
\begin{equation*}
	\CAlg(\sC) \simeq \Fun^\otimes((\bG^\opp, \vee), (\sC, \times)).
\end{equation*}
This fact can be used to give a shorter proof of the previous proposition. 
\begin{proof}[Alternative proof of \Cref{prp:comm-mon-on-prod}]
	Since $\wedge$ distributes over $\vee$ we can directly compute that $N^\otimes = M^\otimes \circ (- \wedge \brak{k})$ is a symmetric monoidal functor:
	\begin{align*}
		N^\otimes(\brak{m} \vee \brak{n}) & = M^\otimes((\brak{m} \vee \brak{n}) \wedge \brak{k}) \\
		& = M^\otimes((\brak{m} \wedge \brak{k}) \vee (\brak{n} \wedge \brak{k})) \\
		& \simeq M^\otimes(\brak{m} \wedge \brak{k}) \times M^\otimes(\brak{n} \wedge \brak{k}) \\
		& \simeq N^\otimes(\brak{m}) \times N^\otimes(\brak{n}). \qedhere
	\end{align*}
\end{proof}

\begin{prp} \label{prp:cart-mon-str-on-prod}
	If $\sC$ is a cartesian monoidal $\infty$-category then, for every $n \in \NN$, $\sC^\times_n$ has finite limits and the induced symmetric monoidal structure on it is cartesian.
\end{prp}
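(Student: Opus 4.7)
The plan is to split the statement into two parts. First, that $\sC^\times_n$ has finite limits is immediate from the Segal condition: we have $\sC^\times_n \simeq \sC^{\times n}$, and a finite product of $\infty$-categories with finite limits has finite limits computed componentwise, so $\sC^\times_n$ inherits all finite limits from $\sC = \sC^\times_1$. This observation is actually already used implicitly in the proof of \Cref{lmm:cartesian-iff-components-are-cartesian}.

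For the second part, I would unpack the phrase ``induced symmetric monoidal structure'' using the construction in \Cref{prp:comm-mon-on-prod}: the relevant symmetric monoidal structure on $\sC^\times_n$ is encoded by the functor $N^\otimes := \sC^\times \circ (-\wedge \brak{n}) : \bG^\opp \to \sCat_1$, whose value at $\brak{m}$ is $\sC^\times_{mn} \simeq (\sC^{\times n})^{\times m}$ via the Segal equivalences for $\sC^\times$. The goal then becomes to exhibit an equivalence $N^\otimes \simeq (\sC^{\times n})^\times$ of symmetric monoidal $\infty$-categories.

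My chosen strategy is to identify $N^\otimes$ as the $n$-fold product $(\sC^\times)^{\times n}$ in the $\infty$-category of symmetric monoidal $\infty$-categories, and then invoke the general fact that a product of cartesian symmetric monoidal $\infty$-categories is the cartesian structure on the product. The key observation for the first identification is that $-\wedge \brak{n} : (\bG^\opp, \vee) \to (\bG^\opp, \vee)$ is itself a symmetric monoidal functor (smash distributes over wedge, as already exploited in the alternative proof of \Cref{prp:comm-mon-on-prod}); postcomposing with the symmetric monoidal functor classifying $\sC^\times$ produces the desired identification. For the second, in $(\sC^\times)^{\times n}$ the tensor product of two objects $\conj{x}, \conj{y} \in \sC^{\times n}$ is computed componentwise as $x_i \otimes_\sC y_i$, which equals $x_i \times y_i$ because $\sC^\times$ is cartesian; thus the overall tensor is the componentwise product, which is the categorical product in $\sC^{\times n}$. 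The unit is $N^\otimes_0 \simeq \sC^\times_0 \simeq \ast$, terminal in $\sC^{\times n}$ because it is terminal in $\sC$ componentwise.

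The main obstacle will be making the coherence of the equivalence $N^\otimes \simeq (\sC^\times)^{\times n}$ precise rather than pointwise. The fibers are easy to identify, but one must check that the structural maps induced by arbitrary $\alpha : \brak{m} \to \brak{m'}$ in $\bG^\opp$ agree on both sides; this reduces to the fact that $\alpha \wedge \id_{\brak{n}}$ factors, via the canonical decomposition $\brak{mn} \cong \brak{m}^{\vee n}$, as the $n$-fold wedge $\alpha^{\vee n}$, so that applying $\sC^\times$ and using the Segal decomposition produces exactly the componentwise action, matching $(\sC^\times)^{\times n}$. Once this coherence is verified the argument closes immediately, and both claims of the proposition follow.
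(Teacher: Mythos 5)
Your proposal is correct and follows essentially the same route as the paper: finite limits in $\sC^\times_n \simeq \sC^{\times n}$ are computed componentwise, and the induced monoidal structure is unpacked as $\sC^\times \circ (- \wedge \brak{n})$, whose multiplication is the componentwise product because smash distributes over wedge and $\sC$ is cartesian monoidal. The only difference is organizational: you package the second half as an equivalence $N^\otimes \simeq (\sC^\times)^{\times n}$ in $\CAlg(\sCat_1)$ followed by stability of cartesianness under products, whereas the paper checks directly that the map induced by $\mu : \brak{p} \to \brak{1}$ is the product map -- the underlying computation is the same.
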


\begin{proof}
	Let $\sA$ be a finite $\infty$-category and let $D : \sA \to \sC^\times_n$ be a diagram. By assumption each component $D_i : \sA \to \sC^\times_n \xlra{(\tau_i)_\ast} \sC$ has a limit $x_i$, and together the $x_i$ assemble to an element $\conj{x} \in \sC^{\times n}$ corresponding to some element $x \in \sC^\times_n$. But for any $y \in \sC^\times_n$ we have
	\begin{align*}
		\Hom_{\sC^\times_n}(y, x) \simeq \prod_{i = 1}^n \Hom_{\sC}(y_i, x_i)\simeq \lim_\sA \prod_{i = 1}^n \Hom_{\sC} \left( y_i, D_i \right)\simeq \lim_\sA \Hom_{\sC^\times_n}(y, D)
	\end{align*}
	and so $x$ is a limit of $D$. To show that the induced monoidal structure $\sD^\times$ on $\sC^\times_n$ is cartesian it will suffice to show that the map $\mu_\ast : \sD^\times_p \to \sD^\times$, induced by the morphism $\mu : \brak{p} \to \brak{1}$ sending everything except the basepoint to $1$, is the same as the product map. But $\mu_\ast$ is $\sC^\times \circ (\mu \wedge \brak{n})$ by \Cref{prp:comm-mon-on-prod} and each component of the latter is the product map because $\sC$ is cartesian monoidal, so we're done.
\end{proof}

\subsection{Techniques to increase and decrease the category number} \label{sec:increase-cat-number}

At times it will be fruitful to pass from $n$-categories to related $m$-categories for some $m > n$. Here we collect some techniques that allow us to do this.

\subsubsection{Categories of commutative $n$-cubes}

We start with a generalization of Rezk's classifying diagram (\cite{Rezk2001}).

\begin{dfn} \label{dfn:n-cubes}
	Let $i : \bD \to \Cat^1(\Set)$ denote the inclusion of the linearly ordered sets $i(k) = \Delta^k$ via the Yoneda embedding. By taking products we get a functor $i^n : \bD^{\times n} \to \Cat^1(\Set)$. If $X : \bD^\opp \to \sD$ is a simplicial object in an $\infty$-category $\sD$, the \emph{commutative $n$-cube} of $X$ is the functor
	\begin{equation*}
		\square^n(X) : \bD^{n, \opp} \to \sD, \quad (k_1, \dotsc, k_n) \mapsto \int_{\bD^\opp} i^n(k_1, \dotsc, k_n) \pitchfork X.
	\end{equation*}
\end{dfn}

First note that since $i(0) = \Delta^0 \simeq \ast$ we have
\begin{equation*}
	\square^n(X)_{k_1, \dotsc, k_p, 0, \dotsc, 0} \simeq \square^p(X)_{k_1, \dotsc, k_p}
\end{equation*}
for any $p \leq n$. If $X \in \Cat(\sD)$, the commutative $n$-cube is supposed to classify commutative diagrams in $X$ with shape the $n$-cube with sides of lengths $k_1, k_2, \dotsc, k_n$ (the length here indicates the number of $1$-morphisms). We will demonstrate this by computing $\square^2(X)$, which is the only case used in the paper. By the equation above and the Yoneda lemma we have that
\begin{equation} \label{eqn:square-2-X-k0}
	\square^2(X)_{k_1, 0} \simeq \square^1(X)_{k_1} \simeq \int_{\bD^\opp} \Delta^{k_1} \pitchfork X \simeq X_{k_1}, 
\end{equation}
i.e. the $(k_1, 0)$-simplices of $\square^2(X)$ correspond to $k_1$-simplices of $X$ and, since $X$ is a category object, these in turn correspond to chains of $k_1$ many composable $1$-morphisms. When $(k_1, k_2) = (1,1)$, the simplicial set $\Delta^1 \times \Delta^1$ has exactly two non-degenerate $2$-simplices with their composite edges identified: $\Delta^1 \times \Delta^1 \simeq \Delta^2 \sqcup_{\Delta^1} \Delta^2$. Hence
\begin{equation} \label{eqn:square-2-X-11}
	\square^2(X)_{1, 1} = \int_{\bD^\opp} (\Delta^1 \times \Delta^1) \pitchfork X \simeq \int_{\bD^\opp} (\Delta^2 \sqcup_{\Delta^1} \Delta^2) \pitchfork X \simeq X_2 \times_{X_1} X_2,
\end{equation}
and since the map $X_2 \to X_1$ along which the pullback is taken is the one defining the composition in $X$ we see that the $(1,1)$-simplices of $\square^2(X)$ correspond to commutative squares of $1$-morphisms of $X$.

\begin{prp}
	The commutative $n$-cube of $X \in \Cat^1(\sD)$, where $\sD$ is an $\infty$-category with finite limits, is an $n$-uple category object in $\sD$.
\end{prp}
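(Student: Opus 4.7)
The plan is to verify that $\square^n(X)$ satisfies the Segal condition in each of its $n$ arguments. Since the definition of $i^n$ is symmetric in its factors, it suffices to check the condition in one variable, say the last; the argument for the others is identical. First I would reduce to the case $\sD = \Spaces$, then use the Segal structure of $X$ together with a colimit decomposition of simplicial sets.

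\emph{Reduction.} Representable functors $\Hom_\sD(c, -)$ preserve limits (in particular ends and pullbacks) and jointly detect equivalences; moreover $K_m \pitchfork (-)$ commutes with $\Hom_\sD(c, -)$ whenever $K_m$ is a set. It follows that
\[
\Hom_\sD\bigl(c, \square^n(X)_{\bar k}\bigr) \simeq \square^n\bigl(\Hom_\sD(c, X)\bigr)_{\bar k},
\]
and $\Hom_\sD(c, X)$ is a Segal space. So I may assume $\sD = \Spaces$ and that $X$ is a Segal space.

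\emph{Key computation.} Fix $k_1,\dots,k_{n-1}$ and define $Y : \bD^\opp \to \Spaces$ by $Y_k := \square^n(X)_{k_1,\dots,k_{n-1},k}$. I need $Y$ to be a Segal space. By the end-for-natural-transformations formula,
\[
Y_k \simeq \Nat\bigl(\Delta^{k_1}\times\dotsb\times\Delta^{k_{n-1}}\times\Delta^k,\, X\bigr),
\]
with $\Nat$ taken in $\Fun(\bD^\opp,\Spaces)$. The functor $\Nat(-, X)$ sends colimits of simplicial sets to limits of spaces. Moreover, because $X$ is a Segal space, the co-Yoneda computation $\Nat(\Delta^j, X) \simeq X_j$ together with the Segal condition on $X$ shows that $\Nat(-, X)$ inverts the spine inclusion $I^k \hookrightarrow \Delta^k$. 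The class of simplicial-set maps sent to equivalences by $\Nat(-, X)$ is closed under colimits, so it also contains every map obtained from spine inclusions by taking pushouts or products with a fixed simplicial set.

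\emph{Assembly and obstacle.} Since products in simplicial sets distribute over colimits in each variable, the pushout decomposition of $\Delta^k$ as an iterated cofiber of copies of $\Delta^1$ over $\Delta^0$ yields, after multiplying by $\Delta^{k_1}\times\dotsb\times\Delta^{k_{n-1}}$, a corresponding pushout description of $\Delta^{k_1}\times\dotsb\times\Delta^{k_{n-1}}\times\Delta^k$. Applying $\Nat(-, X)$ converts this pushout into the pullback
\[
Y_k \simeq Y_1 \times_{Y_0} \dotsb \times_{Y_0} Y_1,
\]
which is precisely the Segal condition for $Y$. The delicate point is the end of the key computation: justifying that $\Nat(-, X)$ inverts not merely the bare spine inclusion but also its product with $\Delta^{k_1}\times\dotsb\times\Delta^{k_{n-1}}$. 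This will follow formally from the colimit-to-limit property of $\Nat(-, X)$ combined with the fact that taking the product with a fixed simplicial set preserves pushouts, but one should write out the relevant pushout square explicitly to confirm that everything lines up with the target Segal pullback.
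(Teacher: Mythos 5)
Your overall route is the one the paper takes: reduce to $\sD = \Spaces$ by applying corepresentable functors (which commute with the end and the cotensor), rewrite the end as a mapping space $\Map(\Delta^{k_1}\times\dotsb\times\Delta^{k_n}, X)$ of simplicial spaces, and adjoin over all but one simplex factor so that the Segal condition in the remaining variable becomes the statement that mapping the product of a fixed simplicial set $P$ with a spine inclusion into $X$ yields an equivalence. The paper phrases this last step as the assertion that the internal hom $[P, X]$ is again a category object whenever $X$ is; your formulation, that $\Nat(-,X)$ inverts $P\times(I^k\hookrightarrow\Delta^k)$, is the adjoint transpose of exactly the same claim, so the two arguments are substantively identical.

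The one place where your write-up overreaches is the closure property you lean on in the final paragraph. From the fact that $\Nat(-,X)$ carries colimits to limits you do get that the class of maps it inverts is closed under cobase change and under colimits in the arrow category; but closure under \emph{products with a fixed simplicial set} does not follow from these two facts. Writing $P = \colim_j \Delta^{a_j}$ over its simplices only reduces the question to whether each $\Delta^{a}\times I^k \to \Delta^{a}\times\Delta^{k}$ is inverted, and that is a genuine combinatorial input: one must exhibit this map in the weak saturation of the spine inclusions, equivalently prove that $[\Delta^a, X]$ is again a Segal space whenever $X$ is (Rezk's compatibility of the Segal condition with the cartesian product). This is precisely the lemma the paper invokes in the form ``$[Y,X]\in\Cat^1(\Spaces)$ for $X\in\Cat^1(\Spaces)$,'' so your strategy is sound, but the sentence claiming the step ``will follow formally from the colimit-to-limit property combined with the fact that products preserve pushouts'' should be replaced by an appeal to, or a proof of, that lemma; as stated it is not a valid derivation.
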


\begin{proof}
	We will first prove the claim in the case $\sD = \Spaces$, in which case $- \pitchfork - = \Hom(-,-)$ and so
	\begin{align*}
		\square^n(X)_{k_1, \dotsc, k_i, \dotsc, k_n} & = \int_{\bD^\opp} \Hom(\Delta^{k_1} \times \dotsb \times \Delta^{k_i} \times \dotsb \times \Delta^{k_n}, X) \\
		& \simeq \int_{\bD^\opp} \Hom(\Delta^{k_i}, [\Delta^{k_1} \times \dotsb \times \Delta^{k_{i-1}} \times \Delta^{k_{i+1}} \times \dotsb \times \Delta^{k_n}, X]) \\
		& \simeq \square^1([\Delta^{k_1} \times \dotsb \times \Delta^{k_{i-1}} \times \Delta^{k_{i+1}} \times \dotsb \times \Delta^{k_n}, X])_{k_i} \\
		& \simeq [\Delta^{k_1} \times \dotsb \times\Delta^{k_{i-1}} \times \Delta^{k_{i+1}} \times \dotsb \times \Delta^{k_n}, X]_{k_i}
	\end{align*}
	for any $1 \leq i \leq n$, where $[Y, Z]$ is the internal hom in simplicial spaces. If both $Y$ and $Z$ are category objects in spaces then $[Y, Z] \in \Cat^1(\Spaces)$ since $- \times Y : \Cat^1(\Spaces) \to \Cat^1(\Spaces)$ preserves colimits. Hence
	\begin{equation*}
		[\Delta^{k_1} \times \dotsb \times\Delta^{k_{i-1}} \times \Delta^{k_{i+1}} \times \dotsb \times \Delta^{k_n}, X] : \bD^\opp \to \Spaces
	\end{equation*}
	satisfies the Segal condition in its variable $k_i$, and since this holds for all $i$ we are done.
	
	When $\sD$ is a general $\infty$-category with finite limits we can reduce to the case of $\Spaces$ by applying $\sD(d, -)$ for an arbitrary object $d$ and commuting that past the end and the cotensor, obtaining
	\begin{equation*}
		\sD(d, \square^n(X)_{k_1, \dotsc, k_n}) \simeq \square^n(\sD(d, X))_{k_1, \dotsc, k_n}.
	\end{equation*}
	The Segal condition holds for the right-hand side by our previous argument and therefore it holds for $\square^n(X)$ since the above equivalence is natural in $d$.
\end{proof}

\subsubsection{Monoids}

\begin{dfn}
	A \emph{monoid} in a cartesian monoidal category $\sC$ is a category object $X : \bD^\opp \to \sC$ with $X_0 \simeq \ast$. The $\infty$-category of monoids in $\sC$ is the full subcategory $\Mon(\sC) \sub \Cat(\sC)$ spanned by the monoids.
\end{dfn}

A monoid $X$ still comes equipped with equivalences $X_n \simeq X_1^{\times n}$ but is no longer invariant under the action of the symmetric groups, so a priori the multiplication map $X_2 \to X_1$ is not commutative.

\begin{prp} \label{prp:underlying-monoid}
	There is a functor $u : \bD^\opp \to \bG^\opp$ such that precomposition by $u$ extracts a monoid $\underline{M^\otimes} \in \Mon(\sC)$, called the \emph{underlying monoid}, from a commutative monoid $M^\otimes \in \CAlg(\sC)$. 
\end{prp}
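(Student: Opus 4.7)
The plan is to define $u$ via a standard Segal-type ``cut'' construction and verify directly that $M^\otimes \circ u$ satisfies the two conditions to be a monoid in $\sC$.

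On objects, set $u([n]) := \brak{n}$. For a morphism in $\bD^\opp$ from $[n]$ to $[m]$, corresponding to some $\alpha : [m] \to [n]$ in $\bD$, I would define the pointed set map $u(\alpha) : \brak{n} \to \brak{m}$ by the rule
\begin{equation*}
	u(\alpha)(i) := \begin{cases} j & \text{if there exists (necessarily unique) $j \in \{1, \dotsc, m\}$ with $\alpha(j-1) < i \leq \alpha(j)$,} \\ \ast & \text{otherwise,} \end{cases}
\end{equation*}
with $u(\alpha)(\ast) = \ast$. Functoriality is a routine verification: for $\beta : [l] \to [m]$ composable with $\alpha$, the intervals $(\alpha(j-1), \alpha(j)]$ partition $\{1, \dotsc, n\}$ compatibly with the finer partition coming from $\beta \circ \alpha$, so $u(\beta) \circ u(\alpha) = u(\beta \circ \alpha)$; and $u$ of an identity is evidently an identity. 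This defines $u : \bD^\opp \to \bG^\opp$.

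Next I would check the two conditions defining a monoid for $\underline{M^\otimes} := M^\otimes \circ u$. First, $\underline{M^\otimes}_0 = M^\otimes_{\brak{0}}$, which is the terminal object of $\sC$ because the Segal condition on $M^\otimes$ forces $M^\otimes_{\brak{0}} \simeq (M^\otimes_{\brak{1}})^{\times 0} \simeq \ast$. Second, for the Segal condition on $\underline{M^\otimes}$, a direct computation shows that the inert Segal inclusion $\rho_i : [1] \to [n]$ in $\bD$ is sent by $u$ to the pointed set map $\brak{n} \to \brak{1}$ taking $k$ to $1$ when $k = i + 1$ and to $\ast$ otherwise, i.e., to the Segal map $\tau_{i+1}$ of $\bG^\opp$. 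Therefore the Segal projection of $\underline{M^\otimes}$ at level $n$ factors through the Segal projection of $M^\otimes$ at $\brak{n}$, and the composite
\begin{equation*}
	\underline{M^\otimes}_n = M^\otimes_{\brak{n}} \xlra{\simeq} (M^\otimes_{\brak{1}})^{\times n} = \underline{M^\otimes}_1^{\times n}
\end{equation*}
is an equivalence. Thus $\underline{M^\otimes}$ is a category object in $\sC$ with $\underline{M^\otimes}_0 \simeq \ast$, hence a monoid with underlying object $\underline{M^\otimes}_1 \simeq M^\otimes_{\brak{1}}$, as required. I do not expect any significant obstacle here; the entire content lies in arranging the cut formula so that the Segal inclusions of $\bD^\opp$ are carried to the Segal projections of $\bG^\opp$.
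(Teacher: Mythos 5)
Your construction of $u$ is exactly the standard ``cut'' functor of \cite[Construction 4.1.2.9]{HA2017}, which is the same functor the paper invokes, and your verification (that $u$ carries the Segal inclusions $\rho_i$ of $\bD$ to the Segal projections $\tau_{i+1}$ of $\bG^\opp$, plus $M^\otimes_{\brak{0}} \simeq \ast$ so the fiber products collapse to products) is precisely the argument the paper leaves to the reader. The proof is correct and takes essentially the same route; the only quibble is the typo $u(\beta)\circ u(\alpha) = u(\beta\circ\alpha)$, where the composite in $\bD$ should read $\alpha\circ\beta$.
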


\begin{proof}
	The functor $u$ appears in \cite[Construction 4.1.2.9]{HA2017}: it is given on objects by $[n] \mapsto \brak{n}$ and on morphisms by sending $\vphi : [m] \to [n]$ to the map of pointed finite sets
	\begin{equation*}
		\vphi^\ast : \brak{n} \to \brak{m},
		\quad \vphi^\ast(k) =
		\begin{cases}
			\min \vphi^\inv(k) & \text{if $\vphi^\inv(k) \neq \emptyset$}, \\
			\ast & \text{if $\vphi^\inv(k) = \emptyset$}.
		\end{cases} 
	\end{equation*}
	It is easy to see that $u$ sends the Segal maps in $\bD^\opp$ to the Segal maps in $\bG^\opp$, and this shows that $\underline{M^\otimes} = M^\otimes \circ u$ is a monoid in $\sC$ whenever $M^\otimes$ is a commutative monoid in $\sC$.
\end{proof}

\begin{crl} \label{crl:symm-mon-to-2cat}
	The underlying monoid of a symmetric monoidal $\infty$-category is an $(\infty, 2)$-category with a contractible space of objects. 
\end{crl}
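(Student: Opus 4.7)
The plan is to combine the underlying monoid construction from \Cref{prp:underlying-monoid} with the chain of inclusions $\sCat_1 \sub \Seg^1(\Spaces) \sub \Cat^1(\Spaces)$ from \Cref{sec:higher-cats} to exhibit $\underline{\sC^\otimes}$ as a $2$-fold Segal space, and then apply the completion machinery of \Cref{prp:from-cat-to-seg}.

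First I would apply \Cref{prp:underlying-monoid} directly to the symmetric monoidal $\infty$-category $\sC^\otimes : \bG^\opp \to \sCat_1$ to obtain the underlying monoid $\underline{\sC^\otimes} := \sC^\otimes \circ u : \bD^\opp \to \sCat_1$, which by definition is a category object in $\sCat_1$ with $\underline{\sC^\otimes}_0 \simeq \ast$ (where $\ast$ is the terminal $(\infty,1)$-category). Composing with the inclusion $\sCat_1 \sub \Cat^1(\Spaces)$ and adjoining the two $\bD^\opp$ factors produces a bisimplicial space
\begin{equation*}
	Y : \bD^\opp \times \bD^\opp \to \Spaces, \quad Y_{m,n} := (\underline{\sC^\otimes}_m)_n.
\end{equation*}
Since the inclusions $\sCat_1 \sub \Seg^1(\Spaces) \sub \Cat^1(\Spaces)$ are right adjoints by \Cref{prp:from-cat-to-seg}, they preserve limits, so the Segal condition for $\underline{\sC^\otimes}$ (which holds in $\sCat_1$) is inherited in $\Cat^1(\Spaces)$; the Segal condition in the second variable is automatic because each $\underline{\sC^\otimes}_m$ is itself an object of $\sCat_1 \sub \Seg^1(\Spaces)$. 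Hence $Y$ is a $2$-uple category object in $\Spaces$.

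Next I would verify the essential constancy required for $Y$ to lie in $\Seg^2(\Spaces)$. This amounts to showing that $Y_0 = \underline{\sC^\otimes}_0$ is essentially constant as an object of $\Cat^1(\Spaces)$. But $\underline{\sC^\otimes}_0 \simeq \ast$ in $\sCat_1$ because $u(0) = \brak{0}$ and the Segal condition for $\sC^\otimes$ forces $\sC^\otimes_0 \simeq (\sC^\otimes_1)^{\times 0} \simeq \ast$. The terminal $\infty$-category is represented by the constant simplicial space at the point, so $Y_{0,n} \simeq \ast$ for every $n$ and the canonical map $c_{Y_{0,0}} \to Y_0$ is an equivalence. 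This places $Y$ in $\Seg^2(\Spaces)$. Applying the completion functor $L^2 : \Seg^2(\Spaces) \to \sCat_2$ from \Cref{prp:from-cat-to-seg} then yields the desired $(\infty,2)$-category; its space of objects is $Y_{0,0} \simeq \ast$, which is contractible.

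I do not anticipate a serious obstacle here. The only point that deserves care is the compatibility between the ``monoid in $\sCat_1$'' formulation (where $X_0 \simeq \ast$ means a terminal $(\infty,1)$-category, not a terminal space) and the ``$2$-fold Segal space'' formulation (where one needs $X_0$ to be an essentially constant simplicial space), and this is precisely the translation supplied by \Cref{prp:from-cat-to-seg} together with the explicit identification of the terminal object of $\sCat_1$.
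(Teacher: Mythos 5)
Your proof is correct and follows essentially the same route as the paper: view $\sC^\otimes$ as a bifunctor $\bG^\opp \times \bD^\opp \to \Spaces$ satisfying the Segal condition in both variables, precompose with $u$ in the $\bG^\opp$ variable, observe $\underline{\sC^\otimes}_{0,n} \simeq \ast$ to get essential constancy, and complete via \Cref{prp:from-cat-to-seg}. One small inaccuracy: the inclusion $\Seg^1(\Spaces) \sub \Cat^1(\Spaces)$ is a \emph{left} adjoint (it admits the right adjoint $R^1$), so your appeal to "right adjoints preserve limits" is misdirected — but the Segal conditions transfer anyway because all the relevant limits are computed levelwise in $\Fun(\bD^\opp,\Spaces)$, which is the level of detail the paper itself works at.
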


\begin{proof}
	A symmetric monoidal $\infty$-category is, by definition, a commutative monoid $\sC^\otimes \in \CAlg(\sCat_1)$. Recall that $\sCat_1$ can be modeled by a special subcategory of $\Seg^1(\Spaces)$. So we have an inclusion $\CAlg(\sCat_1) \sub \CAlg(\Seg^1(\Spaces))$ and, by adjunction, we can consider $\sC^\otimes$ as a functor $\bG^\opp \times \bD^\opp \to \Spaces$ which satisfies the Segal condition in both variables. Precomposing with $u$ in the first variable yields a $2$-fold Segal space $\underline{\sC^\otimes} : \bD^{2, \opp} \to \Spaces$ with a contractible space of objects since $\underline{\sC^\otimes}_{0, n} \simeq \sC^\otimes_{0, n} \simeq \ast$ for any $n$. After completing (i.e. applying $\mc{U}^2$) we obtain the desired $(\infty, 2)$-category.
\end{proof}

\end{document}